\theoremstyle{plain}
\newtheorem{thm}{Theorem}[section]
\newtheorem{lem}[thm]{Lemma}
\newtheorem{prop}[thm]{Proposition}
\newtheorem{cor}[thm]{Corollary}
\newtheorem{f}[thm]{Fact}
\theoremstyle{definition}
\newtheorem{exam}[thm]{Example}
\newtheorem{defi}[thm]{Definition}
\theoremstyle{remark}
\newtheorem{rem}[thm]{Remark}
\newtheorem{q}[thm]{Question}
\newtheorem{p}[thm]{Problem}
\newcommand{\mb}{\mathbb}
\newcommand{\mc}{\mathcal}
\newcommand{\gts}{\mathbf{GTS}}
\newcommand{\vn}{\varnothing}
\newcommand{\cl}{\mathrm{cl}}
\begin{document}

\title{Compactness and compactifications in generalized topology}
\author{Artur Pi\k{e}kosz \\
Institute of Mathematics\\
Cracow University of Technology\\
Warszawska 24, \\
31-155 Cracow, Poland\\
E-mail: pupiekos@cyfronet.pl\\
and\\
Eliza Wajch\\
Institute of Mathematics and Physics\\
Siedlce University of Natural Sciences and Humanities\\
3 Maja 54,\\
08-110 Siedlce, Poland\\
E-mail: eliza.wajch@wp.pl }
\date{}
\maketitle

\begin{abstract}
A generalized topology in a set $X$ is a collection $\text{Cov}_X$ of
families of subsets of $X$ such that the triple $(X, \bigcup{\text{Cov}_X},
\text{Cov}_X)$ is a generalized topological space in the sense of Delfs and
Knebusch. In this work, notions of topological and admissible compactness of
generalized topologies are introduced to begin and investigate a theory of
compactifications, in particular, of Wallman type in the category of weakly normal
generalized topological spaces. Among other facts, we prove in \textbf{ZF}
that the ultrafilter theorem (in abbreviation \textbf{UFT}) holds if and
only if all Wallman extensions of every weakly normal generalized
topological space are compact. In consequence, we develop the theory of
compactifications in \textbf{ZF}+\textbf{UFT} when it is not necessary to
use \textbf{AC}, while \textbf{ZF} is not enough.
\end{abstract}

\renewcommand{\thefootnote}{}

\footnote{2010 \emph{Mathematics Subject Classification}: Primary 54D35, 03E25; Secondary
54A05, 03E30.}

\footnote{\emph{Key words and phrases}: generalized topological space, small space, weakly normal gts, Wallman compactification, \textbf{ZF}, Ultrafilter Theorem.}

\renewcommand{\thefootnote}{\arabic{footnote}} \setcounter{footnote}{0}

\section{Introduction}

Classical general topology, in particular the theory of compactifications,
as well as most parts of current mathematics, have been created under the
basic assumption of all axioms of $ZFC$ and of the consistency of $ZFC$
although some mathematicians seemed to use the assumptions rather
unconsciously. Relatively few authors investigate in deep the role of the
axiom of choice denoted by \textbf{AC} and, if this is possible, analyse
whether the proofs of known theorems in $ZFC$ in which at least countable
choice has been involved can be modified to proofs in $ZF$ or in $ZF$ plus a
weaker assumption than \textbf{AC} (cf. e.g. \cite{Her}, \cite{Jech1} and 
\cite{Jech2}). To avoid careless usage of \textbf{AC}, we are going to
work in the spirit of Horst Herrlich shown partly in \cite{Her}. In this
paper, both notation and terminology for $ZFC$ are taken from \cite{Her}, 
\cite{Jech1}, \cite{Jech2}. \cite{Ku1},\cite{Ku2}. To make progress faster
and come to the essence of the paper, let us assume the system \textbf{ZF}
which, in informal words, is a convenient interpretation of more or less 
$ZF$+[Axioms of Logic] from \cite{Ku2} with the exception that, in much the same way, as in 
\textbf{NBG} or \textbf{MK} and contrary to \cite{Ku1}-\cite{Ku2}, we
modify $ZF$ in such a way that proper classes exist in \textbf{ZF}. We
neither give formulations of axioms of \textbf{ZF} nor describe formal
rules of deduction here but we hope that readers' intuition is good enough
to understand the content of the paper. We apply rather commonly used by
mathematicians informal law that, for every non-void finite collection of
pairwise disjoint non-void sets, one has a choice function of this
collection.

Our terminology concerning usual general topology and category theory, if
not introduced or modified below, is standard and can be found in \cite{En}, 
\cite{PW}, \cite{Ch}, \cite{Her}, \cite{AHS}. We use conventions of \cite{DK},
 \cite{Pie1} and \cite{Pie2} for generalized topological spaces.

\begin{defi}
A \textbf{generalized topology} in a set $X$ is a collection $\text{Cov}_X$ of
families of subsets of $X$ such that the triple $(X, \bigcup{\text{Cov}_X},
\text{Cov}_X)$ is a generalized topological space (abbr. gts) in the sense
of Delfs and Knebusch (cf. \cite{DK} and Definitions 2.2.1--2.2.2 with Remark 2.2.3
in \cite{Pie1}).
\end{defi}

Let $\text{Cov}_X$ be a generalized topology in $X$.  Only sets
from the collection $\text{Op}_X=\bigcup\text{Cov}_X$ are called \textbf{open}
 in the gts $(X, \text{Op}_X,\text{Cov}_X)$, while sets from the collection 
$\text{Cl}_X=\{A\subseteq X: X\setminus A\in\text{Op}_X\}$ are called 
\textbf{closed} in the gts $(X, \bigcup\text{Cov}_X,\text{Cov}_X)$. The gts 
$(X, \bigcup\text{Cov}_X,\text{Cov}_X)$ can be denoted briefly by $X$ or by 
$(X, \text{Cov}_X)$ if this is not misleading. Families that belong to 
$\text{Cov}_X$ are called \textbf{admissible coverings} or \textbf{admissible open families} in $X$. One can guess that \textbf{admissible closed families} in $X$ are  families $\mathcal{H}\subseteq \mathcal{P}(X)$ such that $\{ X\setminus H: H\in\mathcal{H}\}$ are admissible open families. 
If $A\subseteq\mathcal{P}(X)$, then $\tau (A)$ denotes the topology in the standard sense generated by $A$.
The sets from the topology $\tau(\text{Op}_X)$ are called \textbf{weakly open}
 in the gts $(X, \text{Cov}_X)$. The sets closed in the topological space 
$(X, \tau(\text{Op}_X))$ are called \textbf{weakly closed} in the gts $(X, 
\text{Cov}_X)$. If $\mathcal{A}\subseteq\mathcal{P}^{2}(X)$, then $\langle 
\mathcal{A}\rangle$ is the smallest (with respect to inclusion) generalized
topology in $X$ that contains $\mathcal{A}$. When it is necessary to tell
exactly that we generate from $\mathcal{A}$ admissible coverings in $X$, we
will use the notation $\langle \mathcal{A}\rangle_X$. If $Y$ is a proper subset of $X$
and $\mathcal{A}\subseteq\mathcal{P}^{2}(Y)$,
the collections $\langle \mathcal{A}\rangle_X$ and $\langle\mathcal{A}
\rangle_Y$ are different. When $Y\subseteq X$, the pair $(Y, \langle
\text{Cov}_X\cap_{2}Y\rangle_Y)$ is called a \textbf{subspace} of the gts $(X, 
\text{Cov}_X)$. A collection $\mathcal{U}\subseteq\mathcal{P}(X)$ is called 
\textbf{essentially finite} if there is a finite subcollection $\mathcal{V}$
of $\mathcal{U}$ such that $\bigcup\mathcal{U}=\bigcup\mathcal{V}$. Let us
notice that $\langle \{\text{Op}_X \} \rangle$ is the collection $\text{EssFin} 
(\text{Op}_X)$ of all essentially finite families of open sets in the gts $
(X, \text{Cov}_X)$. The gts $(X, \text{Op}_X, \text{EssFin}(\text{Op}_X))$
is the \textbf{smallification} of $(X, \text{Op}_X,\text{Cov}_X)$. The gts $
(X, \text{Op}_X,\text{Cov}_X)$ is called \textbf{small} if $\text{Cov}_X= 
\text{EssFin}(\text{Op}_X)$ (cf. \cite{Pie1}). The \textbf{topologization}
of the gts $(X, \text{Cov}_X)$ is the topological space $X_{top}=(X,
\tau(\bigcup \text{Cov}_X))$.  \textbf{The topology induced by the generalized topology} 
$\text{Cov}_X$ is the collection  $\tau(\bigcup \text{Cov}_X)$.

Let $\tau_{nat}$ be the natural topology of the real line $\mathbb{R}$. We say that a subset $A$ of $\mathbb{R}$ is a \textbf{locally finite union of open intervals} if there exists a locally finite in $(\mb{R}, \tau_{nat})$ collection $\mc{U}$ of open intervals such that $A=\bigcup\mc{U}$. We will use the generalized topologies in $\mathbb{R}$ that induce $\tau_{nat}$ and are defined as follows: 

\begin{defi} \label{proste}
We give names to the following real lines:
\begin{enumerate}
\item[(i)]  the \textbf{usual topological real line} (in abbr. \textbf{the  ut-real line} or \textbf{the topological real line}) $\mb{R}_{ut}$
where $\text{Cov}=$ all families of members of $\tau_{nat}$;

\item[(ii)] the \textbf{o-minimal real line} $\mb{R}_{om}$
where $\text{Cov}=$ essentially finite families of finite unions of open intervals;

\item[(iii)] the \textbf{smallified topological} (or \textbf{small partially topological}) \textbf{real line} $\mb{R}_{st}$
where $\text{Cov}=$ essentially finite families of members of $\tau_{nat}$;

\item[(iv)] the \textbf{localized o-minimal real line} $\mb{R}_{lom}$
where $\text{Cov}=$ locally essentially finite families of locally finite unions of open intervals; 

\item[(v)] the \textbf{localized smallified topological real line} $\mb{R}_{lst}$
where $\text{Cov}=$ locally essentially finite families of members of $\tau_{nat}$;

\item[(vi)] the \textbf{smallified localized o-minimal real line} $\mb{R}_{slom}$ 
where $\text{Cov}=$ essentially finite families of locally finite unions of open intervals;

\item[(vii)] the \textbf{localized at $+\infty$ ($-\infty$, resp.) o-minimal real line} $\mb{R}_{l^+om}$ ($\mb{R}_{l^-om}$, resp.),
where $\text{Cov}=$ locally essentially finite families of locally finite unions of open intervals which, on the negative (positive, resp.) half-line,
are essentially finite and consist of only finite unions of open intervals; 

\item[(viii)] \textbf{localized at $+\infty$ ($-\infty$, resp.) smallified topological real line} $\mb{R}_{l^+st}$ ($\mb{R}_{l^-st}$, resp.)
where $\text{Cov}=$ locally essentially finite families of members of $\tau_{nat}$ which are essentially finite on the negative (positive, resp.) half-line; 

\item [(ix)] the \textbf{smallified localized at $+\infty$ ($-\infty$, resp.) o-minimal real line} $\mb{R}_{sl^+om}$ ($\mb{R}_{sl^-om}$, resp.) 
where $\text{Cov}=$ essentially finite families of locally finite unions of open intervals which are only finite unions of open intervals on the negative (positive, resp.) half-line;

\item[(x)] the \textbf{rationalized o-minimal real line} $\mb{R}_{rom}$ where $\text{Cov}=$ essentially finite families of finite unions of open intervals with endpoints being rational numbers or infinities.
\end{enumerate}
\end{defi}

Some of the generalized topologies in $\mb{R}$ from the definition above appeared in Example 2.2.14 of \cite{Pie1} and in Definition 2.1.15 of \cite{Pie2} when $n=1$; however, for elegance, we have changed the notation $\mb{R}_{ts}$ of \cite{Pie1} to $\mb{R}_{st}$. We have replaced the notation $\mb{R}_{top}$ of \cite{Pie1} by $\mb{R}_{ut}$ to reserve \textit{top} for other purposes. We shall also use modifications of the generalized topologies described in Definition 1.2.

\begin{defi}
Let $X$ be a topological space. Then:

\begin{enumerate}
\item[(i)] a \textbf{closed base} of $X$ is a collection $\mathcal{C}$ of closed sets in $X$ such that, for every closed set $A$ in $X$ and for
each $x\in X\setminus A$, there exists $C\in\mathcal{C}$ such that $x\in
X\setminus C$ and $A\subseteq C$;

\item[(ii)] a closed base $\mathcal{C}$ of $X$ will be called 
\textbf{complete} if $\emptyset, X\in\mathcal{C}$ and, simultaneously, if
the collection $\mathcal{C}$ is closed under finite unions and under finite
intersections.
\end{enumerate}
\end{defi}

Such collections of subsets of a set $X$ that are closed under finite unions
and under finite intersections are also called \textbf{rings of sets}. When $\mathcal{C}$ is a ring of subsets of a set $X$, then $\mathcal{C}\cup\{\emptyset\}$ is a closed base for a topology in $X$ and, therefore, we can call this ring \textbf{complete} if $\emptyset, X\in\mathcal{C}$.

\begin{defi}
Let $\mathcal{C}$ be a complete closed base of a topological space $X$. The
\textbf{gts induced by} $\mathcal{C}$ is the triple $(X, \text{Op}_X, \text{Cov}_X)$
where $\text{Op}_X=\{ X\setminus C: C\in\mathcal{C}\}$ and $\text{Cov}_X=
\text{EssFin}(\text{Op}_X)$.
\end{defi}

\begin{prop}
Let $X$ be a topological space. A collection $\mathcal{C}$ is a complete
closed base of $X$ if and only if there exists a generalized topology $\text{
Cov}_X$ in $X$ such that $\mathcal{C}$ is the collection of all closed sets
in the gts $(X,\text{Cov}_X)$.
\end{prop}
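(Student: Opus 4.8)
The plan is to prove the two implications separately. The nontrivial (``only if'') direction amounts to Definition 1.6 together with a verification that the construction given there really produces a generalized topological space. Throughout, I understand ``generalized topology in the topological space $X$'' as one whose topologization $X_{top}=(X,\tau(\text{Op}_X))$ is $X$; as will be seen, this identification is automatic on both sides.

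\medskip
\noindent($\Leftarrow$) Suppose $\text{Cov}_X$ is a generalized topology in $X$ and $\mathcal{C}=\text{Cl}_X$. By the Delfs--Knebusch axioms, $\text{Op}_X$ contains $\emptyset$ and $X$ and is closed under finite unions and finite intersections; passing to complements, $\mathcal{C}$ contains $\emptyset$ and $X$ and is closed under finite intersections and finite unions, i.e. $\mathcal{C}$ is a complete ring of sets. Since $\text{Op}_X$ is closed under finite intersections, it is an open base of $X_{top}=X$, so every closed set $A$ of $X$ is an intersection of members of $\{X\setminus U:U\in\text{Op}_X\}=\mathcal{C}$; hence for each $x\in X\setminus A$ there is $C\in\mathcal{C}$ with $A\subseteq C$ and $x\notin C$. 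Thus $\mathcal{C}$ is a complete closed base of $X$.

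\medskip
\noindent($\Rightarrow$) Suppose $\mathcal{C}$ is a complete closed base of $X$. Put $\text{Op}_X=\{X\setminus C:C\in\mathcal{C}\}$ and $\text{Cov}_X=\text{EssFin}(\text{Op}_X)$; this is precisely the gts induced by $\mathcal{C}$ of Definition 1.6. The one nonformal point is to confirm that $(X,\text{Op}_X,\text{Cov}_X)$ is a gts. Since $\mathcal{C}$ is a complete ring of sets, so is $\text{Op}_X$. Then $\bigcup\text{Cov}_X=\text{Op}_X$, every admissible family has its union in $\text{Op}_X$ (being a finite union of members of the ring $\text{Op}_X$), and $\{U\}\in\text{Cov}_X$ for each $U\in\text{Op}_X$; and the closure of $\text{EssFin}(\text{Op}_X)$ under the covering operations required of a Delfs--Knebusch gts — restriction to an open set $V\in\text{Op}_X$, passage to a coarser family with the same union, and composition with chosen essentially finite coverings of the members — all follow from a single remark: if $\bigcup\mathcal{U}=\bigcup\mathcal{U}_0$ for a finite $\mathcal{U}_0\subseteq\mathcal{U}$, then after each of these operations the new family still has its union witnessed by the correspondingly modified finite subfamily, and its members still lie in the ring $\text{Op}_X$. (Alternatively, one can quote from \cite{Pie1} that the essentially finite families of members of a complete ring of sets form a small gts.) Granting this, $\text{Cl}_X=\{A\subseteq X:X\setminus A\in\text{Op}_X\}$, and since $A\mapsto X\setminus A$ is a bijection of $\mathcal{P}(X)$, we have $X\setminus A\in\text{Op}_X$ iff $A\in\mathcal{C}$; hence $\text{Cl}_X=\mathcal{C}$. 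Finally, $\mathcal{C}$ being a closed base of $X$ means exactly that $\text{Op}_X$ is an open base of $X$, so $\tau(\text{Op}_X)$ is the topology of $X$ and $X_{top}=X$; thus $\text{Cov}_X$ is a generalized topology in the topological space $X$ with $\mathcal{C}$ as its family of closed sets.

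\medskip
The main obstacle is confined to the gts-axiom check in the ($\Rightarrow$) direction; I would present it compactly by isolating the lemma that essential finiteness (of a family of members of a complete ring) is preserved under the covering operations a generalized topology must respect, so that the argument does not depend on the precise bookkeeping form of the axioms adopted in \cite{Pie1}. Everything else is routine set manipulation.
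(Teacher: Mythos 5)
Your proof is correct. The paper gives no proof of this proposition at all (it is stated as immediate from Definitions 1.3--1.4 and the Delfs--Knebusch axioms), and your argument is exactly the intended one: the forward direction is the verification that the gts induced by $\mathcal{C}$ (Definition 1.4, cf. also Remark 2.2 on small spaces) is a gts with $\text{Cl}_X=\mathcal{C}$, and the converse is the observation that $\text{Op}_X$ of any gts is a complete ring of sets forming an open base of the topologization. Your explicit remark that ``generalized topology in the topological space $X$'' must be read as one whose topologization is $X$ is a genuine and necessary clarification --- without it the ``if'' direction is false --- and is the one point worth keeping visible if this proof were written out.
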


Let us slightly modify the standard notion of a Wallman base which is very
important in the theory of compactifications and in our present work. Sometimes,
Wallman bases are called normal bases (cf. e. g. \cite{Ch}, \cite{Fr} and \cite{PW}).

\begin{defi}
A \textbf{Wallman base} of a topological space $X$ is a closed base $\mathcal{C}$ of $X$ such that $\mathcal{C}$ satisfies the following conditions:

\begin{enumerate}
\item[(i)] $\mathcal{C}$ is closed under finite unions and under finite
intersections, i.e. $\mathcal{C}$ is a ring of sets,

\item[(ii)] for each set $A\subseteq X$ such that $A$ is a singleton or $A$
is closed in $X$, if $x\in X\setminus A$, then there exists $C\in\mathcal{C}$
such that $x\in C\subseteq X\setminus A$,

\item[(iii)] for every pair $A_1, A_2$ of disjoint members of $\mathcal{C}$,
there exists a pair $C_1, C_2$ of members of $\mathcal{C}$ such that $%
C_1\cup C_2=X$ and $A_i\cap C_i=\emptyset$ for each $i\in\{1, 2\}$.
\end{enumerate}
\end{defi}

Let us observe that every topological space which has a Wallman base is a $T_1$-space.
 Therefore, according to \cite{Fr}, we can call a topological space $X
$ \textbf{semi-normal} if $X$ has at least one Wallman base in the sense of
Definition 1.6. 

It is useful to make the convention that $P$ is a property of a
topology $\tau$ (or a generalized topology $\text{Cov}_X$, resp.) in X if and
only if the topological space $(X, \tau)$ (the gts $(X, \text{Cov}_X)$,
resp.) has property $P$. For example, a gts is
called \textbf{weakly normal} iff its generalized topology is weakly normal in the following sense adapted from Definition 2.2.81 of \cite{Pie1}:

\begin{defi}
We say that a generalized topology $\text{Cov}_X$ in $X$ is  \textbf{weakly normal} if, for
every pair $A_1, A_2$ of subsets of $X$ such that $A_i$ is a singleton or 
$A_i\in \text{Cl}_X$ for each $i\in\{1, 2\}$, there exists a pair $W_1, W_2$ of disjoint members of $\text{Op}_X$ such that $A_i\subseteq W_i$ for each $i\in\{1, 2\}$.
\end{defi}

\begin{prop}
Let $\mathcal{C}$ be a complete closed base of a topological space $X$. Then
the gts induced by $\mathcal{C}$ is weakly normal if and only if $\mathcal{C}
$ is a Wallman base of $X$.
\end{prop}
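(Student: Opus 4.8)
The plan is to unwind both sides of the equivalence in terms of the same raw data — the complete closed base $\mathcal{C}$ — and then compare. Recall that the gts induced by $\mathcal{C}$ has $\mathrm{Op}_X=\{X\setminus C:C\in\mathcal{C}\}$ and $\mathrm{Cov}_X=\mathrm{EssFin}(\mathrm{Op}_X)$; since $\mathcal{C}$ is closed under finite unions and finite intersections and contains $\emptyset,X$, the collection of closed sets of this gts, namely $\mathrm{Cl}_X=\{X\setminus U:U\in\mathrm{Op}_X\}$, is exactly $\mathcal{C}$ itself (this is essentially the content of the earlier Proposition identifying complete closed bases with collections of gts-closed sets). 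So the three conditions in Definition~1.6 and the weak normality condition of Definition~1.7 are both being asserted about the \emph{same} family $\mathcal{C}=\mathrm{Cl}_X$, and the whole proof reduces to translating ``disjoint open sets separating $A_1,A_2$'' into ``complementary closed sets from $\mathcal{C}$ missing $A_1,A_2$''.

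For the forward direction, assume the induced gts is weakly normal. Condition (i) of Definition~1.6 holds by construction. For (ii), given $A$ a singleton or $A\in\mathcal{C}$ and $x\notin A$, apply weak normality to the pair $\{x\}$ and $A$ to get disjoint $W_1,W_2\in\mathrm{Op}_X$ with $x\in W_1$, $A\subseteq W_2$; then $C:=X\setminus W_2$ is in $\mathcal{C}$, contains $x$ (since $W_1\cap W_2=\emptyset$ forces $W_1\subseteq C$), and is contained in $X\setminus A$. For (iii), given disjoint $A_1,A_2\in\mathcal{C}$, apply weak normality to obtain disjoint $W_1,W_2\in\mathrm{Op}_X$ with $A_i\subseteq W_i$; set $C_i:=X\setminus W_i\in\mathcal{C}$. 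Then $A_i\cap C_i=\emptyset$, and $C_1\cup C_2=X\setminus(W_1\cap W_2)=X$. Hence $\mathcal{C}$ is a Wallman base.

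Conversely, assume $\mathcal{C}$ is a Wallman base, and let $A_1,A_2$ be subsets of $X$ each a singleton or in $\mathrm{Cl}_X=\mathcal{C}$, with — we may as well assume — $A_1\cap A_2=\emptyset$ (if they meet, weak normality as stated still only requires producing the $W_i$ when such a pair exists; one should check the intended reading, but disjointness is the substantive case). The subtlety is that condition (iii) of Definition~1.6 is phrased only for pairs of \emph{members of} $\mathcal{C}$, whereas weak normality must also handle the cases where one or both of $A_1,A_2$ is a singleton not lying in $\mathcal{C}$. The key step is therefore to first use condition (ii) to ``thicken'' each singleton $A_i=\{x_i\}$: since $x_i\notin A_j$ (and $A_j$ is a singleton or closed), (ii) gives $C\in\mathcal{C}$ with $x_i\in C\subseteq X\setminus A_j$, so we may replace the singleton by a member of $\mathcal{C}$ still disjoint from the other set, reducing to the case $A_1,A_2\in\mathcal{C}$ disjoint. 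Now apply (iii) to get $C_1,C_2\in\mathcal{C}$ with $C_1\cup C_2=X$ and $A_i\cap C_i=\emptyset$; put $W_i:=X\setminus C_i\in\mathrm{Op}_X$. Then $A_i\subseteq W_i$ and $W_1\cap W_2=X\setminus(C_1\cup C_2)=\emptyset$, giving weak normality.

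The main obstacle is bookkeeping around the singleton-versus-closed-set cases in Definition~1.7 and making the reduction via condition (ii) precise (in particular handling the possibility that the two singletons coincide, which cannot happen once we assume disjointness, and the degenerate cases where $A_i=\emptyset$); the open/closed complementation and the essentially-finite structure of $\mathrm{Cov}_X$ play no real role beyond guaranteeing $\mathrm{Cl}_X=\mathcal{C}$, so no genuine difficulty lies there.
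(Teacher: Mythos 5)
Your proof is correct and is essentially what the paper intends: its own ``proof'' of this proposition is just the remark that it follows easily from Definitions 1.3, 1.6 and 1.7, and your argument is the straightforward unwinding of those definitions (using $\mathrm{Cl}_X=\mathcal{C}$ and complementation to translate between separating open sets and covering closed sets). The only loose end is in your verification of condition (ii) of Definition 1.6: that condition quantifies over \emph{all} sets $A$ closed in the topological space $X$, not only $A\in\mathcal{C}$, but this is repaired in one line --- since $\mathcal{C}$ is a closed base, pick $C'\in\mathcal{C}$ with $A\subseteq C'$ and $x\notin C'$, and apply your argument to $C'$ in place of $A$.
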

\begin{proof} We deduce it easily from Definitions 1.3, 1.6 and 1.7. 
\end{proof}

\begin{rem}
If $X$ is a non-empty semi-normal space, then every Wallman base of $X$ is a
complete closed base of $X$.
\end{rem}

In the light of Proposition 1.8, Wallman bases are so strictly
connected with weakly normal generalized topologies that it is interesting
to introduce a notion of a Wallman type compactification of a weakly normal
gts and create a reasonable theory of compactifications for generalized
topological spaces. This is the main topic of the paper although we hardly
start the theory and show directions of its future developments.

The basic theorem of the next section will help us to decide what axiom \textbf{A}
should be added to \textbf{ZF} to get our theorems about compactifications
provable in \textbf{ZF+A} if they are not provable in \textbf{ZF}. In the sequel, we will use the abbreviation \textbf{ZFU} for \textbf{ZF+UFT} and clearly denote the ones of our results  that are obtained in \textbf{ZFU} or in \textbf{ZFC} if it is not possible for us to deduce them in \textbf{ZF}. All other results of this work are in \textbf{ZF}. 

\section{Ultrafilter theorem via compactness in Wallman spaces}

To simplify our language and take care of its precision, we introduce the following notion:

\begin{defi} A \textbf{small space} is an ordered pair $(X, \mathcal{C})$ where $X$ is a set, while $\mathcal{C}$ is a complete ring of subsets of $X$.
\end{defi}

\begin{rem} By assigning to any small space $(X,\mathcal{C})$ the generalized topological space $(X, \text{Op}_X, \text{EssFin}(\text{Op}_X))$ such that $\mathcal{C}=\text{Cl}_X$, we obtain a one-to-one correspondence between small spaces in the sense of Definition 2.1 and these gtses that are small. Therefore, we identify a small space $(X, \mathcal{C})$ with the unique gts $(X, \text{Op}_X, \text{EssFin}(\text{Op}_X))$ such that $\mathcal{C}=\text{Cl}_X$. 
\end{rem}

The following modification of the notion of a disjunctive ring of \cite{Fr} will be useful.
 
\begin{defi} 
We say that a ring $\mathcal{C}$ of subsets of $X$ is \textbf{disjunctive} if, for each subset $A$ of $X$ and for each point $x\in X\setminus A$, when $A\in\mathcal{C}$ or $A$ is a singleton, then there exists $C\in \mathcal{C}$ such that $x\in C\subseteq X\setminus A$. A small space $(X,\mathcal{C})$ is called disjunctive when the ring $\mathcal{C}$ is disjunctive. 
\end{defi}

\begin{rem} Let $\mc{C}$ be a disjunctive ring of subsets of a set $X$. If $X$ consists of at least two points or if $\mathcal{C}$ is a closed base for a topology in a non-empty $X$, then $\emptyset\in\mathcal{C}$ but, in general, $X$ need not belong to $\mathcal{C}$. For example, the collection $\mathcal{C}$ of all finite subsets of an infinite set $X$ is a disjunctive ring such that $X\notin\mathcal{C}$.  
\end{rem}
In the following proposition, the notions of a weakly $T_1$ gts and of a weakly regular gts are taken from Definition 2.2.81 of \cite{Pie1}, while the notion of a $T_1$ closed base is given in \cite{HK}. 

\begin{prop} Let $(X, \mathcal{C})$ be a small space. If the gts induced by $\mathcal{C}$ is weakly regular, then $\mathcal{C}$ is disjunctive. If $\mathcal{C}$ is disjunctive, then the gts induced by $\mathcal{C}$ is weakly $T_1$. 
Moreover, $\mathcal{C}$ is a $T_1$ closed base of the topologization of the gts induced by $\mathcal{C}$ if and only if $\mathcal{C}$ is disjunctive. 
\end{prop}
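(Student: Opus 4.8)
The plan is to derive all three assertions from one structural observation about the topologization of the gts induced by $\mathcal{C}$, together with elementary manipulations of complements. Write $Y=(X,\tau(\text{Op}_X))$ for that topologization. Since $(X,\mathcal{C})$ is a small space, $\mathcal{C}$ is a complete ring, so $\text{Op}_X=\{X\setminus C:C\in\mathcal{C}\}$ contains $X$ and is closed under finite intersections; hence $\text{Op}_X$ is a base of $Y$, and consequently the closed sets of $Y$ are exactly the sets $\bigcap\mathcal{D}$ with $\mathcal{D}\subseteq\mathcal{C}$. In particular $\mathcal{C}$ is automatically a closed base of $Y$ in the sense of Definition 1.3(i): if $A$ is closed in $Y$ and $x\in X\setminus A$, write $A=\bigcap\mathcal{D}$ with $\mathcal{D}\subseteq\mathcal{C}$ and choose $C\in\mathcal{D}$ with $x\notin C$; then $A\subseteq C$ and $x\in X\setminus C$. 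I would also keep in mind the tautology $W\in\text{Op}_X\iff X\setminus W\in\text{Cl}_X=\mathcal{C}$.

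For the implication ``weakly regular $\Rightarrow$ disjunctive'' I would argue as follows. Let $A\subseteq X$ be a singleton or a member of $\text{Cl}_X$, and let $x\in X\setminus A$. Weak regularity of the gts (Definition 2.2.81 of \cite{Pie1}, formulated as usual for sets that are singletons or closed) yields disjoint $U,V\in\text{Op}_X$ with $x\in U$ and $A\subseteq V$. Put $C:=X\setminus V$; then $C\in\text{Cl}_X=\mathcal{C}$, we have $x\in C$ because $x\in U$ and $U\cap V=\varnothing$, and $C\subseteq X\setminus A$ because $A\subseteq V$, so $x\in C\subseteq X\setminus A$ --- precisely the disjunctiveness clause of Definition 2.3. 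For ``disjunctive $\Rightarrow$ weakly $T_1$'', assume $\mathcal{C}$ is disjunctive and fix $y\in X$. For each $z\in X\setminus\{y\}$, disjunctiveness applied to the singleton $\{z\}$ and the point $y$ produces a member of $\mathcal{C}$ containing $y$ but not $z$; hence $\{y\}=\bigcap\{C\in\mathcal{C}:y\in C\}$, so $\{y\}$ is an intersection of members of $\mathcal{C}$ and therefore weakly closed by the observation above. As $y$ was arbitrary, every singleton is weakly closed, i.e. $Y$ is a $T_1$ space, which is exactly weak $T_1$-ness of the gts.

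For the ``moreover'' part I would again invoke the observation that $\mathcal{C}$ is a closed base of $Y$, and then compare with the definition of a $T_1$ closed base from \cite{HK}. Because $\text{Cl}_X=\mathcal{C}$ and every member of $\mathcal{C}$ is closed in $Y$, the clause ``$A$ belongs to the base $\mathcal{C}$'' is interchangeable with ``$A\in\text{Cl}_X$'', so the extra $T_1$-requirement on the closed base $\mathcal{C}$ amounts exactly to the separation condition of Definition 2.3 --- for every set that is a singleton or a member of $\mathcal{C}$, and every point outside it, some member of $\mathcal{C}$ contains the point and misses the set. Hence $\mathcal{C}$ is a $T_1$ closed base of $Y$ if and only if $\mathcal{C}$ is disjunctive. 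Equivalently, one may argue directly: if $\mathcal{C}$ is disjunctive, then by the second implication $Y$ is $T_1$ and, as a special case of disjunctiveness, the Frink-type separation of base members from exterior points holds, so $\mathcal{C}$ is a $T_1$ closed base; conversely a $T_1$ closed base is disjunctive by unwinding the definition.

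I expect the main obstacle to be bookkeeping rather than mathematics: one must carefully keep apart three notions of ``closed'' --- membership in $\text{Cl}_X=\mathcal{C}$, weak closedness (closedness in $Y$), and being an intersection of members of a closed base --- and use the fact that $\mathcal{C}$ is automatically a closed base of its own topologization. Once that dictionary is in place, each of the three claims collapses to a one-line complement manipulation; and since only the existence of separating sets, never a simultaneous choice of them, is invoked, the whole argument stays within \textbf{ZF}.
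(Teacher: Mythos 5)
Your proposal is correct and is exactly the routine verification the paper has in mind: the authors explicitly leave Proposition 2.5 "to readers as a simple deduction from appropriate definitions," and your dictionary between $\mathcal{C}=\text{Cl}_X$, weak closedness, and the closed-base property, together with the complement manipulations $C=X\setminus V$, is that deduction spelled out. The only point worth flagging is that your argument (and the truth of the first implication) relies on the convention, consistent with Definition 1.7 here and with Definition 2.2.81 of \cite{Pie1}, that weak regularity separates a point from any set that is a singleton \emph{or} closed, not merely from closed sets.
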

\begin{proof} We leave it to readers as a simple deduction from appropriate definitions mentioned above.
\end{proof}

Let us pass to Wallman spaces.

\begin{defi}
Suppose that $(X, \mathcal{C})$ is a small space. Let $\mathcal{W}(X,\mathcal{C})$ be the collection of all ultrafilters in $\mathcal{C}$%
. For $A\in \mathcal{C}$, let $$[A]_{\mathcal{C}}=\{ p\in \mathcal{W}(X, \mathcal{C}):
A\in p\}.$$ 
\begin{enumerate}
\item[(i)] The set $\mathcal{W}(X, \mathcal{C})$ equipped with the topology which has
the collection $\{ [A]_{\mathcal{C}}: A\in\mathcal{C}\}$ as a closed base is
called the \textbf{Wallman space} corresponding to $\mathcal{C}$.
\item[(ii)] \textbf{The Wallman space of a gts} $Y$ is the Wallman space corresponding to $\text{Cl}_Y$.
\end{enumerate}
\end{defi}

It is possible that, even when $X$ is non-empty, the set $\mathcal{W}(X,\mathcal{C})$ is empty in a model for 
\textbf{ZF}. Namely, by Theorem 4.32 of \cite{Her} and its proof in \cite{Her},
 every model for $\mathbf{ZF+\neg AC}$ has a non-empty $T_0$-space $X$ such that there are no ultrafilters in the collection of all
closed sets of $X$. 

\begin{prop}
The following conditions are all equivalent:
\begin{enumerate}
\item[(i)] \textbf{AC} holds;
\item[(ii)] for every small space $(X,\mathcal{C})$ such that $X\neq\emptyset$, the space $\mathcal{W}(X,\mathcal{C})$ is non-void;
\item[(iii)] for every  disjunctive small space $(X,\mathcal{C})$, the space $\mathcal{W}(X,\mathcal{C})$ is compact.
\end{enumerate}
\end{prop}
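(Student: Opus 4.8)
The implications (i)$\Rightarrow$(ii) and (i)$\Rightarrow$(iii) are the routine direction: under \textbf{AC} (hence \textbf{UFT}), every proper filter in a ring $\mathcal{C}$ extends to an ultrafilter, so $\mathcal{W}(X,\mathcal{C})\neq\emptyset$ as soon as $\mathcal{C}$ carries a nontrivial filter (e.g. the principal filter of any nonempty member of $\mathcal{C}$, or, if $X\in\mathcal{C}$, the filter $\{X\}$), and compactness of the Wallman space is the classical Wallman argument: a family of basic closed sets $\{[A_i]_{\mathcal{C}}\}$ with the finite intersection property gives a family $\{A_i\}$ in $\mathcal{C}$ with the f.i.p., which generates a filter, which extends (by \textbf{AC}) to an ultrafilter $p$, and then $p\in\bigcap_i[A_i]_{\mathcal{C}}$. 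I would remark that disjunctiveness is not even needed for (iii) in this direction; it is only relevant to the reverse implication where one must manufacture an interesting small space from an arbitrary set.

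The real content is (ii)$\Rightarrow$(i) and (iii)$\Rightarrow$(i), i.e. deriving \textbf{AC} in \textbf{ZF} from these apparently weak statements. For (ii)$\Rightarrow$(i): given an arbitrary nonempty family $\{X_j\}_{j\in J}$ of nonempty pairwise disjoint sets, I would put $X=\bigcup_j X_j$ and let $\mathcal{C}$ be the ring of subsets of $X$ generated by $\{X_j : j\in J\}$ together with $\emptyset$ and $X$ — concretely, finite unions of the $X_j$'s, plus their complements, arranged to be closed under finite unions and intersections (equivalently, the ring generated by the partition). An ultrafilter $p$ in this ring, by the disjointness of the blocks, must contain exactly one $X_j$ (it cannot contain two disjoint sets, and by maximality/ultra-ness it must decide each $X_j$, and cannot contain all complements since their "intersection pattern" is empty — this needs the blocks to be the atoms of the ring). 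Conversely each $j$ gives the principal-type ultrafilter $\{C\in\mathcal{C}: X_j\subseteq C\}$. Hence $\mathcal{W}(X,\mathcal{C})$ is in canonical bijection with $J$, and picking a point of the nonempty $\mathcal{W}(X,\mathcal{C})$ picks an index $j_0$; to get a genuine choice function on \emph{all} of $J$ one instead runs this for each $J$-indexed subfamily, or better: observe that the map $p\mapsto$ (the unique $X_j\in p$) composed with any element of $\mathcal{W}(X,\mathcal{C})$ doesn't yet give choice, so I would instead use the standard trick of considering, for the given family, the space whose ultrafilters correspond to choice functions — or simply note that (ii) applied to this $(X,\mathcal{C})$ yields one ultrafilter, equivalently one block $X_{j_0}$, which is exactly \textbf{AC} in the form "every nonempty family of nonempty pairwise disjoint sets has a choice function" once we realize the ultrafilters on the partition ring biject with \emph{elements} of $\prod_j X_j$ is false — rather with $J$. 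The clean route: recall \textbf{AC} is equivalent to "for every set $S$ of nonempty sets there is a function $f$ with $f(A)\in A$", and reduce to pairwise disjoint families by the usual replacement $A\mapsto\{A\}\times A$; then the ultrafilter on the partition ring selects a block, and doing this selects one member of each original set simultaneously because the block already \emph{is} $\{A\}\times A$ for the chosen $A$ — so one still needs to choose inside. The correct and standard fix (this is Herrlich's argument, cf. \cite{Her}) is to take for $\mathcal{C}$ not the partition ring on $\bigcup X_j$ but a ring on the \emph{disjoint sum with distinguished points removed} so that ultrafilters biject with choice functions; I would follow that construction.

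For (iii)$\Rightarrow$(i): this is where disjunctiveness earns its place. Given a family of nonempty sets, I would build a \emph{disjunctive} small space $(X,\mathcal{C})$ — disjunctiveness forces enough closed sets to separate points from the sets we care about — whose Wallman space $\mathcal{W}(X,\mathcal{C})$, if compact, must be nonempty (a compact space built from a nonempty $X$ with enough sets cannot be empty, since the basic closed sets $[C]_{\mathcal{C}}$ for $C\in\mathcal{C}$ cover structure that forces $\mathcal{W}\neq\emptyset$: e.g. $\{[C]_{\mathcal{C}} : x\in C\in\mathcal{C}\}$ has the f.i.p. for each fixed $x$), and then a point of it decodes to the desired choice. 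So (iii) already implies (ii) on the relevant class of spaces, and we reduce to the previous paragraph. I would close by assembling: (i)$\Rightarrow$(iii)$\Rightarrow$(ii)$\Rightarrow$(i), or (i)$\Rightarrow$(ii) and (i)$\Rightarrow$(iii) together with (ii)$\Rightarrow$(i) and (iii)$\Rightarrow$(ii).

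\textbf{Main obstacle.} The crux is the bookkeeping in the reverse implications: one must design the small space so that its ultrafilters are in \emph{effective} (choice-free) bijection with choice functions of the originally given family — not merely with the index set — and so that compactness/nonemptiness of the Wallman space is cheap while the ultrafilter structure is rich. Getting disjunctiveness simultaneously with this encoding, and verifying that an ultrafilter in the constructed ring genuinely picks one element from \emph{each} set rather than collapsing information, is the delicate point; I expect to lean on the construction behind Theorem 4.32 of \cite{Her} and adapt it to the small-space/Wallman formulation.
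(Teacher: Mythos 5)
Your forward directions (i)$\Rightarrow$(ii) and (i)$\Rightarrow$(iii) are fine, including the correct observation that disjunctivity is not needed there. The problem is that the two reverse implications --- the entire content of the proposition --- are never actually proved. For (ii)$\Rightarrow$(i) you set up the partition ring, correctly diagnose that its ultrafilters biject with the index set $J$ rather than with choice functions (so a point of $\mathcal{W}(X,\mathcal{C})$ selects a block, not an element of each block), and then write ``I would follow that construction'' with reference to Herrlich. That is precisely the step that carries all the difficulty, and it is left blank; as it stands your own construction provably cannot yield \textbf{AC}. For (iii)$\Rightarrow$(i) there are two further gaps. First, your argument that compactness of $\mathcal{W}(X,\mathcal{C})$ forces non-emptiness rests on the family $\{[C]_{\mathcal{C}}: x\in C\in\mathcal{C}\}$ being centred; but $[C_1]_{\mathcal{C}}\cap\dots\cap[C_n]_{\mathcal{C}}=[C_1\cap\dots\cap C_n]_{\mathcal{C}}$, and to know this is non-empty you must already know that some ultrafilter contains the non-empty set $C_1\cap\dots\cap C_n$ --- exactly what you are trying to extract. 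This is where disjunctivity genuinely enters: it guarantees that for each $x\in X$ the fixed filter $p_x=\{C\in\mathcal{C}: x\in C\}$ is \emph{maximal}, hence $[C]_{\mathcal{C}}\supseteq\{p_x: x\in C\}\neq\emptyset$ for every non-empty $C\in\mathcal{C}$, which is what makes centred families of non-empty members of $\mathcal{C}$ give centred families of basic closed sets. You gesture at ``disjunctiveness forces enough closed sets'' but never verify this. Second, your plan to ``reduce to the previous paragraph'' requires the small space built in the (ii)$\Rightarrow$(i) argument to be disjunctive, which is neither arranged nor checked (and cannot be checked, since that construction was never completed).

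For comparison: the paper does not construct anything. It observes that the closed-set lattice of an arbitrary non-empty topological space is a complete ring of sets, and that of a $T_1$ space is a disjunctive one, so (ii) and (iii) specialize to the hypotheses of Theorem 4.32 of \cite{Her} (non-existence, under $\neg$\textbf{AC}, of ultrafilters in the closed-set lattice of some non-empty space) and Theorem 4.1 of \cite{KT} (compactness of all Wallman spaces of $T_1$ spaces is equivalent to \textbf{AC}), and simply cites those results. If you do not intend to reproduce the encodings from those proofs, the honest route is to make the same reduction explicitly and cite them; if you do intend a self-contained proof, the construction translating ultrafilters into choice functions must actually be written down, since it is the whole theorem.
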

\begin{proof} It suffices to apply Theorems 4.32 of \cite{Her} and 4.1 of \cite{KT} (cf. also Proposition 1 of \cite{HK}).
\end{proof}

Clearly, if $\mathcal{C}$ is a Wallman base of a non-empty $X$, then there are maximal fixed
filters in $\mathcal{C}$; hence, the Wallman space $\mathcal{W}(X, \mathcal{C})$
is non-empty in this case. In \cite{HK}, one can find several conditions that imply that $\mathcal{W}(\omega_1, \mathcal{C})$ is non-compact  where $\mathcal{C}$ is a collection of all closed sets of $\omega_1$ equipped with the order topology. In Solovay's model $\mathcal{M}5(\aleph )$ of \cite{HR}, the space $\mathcal{W}(\omega, \mathcal{P}(\omega))$ is non-compact. The theorem below shows that, in every model for $\mathbf{ZF+\neg UFT}$, there exists a small space $(X, \mathcal{C})$ such that $\mathcal{W}(X, \mathcal{C})$ is non-compact. 

\begin{thm}
\textbf{UFT} holds if and only if, for
every semi-normal space $X$ and for each Wallman base $\mathcal{C}$ of $X$,
the Wallman space $\mathcal{W}(X, \mathcal{C})$ is compact.
\end{thm}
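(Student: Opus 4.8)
The plan is to prove the two implications separately. For the forward direction, assume \textbf{UFT}. Given a semi-normal space $X$ and a Wallman base $\mathcal{C}$ of $X$, I would show $\mathcal{W}(X,\mathcal{C})$ is compact by the usual finite-intersection-property characterization of compactness: a space with a closed base $\mathcal{B}$ is compact iff every subfamily of $\mathcal{B}$ with the finite intersection property has nonempty intersection. So take a family $\{[A_i]_{\mathcal{C}} : i\in I\}$ of basic closed sets with the finite intersection property. Using that $\mathcal{C}$ is a ring (closed under finite intersections) and the identity $[A]_{\mathcal{C}}\cap[B]_{\mathcal{C}} = [A\cap B]_{\mathcal{C}}$ together with $[A\cap B]_{\mathcal{C}}\neq\emptyset$ iff no ultrafilter misses $A\cap B$, the finite intersection property of the $[A_i]_{\mathcal{C}}$ translates into: $\{A_i : i\in I\}$ generates a proper filter in the lattice $\mathcal{C}$ (here I use condition (iii) of a Wallman base, disjunctivity, or simply that $\emptyset\notin$ the generated filter, to see that finite intersections of the $A_i$ are nonempty and hence the $A_i$ have the finite intersection property inside $\mathcal{C}$). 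Then \textbf{UFT} — in its lattice/Boolean-algebra form, or more precisely the form asserting every proper filter in a distributive lattice with $0,1$ extends to an ultrafilter, which is equivalent to \textbf{UFT} in \textbf{ZF} — yields an ultrafilter $p\in\mathcal{W}(X,\mathcal{C})$ containing every $A_i$, i.e. $p\in\bigcap_i [A_i]_{\mathcal{C}}$. Hence $\mathcal{W}(X,\mathcal{C})$ is compact.

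For the converse, I would argue contrapositively: assume \textbf{UFT} fails and produce a semi-normal space $X$ with a Wallman base $\mathcal{C}$ whose Wallman space is non-compact. The natural move is to take a distributive lattice (or a Boolean algebra, or directly a ring of sets) $\mathcal{C}$ on some set $X$ for which there is a proper filter with no ultrafilter extension, or more conveniently use the standard \textbf{ZF}-equivalent of \textbf{UFT} stating that the Stone space of every Boolean algebra is compact, and its failure. Concretely: if \textbf{UFT} fails, there is a Boolean algebra $B$ and a proper filter $F$ on $B$ not contained in any ultrafilter (equivalently, the family of basic closed sets in the Stone-type space $\mathcal{W}$ indexed by $F$ has the finite intersection property but empty intersection). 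Realizing $B$ as a field of subsets of $X = $ its Stone space (or any faithful representation) makes $\mathcal{C} = B$ a complete ring of subsets of $X$; one checks $\mathcal{C}$ separates points and satisfies the Wallman-base normality condition (iii) — for a Boolean algebra this is immediate since complements exist, take $C_1 = X\setminus A_1$, $C_2 = X\setminus A_2$ when $A_1\cap A_2=\emptyset$ so $A_1\subseteq C_2$, $A_2\subseteq C_1$ and $C_1\cup C_2 = X\setminus(A_1\cap A_2) = X$. The separation/$T_1$ condition (ii) needs $\mathcal{C}$ to be a closed base for a $T_1$ topology on $X$; choosing the representation so that $X$ carries the topology generated by $\mathcal{C}$ as closed base, and passing to the $T_1$-quotient or using a disjunctive representation, handles this. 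Then the chosen filter witnesses non-compactness of $\mathcal{W}(X,\mathcal{C})$ by the finite-intersection-property criterion, and $X$ is semi-normal because $\mathcal{C}$ is by construction a Wallman base.

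The main obstacle I anticipate is twofold and is essentially the same point on both sides: first, pinning down the precise \textbf{ZF}-equivalent form of \textbf{UFT} that matches ``every proper filter in a complete ring of sets extends to an ultrafilter in that ring'' — one must verify this lattice-theoretic statement is equivalent to \textbf{UFT} over \textbf{ZF}, which is classical (the prime ideal theorem for distributive lattices is equivalent to \textbf{BPI}, itself equivalent to \textbf{UFT}), but needs to be cited or sketched carefully; and second, in the converse, arranging that the counterexample lattice is genuinely realized as a \emph{ring of subsets of an actual set} $X$ forming a \emph{Wallman base} (so that $X$ is honestly semi-normal and the hypothesis of the theorem applies) rather than just an abstract lattice — the subtlety is doing this representation in \textbf{ZF} without inadvertently using choice, which is why starting from a field-of-sets presentation or the Stone space of the offending Boolean algebra is the safer route. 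A secondary bookkeeping point is the exact translation between ``$\mathcal{W}(X,\mathcal{C})$ non-compact'' and ``some proper filter in $\mathcal{C}$ has no ultrafilter extension,'' which relies on $[A\cap B]_{\mathcal{C}} = [A]_{\mathcal{C}}\cap[B]_{\mathcal{C}}$ and $[A]_{\mathcal{C}} = \emptyset \iff A$ is in no ultrafilter, i.e. (for $A$ with a complement) $A$ generates an improper filter; these are routine but should be stated.
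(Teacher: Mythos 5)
Your forward direction has a genuine gap at its central step. You invoke ``every proper filter in a distributive lattice with $0,1$ extends to an ultrafilter'' and assert this is equivalent to \textbf{UFT} in \textbf{ZF}. That is false if ``ultrafilter'' means maximal filter, which is what membership in $\mathcal{W}(X,\mathcal{C})$ requires: the prime filter theorem for distributive lattices is indeed equivalent to \textbf{BPI}/\textbf{UFT}, but the \emph{maximal} filter extension theorem for distributive lattices (even for rings of sets) is strictly stronger --- the paper's own Proposition 2.7 records that ``$\mathcal{W}(X,\mathcal{C})$ is compact for every disjunctive small space'' is already equivalent to full \textbf{AC}. So either your $p$ is merely a prime filter (and then it need not be a point of the Wallman space), or you have silently assumed \textbf{AC}. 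The missing idea is precisely where condition (iii) of the Wallman base enters: from \textbf{UFT} one gets a prime filter $\mathcal{F}$ in $\mathcal{C}$ containing the given centred family, and then one must \emph{construct} a maximal filter from it, namely $\mathcal{M}=\{A\in\mathcal{C}:A\cap F\neq\emptyset\ \text{for all}\ F\in\mathcal{F}\}$; verifying that $\mathcal{M}$ is closed under finite intersections uses the screening pair $C_1,C_2$ from condition (iii) together with the primeness of $\mathcal{F}$. Your proof never uses (iii) for anything essential, so it cannot distinguish Wallman bases from arbitrary disjunctive rings, where the conclusion genuinely fails short of \textbf{AC}.

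The converse also has a soft spot: realizing the offending Boolean algebra as a field of sets via its Stone space is circular, since Stone representation is itself a \textbf{BPI}-strength statement and the Stone space may be empty in your model. The repair is easy, though, and simpler than what the paper does: $\neg\textbf{UFT}$ gives directly a set $S$ and a filter in $\mathcal{P}(S)$ with no ultrafilter extension; $\mathcal{P}(S)$ is a Wallman base of the discrete (hence semi-normal) space $S$, finite intersections of nonempty sets meet principal ultrafilters so the corresponding basic closed sets are centred, and the filter witnesses non-compactness of $\mathcal{W}(S,\mathcal{P}(S))$. (The paper instead takes a non-compact product of finite discrete spaces with its zero-set lattice and derives a contradiction by projecting an ultrafilter to each finite factor.) But as it stands, the necessity half of your argument does not go through.
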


\begin{proof} \textsl{Necessity.} First, assume that \textbf{UFT} holds and suppose that $\mathcal{C}$ is a Wallman base of a non-void semi-normal space $X$. Consider a centred family $\mathcal{H}\subseteq\{ [A]_{\mathcal{C}}: A\in\mathcal{C}\}$. By the theorem given in Exercise 5 of Section 4.3 in \cite{Her}, there exists a prime filter $\mathcal{F}$ in $\mathcal{C}$ such that $\{A\in\mathcal{C}: [A]_{\mathcal{C}}\in\mathcal{H}\}\subseteq\mathcal{F}$. Let $$\mathcal{M}=\{ A\in\mathcal{C}:\forall_ {F\in\mathcal{F}}A\cap F\neq\emptyset\}.$$  To prove that $\mathcal{M}$ is a filter in $\mathcal{C}$, consider an arbitrary pair $A_1, A_2$ of members of $\mathcal{M}$. Suppose that $A_1\cap A_2\notin\mathcal{M}$. There exists $F\in\mathcal{F}$ such that $A_1\cap A_2\cap F=\emptyset$. Since $\mathcal{C}$ is a Wallman base, there exists a pair $C_1, C_2$ of members of $\mathcal{C}$ such that $C_1\cup C_2=X$, $A_1\subseteq X\setminus C_1$ and $A_2\cap F\subseteq X\setminus C_2$. That $\mathcal{F}$ is prime implies that $C_1\in\mathcal{F}$ or $C_2\in\mathcal{F}$. However, $C_1\notin\mathcal{F}$ because $A_1\in\mathcal{M}$ and $A_1\cap C_1=\emptyset$. If $C_2\in\mathcal{F}$, then $F\cap C_2\in\mathcal{F}$ but this is impossible because $A_2\in\mathcal{M}$ does not meet $F\cap C_2$. It follows from the contradiction obtained that $A_1\cap A_2\in\mathcal{M}$. This implies that $\mathcal{M}$ is a filter in $\mathcal{C}$. Of course, the filter $\mathcal{M}$ is maximal in $\mathcal{C}$. In consequence, $\mathcal{M}\in \mathcal{W}(X,\mathcal{C})$ and $\mathcal{M}\in\bigcap\mathcal{H}$. 

\textsl{Sufficiency.} Now, assume that \textbf{UFT} does not hold. In view of Theorem 4.37 of \cite{Her}, there exists a product $X=\prod_{s\in S}X_s$ of finite discrete spaces $X_s$ such that $X$ is not compact. Let $\mathcal{C}=\mathcal{Z}(X)$ be the collection of all zero-sets in $X$. Since $X$ is not compact, there exists a filter $\mathcal{F}$ in $\mathcal{C}$ such that $\bigcap\mathcal{F}=\emptyset$. Suppose that the Wallman space $\mathcal{W}(X, \mathcal{C})$ is compact. There is $p\in \mathcal{W}(X,\mathcal{C})$ such that $p\in\bigcap\{[A]_{\mathcal{C}}: A\in\mathcal{F}\}$. Then $p$ is an ultrafilter in $\mathcal{C}$ such that $\mathcal{F}\subseteq p$. Let $s\in S$ and let $\pi_s: X\to X_s$ be the projection. Then $\mathcal{U}_s=\{A\subseteq X_s: {\pi_s}^{-1}(A)\in p\}$ is an ultrafilter in $\mathcal{P}(X_s)$; therefore, there exists a unique $x_s\in\cap\mathcal{U}_s$. Then $x\in X$ such that $\pi_s(x)=x_s$ for each $s\in S$ is a point from $\bigcap\mathcal{F}$. The contradiction obtained shows that the space $\mathcal{W}(X,\mathcal{C})$ is not compact. 
\end{proof}

So far as compactifications are concerned, we are interested mainly in Hausdorff compactifications, among them, in compactifications of Wallman type that correspond to Wallman bases. Recall that, in \textbf{ZFC}, not all Hausdorff compactifications of Tychonoff spaces are of Wallman type (cf. \cite{U}).

There is a model for $\mathbf{ZF +\neg AC}$ in which there exists a semi-normal space which is not completely regular (cf. e.g. \cite{Jech1}). There are models for $\mathbf{ZFU+\neg AC}$ (cf. e.g. \cite{Her} and \cite{HR}). However, we are unable to give a satisfactory answer to the following question:

\begin{q} 
Must every semi-normal space be completely regular in \textbf{ZFU}?
\end{q}
  
\section{Compactness for generalized topologies}

To begin a theory of compactifications of gtses, we must define compactness in the category $\mathbf{GTS}$ of generalized topological spaces first. 

\begin{defi}
Let $P$ be a topological property. We will say that a gts $(X,
\text{Op}_X,\text{Cov}_X)$ \textbf{has $P$ topologically} if its topologization has $P$. We will say that $(X,\text{Op}_X,\text{Cov}_X)$ \textbf{%
has $P$ admissibly} if any instance of "open family" in the definition of $P$
is replaced by "admissible open family". We will say that $(X,\text{Op}_X,\text{Cov}_X)$
\textbf{has $P$ absolutely} if the definition of $P$ is imported verbatim in 
\textbf{GTS}. 
\end{defi}

\begin{defi}
Let $X$ be a gts. A set $Y\subseteq X$ is called \textbf{topologically} (\textbf{absolutely}, \textbf{admissibly}, resp.) \textbf{compact} in $X$ if, for every weakly open (open, admissibly open, resp.) in $X$ family $\mathcal{V}$ such that $Y\subseteq\bigcup\mathcal{V}$, there exists a finite family $\mathcal{U}\subseteq\mathcal{V}$ such that $Y\subseteq\bigcup\mathcal{U}$.
\end{defi}

\begin{defi}
A gts $X$ is called \textbf{topologically} (\textbf{absolutely}, \textbf{admissibly}, resp.) \textbf{compact} if $X$ is topologically  (absolutely, admissibly, resp.) compact in $X$.
\end{defi}

\begin{prop}
Let $Y$ be a subset of a gts $X$. Then $Y$ is topologically compact in $X$ if and only if $Y$ is absolutely compact in $X$. Moreover, if $Y$ is absolutely compact in $X$, then $Y$ is admissibly compact in $X$.
\end{prop}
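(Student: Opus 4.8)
The plan is to unwind the three notions of compactness for a subset $Y\subseteq X$ directly from Definitions 3.1–3.3 and the description of the open and weakly open sets of a gts. Recall that $\mathrm{Op}_X=\bigcup\mathrm{Cov}_X$ is the family of open sets, while the weakly open sets form the topology $\tau(\mathrm{Op}_X)$ generated by $\mathrm{Op}_X$; in particular every open set is weakly open and every admissibly open family consists of open sets.

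For the equivalence of topological and absolute compactness of $Y$ in $X$, I would argue both implications. Since $\mathrm{Op}_X\subseteq\tau(\mathrm{Op}_X)$, any cover of $Y$ by open sets is in particular a cover by weakly open sets, so if $Y$ is topologically compact it is absolutely compact. Conversely, suppose $Y$ is absolutely compact and let $\mathcal{V}\subseteq\tau(\mathrm{Op}_X)$ cover $Y$. Each $V\in\mathcal{V}$ is a union of finite intersections of members of $\mathrm{Op}_X$, hence in particular a union $\bigcup\mathcal{W}_V$ of open sets (open sets are closed under nothing special here, but we only need that basic open sets of the generated topology are finite intersections of elements of $\mathrm{Op}_X$, and a finite intersection of open sets need not be open — so instead I would write each $V$ as a union of sets each of which is a finite intersection of elements of $\mathrm{Op}_X$). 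To avoid this subtlety, the cleaner route is: refine $\mathcal{V}$ to the family $\mathcal{B}$ of all finite intersections of members of $\mathrm{Op}_X$ that are contained in some member of $\mathcal{V}$; this $\mathcal{B}$ still covers $Y$, but its members are finite intersections of open sets, not open sets themselves. That shows the naive argument needs care — and indeed the honest proof should instead invoke that a space is compact iff every cover by elements of a subbase has a finite subcover (the Alexander Subbase Lemma). Here $\mathrm{Op}_X$ is a subbase for $\tau(\mathrm{Op}_X)$, and, crucially, the Alexander Subbase Lemma is equivalent to \textbf{UFT} and hence not available in \textbf{ZF}. So the actual content is subtler than it looks: I expect the intended proof to establish the equivalence by observing that $\mathrm{Op}_X$ is in fact \emph{closed under finite intersections up to the cover in question} — no; rather, I suspect the paper's convention makes $\mathrm{Op}_X$ itself a topology in the ``absolute'' reading, or that one proves it via: a cover of $Y$ by weakly open sets, each weakly open set being a union of open sets, can be replaced by the union of all those open subfamilies, yielding an open cover; a finite subcover of the latter gives, by choosing for each of the finitely many open sets one weakly open set containing it, a finite subcover of the original. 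This last argument \emph{is} valid in \textbf{ZF} (finitely many choices), so I would carry it out: given $\mathcal{V}\subseteq\tau(\mathrm{Op}_X)$ covering $Y$, set $\mathcal{V}'=\{O\in\mathrm{Op}_X:\exists V\in\mathcal{V},\ O\subseteq V\}$; since each $V\in\mathcal{V}$ is a union of members of $\mathrm{Op}_X$ (as $\mathrm{Op}_X$ is closed under finite intersections? it is not in general — but each point of $V$ has an $\mathrm{Op}_X$-neighborhood inside $V$ only if $\mathrm{Op}_X$ is closed under finite intersections).

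Given the genuine obstacle above, the plan I would actually commit to is this. The key point to extract from the definitions is that for \emph{this} paper the three readings collapse because the relevant covers are by \emph{open} sets throughout once one passes to the subbase, \emph{and} the finite subcover extraction involves only finitely many choices. Concretely: (1) Absolute $\Rightarrow$ topological requires no choice and is immediate once one checks that every weakly open cover of $Y$ admits a refinement to an open cover with the finite-subcover property preserved; the safe way is to note that $Y$ topologically compact means compact in $\tau(\mathrm{Op}_X)$, and then use that $\mathrm{Op}_X$ is a subbase, invoking the \textbf{ZF}-provable direction ``compact $\Rightarrow$ every subbasic cover has a finite subcover'' — wait, that direction is the trivial one and goes the wrong way. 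So the honest statement is that topological $\Rightarrow$ absolute is the trivial direction (subbasic covers are special covers), and absolute $\Rightarrow$ topological is the one needing the Alexander-type argument. I would therefore state the proof as: topological compactness trivially implies absolute compactness since $\mathrm{Op}_X\subseteq\tau(\mathrm{Op}_X)$; for the converse, one uses that every weakly open set is a union of finite intersections of open sets, reduces a weakly open cover of $Y$ first to a cover by such finite intersections, then observes that admissible compactness-style arguments... The main obstacle, which I would flag explicitly, is exactly whether the reduction from weakly open covers to open covers can be done in \textbf{ZF}; I expect the paper resolves it by a direct argument using that $Y$ is covered, passing to the open sets appearing in the unions, and then, after obtaining a finite subcover by open sets, selecting (finitely many times) a weakly open member containing each — this is \textbf{ZF}-legitimate.

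Finally, for the ``moreover'' clause, absolute $\Rightarrow$ admissible is immediate: an admissibly open cover $\mathcal{V}$ of $Y$ (i.e.\ $\mathcal{V}\in\mathrm{Cov}_X$ with $Y\subseteq\bigcup\mathcal{V}$) is in particular a cover of $Y$ by open sets, since every member of an admissible covering lies in $\mathrm{Op}_X=\bigcup\mathrm{Cov}_X$; absolute compactness then yields a finite $\mathcal{U}\subseteq\mathcal{V}$ with $Y\subseteq\bigcup\mathcal{U}$, which is precisely admissible compactness of $Y$ in $X$. This direction needs no choice at all. The hard part of the whole proposition is thus the single implication absolute $\Rightarrow$ topological, and specifically making sure it stays within \textbf{ZF}; everything else is a one-line unwinding of the definitions.
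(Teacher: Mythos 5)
Your outline eventually converges on the paper's argument --- refine a weakly open cover $\mathcal{V}$ of $Y$ to the open family $\mathcal{V}'=\{O\in\text{Op}_X:\exists V\in\mathcal{V},\ O\subseteq V\}$, extract a finite subcover by absolute compactness, and then make finitely many choices of a $V\in\mathcal{V}$ above each of the finitely many $O$'s (which is indeed \textbf{ZF}-legitimate, and is exactly what the paper does with its families $O(V)$) --- but you leave the one load-bearing step unproved and even cast doubt on it. The whole argument hinges on $\mathcal{V}'$ actually covering $Y$, i.e.\ on every weakly open set being a union of members of $\text{Op}_X$, i.e.\ on $\text{Op}_X$ being a \emph{base}, not merely a subbase, for $\tau(\text{Op}_X)$. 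You repeatedly write that $\text{Op}_X$ ``is not in general'' closed under finite intersections and flag this as an unresolved obstacle. It is resolved by the definition of a gts: in the Delfs--Knebusch axioms (Definition 2.2.1 of [P1], conditions (A1)--(A8)), $\text{Op}_X$ contains $\emptyset$ and $X$ and is stable under finite unions and finite intersections, so it is a ring of sets and hence a base for the topology it generates. Once this is stated, every weakly open $V$ is the union of $O(V)=\{U\in\text{Op}_X: U\subseteq V\}$, the refinement covers $Y$, and the argument closes.

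A consequence of this gap is that your excursion through the Alexander Subbase Lemma and \textbf{UFT} is a red herring: no subbase-to-base reduction is needed, so no choice principle beyond finitely many selections enters, and the equivalence of topological and absolute compactness is an outright \textbf{ZF} theorem (the paper only leans on its stated convention about finite choice functions). The two easy implications --- topological $\Rightarrow$ absolute because $\text{Op}_X\subseteq\tau(\text{Op}_X)$, and absolute $\Rightarrow$ admissible because every member of a family in $\text{Cov}_X$ lies in $\text{Op}_X=\bigcup\text{Cov}_X$ --- are handled correctly and match the paper. To make the proof complete, commit to the refinement argument and justify the covering property of $\mathcal{V}'$ by citing the closure of $\text{Op}_X$ under finite intersections.
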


\begin{proof}
Since each admissible open family is an open family, absolute compactness implies admissible compactness. We include a proof for less advanced readers that absolute compactness implies topological compactness. If $Y$ is absolutely compact in $X$ and a weakly open in $X$ family $\mathcal{V}$ covers $Y$, to avoid \textbf{AC}, for each $V\in\mathcal{V}$,  we define $O(V)=\{ U\in\text{Op}_X: U\subseteq V\}$ and consider the open family $\mathcal{W}=\bigcup \{ O(V): V\in\mathcal{V}\}$. If $\mathcal{U}\subseteq \mathcal{W}$ is finite and covers $Y$, to each $U\in\mathcal{U}$, we assign one $V(U)\in\mathcal{V}$ such that $U\subseteq V(U)$. Then $\{ V(U): U\in \mathcal{U}\}$ is finite and covers $Y$. This shows that absolute compactness in $X$ implies topological compactness in $X$. Without the convention that, for every non-void finite family of pairwise disjoint non-void sets, there is in \textbf{ZF} a choice function of this family, we are unable to prove that topological compactness follows from absolute compactness. Of course, absolute compactness follows from topological compactness.  
\end{proof}

\begin{exam}
Every subspace a small gts is admissibly compact. In particular, all small real lines described in Definition 1.2 (ii), (iii), (vi), (ix) and (x) are admissibly compact but not absolutely compact.
\end{exam}

\begin{defi} Let $\text{Cov}_X$ be a generalized topology in $X$ and let $Y\subseteq X$. We say that $Y$ is a \textbf{strict subspace} of the gts $(X, \text{Cov}_X)$ or, equivalently, that the set $Y$ is strict in $X$ if ${\langle \text{Cov}_X{\cap}_2 Y\rangle}_Y= \text{Cov}_X {\cap}_2 Y$ (cf. \cite{Pie1}). 
\end{defi}

\begin{rem} If $Y$ is a strict subset of a gts $(X,\text{Op}_X, \text{Cov}_X)$, then $\text{Op}_Y=\bigcup (\text{Cov}_X{\cap}_2 Y)=\text{Op}_X{\cap}_1 Y$. We still do not know whether every subset of a gts $X$ is strict in $X$ (cf. Question 2.2.96 of \cite{Pie1}).    
\end{rem}

A gts $X$ is called \textbf{topological} if $\text{Cov}_X=\mathcal{P}(\text{Op}_X)$ (cf. Definition 2.2.67 of \cite{Pie1}).

\begin{f}
Every subspace of a topological gts $X$ is a topological gts and it is strict in $X$. 
\end{f}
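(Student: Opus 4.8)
The plan is to prove the Fact in two parts, treating the ``topological gts'' claim and the ``strict'' claim somewhat independently, though the second will use the first.

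First I would fix a topological gts $X$, i.e.\ $\text{Cov}_X = \mathcal{P}(\text{Op}_X)$, and a subset $Y \subseteq X$. To show that the subspace $(Y, \langle \text{Cov}_X {\cap}_2 Y\rangle_Y)$ is again topological, I would first identify $\text{Op}_Y = \bigcup(\text{Cov}_X {\cap}_2 Y)$. Since every subset of $\text{Op}_X$ is an admissible covering in $X$, intersecting with $Y$ shows that every subfamily of $\{\,U \cap Y : U \in \text{Op}_X\,\} = \text{Op}_X {\cap}_1 Y$ arises as $\mathcal{U} {\cap}_2 Y$ for some $\mathcal{U} \in \text{Cov}_X$; in particular $\text{Cov}_X {\cap}_2 Y = \mathcal{P}(\text{Op}_X {\cap}_1 Y)$ already, so it is closed under the operations required of a generalized topology and hence equals its own generated gts, i.e.\ $\langle \text{Cov}_X {\cap}_2 Y\rangle_Y = \text{Cov}_X {\cap}_2 Y = \mathcal{P}(\text{Op}_X {\cap}_1 Y)$. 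This simultaneously gives $\text{Op}_Y = \text{Op}_X {\cap}_1 Y$ and $\text{Cov}_Y = \mathcal{P}(\text{Op}_Y)$, which is exactly the assertion that $Y$ is a topological gts and that $Y$ is strict in $X$ (the equality $\langle \text{Cov}_X {\cap}_2 Y\rangle_Y = \text{Cov}_X {\cap}_2 Y$ being the definition of strictness in Definition~3.8).

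The one point that needs genuine care — and which I expect to be the main obstacle — is checking that $\text{Cov}_X {\cap}_2 Y$ really is a generalized topology on $Y$ in the sense of Delfs and Knebusch without invoking choice, i.e.\ verifying the closure axioms (closure under passing to subfamilies, under finite unions of members understood appropriately, the presence of the relevant ``trivial'' families, and the covering/refinement conditions built into Definition~2.2.1--2.2.2 of \cite{Pie1}). Each of these follows because $\mathcal{P}(\text{Op}_Y)$ is visibly closed under all set-theoretic operations in question, but one must be careful that the restriction operation ${\cap}_2 Y$ on families behaves well — concretely that for $\mathcal{U} \subseteq \text{Op}_X$ one has $\bigcup(\mathcal{U}{\cap}_2 Y) = (\bigcup\mathcal{U}) \cap Y$ and that finite intersections and unions of restricted families are restrictions of the corresponding families in $X$. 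These are routine but should be stated.

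Thus the proof reduces to the single computation $\text{Cov}_X {\cap}_2 Y = \mathcal{P}(\text{Op}_X {\cap}_1 Y)$ together with the observation that a power-set-of-opens collection is automatically a generalized topology and automatically equals the gts it generates. I would keep the writeup short, citing Remark~3.9 for the identity $\text{Op}_Y = \text{Op}_X {\cap}_1 Y$ in the strict case and otherwise pointing to the relevant definitions in \cite{Pie1}, since the paper elsewhere treats such verifications as ``simple deductions from appropriate definitions.''
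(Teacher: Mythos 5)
The paper states this as a Fact with no proof, and your argument is the intended routine verification: the key identity $\text{Cov}_X\cap_2 Y=\mathcal{P}(\text{Op}_X\cap_1 Y)$ (realized without choice by taking for a given $\mathcal{V}\subseteq\text{Op}_X\cap_1 Y$ the family of \emph{all} $U\in\text{Op}_X$ with $U\cap Y\in\mathcal{V}$) immediately gives both topologicity and strictness. The only phrase worth tightening is that a power set of an open collection is ``automatically'' a generalized topology: axiom (A4) forces $\text{Op}_Y$ to be closed under arbitrary unions, which holds here precisely because $\text{Op}_X$ is a genuine topology when $X$ is a topological gts, so that $\text{Op}_X\cap_1 Y$ is the subspace topology.
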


\begin{f} Let $Y$ be a subspace of a gts $X$. Then the topology in $Y$ generated by $\text{Op}_Y$ is identical with the topology in $Y$ generated by $\text{Op}_X{\cap}_1 Y$ and with $\tau(Op_X)\cap_1 Y$. In this sense, the topologization of $Y$ is a topological subspace of the topologization of $X$.
\end{f}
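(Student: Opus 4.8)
The plan is to establish that the three collections $\tau(\text{Op}_Y)$, $\tau(\text{Op}_X\cap_1 Y)$ and $\tau(\text{Op}_X)\cap_1 Y$ coincide, by proving two inclusions together with one classical identity. The identity is $\tau(\text{Op}_X\cap_1 Y)=\tau(\text{Op}_X)\cap_1 Y$: since $\text{Op}_X$ is a ring of subsets of $X$ with $X\in\text{Op}_X$, it is a base of $X_{top}$, hence $\text{Op}_X\cap_1 Y$ is a base of the topological subspace $Y$ of $X_{top}$, and so the topology it generates is exactly $\tau(\text{Op}_X)\cap_1 Y$. This is the only place where ordinary subspace topology enters.

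The inclusion $\tau(\text{Op}_X\cap_1 Y)\subseteq\tau(\text{Op}_Y)$ reduces, since $\tau(\text{Op}_Y)$ is a topology, to checking $\text{Op}_X\cap_1 Y\subseteq\text{Op}_Y$. If $U\in\text{Op}_X=\bigcup\text{Cov}_X$, choose $\mathcal{U}\in\text{Cov}_X$ with $U\in\mathcal{U}$; then $\mathcal{U}\cap_2 Y\in\text{Cov}_X\cap_2 Y\subseteq\langle\text{Cov}_X\cap_2 Y\rangle_Y=\text{Cov}_Y$ and $U\cap Y\in\mathcal{U}\cap_2 Y$, so $U\cap Y\in\bigcup\text{Cov}_Y=\text{Op}_Y$.

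The reverse inclusion $\tau(\text{Op}_Y)\subseteq\tau(\text{Op}_X\cap_1 Y)$ is the substantive step, and it is handled by the universal property of $\langle\cdot\rangle_Y$. Write $\sigma=\tau(\text{Op}_X\cap_1 Y)$. Then $(Y,\sigma,\mathcal{P}(\sigma))$ is a topological gts, so $\mathcal{P}(\sigma)$ is a generalized topology in $Y$. Every member of $\text{Cov}_X\cap_2 Y$ has the form $\mathcal{U}\cap_2 Y$ with $\mathcal{U}\in\text{Cov}_X$, and each of its elements is $U\cap Y$ for some $U\in\text{Op}_X$, hence lies in $\text{Op}_X\cap_1 Y\subseteq\sigma$; therefore $\mathcal{U}\cap_2 Y\in\mathcal{P}(\sigma)$. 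Thus $\text{Cov}_X\cap_2 Y\subseteq\mathcal{P}(\sigma)$, and by minimality of the generated generalized topology $\text{Cov}_Y=\langle\text{Cov}_X\cap_2 Y\rangle_Y\subseteq\mathcal{P}(\sigma)$. Taking unions gives $\text{Op}_Y=\bigcup\text{Cov}_Y\subseteq\bigcup\mathcal{P}(\sigma)=\sigma$, whence $\tau(\text{Op}_Y)\subseteq\sigma$.

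Putting the three facts together yields $\tau(\text{Op}_Y)=\tau(\text{Op}_X\cap_1 Y)=\tau(\text{Op}_X)\cap_1 Y$, which is precisely the assertion that the topologization of $Y$ is the topological subspace $Y$ of the topologization of $X$. I expect the only genuinely delicate point to be the use of the generated generalized topology in the last paragraph: one must pass to the \emph{topology} $\sigma$ generated by $\text{Op}_X\cap_1 Y$ rather than to the mere ring generated by it, because only then is $(Y,\sigma,\mathcal{P}(\sigma))$ a legitimate gts whose admissible coverings form the full power set of an honest topology; with that in place the argument is purely formal and needs no choice.
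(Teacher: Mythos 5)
Your proof is correct. The paper states this as an unproved Fact (left to the reader), so there is no official argument to compare against, but your three steps are exactly the natural ones: the inclusion $\text{Op}_X\cap_1 Y\subseteq\text{Op}_Y$ via restricting an admissible covering, the reverse containment $\text{Op}_Y\subseteq\tau(\text{Op}_X\cap_1 Y)$ via the minimality of $\langle\text{Cov}_X\cap_2 Y\rangle_Y$ against the topological gts $(Y,\sigma,\mathcal{P}(\sigma))$, and the standard subspace identity for generated topologies; your closing remark correctly identifies why one must test against the full topology $\sigma$ rather than the generated ring, since infinite admissible families in $\text{Cov}_X\cap_2 Y$ must have open unions in the comparison gts.
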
 

With Proposition 3.4 in hand, we can deduce the following:

\begin{cor}
Let $Y$ be a subspace of a gts $X$. Then the following properties hold:
\begin{enumerate}
\item[(i)] $Y$ is absolutely compact in $Y$ if and only if $Y$ is absolutely compact in $X$; 
\item[(ii)] when $Y$ is strict in $X$, then $Y$ is admissibly compact in $Y$ if and only if $Y$ is admissibly compact in $X$. 
\end{enumerate}
\end{cor}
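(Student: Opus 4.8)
The statement to prove is Corollary 3.14: for a subspace $Y$ of a gts $X$, (i) $Y$ is absolutely compact in $Y$ iff $Y$ is absolutely compact in $X$, and (ii) when $Y$ is strict in $X$, then $Y$ is admissibly compact in $Y$ iff $Y$ is admissibly compact in $X$. The plan is to unwind the definitions of absolute and admissible compactness (Definition 3.2) for the two ambient spaces $Y$ and $X$ and compare the open families one is allowed to use as covers in each case. For (i) the relevant collections are $\text{Op}_Y$ and $\text{Op}_X$; for (ii) they are $\text{Cov}_Y$ and $\text{Cov}_X$, and this is exactly where strictness enters.

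First I would treat part (i). By Proposition 3.4, absolute compactness of $Y$ in $Z$ (for $Z\in\{X,Y\}$) coincides with topological compactness of $Y$ in $Z$, i.e.\ with the statement that every cover of $Y$ by weakly open sets of $Z$ has a finite subcover. By Fact 3.12, the topology on $Y$ generated by $\text{Op}_Y$ equals $\tau(\text{Op}_X)\cap_1 Y$; that is, the topologization of the subspace $Y$ is precisely the topological subspace $Y_{top}\subseteq X_{top}$. Hence asking that $Y$ be compact as a subset of $X_{top}$ is the same as asking that $Y$ be compact as a topological space in its own topologization, by the familiar (and choice-free) fact that compactness of a subset of a topological space is equivalent to compactness of that subset with the subspace topology. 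Combining these two equivalences gives (i). One small point to be careful about is that the equivalence ``$A$ compact in $Z_{top}$ iff $A$ compact as a topological space'' should be invoked in its covering form (covers by open sets of $Z_{top}$ restricted to $A$ versus open sets of the subspace), since we are working in \textbf{ZF}; but no choice is needed here because a cover of $A$ by subspace-open sets lifts canonically (take, for each member, the open set of $Z_{top}$ witnessing it is not unique, so instead: a finite subcover downstairs restricts to a finite subcover, and conversely pass through the $O(V)$ trick of Proposition 3.4 if one wants to avoid selecting witnesses). So I would phrase (i) as: $Y$ absolutely compact in $X$ $\iff$ $Y$ compact in $X_{top}$ $\iff$ $Y_{top}$ compact $\iff$ $Y$ compact in $Y_{top}$ $\iff$ $Y$ absolutely compact in $Y$.

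For part (ii), admissible compactness of $Y$ in $X$ means: every $\mathcal{V}\in\text{Cov}_X$ with $Y\subseteq\bigcup\mathcal{V}$ admits a finite $\mathcal{U}\subseteq\mathcal{V}$ covering $Y$; admissible compactness of $Y$ in $Y$ is the analogous statement with $\text{Cov}_Y$. Now $\text{Cov}_Y=\langle\text{Cov}_X\cap_2 Y\rangle_Y$ by the definition of subspace, and the hypothesis that $Y$ is strict in $X$ gives (Definition 3.10) that $\langle\text{Cov}_X\cap_2 Y\rangle_Y=\text{Cov}_X\cap_2 Y$. So under strictness $\text{Cov}_Y=\text{Cov}_X\cap_2 Y=\{\,\mathcal{V}\cap_1 Y:\mathcal{V}\in\text{Cov}_X\,\}$ (I'd double-check the exact meaning of $\cap_2$ in \cite{Pie1}: it is the operation producing, from a family of families, the family of traces on $Y$). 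Given this description, the argument is the same restriction/lifting bookkeeping as in (i): an admissible cover of $Y$ in $Y$ is the trace of an admissible cover in $X$ and vice versa, and finite subcovers transfer both ways because trimming to a finite subfamily commutes with taking traces. I expect the main obstacle — really the only nontrivial point — to be making sure that strictness is used correctly and is genuinely needed: without it, $\text{Cov}_Y$ could contain admissible families on $Y$ that are \emph{not} traces of admissible families of $X$ (they are only generated by such traces), so an admissible cover of $Y$ in $Y$ need not come from an admissible cover in $X$, and the ``admissibly compact in $Y$ $\Rightarrow$ admissibly compact in $X$'' direction could fail; conversely, even the other direction is cleanest when the two collections literally coincide. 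I would also note in passing that Remark 3.11 ($\text{Op}_Y=\text{Op}_X\cap_1 Y$ for strict $Y$) is consistent with, and reinforces, the description of $\text{Cov}_Y$ used here, though part (i) does not need strictness because compactness there is governed only by the induced topology, which by Fact 3.12 is unconditionally the subspace topology.
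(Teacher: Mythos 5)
Your proof is correct and follows the route the paper intends: the paper gives no explicit argument, saying only that the corollary can be deduced ``with Proposition 3.4 in hand'', and your combination of Proposition 3.4 with the fact that the topologization of the subspace $Y$ is the topological subspace of $X_{top}$ (for (i)), together with strictness giving $\text{Cov}_Y=\text{Cov}_X\cap_2 Y$ and the finite-subfamily bookkeeping using the paper's finite-choice convention (for (ii)), is exactly that deduction. One small slip in your closing remark: since $\text{Cov}_Y=\langle \text{Cov}_X\cap_2 Y\rangle_Y$ always \emph{contains} the traces $\text{Cov}_X\cap_2 Y$, the implication ``admissibly compact in $Y$ $\Rightarrow$ admissibly compact in $X$'' holds for an arbitrary subspace (pull a cover $\mathcal{V}\in\text{Cov}_X$ back to its trace, which lies in $\text{Cov}_Y$, and lift a finite subcover); it is the \emph{converse} direction that could fail without strictness, because a cover in $\text{Cov}_Y$ need not be the trace of any member of $\text{Cov}_X$.
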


\begin{rem} 
In the light of Proposition 3.4 and Corollary 3.10, the notions of topological and absolute compactness are equivalent in the class of generalized topological spaces, therefore, we can call a subspace of a gts absolutely compact iff it is topologically compact in itself. Moreover, we can call a strict subspace of a gts admissibly compact iff this subspace is admissibly compact in itself. Since, in category theory, there is a notion of compact categories (cf. \cite{AHS}), to avoid misunderstanding, we would not like to use the term "compact" in the class of gtses as a synonym of "topologically compact". 
\end{rem}  

\begin{f}
A closed subspace of an admissibly (topologically, resp.)  compact gts is admissibly (topologically, resp.) compact.
\end{f}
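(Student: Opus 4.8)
The plan is to prove both statements (the admissible case and the topological case) by the standard argument that a closed subspace of a compact space is compact, carefully adapted to the two relevant notions of "open family" and with attention paid to avoiding \textbf{AC} in the spirit of the rest of the paper. Let $C$ be a closed subspace of a gts $X$, so $X\setminus C\in\text{Op}_X$, and recall from Fact~3.8 that the subspace structure on $C$ has $\text{Cov}_C=\langle\text{Cov}_X{\cap}_2 C\rangle_C$ and $\text{Op}_C=\bigcup(\text{Cov}_X{\cap}_2 C)$.

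For the admissible case, suppose $X$ is admissibly compact and let $\mathcal{V}$ be an admissible open family in $C$ with $C\subseteq\bigcup\mathcal{V}$; that is, $\mathcal{V}\in\text{Cov}_C$. The first step is to produce from $\mathcal{V}$ an admissible open family in $X$. One would like to use the definition of the subspace generalized topology: every member of $\text{Cov}_C=\langle\text{Cov}_X{\cap}_2 C\rangle_C$ is built from families of the form $\mathcal{U}{\cap}_2 C$ with $\mathcal{U}\in\text{Cov}_X$ by the operations that generate a generalized topology. The cleanest route is to observe that $\mathcal{V}{\cup}\{X\setminus C\}$ — or rather the family obtained by adjoining $X\setminus C$ to a witnessing $\mathcal{U}\in\text{Cov}_X$ with $\mathcal{U}{\cap}_2 C$ refining $\mathcal{V}$ — is an admissible open family in $X$ covering all of $X$; applying admissible compactness of $X$ yields a finite subfamily covering $X$, hence finitely many members of the original data covering $C$, and tracing these back gives a finite subfamily of $\mathcal{V}$ covering $C$. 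Here one must be slightly careful that adjoining a single open set to an admissible covering again gives an admissible covering, which follows because $\text{Cov}_X$ is closed under the appropriate operations defining a generalized topology (enlarging by a member of $\text{Op}_X$).

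For the topological case, suppose $X$ is topologically compact, i.e. $X_{top}=(X,\tau(\text{Op}_X))$ is compact. By Fact~3.9 the topologization of $C$ is exactly the topological subspace of $X_{top}$ on the underlying set $C$, and $C$ is $\tau(\text{Op}_X)$-closed since $X\setminus C\in\text{Op}_X\subseteq\tau(\text{Op}_X)$. Thus the statement reduces to the classical fact that a closed subspace of a compact topological space is compact, whose usual proof uses no choice: given a relatively open cover of $C$, extend each relatively open set to an open set of $X_{top}$ (taking the union of all open subsets of $X$ contained in it, exactly as in the proof of Proposition~3.4, to sidestep \textbf{AC}), throw in the open set $X\setminus C$, extract a finite subcover of $X_{top}$, and discard $X\setminus C$.

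The main obstacle is purely bookkeeping in the admissible case: one must pin down precisely how an arbitrary member of $\langle\text{Cov}_X{\cap}_2 C\rangle_C$ decomposes in terms of restrictions of members of $\text{Cov}_X$, and verify that adjoining the single set $X\setminus C$ to such a witness keeps it inside $\text{Cov}_X$ and that the resulting family covers $X$. None of this requires \textbf{AC} — the only choices made are the finite ones permitted by the paper's standing convention and the \textbf{AC}-free trick of replacing a set by the union of all open sets it contains — so both implications go through in \textbf{ZF}. The topological (equivalently, by Proposition~3.4 and Remark~3.11, absolute) case needs nothing beyond the classical argument combined with Fact~3.9.
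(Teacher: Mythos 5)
The paper records this statement as a bare Fact with no proof, so I can only judge your argument on its own terms. Your topological half is complete and correct: by Fact~3.9 the topologization of the subspace $C$ is the topological subspace of $X_{top}$, $C$ is $\tau(\text{Op}_X)$-closed because $X\setminus C\in\text{Op}_X$, and the classical choice-free argument (enlarge, adjoin $X\setminus C$, extract, discard) finishes it.

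The admissible half, however, has a genuine gap at exactly the point you wave off as ``bookkeeping''. Your plan requires, for an arbitrary $\mathcal{V}\in\text{Cov}_C=\langle\text{Cov}_X\cap_2 C\rangle_C$ covering $C$, a witnessing $\mathcal{U}\in\text{Cov}_X$ such that $\mathcal{U}\cap_2 C$ refines $\mathcal{V}$. This is not automatic: $\text{Cov}_C$ is the generalized topology \emph{generated} by the traces, and the generating operations (in particular the locality axiom (A8) of Definition~2.2.1 of \cite{Pie1}) can a priori produce admissible families, and even open sets, that are not traces of anything in $\text{Cov}_X$ --- this is precisely the strictness issue the paper leaves open in Remark~3.7, so asserting that the witness exists begs the question. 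The gap is closable, but it takes an actual idea rather than bookkeeping. One way: show that
$\Phi=\bigl\{\mathcal{W}\subseteq\mathcal{P}(C):\{W\cup(X\setminus C):W\in\mathcal{W}\}\cup\{X\setminus C\}\in\text{Cov}_X\bigr\}$
is a generalized topology in $C$ --- each axiom for $\Phi$ follows from the corresponding axiom for $\text{Cov}_X$, using that $X\setminus C\in\text{Op}_X$ and that for $V,W\subseteq C$ one has $(V\cap W)\cup(X\setminus C)=(V\cup(X\setminus C))\cap(W\cup(X\setminus C))$ --- and that $\Phi$ contains $\text{Cov}_X\cap_2 C$, since $(U\cap C)\cup(X\setminus C)=U\cup(X\setminus C)$. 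Minimality of $\langle\cdot\rangle_C$ then gives $\text{Cov}_C\subseteq\Phi$, so for $\mathcal{V}\in\text{Cov}_C$ covering $C$ the family $\{V\cup(X\setminus C):V\in\mathcal{V}\}\cup\{X\setminus C\}$ is an admissible covering of $X$; admissible compactness of $X$ and the disjointness of $C$ from $X\setminus C$ then yield a finite subfamily of $\mathcal{V}$ covering $C$, with no appeal to \textbf{AC}. Until something of this sort is carried out, the admissible case of your proposal is incomplete.
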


\begin{f}
A finite union of closed admissibly (topologically, resp.) compact subspaces of a gts $X$ is a
closed admissibly (topologically, resp.) compact subspace of $X$.
\end{f}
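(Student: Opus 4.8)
The plan is to reduce the statement to Fact 3.12 together with the two finite-union facts about compact subsets of topological spaces, proved separately in the admissible and topological settings. First I would fix a gts $X$ and finitely many closed subspaces $Y_1,\dots,Y_n$ of $X$ that are admissibly (resp.\ topologically) compact, and put $Y=\bigcup_{i=1}^{n}Y_i$. That $Y$ is closed in $X$ is immediate, since $\text{Cl}_X$ is closed under finite unions. It remains to see that $Y$, as a subspace of $X$, is admissibly (resp.\ topologically) compact in the sense of Definitions 3.2--3.3. By Corollary 3.10(i), topological (equivalently absolute) compactness of the subspace $Y$ may be checked in $Y$ or in $X$ interchangeably; for the admissible case I would first argue that each $Y_i$ is strict in $X$ — this follows because $Y_i$ is closed, hence $\text{Cov}_X\cap_2 Y_i$ is already a generalized topology on $Y_i$ (a covering family of opens of $X$ restricted to a closed set, intersected with $Y_i$, stays admissible on $Y_i$) — and then invoke Corollary 3.10(ii) so that admissible compactness of $Y_i$ in $Y_i$ passes to admissible compactness of $Y_i$ in $X$.

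Next I would run the covering argument directly in $X$. Let $\mathcal{V}$ be an admissibly open (resp.\ weakly open) family in $X$ with $Y\subseteq\bigcup\mathcal{V}$. For each $i\in\{1,\dots,n\}$ we have $Y_i\subseteq\bigcup\mathcal{V}$, so by admissible (resp.\ topological) compactness of $Y_i$ in $X$ there is a finite $\mathcal{U}_i\subseteq\mathcal{V}$ with $Y_i\subseteq\bigcup\mathcal{U}_i$. Then $\mathcal{U}=\bigcup_{i=1}^{n}\mathcal{U}_i$ is a finite subfamily of $\mathcal{V}$ and $Y\subseteq\bigcup\mathcal{U}$. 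This shows $Y$ is admissibly (resp.\ topologically) compact in $X$, and by Corollary 3.10 the same holds in $Y$; hence $Y$ is a closed admissibly (resp.\ topologically) compact subspace of $X$. No choice is needed here: the index set $\{1,\dots,n\}$ is finite and each selection of $\mathcal{U}_i$ is of a single finite set, so only the informal finite-choice convention already in force is used.

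I expect the only real subtlety to be the strictness issue in the admissible case: Remark 3.8 stresses that it is open whether every subset of a gts is strict, so one cannot simply quote Corollary 3.10(ii) for arbitrary subspaces. The fix is exactly that closed subspaces behave well — one should verify carefully that for $Y_i\in\text{Cl}_X$ one has $\langle\text{Cov}_X\cap_2 Y_i\rangle_{Y_i}=\text{Cov}_X\cap_2 Y_i$, i.e.\ that restricting admissible coverings of $X$ to the closed set $Y_i$ already yields an admissible generalized topology closed under the operations defining $\langle\cdot\rangle$, so that no new covering families are generated. Once that is in place the rest is the routine finite patching above. Alternatively, to sidestep Corollary 3.10 entirely in the admissible case, one can note that a closed subspace's admissible opens are exactly the traces $\text{Op}_X\cap_1 Y_i$ of admissible opens of $X$ (again using closedness), carry out the covering argument with families in $X$, and only at the end restrict to $Y$; I would present whichever of these two routes keeps the bookkeeping lightest.
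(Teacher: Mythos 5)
The paper states this as a Fact with no written proof, so the comparison is against the evidently intended routine argument. Your core argument is that argument and it is correct: $\mathrm{Cl}_X$ is closed under finite unions (Proposition 1.5 / Definition 1.3(ii)), and given a weakly open (resp.\ admissibly open) family $\mathcal{V}$ in $X$ covering $Y=\bigcup_{i=1}^n Y_i$, one extracts a finite $\mathcal{U}_i\subseteq\mathcal{V}$ covering each $Y_i$ and takes the finite union; only finite choice is involved. Read with Definition 3.2, i.e.\ with ``admissibly (topologically) compact subspace of $X$'' meaning ``admissibly (topologically) compact \emph{in} $X$'' --- which is the only reading under which Facts 3.12--3.14 can be stated without a strictness hypothesis --- this finishes the proof, and your closing ``alternative route'' is exactly this.

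The genuine problem is the detour you present as the main route. You assert that every closed subset $Y_i$ of a gts is strict, justifying it by saying that $\mathrm{Cov}_X\cap_2 Y_i$ ``is already a generalized topology on $Y_i$'' because traces of admissible families stay admissible. That parenthetical only gives the trivial inclusion $\mathrm{Cov}_X\cap_2 Y_i\subseteq\langle\mathrm{Cov}_X\cap_2 Y_i\rangle_{Y_i}$; strictness is the reverse inclusion, i.e.\ that generating from the traces (closing under essentially finite families, coarsenings, the gluing axiom (A8), etc.) produces no new admissible families. Nothing about closedness of $Y_i$ obviously yields this, the paper explicitly records that strictness of arbitrary subsets is open (Remark 3.8, Question 2.2.96 of \cite{Pie1}), and nowhere does it claim the special case of closed subsets. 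So if your proof is read as relying on Corollary 3.10(ii) via this claim, it has a gap. The fix is simply to drop the strictness detour and run the whole argument with covers admissible in $X$, as in your last paragraph; for the topological case Corollary 3.10(i) already lets you move between $Y$ and $X$ with no strictness needed.
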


\begin{f}
If $\mathcal{F}$ is an admissible closed family (see Remark 2.2.55 in \cite{Pie1}) of a gts $X$ and there is a topologically compact in $X$ member of $\mathcal{F}$, then $\bigcap\mathcal{F}$ is closed and topologically compact in $X$.
\end{f}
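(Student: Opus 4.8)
The plan is to imitate the classical argument that a closed set meeting a compact set in a Hausdorff-type situation inherits compactness, but recast it so that it works for admissible closed families rather than arbitrary intersections. Let $\mathcal{F}$ be an admissible closed family of the gts $X$, so that $\{X\setminus H : H\in\mathcal{F}\}$ is an admissible open family, hence a member of $\text{Cov}_X$. Fix a member $K\in\mathcal{F}$ which is topologically compact in $X$, and set $A=\bigcap\mathcal{F}$. First I would record that $A$ is closed in $X$: indeed $X\setminus A=\bigcup\{X\setminus H:H\in\mathcal{F}\}\in\text{Op}_X$ because $\{X\setminus H:H\in\mathcal{F}\}\in\text{Cov}_X$, so $A\in\text{Cl}_X$. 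This is the routine part.

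For topological compactness of $A$, by Proposition 3.4 it is equivalent (and more convenient) to work with absolute compactness, i.e. with open covers by members of $\text{Op}_X$. So let $\mathcal{V}\subseteq\text{Op}_X$ be an open family with $A\subseteq\bigcup\mathcal{V}$. Consider the family $\mathcal{V}\cup\{X\setminus H:H\in\mathcal{F}\}$; since $\bigcup\mathcal{F}\setminus\bigcap\mathcal{F}$ is covered by the sets $X\setminus H$ and the remaining points of $K$ (those in $A$) are covered by $\mathcal{V}$, this enlarged family covers $K$. The key step is to argue that $\mathcal{V}\cup\{X\setminus H:H\in\mathcal{F}\}$ is again a member of $\text{Op}_X$-families in the sense needed, so that topological (equivalently absolute) compactness of $K$ applies: from the absolute cover we extract a finite subfamily covering $K$, say $\mathcal{U}_0\subseteq\mathcal{V}$ together with finitely many $X\setminus H_1,\dots,X\setminus H_n$. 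Then $K\cap H_1\cap\dots\cap H_n\subseteq\bigcup\mathcal{U}_0$; but $A\subseteq K$ and $A\subseteq H_i$ for every $i$, so $A\subseteq K\cap H_1\cap\dots\cap H_n\subseteq\bigcup\mathcal{U}_0$, giving the desired finite subcover of $A$. Finally I would invoke Fact 3.12 (a closed subspace of a topologically compact gts is topologically compact) as an alternative finishing move, but the direct cover argument above is cleaner and avoids having to pass through the subspace structure.

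The main obstacle I anticipate is bookkeeping around the word "admissible": the definition of topological compactness only refers to the topologization, so strictly speaking one does not need $\{X\setminus H:H\in\mathcal{F}\}\in\text{Cov}_X$ for the compactness extraction — one only needs it to be a family of open sets, which is automatic — and the admissibility of $\mathcal{F}$ is used solely to guarantee that $\bigcap\mathcal{F}$ is closed. So the real content is just: (a) $A$ is closed because $\mathcal{F}$ is admissible; (b) $A\subseteq K$ with $K$ topologically compact; (c) a closed subset of a topologically compact set is topologically compact, which is Fact 3.12 applied to the closed subspace $A$ of the topologically compact closed subspace $K$ (using Fact 3.12 twice, or once on $K$ viewed inside $X$). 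Care is needed only to ensure all extractions of finite subcovers are done without appealing to \textbf{AC}, exactly in the style of the proof of Proposition 3.4 — e.g. by replacing each chosen open set by the union of all $\text{Op}_X$-sets inside it — but since we are extracting \emph{finite} subfamilies from a single fixed family, no choice is involved and the argument goes through in \textbf{ZF}.
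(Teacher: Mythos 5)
Your argument is correct and is exactly the standard one the paper evidently intends (the statement is given as a Fact with no proof supplied): admissibility of $\mathcal{F}$ yields $X\setminus\bigcap\mathcal{F}=\bigcup\{X\setminus H:H\in\mathcal{F}\}\in\text{Op}_X$, hence closedness, and adjoining the complements (or just $X\setminus\bigcap\mathcal{F}$) to a cover of $\bigcap\mathcal{F}$ and using the compactness of the fixed member $K$ gives the finite subcover, with no appeal to \textbf{AC}. Your closing observation is also right: only the closedness claim uses admissibility, since $\bigcap\mathcal{F}$ is automatically weakly closed and the compactness extraction needs nothing more.
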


Let us notice that a gts is  \textbf{weakly Hausdorff} in the sense of \cite{Pie1} if and only if it is topologically Hausdorff.

\begin{prop}
Each weakly Hausdorff topologically compact gts is weakly normal.
\end{prop}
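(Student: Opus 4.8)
The plan is to reduce everything to the classical fact that a Hausdorff compact topological space is normal, applied to the topologization $X_{top}=(X,\tau(\mathrm{Op}_X))$, and then to upgrade the resulting open (in $\tau$) separating sets to genuinely \emph{open} sets of the gts, i.e. members of $\mathrm{Op}_X$. So let $(X,\mathrm{Op}_X,\mathrm{Cov}_X)$ be a topologically compact, weakly Hausdorff (equivalently topologically Hausdorff) gts, and let $A_1,A_2$ be a pair of subsets of $X$, each of which is either a singleton or closed in the gts, i.e. $A_i\in\mathrm{Cl}_X$. We must produce disjoint $W_1,W_2\in\mathrm{Op}_X$ with $A_i\subseteq W_i$.

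First I would observe that each $A_i$ is weakly closed, i.e. closed in $X_{top}$: a singleton is $\tau$-closed because $X_{top}$ is Hausdorff hence $T_1$, and if $A_i\in\mathrm{Cl}_X$ then $X\setminus A_i\in\mathrm{Op}_X\subseteq\tau(\mathrm{Op}_X)$, so $A_i$ is $\tau$-closed. Since $X_{top}$ is compact Hausdorff, it is normal, so there are disjoint $\tau$-open sets $U_1\supseteq A_1$, $U_2\supseteq A_2$; moreover, using normality once more (or regularity of $X_{top}$) I can shrink so that $\mathrm{cl}_\tau(U_1)\cap\mathrm{cl}_\tau(U_2)=\emptyset$, or equivalently find $\tau$-open $U_i$ with $A_i\subseteq U_i$ and $\mathrm{cl}_\tau U_i\subseteq X\setminus A_j$ for $i\neq j$. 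The point of taking closures is that $\mathrm{cl}_\tau U_i$ is a compact subset of $X_{top}$, hence — being $\tau$-closed, and $\tau=\tau(\mathrm{Op}_X)$ so that $\mathrm{Op}_X$ is a base closed under finite intersections — it is covered by the members of $\mathrm{Op}_X$ contained in $X\setminus A_j$; compactness yields a finite subcover, whose union $W_j'\in\mathrm{Op}_X$ satisfies $\mathrm{cl}_\tau U_j\subseteq W_j'\subseteq X\setminus A_j$. Wait — this separates $\mathrm{cl}_\tau U_j$ from $A_j$, which is the wrong direction; instead I apply this to get, around each $A_i$, an honest gts-open neighbourhood. Concretely: $A_i$ is compact in $X_{top}$ (closed subset of a compact space) and $A_i\subseteq U_i$ with $U_i\in\tau(\mathrm{Op}_X)$, so the sets of $\mathrm{Op}_X$ lying inside $U_i$ cover $A_i$; a finite subcover has union $W_i\in\mathrm{Op}_X$ with $A_i\subseteq W_i\subseteq U_i$. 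Since $U_1\cap U_2=\emptyset$, also $W_1\cap W_2=\emptyset$, and we are done.

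The one technical point to verify carefully is that $A_i$ is $\tau$-compact. If $A_i$ is a singleton this is trivial. If $A_i\in\mathrm{Cl}_X$, then by Fact~3.12 $A_i$ (as a closed subspace of the topologically compact gts $X$) is topologically compact, and by Fact~3.9 the topologization of the subspace $A_i$ is the topological subspace of $X_{top}$; hence $A_i$ is compact in $X_{top}$. Then the covering-of-$A_i$-by-members-of-$\mathrm{Op}_X$ argument goes through with no choice beyond finite choice. I expect the only mild obstacle to be bookkeeping about which separation direction one needs and making sure the final neighbourhoods are in $\mathrm{Op}_X$ rather than merely in $\tau(\mathrm{Op}_X)$; the closure-shrinking step is available but in fact not needed, since it suffices to shrink the already-disjoint $\tau$-open $U_i$ down to gts-open $W_i\subseteq U_i$ around the compact sets $A_i$. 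Thus the statement follows in \textbf{ZF} (using only the finite-choice convention of the paper).
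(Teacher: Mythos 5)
Your proof is correct and is essentially the paper's own argument: the paper's proof consists of the single remark that ``the standard topological proof, if carefully led in \textbf{ZF}, extends to this situation,'' and your write-up is precisely that extension --- separate the (weakly closed, disjoint) sets $A_1,A_2$ in the compact Hausdorff space $X_{top}$, then use compactness of the $A_i$ together with the fact that $\mathrm{Op}_X$ is a base for $\tau(\mathrm{Op}_X)$ closed under finite unions to shrink the weakly open separators to disjoint members of $\mathrm{Op}_X$, with no use of choice beyond the paper's finite-choice convention. The only cosmetic simplification available is that $\tau$-compactness of $A_i$ follows directly from its being $\tau$-closed in the compact space $X_{top}$, without invoking Facts 3.9 and 3.12.
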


\begin{proof}
The standard topological proof, if carefully led in \textbf{ZF}, extends to this situation.
\end{proof}

\section{Products of generalized topological spaces}

Morphisms in the category $\mathbf{GTS}$ are strictly continuous mappings where a mapping $f:X\to Y$ from a gts $X$ to a gts $Y$ is called \textbf{strictly continuous} if $f^{-1}(\mathcal{V})\in\text{Cov}_X$  for each $\mathcal{V}\in\text{Cov}_Y$ (cf. e.g. Definition 2.2.4 of \cite{Pie1}). 
A gts $X$ is \textbf{partially topological} when $\text{Op}_X$ is the topology of $X_{top}$ (cf. Definition 2.2.67 of \cite{Pie1}). The partially topological gtses form a full subcategory $\mathbf{GTS}_{pt}$ of $\mathbf{GTS}$ (cf. \cite{Pie1}). 

\begin{defi}
For a gts $X=(X, \text{Op}_X, \text{Cov}_X)$, let $(\text{Op}_{X})_{pt}$ be the topology  $\tau(\text{Op}_X)$ in $X$ generated by $\text{Op}_X$ and let $(\text{Cov}_{X})_{pt}$ be the generalized topology $\langle \text{Cov}_X\cup\text{EssFin}(\tau(\text{Op}_X))\rangle_X$. Then the gts $X_{pt}=(X,(\text{Op}_X)_{pt}, (\text{Cov}_X)_{pt})$ is called the \textbf{partial topologization} of $X$. 
\end{defi}

\begin{defi} 
The \textbf{functor of partial topologization} is the mapping $pt:\mathbf{GTS}\to\mathbf{GTS}_{pt}$ such that, when $f$ is a morphism in $\mathbf{GTS}$ and $X$ is a gts, then $pt(f)=f$ and $pt(X)=X_{pt}$.
\end{defi}
   
\begin{defi} 
Suppose we are given a class $I$, a set $X$ and, for every $i\in I$, a gts $Y_i$ with its generalized topology $\text{Cov}_i$. Let $F=\{ f_i: i\in I\}$ where $f_i: X\to Y_i$ for every $i\in I$. We call the collection $\text{Cov}_{X}(F)=\langle \bigcup_{i\in I} {f_i}^{-1}(\text{Cov}_i)\rangle_X$ the \textbf{$\mathbf{GTS}$-initial generalized  topology} in $X$ for the class $F$. Then the generalized topology  $(\text{Cov}_{X}(F))_{pt}$ is called the \textbf{$\mathbf{GTS}_{pt}$-initial generalized topology} in $X$ for $F$. 
\end{defi}

The proofs in \cite{Pie1} that the categories $\mathbf{GTS}$ and $\mathbf{SS}$ are topological are correct in \text{ZF+[AC for classes]} but, unfortunately, they are incorrect in \textbf{ZF}. One can easily improve them in \textbf{ZF} by mimicking part of our proof to the following theorem.  

\begin{thm}
The construct $\mathbf{GTS}_{pt}$ is topological.
\end{thm}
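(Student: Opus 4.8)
The plan is to verify that the forgetful functor $U:\mathbf{GTS}_{pt}\to\mathbf{Set}$ admits, for every set $X$ and every (possibly large) source $F=\{f_i:X\to U(Y_i)\}_{i\in I}$ of maps into underlying sets of partially topological gtses, an initial lift; by the standard criterion (see \cite{AHS}) this, together with fibre-smallness and the existence of indiscrete structures, is exactly what "topological construct" means. The natural candidate is $X_F:=(X,(\mathrm{Op}_X(F))_{pt},(\mathrm{Cov}_X(F))_{pt})$, i.e.\ the partial topologization of the $\mathbf{GTS}$-initial structure of Definition 4.4; note $X_F$ is partially topological by construction, so it lives in $\mathbf{GTS}_{pt}$.

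First I would check that each $f_i:X_F\to Y_i$ is strictly continuous. This reduces to showing $f_i^{-1}(\mathcal V)\in(\mathrm{Cov}_X(F))_{pt}$ for $\mathcal V\in\mathrm{Cov}_i$ and $f_i^{-1}(W)$ weakly open for $W$ weakly open in $Y_i$; the first is immediate since $f_i^{-1}(\mathrm{Cov}_i)\subseteq\mathrm{Cov}_X(F)\subseteq(\mathrm{Cov}_X(F))_{pt}$, and the second follows because $f_i$ is continuous for the underlying topologies (preimages of the generators $\mathrm{Op}_i$ are in $\mathrm{Op}_X(F)$, hence weakly open), so preimages of $\tau(\mathrm{Op}_i)$-sets are in $\tau(\mathrm{Op}_X(F))=(\mathrm{Op}_X(F))_{pt}$. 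Next, the initiality (universal) property: given a gts $Z$ in $\mathbf{GTS}_{pt}$ and a map $g:U(Z)\to X$ with every $f_i\circ g$ strictly continuous, I must show $g:Z\to X_F$ is strictly continuous. The key point is to describe $(\mathrm{Cov}_X(F))_{pt}$ explicitly enough to pull it back: by Definition 4.1 it is $\langle\mathrm{Cov}_X(F)\cup\mathrm{EssFin}(\tau(\mathrm{Op}_X(F)))\rangle_X$, and $\mathrm{Cov}_X(F)=\langle\bigcup_i f_i^{-1}(\mathrm{Cov}_i)\rangle_X$. So every member of $(\mathrm{Cov}_X(F))_{pt}$ is obtained from families of the form $f_i^{-1}(\mathcal V)$ ($\mathcal V\in\mathrm{Cov}_i$) and essentially finite families of $\tau(\mathrm{Op}_X(F))$-open sets by the closure operations defining a generalized topology (finite unions/intersections within a family, subfamilies, unions indexed by a member-family, and adding singletons of open sets — whatever the precise list in \cite{Pie1}). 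I would therefore prove by induction on this generation process that $g^{-1}$ maps each such family into $\mathrm{Cov}_Z$: the base case for $f_i^{-1}(\mathcal V)$ is handled by $g^{-1}(f_i^{-1}(\mathcal V))=(f_i\circ g)^{-1}(\mathcal V)\in\mathrm{Cov}_Z$ since $f_i\circ g$ is strictly continuous; the base case for $\mathrm{EssFin}(\tau(\mathrm{Op}_X(F)))$ uses that $g$ is continuous for the underlying topologies (which itself follows from strict continuity of the $f_i\circ g$ at the topological level, since $\tau(\mathrm{Op}_X(F))$ is generated by the $f_i^{-1}(\mathrm{Op}_i)$) together with $Z$ being partially topological, so $g^{-1}$ of an essentially finite family of weakly open sets is an essentially finite family of $\mathrm{Op}_Z=\tau(\mathrm{Op}_Z)$-sets, hence in $\mathrm{Cov}_Z$; and the inductive steps hold because $\mathrm{Cov}_Z$, being a generalized topology, is closed under exactly the same operations and $g^{-1}$ commutes with unions, intersections and the formation of singletons.

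The step I expect to be the main obstacle — and the reason the $\mathbf{ZF}$ version differs from the $\mathbf{ZF}+$[AC for classes] argument of \cite{Pie1} — is the inductive description of $\langle\,\cdot\,\rangle_X$ and the verification that $g^{-1}$ respects it uniformly over the proper class $I$ without any choice: one must phrase the generation of $\langle\mathcal A\rangle_X$ as an explicit transfinite closure (a class-indexed least fixed point of an operator that is manifestly preserved by $g^{-1}$) rather than as "the intersection of all generalized topologies containing $\mathcal A$" fed through a choice of preimages. Concretely, I would define $\mathcal A_0=\mathcal A$, $\mathcal A_{\alpha+1}=$ the result of one round of (adding all finite-union/finite-intersection refinements, all subfamilies, all $\bigcup$-combinations over member-families, all singleton-of-open-set families, etc.), $\mathcal A_\lambda=\bigcup_{\alpha<\lambda}\mathcal A_\alpha$ at limits, so $\langle\mathcal A\rangle_X=\bigcup_\alpha\mathcal A_\alpha$; then $g^{-1}(\langle\mathcal A\rangle_X)\subseteq\langle g^{-1}(\mathcal A)\rangle_Z$ drops out by a routine induction on $\alpha$. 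Finally I would dispense with the remaining routine obligations: fibres are (sub)sets because generalized topologies on a fixed $X$ form a set, $X$ with $(\mathcal P(\mathcal P(X)))_{pt}$ — or simply the indiscrete partially topological structure — provides a terminal object over each $X$, and the constant/empty source cases are trivial, which together with the above yields that $\mathbf{GTS}_{pt}$ is topological over $\mathbf{Set}$.
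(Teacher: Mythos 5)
Your proposal is correct and follows essentially the same route as the paper: the initial lift is the partial topologization $X_{pt}$ of the $\mathbf{GTS}$-initial structure $\text{Cov}_X(F)$, the $f_i$ are checked to be morphisms, and the universal property is verified without ever selecting representatives from the possibly proper class $I$. The only difference is presentational: where you carry out the transfinite induction on the generation of $\langle\,\cdot\,\rangle_X$ by hand (thereby re-proving the $\mathbf{GTS}$-initiality along the way), the paper first invokes the already-established $\mathbf{GTS}$-initiality (the proof of Theorem 2.2.60 of \cite{Pie1}) to conclude that $id\circ h:Z\to X$ is a $\mathbf{GTS}$-morphism and then simply applies the functor $pt$, writing $h=(id\circ h)_{pt}$, to land in $\mathbf{GTS}_{pt}$.
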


\begin{proof} Let $F=\{f_i: i\in I\}$ be a source of mappings $f_i:X\to Y_i$ indexed by a class $I$ (cf. \cite{AHS}). Assume that each $Y_i$ has a generalized topology $\text{Cov}_i$. Let us give $X$ the $\mathbf{GTS}$-initial topology for $F$.  Clearly, $X_{pt}$ is an object of $\mathbf{GTS}_{pt}$, and $id:X_{pt}\to X$ is the canonical morphism, with all $f_i\circ id$  morphisms in $\mathbf{GTS}_{pt}$.  The class $I$ can be proper and, without \textbf{AC} for classes, we should not assume that $I$ is a set nor that $I$ can be replaced by a set; however, in much the same way,  as in the proof of Theorem 2.2.60 in \cite{Pie1}, we can observe that, for any object $Z$ of $\mathbf{GTS}_{pt}$ and a mapping $h:Z\to X_{pt}$, if all $f_i\circ id \circ h$ with $i\in I$ are morphisms, then 
 $id\circ h:Z \to X$ is a morphism in $\gts$, so $h=(id \circ h)_{pt}$ is a morphism in $\gts_{pt}$. 
\end{proof}

\begin{rem} Since the category $\mathbf{GTS}$ is fibre-small, one can try to prove Theorem 4.4 by applying Proposition 21.36 of \cite{AHS}; however, this is incorrect in \textbf{ZF} because the arguments given in \cite{AHS} that, if a structured source in a fibre-small concrete category is indexed by a large class $I$, then $I$  can be replaced by a set $J\subseteq I$ require both \textbf{AC} for classes and replacement scheme. Therefore, an interesting question is whether Proposition 21.36 of \cite{AHS} can be false in a model for \textbf{ZF}. 
\end{rem}  

Now, we are in a position to define products of gtses. 

\begin{defi} Let $S$ be a set and let $\{X_s: s\in S\}$ be a collection of sets $X_s$. Assume that $\text{Cov}_s$ is a generalized topology in $X_s$ where $s\in S$. Let $X=\prod_{s\in S} X_{s}$ and, for each $s\in S$, let ${\pi}_s: X\to X_s$ be the projection. In what follows,  we treat each $X_s$ as the gts with its generalized topology $\text{Cov}_s$. Then:
\begin{enumerate}
\item[(i)] the \textbf{$\mathbf{GTS}$-product generalized topology} in the set $X$ is the $\mathbf{GTS}$-initial generalized topology in $X$ for the collection $\{ {\pi}_s: s\in S\}$;
\item[(ii)]  the \textbf{$\mathbf{GTS}$-product of the collection} $(X_s, \text{Cov}_s)$ with $s\in S$ is the gts $(X, \langle \bigcup_{s\in S}{{\pi}_s}^{-1}(\text{Cov}_s)\rangle_X)$;
\item[(iii)] when all gtses $(X_s, \text{Cov}_s)$ with $s\in S$ are partially topological, the \textbf{$\mathbf{GTS}_{pt}$-product generalized topology} in the set $X$ is the $\mathbf{GTS}_{pt}$-initial generalized topology in $X$ for the collection $\{ {\pi}_s: s\in S\}$;
\item[(iv)] when all gtses $(X_s, \text{Cov}_s)$ with $s\in S$ are partially topological,  the \textbf{$\mathbf{GTS}_{pt}$-product} of their collection is the gts $$(X,( \langle \bigcup_{s\in S}{{\pi}_s}^{-1}(\text{Cov}_s)\rangle_X)_{pt}).$$ 
\end{enumerate}
\end{defi}

The following example shows that the $\mathbf{GTS}$-product and the $\mathbf{GTS}_{pt}$-product of the same collection of partially topological gtses can be different.

\begin{exam}
For the smallified topological real line $\mathbb{R}_{st}$ (cf. Definition 1.2 (iii)), let $X_1=X_2=\mathbb{R}_{st}$. Then the $\mathbf{GTS}$-product of the collection $\{X_1, X_2\}$ is a small gts which is not partially topological. Indeed, if $H=\{(x_1, x_2)\in X_1\times X_2: x_1\cdot x_2=1\}$, then the set $U=(X_1\times X_2)\setminus H$ is not open in the $\mathbf{GTS}$-product of $\{ X_1, X_2\}$, while $U$ is open in $\mathbf{GTS}_{pt}$-product of $\{X_1, X_2\}$. 
\end{exam}  
We encourage readers to see what products as sources in category theory are in \cite{AHS}.
\begin{rem}Suppose that $\mathbf{C}$ is a full subcategory of the category $\mathbf{GTS}$ such that $\mathbf{C}$ has products for all set-indexed families. 
\begin{enumerate} 
\item[(i)] Sometimes, we will use ${\prod^{\mathbf{C}}_{s\in S}}X_s$ to denote the $\mathbf{C}$-product of the collection $\{X_s: s\in S\}$ of objects of the category $\mathbf{C}$. Given a pair $X, Y$ of gtses in $\mathbf{C}$, we may write $X\times_{\mathbf{C}}Y$ to denote the $\mathbf{C}$-product of the collection $\{X , Y\}$.  
\item[(ii)] Since $\mathbf{GTS}$-products of small gtses are also small gtses, the notions of $\mathbf{SS}$-products and of $\mathbf{GTS}$-products of small gtses are identical. 
\item[(iii)] The category of all partially topological small gtses is denoted by $\mathbf{SS}_{pt}$. If $\mathcal{X}=\{(X_s, \text{Cov}_s): s\in S\}$ is a set-indexed collection of partially topological small gtses, $X=\prod_{s\in S}X_s$ and $\text{Op}_X$ is the collection of all open sets of the $\mathbf{GTS}_{pt}$ -product of $\mathcal{X}$, then $\text{EssFin}( \text{Op}_X)$ is the generalized topology of the $\mathbf{SS}_{pt}$-product of $\mathcal{X}$.
\item[(iv)] It is unclear whether every $\mathbf{GTS}$-product of topological gtses is a topological gts; however, if this is necessary, one can use the functor $\text{top}:\mathbf{GTS}\to\mathbf{Top}$ (cf. Definition 2.2.64 of \cite{Pie1}) to transform $\mathbf{GTS}$-products of topological gtses into $\mathbf{Top}$-products. 
\end{enumerate}
\end{rem}

\begin{prop} Let $X$ and $Y$ be topological discrete gtses. Then the $\mathbf{GTS}$-product $X\times_{\mathbf{GTS}}Y$ of $X$ and $Y$ is a topological discrete gts.
\end{prop}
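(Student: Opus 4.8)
The plan is to unwind the definitions and check that the $\gts$-product of two discrete topological gtses is again discrete and topological. Recall that a gts $Z$ is \textbf{topological} when $\text{Cov}_Z = \mathcal{P}(\text{Op}_Z)$, and it is \textbf{discrete} (as a topological gts) when moreover $\text{Op}_Z = \mathcal{P}(Z)$. So for $X$ and $Y$ discrete topological we have $\text{Cov}_X = \mathcal{P}(\mathcal{P}(X))$ and $\text{Cov}_Y = \mathcal{P}(\mathcal{P}(Y))$. Writing $Z = X\times Y$ with projections $\pi_X, \pi_Y$, the $\gts$-product generalized topology is $\text{Cov}_Z = \langle \pi_X^{-1}(\text{Cov}_X) \cup \pi_Y^{-1}(\text{Cov}_Y)\rangle_Z$.

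First I would show that $\text{Op}_Z = \mathcal{P}(Z)$, i.e. every subset of $Z$ is open in the product. Since $\text{Op}_X = \mathcal{P}(X)$, for each $x\in X$ the singleton $\{x\}$ is open, so $\{\{x\}\}\in \text{Cov}_X$ (as $\text{Cov}_X = \mathcal{P}(\text{Op}_X)$), hence $\pi_X^{-1}(\{\{x\}\}) = \{\{x\}\times Y\}$ is an admissible covering in $Z$, giving $\{x\}\times Y \in \text{Op}_Z$; symmetrically $X\times\{y\}\in\text{Op}_Z$. A generalized topology is closed under taking members of the union $\text{Op}_Z$ that arise from essentially finite operations: in particular $\text{Op}_Z$ is closed under finite intersections (the family $\{U,V\}$ with $U,V$ open is refined appropriately — more precisely, $\langle\cdot\rangle$ contains $\text{EssFin}(\text{Op}_Z)$, and intersections of two open sets are open in any gts), so $\{(x,y)\} = (\{x\}\times Y)\cap(X\times\{y\})\in\text{Op}_Z$ for each $(x,y)\in Z$. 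Next, for an arbitrary $A\subseteq Z$, the family $\{\{(x,y)\}: (x,y)\in A\}$ is a subfamily of $\pi_X^{-1}(\text{Cov}_X)\cup\ldots$-generated coverings? — here one must be slightly careful: we need this family itself to be an admissible covering, not merely a union of open sets. Since $\{\{x\}\times Y : x\in X\} = \pi_X^{-1}(\{\{x\}:x\in X\})$ and $\{\{x\}:x\in X\}\in\text{Cov}_X = \mathcal{P}(\mathcal{P}(X))$, this whole family is admissible; intersecting admissibly (using that $\text{Cov}_Z$ is a generalized topology, which is closed under the intersection operation $\mc{U}\wedge\mc{V} = \{U\cap V: U\in\mc{U}, V\in\mc{V}\}$, see \cite{DK}, \cite{Pie1}) with $\{X\times\{y\}: y\in Y\}$ yields that $\{\{(x,y)\}: (x,y)\in Z\}\in\text{Cov}_Z$, and any subfamily of an admissible covering whose union we restrict to is still admissible. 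Thus $A\in\text{Op}_Z$, so $\text{Op}_Z = \mathcal{P}(Z)$.

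It remains to see $\text{Cov}_Z = \mathcal{P}(\text{Op}_Z) = \mathcal{P}(\mathcal{P}(Z))$. The inclusion $\text{Cov}_Z\subseteq\mathcal{P}(\text{Op}_Z)$ is automatic. For the reverse, take any family $\mc{A}\subseteq\mathcal{P}(Z)$; then $\mc{A} = \mc{A}\wedge\{\{(x,y)\}:(x,y)\in Z\}$ up to the fact that every member of $\mc{A}$ is a union of singletons — more simply, the family $\mc{B} = \{\{(x,y)\}: (x,y)\in\bigcup\mc{A}\}$ is admissible by the previous paragraph, and $\mc{A}$ refines $\mc{B}$ while $\bigcup\mc{A} = \bigcup\mc{B}$; since a generalized topology in the sense of Delfs–Knebusch is closed under such "shrinking/refinement with the same union inside an admissible covering", we get $\mc{A}\in\text{Cov}_Z$. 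Hence $\text{Cov}_Z = \mathcal{P}(\mathcal{P}(Z))$, so $Z$ is a topological discrete gts.

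The main obstacle is bookkeeping about which closure operations the Delfs–Knebusch axioms for $\text{Cov}_Z$ actually grant — specifically the "refinement" axiom that lets one pass from an admissible covering $\mc{B}$ to any family $\mc{A}$ with each member of $\mc{A}$ contained in some member of $\mc{B}$ and $\bigcup\mc{A}=\bigcup\mc{B}$ — and making sure these are invoked legitimately (without \textbf{AC}) for the generated generalized topology $\langle\cdot\rangle_Z$. Once the singletons $\{(x,y)\}$ are shown to form an admissible covering of $Z$, everything else is a routine consequence of those axioms, so this is the step I would write out most carefully; the rest I would present as direct verification against Definitions 2.2.1–2.2.2 of \cite{Pie1}.
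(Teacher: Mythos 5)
Your proof follows the same route as the paper's: show that the singletons of $X\times Y$ form an admissible open family of the product, deduce that every subset of $X\times Y$ is open, and then upgrade every family of subsets to an admissible covering by coarsening against a singleton covering. Two details need repair. First, to get an arbitrary $A\subseteq X\times Y$ open you appeal to the principle that ``any subfamily of an admissible covering whose union we restrict to is still admissible''; this is not one of the axioms of Definition 2.2.1 of \cite{Pie1}, and as a route to openness of $A$ it is circular, since the restriction axiom (A5) presupposes that the set one restricts to is already open. The correct (and shorter) step is the locality axiom (A8): $A\subseteq\bigcup\{\{z\}: z\in X\times Y\}$ and $A\cap\{z\}$ is open for each $z$, hence $A$ is open. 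Second, in your last paragraph the refinement runs the wrong way: the singleton family $\mc{B}$ refines $\mc{A}$, i.e.\ $\mc{A}$ is a \emph{coarsening} of $\mc{B}$, and it is the coarsening axiom (A7) --- applied after (A5) has restricted the full singleton covering to the now-known-to-be-open set $\bigcup\mc{A}$ --- that yields $\mc{A}\in\text{Cov}_{X\times Y}$; the ``shrinking'' principle you describe is not an axiom. With these substitutions your argument coincides with the paper's; your explicit derivation of the admissibility of the singleton covering, via the wedge of the two preimage coverings $\{\{x\}\times Y: x\in X\}$ and $\{X\times\{y\}: y\in Y\}$ (which is legitimate, using (A5) and transitivity), usefully fills in a step the paper merely asserts.
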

\begin{proof}
Let us observe that the collection of all singletons of $X\times Y$ is an admissible open family of $X\times_{\mathbf{GTS}}Y$. Thus, by  (A8) of Definition 2.2.1 of \cite{Pie1}, each subset $W\subseteq X\times Y$ is open in $X\times_{\mathbf{GTS}}Y$.

Let $\mc{U}$ be a family of subsets of $X\times Y$. In view of condition (A5) of Definition 2.2.1 of \cite{Pie1}, since $\bigcup\mc{U}$ is open in $X\times_{\mathbf{GTS}}Y$, the covering of $\bigcup\mc{U}$ by singletons is admissible in $X\times_{\mathbf{GTS}}Y$. The family $\mc{U}$ is a coarsening of the previous one. By (A7) of Definition 2.2.1 of \cite{Pie1}, $\mc{U}$ is admissible in $X\times_{\mathbf{GTS}}Y$.
\end{proof}

\begin{f} 
Let $\mathbf{C}$ be one of the categories $\mathbf{GTS}, \mathbf{SS}, \mathbf{GTS}_{pt}, \mathbf{SS}_{pt}$. 
Suppose that $\mathcal{X}=\{ X_s: s\in S\}$ is a family of gtses $X_s$ that are objects of $\mathbf{C}$ for $s\in S$. Then $(\prod^{\mathbf{C}}_{s\in S}X_s)_{top}$ is the Tychonoff product $\prod_{s\in S}(X_s)_{top}$, i.e. the topologization of a $\mathbf{C}$-product of gtses is the Tychonoff product of the topologizations of the gtses.
\end{f}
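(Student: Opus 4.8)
The plan is to show equality of topologies on the set $\prod_{s\in S}X_s$: the topology $\tau_1$ generated by $\mathrm{Op}_X$, where $\mathrm{Op}_X=\bigcup(\prod^{\mathbf{C}}_{s\in S}X_s)$ is the open-set collection of the $\mathbf{C}$-product, and the Tychonoff product topology $\tau_2=\tau\bigl(\bigcup_{s\in S}\pi_s^{-1}(\tau(\mathrm{Op}_s))\bigr)$. Since for the four categories $\mathbf{GTS},\mathbf{SS},\mathbf{GTS}_{pt},\mathbf{SS}_{pt}$ the open-set collections of the respective products of the same family differ only by passing to $\mathrm{EssFin}$ or to the induced topology — operations that do not change the generated topology $\tau(\cdot)$ — it suffices to treat the case $\mathbf{C}=\mathbf{GTS}$; the other three cases then follow because $\tau(\mathrm{Op}_X)=\tau(\langle \mathrm{Op}_X\rangle\text{-opens})$ and, by Fact 3.7 applied coordinatewise, topologization is insensitive to these refinements. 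This reduction should be stated explicitly at the start.

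For $\mathbf{C}=\mathbf{GTS}$, recall from Definition 4.6 that the product generalized topology is $\langle\bigcup_{s\in S}\pi_s^{-1}(\mathrm{Cov}_s)\rangle_X$, so $\mathrm{Op}_X$ is the union of all families in that generated collection. The first inclusion $\tau_2\subseteq\tau_1$ is the easy direction: each $\pi_s$ is strictly continuous from the $\mathbf{GTS}$-product to $X_s$, hence (weakly) continuous between the topologizations by Fact 3.8, so $\pi_s^{-1}$ of every set in $\tau(\mathrm{Op}_s)$ lies in $\tau_1$; since $\tau_2$ is generated by these preimages, $\tau_2\subseteq\tau_1$. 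The second inclusion $\tau_1\subseteq\tau_2$ requires understanding which sets are open in the $\mathbf{GTS}$-product. The plan is to argue that every open set of the $\mathbf{GTS}$-product is already weakly open, i.e. belongs to $\tau_2$. One does this by analyzing how the generalized topology $\langle\bigcup_s\pi_s^{-1}(\mathrm{Cov}_s)\rangle_X$ is built from $\bigcup_s\pi_s^{-1}(\mathrm{Cov}_s)$ via the closure axioms (A1)--(A8) of Definition 2.2.1 of \cite{Pie1}: each axiom, when it adds a new admissible family, only adds families whose members are obtained from the preimages $\pi_s^{-1}(U)$ (with $U\in\mathrm{Op}_s$) by finite intersection, arbitrary union, and passing to subsets witnessed by an admissible covering — and all such sets are finite intersections of subbasic Tychonoff-open sets unioned together, hence lie in $\tau_2$. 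Formally I would set $\mathcal{A}=\{\mathcal{U}\subseteq\mathcal{P}(X): \bigcup\mathcal{U}\in\tau_2 \text{ and every }U\in\mathcal{U}\text{ is }\tau_2\text{-open}\}$ and check that $\mathcal{A}$ contains $\bigcup_s\pi_s^{-1}(\mathrm{Cov}_s)$ and is closed under all the gts-axioms, so $\langle\bigcup_s\pi_s^{-1}(\mathrm{Cov}_s)\rangle_X\subseteq\mathcal{A}$; this gives $\mathrm{Op}_X\subseteq\tau_2$, whence $\tau_1=\tau(\mathrm{Op}_X)\subseteq\tau_2$.

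The main obstacle is the closure-under-axioms verification: one must check each of (A1)--(A8) carefully, and in particular the "coarsening" and "subcovering/refinement" axioms, ensuring in \textbf{ZF} (without choice) that the union of the relevant families stays $\tau_2$-open — here I would avoid choice exactly as in the proof of Proposition 3.4, by replacing, for a member $V$ of an admissible family, the family $\{U\in\pi_s^{-1}(\mathrm{Op}_s): U\subseteq V\}$ and working with canonical unions rather than selected representatives. A secondary point to handle cleanly is that preimages commute with the operations, $\pi_s^{-1}(\bigcup\mathcal{V})=\bigcup\pi_s^{-1}(\mathcal{V})$ etc., so that $\pi_s^{-1}(\mathrm{Cov}_s)$ indeed consists of $\tau_2$-open members with $\tau_2$-open unions. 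Once $\mathrm{Op}_X\subseteq\tau_2$ is established, the reverse inclusion from the previous paragraph closes the argument for $\mathbf{GTS}$, and the reduction handles $\mathbf{SS},\mathbf{GTS}_{pt},\mathbf{SS}_{pt}$.
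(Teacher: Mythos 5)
The paper states this as a Fact without proof, so there is no argument of the authors' to compare against; judged on its own, your plan is correct and essentially complete. Both inclusions are sound: each subbasic Tychonoff-open set $\pi_s^{-1}(U)$ with $U\in\text{Op}_s$ lies in $\text{Op}_X$ because $U$ belongs to some member of $\text{Cov}_s$, so $\tau_2\subseteq\tau(\text{Op}_X)$; and conversely every open set of the product gts is $\tau_2$-open. For that converse, though, you can spare yourself the axiom-by-axiom induction over (A1)--(A8): your collection $\mathcal{A}$ (the redundant condition ``$\bigcup\mathcal{U}\in\tau_2$'' follows from the other one) is exactly $\mathcal{P}(\tau_2)$, i.e.\ the generalized topology of the topological gts on $(X,\tau_2)$, which is already known to be a generalized topology; since it contains $\bigcup_{s}\pi_s^{-1}(\text{Cov}_s)$, the minimality built into the definition of $\langle\cdot\rangle_X$ yields $\langle\bigcup_{s}\pi_s^{-1}(\text{Cov}_s)\rangle_X\subseteq\mathcal{P}(\tau_2)$ at once, with no case analysis and no choice issues. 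Your reduction of the four categories to $\mathbf{GTS}$ is also right: $\mathbf{SS}$- and $\mathbf{SS}_{pt}$-products have the same open sets as the corresponding $\mathbf{GTS}$- and $\mathbf{GTS}_{pt}$-products (Remark 4.8), and partial topologization replaces $\text{Op}_X$ by $\tau(\text{Op}_X)$, which leaves $\tau(\cdot)$ unchanged. Two cosmetic points: the appeals to Facts 3.7 and 3.8 are misplaced (those concern subspaces); what you actually need is only that $\tau(\tau(\mathcal{B}))=\tau(\mathcal{B})$, that $\bigcup\text{EssFin}(\text{Op})=\text{Op}$, and that strict continuity of $\pi_s$ gives $\pi_s^{-1}(\tau(\text{Op}_s))\subseteq\tau(\text{Op}_X)$ because preimages commute with unions and finite intersections.
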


The beautiful equivalence of \textbf{UFT} with the statement that all Tychonoff products of compact Hausdorff spaces are compact (cf. Theorem 4.37 of \cite{Her}) can be adapted to the language of $\mathbf{GTS}$ as follows:

\begin{thm} 
That all $\mathbf{GTS}$-products of weakly Hausdorff topologically compact gtses are topologically compact is equivalent with \textbf{UFT}.
\end{thm}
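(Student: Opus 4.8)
The plan is to transfer the problem to ordinary topology by means of Fact 4.12, which says that the topologization of a $\mathbf{GTS}$-product is the Tychonoff product of the topologizations, and then to invoke the known $\mathbf{ZF}$-characterization of \textbf{UFT} as the Tychonoff theorem for compact Hausdorff spaces (Theorem 4.37 of \cite{Her}), exactly as was done in the sufficiency part of Theorem 2.8.

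\textsl{Necessity.} First I would assume \textbf{UFT} and take an arbitrary set-indexed family $\{X_s:s\in S\}$ of weakly Hausdorff topologically compact gtses. Since, by the remark preceding Proposition 3.15, a gts is weakly Hausdorff if and only if it is topologically Hausdorff, each topologization $(X_s)_{top}$ is a compact Hausdorff space. By Fact 4.12, the topologization of the $\mathbf{GTS}$-product $\prod^{\mathbf{GTS}}_{s\in S}X_s$ is the Tychonoff product $\prod_{s\in S}(X_s)_{top}$, and by Theorem 4.37 of \cite{Her} this product is compact under \textbf{UFT}. Hence $\prod^{\mathbf{GTS}}_{s\in S}X_s$ is topologically compact.

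\textsl{Sufficiency.} I would argue by contraposition. Assume \textbf{UFT} fails. By Theorem 4.37 of \cite{Her}, there is a set $S$ and finite discrete spaces $X_s$, $s\in S$, such that the Tychonoff product $\prod_{s\in S}X_s$ is not compact. Regard each $X_s$ as the topological gts with $\text{Cov}_{X_s}=\mathcal{P}(\mathcal{P}(X_s))$; since $X_s$ is finite, $(X_s)_{top}=X_s$ is compact and Hausdorff, so $X_s$ is a weakly Hausdorff topologically compact gts. Applying Fact 4.12 once more, the topologization of $\prod^{\mathbf{GTS}}_{s\in S}X_s$ is $\prod_{s\in S}(X_s)_{top}=\prod_{s\in S}X_s$, which is not compact. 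Thus this $\mathbf{GTS}$-product of weakly Hausdorff topologically compact gtses is not topologically compact, which contradicts the assumed statement; hence \textbf{UFT} holds.

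I do not expect a genuine obstacle here: the whole argument is carried by Fact 4.12 together with the equivalence of \textbf{UFT} with the Tychonoff theorem for compact Hausdorff (equivalently, finite discrete) spaces, and the only point requiring attention is to keep the reasoning inside \textbf{ZF}, which is automatic since both Fact 4.12 and Theorem 4.37 of \cite{Her} are \textbf{ZF}-results and finite choices are always available. If one wished, one could phrase the sufficiency part directly (without contraposition) by producing, for any non-compact product of finite discrete spaces, the corresponding $\mathbf{GTS}$-product as an explicit witness, but the contrapositive form is the cleanest.
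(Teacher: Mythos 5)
Your argument is correct and is essentially the paper's own (unwritten) proof: the paper states this theorem without any argument, merely noting that Herrlich's Theorem 4.37 ``can be adapted to the language of $\mathbf{GTS}$'', and the adaptation is exactly your reduction via the fact that topologization turns $\mathbf{GTS}$-products into Tychonoff products, applied in one direction to compact Hausdorff topologizations and in the other to finite discrete spaces viewed as topological gtses. The only slip is a citation number: the fact you invoke is Fact 4.10 of the paper, not 4.12 (which is the Kuratowski--Mr\'owka remark).
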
 
 
Since we identify $X\times Y$ with the product $\prod_{Z\in\{ X, Y\}} Z$, we use the symbols ${\pi}_X$ and ${\pi}_Y$ to denote the projections ${\pi}_X:X\times Y\to X$ and ${\pi}_Y:X\times Y\to Y$, respectively.

\begin{rem}
Looking at the proof of Kuratowski's Theorem 3.1.16 in \cite{En}, we can deduce that it is valid in \textbf{ZF} that a topological space $X$ is compact if and only if, for every topological space $Y$, the projection ${\pi}_Y: X\times Y\to Y$ is a closed mapping. Let us observe that \textbf{AC} is strongly involved in Mr\'owka's proof to this theorem given in \cite{Mr}.
\end{rem}
In the language of $\mathbf{GTS}$, we can get versions of Kuratowski-Mr\'owka theorem for gtses in \textbf{ZF}.

\begin{thm}
For every partially topological gts $X$, the following conditions are
equivalent:
\begin{enumerate}
\item[(i)] $X$ is topologically compact,
\item[(ii)]$X$ is $\mathbf{GTS}_{pt}$--complete (cf. Definition 4.1.1. of \cite{Pie2}).
\end{enumerate}
\end{thm}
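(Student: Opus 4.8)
The plan is to prove the equivalence by adapting the classical Kuratowski--Mr\'owka characterization, using the earlier observation (Remark 4.15) that the topological version holds in \textbf{ZF}: a topological space $Z$ is compact if and only if the projection $\pi_W: Z\times W\to W$ is closed for every topological space $W$. Since $X$ is partially topological, $\text{Op}_X=\tau(\text{Op}_X)$, so the topologization $X_{top}$ has the same open sets as $X$, and $X$ is topologically compact precisely when $X_{top}$ is compact as a topological space. Thus the task is to connect $\mathbf{GTS}_{pt}$-completeness of $X$ with closedness of projections $\pi_Y: X\times_{\mathbf{GTS}_{pt}}Y\to Y$ in the category $\mathbf{GTS}_{pt}$, and then with the corresponding topological statement via Fact 4.13, which tells us that $(X\times_{\mathbf{GTS}_{pt}}Y)_{top}$ is the Tychonoff product $X_{top}\times Y_{top}$.

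First I would unfold Definition 4.1.1 of \cite{Pie2}: $\mathbf{GTS}_{pt}$-completeness of $X$ should mean (in analogy with the categorical notion of completeness/properness used there) that for every object $Y$ of $\mathbf{GTS}_{pt}$ the projection $\pi_Y: X\times_{\mathbf{GTS}_{pt}}Y\to Y$ sends admissible closed families (equivalently, is a closed morphism in a suitable sense) to closed sets, or that $X\to \mathbf{1}$ is a proper/complete morphism in $\mathbf{GTS}_{pt}$. For the implication (i)$\Rightarrow$(ii): assume $X_{top}$ is compact. Given any partially topological $Y$, we must show $\pi_Y$ is closed. A closed set $C$ in $X\times_{\mathbf{GTS}_{pt}}Y$ is closed in the topologization $(X\times_{\mathbf{GTS}_{pt}}Y)_{top}=X_{top}\times Y_{top}$; by the \textbf{ZF}-valid topological Kuratowski theorem quoted in Remark 4.15, $\pi_{Y_{top}}(C)$ is closed in $Y_{top}$, hence weakly closed in $Y$; since $Y$ is partially topological, weakly closed equals closed, so $\pi_Y(C)$ is closed in $Y$. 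This gives $\mathbf{GTS}_{pt}$-completeness once one checks the precise formulation matches. For (ii)$\Rightarrow$(i): assuming $X$ is $\mathbf{GTS}_{pt}$-complete, I would feed in a well-chosen test object $Y$ — following the classical argument, take $Y$ to be a one-point extension of a space built from a filter on $X$ with empty intersection, given the partially topological (indeed, small or discrete-based) structure so that it lives in $\mathbf{GTS}_{pt}$ — and derive closedness of the relevant projection, which forces the filter to have a cluster point, yielding compactness of $X_{top}$. Again Fact 4.13 translates the $\mathbf{GTS}_{pt}$-product to a Tychonoff product so that the topological argument of \cite{En}, Theorem 3.1.16, applies verbatim in \textbf{ZF}.

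The main obstacle I anticipate is twofold. First, one must verify that the test space used in the (ii)$\Rightarrow$(i) direction genuinely belongs to $\mathbf{GTS}_{pt}$ and that the $\mathbf{GTS}_{pt}$-product with it has exactly the Tychonoff topology on its topologization (not something finer), so that the classical closed-projection witness for non-compactness survives; here Fact 4.13 is the crucial input and must be applied with care to a possibly large but in fact set-sized situation. Second, and more delicately, the whole argument must avoid \textbf{AC}: the classical Mr\'owka proof uses choice, so I would instead route everything through Kuratowski's proof as in Remark 4.15, which only needs the filter-convergence formulation and no choice, since $X$ is fixed and the test space is explicitly constructed. Once the definitional unfolding of $\mathbf{GTS}_{pt}$-completeness is pinned down and matched to "closed projection for all partially topological $Y$", the remaining steps are the careful but routine transfer between the gts and its topologization via partial topologicality and Fact 4.13.
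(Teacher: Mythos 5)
Your proposal is correct and follows essentially the same route as the paper: both directions reduce to the \textbf{ZF}-valid Kuratowski--Mr\'owka theorem (Remark 4.12) via the observation that the topologization of the $\mathbf{GTS}_{pt}$-product is the Tychonoff product (Fact 4.10), with partial topologicality identifying weakly closed with closed sets. The only cosmetic difference is in (ii)$\Rightarrow$(i): the paper simply notes that $\mathbf{GTS}_{pt}$-completeness gives $\mathbf{Top}$-completeness of $X_{top}$ (since every topological space is a partially topological test object) and cites Remark 4.12, whereas you re-run the filter/test-space construction inside that remark's proof.
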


\begin{proof}
Let us consider an arbitrary pair $X,Y$ of objects of  $\gts_{pt}$. The open sets in their product $X\times_{\mathbf{GTS}_{pt}} Y$ in 
$\mathbf{GTS}_{pt}$ are all unions of open boxes.
If $X$ is topologically compact, then for a closed $C$ in $X\times_{\mathbf{GTS}_{pt}} Y$ its projection ${\pi}_Y (C)$ is weakly closed, so closed.
If $X$ is  $\mathbf{GTS}_{pt}$--complete, then $X_{top}$ is $\mathbf{Top}$--complete (cf. Definition 4.1.1 of \cite{Pie2}), and $X$ is topologically compact by 
Remark 4.12.
\end{proof}

\begin{defi}
A gts $X$ will be called \textbf{topologically}  $\mathbf{GTS}$--\textbf{complete} if, for any object $Y$ in $\mathbf{GTS}$,  the projection ${\pi}_Y (C)$ of a weakly closed set $C$ in the product $X\times_{\mathbf{GTS}} Y$ is a weakly closed set in $Y$.
\end{defi}

The next theorem  is inspired by Question 4.1.3 of \cite{Pie2}.

\begin{thm}
A gts $X$ is topologically compact if and only if it is topologically 
$\mathbf{GTS}$--complete.
\end{thm}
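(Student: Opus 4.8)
The plan is to reduce the general assertion to the already-established partially topological case in Theorem 4.13. First I would prove the implication (compact $\Rightarrow$ topologically $\mathbf{GTS}$--complete) directly: suppose $X$ is topologically compact and $Y$ is an arbitrary gts. A weakly closed set $C$ in $X\times_{\mathbf{GTS}}Y$ is, by Fact 4.10, a closed set in the topologization $X_{top}\times Y_{top}$, i.e.\ in the Tychonoff product of the topological spaces $X_{top}$ and $Y_{top}$. Since $X_{top}$ is compact (Fact 4.10 again, or directly from the hypothesis), the classical Kuratowski argument, valid in \textbf{ZF} by Remark 4.12, gives that ${\pi}_Y(C)$ is closed in $Y_{top}$, that is, weakly closed in $Y$. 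This uses only that the $\mathbf{GTS}$-product has \emph{at least} the open box topology weakly, which is exactly Fact 4.10.

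For the converse, assume $X$ is topologically $\mathbf{GTS}$--complete; I want to deduce topological compactness. The natural move is to restrict the test objects $Y$ to partially topological ones, or even to pass through the partial topologization $X_{pt}$. Observe that $X$ and $X_{pt}$ have the same weakly open (hence the same weakly closed) sets, because $\tau(\text{Op}_X)=\tau((\text{Op}_X)_{pt})$ by Definition 4.2, so $X$ is topologically compact if and only if $X_{pt}$ is. Likewise, for a partially topological $Y$, the weakly closed sets of $X\times_{\mathbf{GTS}}Y$ and of $X_{pt}\times_{\mathbf{GTS}_{pt}}Y$ coincide, since in both cases the induced topology is the Tychonoff product topology on $X_{top}\times Y_{top}$ (Fact 4.10). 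Hence topological $\mathbf{GTS}$--completeness of $X$, restricted to partially topological test spaces $Y$, is equivalent to $\mathbf{GTS}_{pt}$--completeness of $X_{pt}$ in the sense of Definition 4.1.1 of \cite{Pie2}. By Theorem 4.13, $X_{pt}$ is $\mathbf{GTS}_{pt}$--complete iff it is topologically compact, and that is equivalent to $X$ being topologically compact. So it remains only to check that topological $\mathbf{GTS}$--completeness (quantified over \emph{all} gtses $Y$) in particular yields the property for all partially topological $Y$, which is immediate.

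The step I expect to be the main obstacle is the bookkeeping around weak closedness in the two different products: one must verify carefully that $\tau(\text{Op}_{X\times_{\mathbf{GTS}}Y})$ equals the Tychonoff product topology $\tau(\text{Op}_X)\times\tau(\text{Op}_Y)$ on $X\times Y$ when $Y$ is partially topological, so that ``weakly closed in the $\mathbf{GTS}$-product'' and ``weakly closed in the $\mathbf{GTS}_{pt}$-product'' genuinely coincide. Fact 4.10 asserts the topologizations agree, which is essentially this; but since the $\mathbf{GTS}$-product may carry strictly more open sets than open boxes (as Example 4.8 shows), I would make explicit that passing to the generated topology $\tau(-)$ collapses this difference. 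Everything else is the classical closed-projection characterization imported verbatim, and the appeal to Remark 4.12 to stay within \textbf{ZF}; no choice is needed because the only filter-theoretic content has been absorbed into Theorem 4.13.
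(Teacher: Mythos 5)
Your argument is correct, and the forward implication coincides with the paper's: Fact 4.10 identifies the weakly closed sets of $X\times_{\mathbf{GTS}}Y$ with the closed sets of the Tychonoff product $X_{top}\times Y_{top}$, and Remark 4.12 supplies the Kuratowski closed-projection property of compact spaces in \textbf{ZF}. Where you diverge is the converse: you route it through the partial topologization $X_{pt}$ and Theorem 4.13, observing that topological $\mathbf{GTS}$--completeness tested only against partially topological $Y$ already yields $\mathbf{GTS}_{pt}$--completeness of $X_{pt}$ (legitimate, because the $\mathbf{GTS}$- and $\mathbf{GTS}_{pt}$-products have the same topologization by Fact 4.10, hence the same weakly closed sets, and $\tau(\text{Op}_X)=\tau((\text{Op}_X)_{pt})$). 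The paper instead compresses both directions into a single chain of equivalences: topological compactness of $X$ means $X_{top}$ is $\mathbf{Top}$--complete (Remark 4.12), and $\mathbf{Top}$--completeness of $X_{top}$ is literally the same condition as topological $\mathbf{GTS}$--completeness of $X$, since every topological space occurs as the topologization of a (topological) gts and Fact 4.10 transports closed sets back and forth between the $\mathbf{GTS}$-product and the Tychonoff product. Your detour is harmless --- the proof of Theorem 4.13 itself bottoms out in the same Remark 4.12 --- but the direct route is shorter and makes visible that the converse needs only topological gtses as test objects, not the $pt$ machinery. The obstacle you flag (that the $\mathbf{GTS}$-product may carry strictly more open sets than open boxes, as in Example 4.8) is indeed dissolved by passing to the generated topology via Fact 4.10, exactly as you say.
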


\begin{proof}
It suffices to observe that the condition that $X$ is topologically compact means exactly that $X_{top}$ is $\mathbf{Top}$--complete and this is equivalent to the condition that each weakly closed subset of the product $X\times_{\mathbf{GTS}} Y$ has a weakly closed projection in $Y$.
\end{proof}
 
\section{Strict compactifications in $\mathbf{GTS}$}

To start our theory of strict compactifications of gtses, we need the following notions:
 
\begin{defi} For a pair $X, Y$ of gtses, a mapping $f:X\to Y$ is called:
\begin{enumerate}
\item[(i)] an \textbf{embedding} of $X$ into $Y$ if $f$ is a strictly continuous injection such that $f(\text{Cov}_X)\subseteq \langle \text{Cov}_Y{\cap}_2 f(X)\rangle_{f(X)}$;
\item[(ii)] a \textbf{strict embedding} of $X$ into $Y$  if $f$ is an embedding of $X$ into $Y$ such that $ f(\text{Cov}_X)\subseteq \text{Cov}_Y{\cap}_2 f(X)$.
\end{enumerate}
\end{defi}

\begin{defi} A \textbf{strict compactification} of a gts $X$ is an ordered pair $(\alpha X, \alpha )$ such that $\alpha X$ is a topologically compact gts and $\alpha: X\to \alpha X$ is a strict embedding such that $\alpha (X)$ is a dense subset of the topologization $(\alpha X)_{top}$ of $\alpha X$.
\end{defi}

\begin{rem} Let $\alpha X, \gamma X$ be Hausdorff compactifications of a topological space $X$. If there exists a homeomorphism $h:\alpha X\to\gamma X$ such that $h\circ \alpha =\gamma$, let us say that $\alpha X$ and $\gamma X$ are topologically equivalent and write $\alpha X=\gamma X$. When there is a continuous function $f:\alpha X\to \gamma X$ such that $f\circ\alpha =\gamma$, let us use the notation $\gamma X\leq\alpha X$ and say that $\gamma X$ is topologically smaller than $\alpha X$. Compactifications of topological spaces will be called \textbf{topological compactifications}.
\end{rem}

\begin{rem}
Let $X$ be a gts. In much the same way, as in the classical theory of compactifications of topological spaces, we denote a strict compactification $(\alpha X, \alpha)$ by $\alpha X$ from time to time. When $\alpha X$ and $\gamma X$ are strict compactifications of $X$, one can write $\alpha X\preceq \gamma X$ if and only if there exists a strictly continuous surjection $f:\gamma X\to \alpha X$ such that $f\circ \gamma = \alpha$. Let us call strict compactifications $\alpha X$ and $\gamma X$ of $X$ \textbf{strictly equivalent} if there exists a strict homeomorphism $h:\alpha X\to \gamma X$ such that $h\circ \alpha = \gamma$. It is clear that if $\alpha X$ and $\gamma X$ are weakly Hausdorff strict compactifications of $X$, then  $\alpha X$ and $\gamma X$ are strictly equivalent if and only if $\alpha X\preceq \gamma X$ and $\gamma X\preceq \alpha X$. We write $\alpha X\approx \gamma X$ when $\alpha X$ and $\gamma X$are strictly equivalent strict compactifications.
When $\alpha X$ is a strict compactification of a gts $X$ or a topological compactification of a topological space $X$, we identify each point $x\in X$ with $\alpha(x)$, the mapping $\alpha$ with $\text{id}_X$, $\alpha(X)$ with $X$ and, of course, the remainder $\alpha X\setminus \alpha(X)$ with $\alpha X\setminus X$.
\end{rem}
 
\begin{defi} 
Let $\alpha X$ be a topological compactification of the topologization $X_{top}$ of a gts $X$ and let $\tau_{\alpha X}$ be the topology of $\alpha X$. Define 
$$\text{Op}^{S}_{\alpha X}= \{ V\in {\tau}_{\alpha X}: {\alpha}^{-1}(V)\in\text{Op}_X\}$$ and $$\text{Cov}^{S}_{\alpha X}=\{ \mathcal{V}\subseteq \text{Op}^{S}_{\alpha X}: {\alpha}^{-1}(\mathcal{V})\in \text{Cov}_X\}.$$ The collection $\text{Cov}^{S}_{\alpha X}$ will be called \textbf{the strongest generalized topology in $\alpha X$ associated with} $\text{Cov}_X$. The gts $${\alpha}^{S}X=((\alpha X, \text{Op}^{S}_{\alpha X}, \text{Cov}^{S}_{\alpha X}), \alpha)$$ will be called \textbf{the strongest strict compactification of $X$ corresponding to $\alpha X$}.
\end{defi}

\begin{prop} Let $X$ be a gts and let $\alpha X$ be a compactification of the topologization $X_{top}$ of $X$. Then the following hold true:
\begin{enumerate}
\item[(i)] ${\alpha}^{S}X$ is a strict compactification of the gts $X$; 
\item[(ii)] if $\tau (\text{Op}^{S}_{\alpha X})$ is Hausdorff, then ${\tau}_{\alpha X}=\tau (\text{Op}^{S}_{\alpha X})$;
\item[(iii)] if $X\in\tau_{\alpha X}$, then the remainder $Y=\alpha X\setminus X$ as a subspace of the gts ${\alpha}^{S}X$ is a topological gts.
\end{enumerate}
\end{prop}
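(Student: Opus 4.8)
The three claims are essentially bookkeeping with the definitions of $\text{Op}^S_{\alpha X}$ and $\text{Cov}^S_{\alpha X}$, so the plan is to verify them in order.

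For (i), I would first check that $\text{Cov}^S_{\alpha X}$ really is a generalized topology in $\alpha X$: since $\alpha^{-1}$ commutes with unions and with the operations in the axioms (A1)--(A8) of a gts (cf. Definition 2.2.1 of \cite{Pie1}), and $\text{Cov}_X$ satisfies these axioms, pulling back along the (injective on $X$, but here we only need preimages) map $\alpha$ transports them back to families of subsets of $\alpha X$ whose traces on $X$ lie in $\text{Cov}_X$; one must only note that $\bigcup \text{Cov}^S_{\alpha X}=\text{Op}^S_{\alpha X}$, which is immediate from the definitions. Then $\alpha:X\to\alpha^S X$ is strictly continuous because $\alpha^{-1}(\mathcal{V})\in\text{Cov}_X$ for every $\mathcal{V}\in\text{Cov}^S_{\alpha X}$ by definition; it is injective since $\alpha X$ is a (Hausdorff, hence $T_1$) compactification of $X_{top}$ and $\alpha$ is an embedding of topological spaces; and it is a \emph{strict} embedding because, for $\mathcal{V}\in\text{Cov}_X$, the family $\{V\in\tau_{\alpha X}: \alpha^{-1}(V)\in\mathcal{V}\}$... more carefully, I would argue that for $\mathcal V\in\text{Cov}_X$ one has $\alpha(\mathcal V)\subseteq \text{Cov}^S_{\alpha X}\cap_2\alpha(X)$: each $\alpha(V)=W\cap\alpha(X)$ for some $W\in\tau_{\alpha X}$, and one can take $W\in\text{Op}^S_{\alpha X}$ since $\alpha^{-1}(W)=V\in\text{Op}_X$, giving $\alpha(\mathcal V)=\mathcal W\cap_2\alpha(X)$ with $\mathcal W\in\text{Cov}^S_{\alpha X}$. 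Density of $\alpha(X)$ in $(\alpha X)_{top}=(\alpha X,\tau(\text{Op}^S_{\alpha X}))$ follows because $\tau(\text{Op}^S_{\alpha X})\subseteq\tau_{\alpha X}$ and $\alpha(X)$ is already $\tau_{\alpha X}$-dense. Topological compactness of $\alpha^S X$: its topologization has topology $\tau(\text{Op}^S_{\alpha X})$, which is coarser than the compact topology $\tau_{\alpha X}$, and a coarser topology on the same underlying set is still compact — this needs no choice.

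For (ii), assume $\tau(\text{Op}^S_{\alpha X})$ is Hausdorff. We already have $\tau(\text{Op}^S_{\alpha X})\subseteq\tau_{\alpha X}$; the reverse inclusion follows from the classical \textbf{ZF} fact that a continuous bijection from a compact space to a Hausdorff space is a homeomorphism — here $\mathrm{id}:(\alpha X,\tau_{\alpha X})\to(\alpha X,\tau(\text{Op}^S_{\alpha X}))$ is continuous, the domain is compact, the codomain is Hausdorff, so it is a homeomorphism and the two topologies coincide. For (iii), when $X\in\tau_{\alpha X}$ the remainder $Y=\alpha X\setminus X$ is closed in $\alpha X$; I would show the subspace gts structure $\langle\text{Cov}^S_{\alpha X}\cap_2 Y\rangle_Y$ is topological, i.e. equals $\mathcal P(\text{Op}_Y)$. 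The point is that any open set $U$ of $Y$ in the subspace topologization sits inside some $W\in\text{Op}^S_{\alpha X}$ with $W\cap Y=U$; since $X$ is open, $W\cap Y$ and $W$ differ only on the open set $X$, and $\alpha^{-1}(W\cup X)=X\in\text{Op}_X$, so one can enlarge $W$ to an element of $\text{Op}^S_{\alpha X}$ whose trace on $Y$ is still $U$ while arranging that every subfamily of $\{$such traces$\}$ is admissible; concretely, for any $\mathcal U\subseteq\text{Op}_Y$ pick for each $U\in\mathcal U$ a witness $W_U\in\text{Op}^S_{\alpha X}$, replace it by $W_U\cup X$ (still in $\text{Op}^S_{\alpha X}$ since its $\alpha$-preimage is $X$), and observe $\{W_U\cup X:U\in\mathcal U\}\cup\{X\}$ has $\alpha$-preimage the family $\{X\}\cup\{X\}$... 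I'd instead note $\mathcal U\cup\{X\cap Y\}=\mathcal U\cup\{\emptyset\}$ and that $\{W_U\cup X: U\in\mathcal U\}$ is a family of opens of $\alpha X$ with all preimages equal to $X\in\text{Op}_X$, hence in $\text{Cov}^S_{\alpha X}$ by axiom closure, whence its trace $\mathcal U$ (up to $\emptyset$) lies in $\text{Cov}^S_{\alpha X}\cap_2 Y$; since this holds for arbitrary $\mathcal U$, the subspace is topological.

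\textbf{Main obstacle.} The delicate point is (iii): one must be careful that passing from a witness $W_U$ to $W_U\cup X$ keeps it in $\text{Op}^S_{\alpha X}$ and that the resulting family of traces is \emph{all} of $\mathcal U$ (not just cofinally so), and that arbitrary — not merely essentially finite — subfamilies of $\text{Op}_Y$ become admissible, which is exactly the content of "$Y$ is a topological gts." The trick $W\mapsto W\cup X$ is what makes the admissibility axioms collapse on $Y$, because all these enlarged sets have the same $\alpha$-preimage $X\in\text{Op}_X$, and $\text{Cov}_X$ being a gts contains, together with any family, every family refining it with the same union (axiom (A7) used in Proposition 4.9), so $\{W_U\cup X\}$ and hence its trace is admissible. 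I would also double-check in (i) that no form of \textbf{AC} sneaks in when choosing the witnesses $W$ for the strictness claim: one can take the \emph{canonical} witness $W=\alpha X\setminus\mathrm{cl}_{\alpha X}(\alpha(X)\setminus\alpha(V))$ or simply the union of all $\tau_{\alpha X}$-open sets whose $\alpha$-preimage lies in $\text{Op}_X$ and whose trace on $\alpha(X)$ is contained in $\alpha(V)$, making the construction choice-free.
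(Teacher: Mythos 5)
Your proposal is correct and follows essentially the same route as the paper: (i) is a direct verification from the definition of $\text{Cov}^{S}_{\alpha X}$ (with the canonical witnesses $\text{Ex}_{\alpha X}(V)$ avoiding any use of choice), (ii) is the minimality of compact Hausdorff topologies (Corollary 3.1.14 of \cite{En}, equivalently your continuous-bijection argument), and (iii) uses exactly the paper's trick of replacing each weakly open set by its union with $X$, so that the whole family has trace $\{X\}\in\text{Cov}_X$ and is therefore admissible. The only cosmetic remark is that in (iii) the enlarged set $W_U\cup X$ equals $U\cup X$ independently of the chosen witness $W_U$, so the family $\{U\cup X:U\in\mathcal{U}\}$ is defined without any selection and your worry about choices there evaporates.
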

\begin{proof} It is easy to check that (i) holds. To prove (ii), it is sufficient to apply the fact that, among Hausdorff topologies in $\alpha X$, compact topologies are minimal (cf. Corollary 3.1.14 of \cite{En}). Now, assume that $X\in{\tau}_{\alpha X}$. For each $V\in{\tau}_{\alpha X}$, the set $V\cup X$ is in $\text{Op}^{S}_{\alpha X}$, so $V\setminus X= V\cap Y\in\text{Op}_Y$. This implies that ${\tau}_{\alpha X}\cap_1 Y=\text{Op}_Y$. If $\emptyset \neq \mc{U}\subseteq \text{Op}_Y$, then $\mc{U}\cup_{1}X\subseteq\text{Op}^{S}_{\alpha X}$ and $(\mc{U}\cup_1 X)\cap_1 X=\{ X\}\in\text{Cov}_X$, so $\mc{U}\cup_1 X\in\text{Cov}^{S}_{\alpha X}$. Since $(\mc{U}\cup_1 X)\cap_1 Y=\mc{U}$, we have that $\mc{U}\in\text{Cov}_Y$. This is why $\text{Cov}_Y=\mc{P}(\text{Op}_Y)$. In consequence, (iii) holds.
\end{proof}
Let us give an example showing that the topology $\tau (\text{Op}^{S}_{\alpha X})$ on $\alpha X$ generated by $\text{Op}^{S}_{\alpha X}$ need not be equal to ${\tau}_{\alpha X}$.
\begin{exam} 
For $X=\omega$, the set $\alpha X=\omega + 1$ equipped with the topology $\tau = \mathcal{P}(X)\cup\{ U\subseteq \alpha X: \omega\in U \text{and} \mid X\setminus U\mid <\omega\}$ is Alexandoff's compactification of the discrete subspace $X$ of $\alpha X$. Let $\text{Op}_X=\{X\}\cup\{A\subseteq X: \mid A\mid <\omega\}$ and $\text{Cov}_X=\text{EssFin}(\text{Op}_X)$. Then $\tau$ is different from the topology in $\alpha X$ generated by $\text{Op}^{S}_{\alpha X}$ where $\text{Op}^{S}_{\alpha X}= \{ V\in\tau: V\cap X\in\text{Op}_X\}$.
\end{exam}

\begin{defi} 
Let $X$ be a gts and let $\alpha X$ be a topological compactification  of  $X_{top}$.  We define an operator $\text{Ex}_{\alpha X}:\text{Op}_X\to \text{Op}^{S}_{\alpha X}$ by $$\text{Ex}_{\alpha X}(U)=\alpha X\setminus\text{cl}_{\alpha X}( X\setminus U)$$ for each $U\in\text{Op}_X$.
 The collection $\text{Op}^{w}_{\alpha X}=\{ \text{Ex}_{\alpha X}(U): U\in\text{Op}_X\}$ will be called \textbf{the wallmanian part} of $\text{Op}^{S}_{\alpha X}$, while the collection $$\text{Cov}^{w}_{\alpha X}=\{ \mathcal{V}\in\text{Cov}^{S}_{\alpha X}: \mathcal{V}\subseteq \text{Op}^{w}_{\alpha X}\}$$ will be called the wallmanian part of $\text{Cov}^{S}_{\alpha X}$. 
\end{defi}

\begin{rem}
In the definition above,  $\text{Ex}_{\alpha X}(U)$ is the biggest (with respect to inclusion) set $V\in\text{Op}^{S}_{\alpha X}$ such that $V\cap X=U$ (cf. Section 7.1 of \cite{En}). Furthermore, we have $\bigcup\text{Cov}^{w}_{\alpha X}=\text{Op}^{w}_{\alpha X}$ and $\text{Op}^{w}_{\alpha X}$ is stable under finite intersections;  however, it may happen that $\text{Op}^{w}_{\alpha X}$ is not stable under finite unions.
\end{rem}

\begin{exam}
Let $X$ be the small partially topological real line $\mathbb{R}_{st}$ (cf. Definition 1.2 (iii)). Let $\alpha X$ be the Alexandroff topological compactification of $X_{top}$. Then $\text{Ex}_{\alpha X}((-\infty; 0))\cup\text{Ex}_{\alpha X}((0; +\infty))\notin\text{Op}^{w}_{\alpha X}$.
\end{exam}

\begin{defi} Let $X$ be a gts. For a topological compactification $\alpha X$ of $X_{top}$, let us say that the operator $\text{Ex}_{\alpha X}$ is:
\begin{enumerate}
\item[(i)] \textbf{finitely additive} if $\text{Op}^w_{\alpha X}$ is stable under finite unions;
\item[(ii)] \textbf{admissibly additive} if, for each $\mathcal{U}\in\text{Cov}_X$, the following equality holds: $$\text{Ex}_{\alpha X}(\bigcup_{U\in\mathcal{U}}U)=\bigcup_{U\in\mathcal{U}}\text{Ex}_{\alpha X}(U).$$
\end{enumerate}
\end{defi}

\begin{prop} 
When $X$ is a gts and $\alpha X$ is a compactification of $X_{top}$, then $\text{Cov}^{w}_{\alpha X}$ is a generalized topology in $\alpha X$ if and only if the operator $\text{Ex}_{\alpha X}$ is admissibly additive.
\end{prop}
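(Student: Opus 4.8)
The plan is to establish both implications by unwinding the definitions of $\text{Cov}^{w}_{\alpha X}$, of admissible additivity of $\text{Ex}_{\alpha X}$, and of a generalized topology (the axioms (A1)--(A8) of Definition 2.2.1 of \cite{Pie1}), and using the structural facts about $\text{Op}^{w}_{\alpha X}$ already recorded in Remark 5.10: namely that $\bigcup \text{Cov}^{w}_{\alpha X} = \text{Op}^{w}_{\alpha X}$, that $\text{Op}^{w}_{\alpha X}$ is stable under finite intersections, and that $\text{Ex}_{\alpha X}(U)$ is the largest member of $\text{Op}^{S}_{\alpha X}$ meeting $X$ in $U$. A useful preliminary observation, to be proved first, is that $\text{Ex}_{\alpha X}$ is monotone and that for any $\mathcal{U} \in \text{Cov}_X$ one always has the inclusion $\bigcup_{U \in \mathcal{U}} \text{Ex}_{\alpha X}(U) \subseteq \text{Ex}_{\alpha X}(\bigcup_{U \in \mathcal{U}} U)$, since each $\text{Ex}_{\alpha X}(U)$ is contained in $\text{Ex}_{\alpha X}(\bigcup \mathcal{U})$ by monotonicity; thus admissible additivity is exactly the reverse inclusion, and it is equivalent to asking that $\bigcup_{U \in \mathcal{U}} \text{Ex}_{\alpha X}(U)$ already be of the form $\text{Ex}_{\alpha X}(W)$ for some open $W$ in $X$ (necessarily $W = \bigcup \mathcal{U}$, by intersecting with $X$).

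For the direction assuming $\text{Ex}_{\alpha X}$ admissibly additive, I would verify that $\text{Cov}^{w}_{\alpha X}$ satisfies the generalized-topology axioms. Most axioms pass to $\text{Cov}^{w}_{\alpha X}$ from the fact that $\text{Cov}^{S}_{\alpha X}$ is already a generalized topology (this is Proposition 5.6(i), essentially): the point to check is closure under the operations that could leave $\text{Op}^{w}_{\alpha X}$, i.e.\ the axiom requiring $\bigcup \mathcal{V} \in \text{Op}_{\alpha X}^{S}$-type unions to stay admissible, and the coarsening/refinement axioms. The crucial use of admissible additivity is precisely in the gluing axiom: given an admissible family $\mathcal{V} \in \text{Cov}^{w}_{\alpha X}$, its union $\bigcup \mathcal{V} = \bigcup_{U} \text{Ex}_{\alpha X}(U)$ (over the corresponding $\mathcal{U} = \alpha^{-1}(\mathcal{V}) \in \text{Cov}_X$) equals $\text{Ex}_{\alpha X}(\bigcup \mathcal{U})$ by hypothesis, hence lies in $\text{Op}^{w}_{\alpha X}$; combined with the stability of $\text{Op}^{w}_{\alpha X}$ under finite intersections from Remark 5.10, this lets every axiom of a generalized topology go through. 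Finite additivity of $\text{Ex}_{\alpha X}$ is the special case of two-element $\mathcal{U}$, so it too follows and keeps $\text{Op}^{w}_{\alpha X}$ a ring, which is what one needs for the finite-intersection and finite-refinement axioms.

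For the converse, assume $\text{Cov}^{w}_{\alpha X}$ is a generalized topology and fix $\mathcal{U} \in \text{Cov}_X$. Set $\mathcal{V} = \{\text{Ex}_{\alpha X}(U): U \in \mathcal{U}\} \subseteq \text{Op}^{w}_{\alpha X}$. Since $\alpha^{-1}(\text{Ex}_{\alpha X}(U)) = U$ for each $U$, we get $\alpha^{-1}(\mathcal{V}) = \mathcal{U} \in \text{Cov}_X$, so $\mathcal{V} \in \text{Cov}^{S}_{\alpha X}$ and, lying inside $\text{Op}^{w}_{\alpha X}$, in fact $\mathcal{V} \in \text{Cov}^{w}_{\alpha X}$. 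Because $\text{Cov}^{w}_{\alpha X}$ is a generalized topology, $\bigcup \mathcal{V} \in \bigcup \text{Cov}^{w}_{\alpha X} = \text{Op}^{w}_{\alpha X}$, i.e.\ $\bigcup_{U \in \mathcal{U}} \text{Ex}_{\alpha X}(U) = \text{Ex}_{\alpha X}(W)$ for some $W \in \text{Op}_X$. Intersecting both sides with $X$ gives $W = \bigcup_{U \in \mathcal{U}} U$, which is the required identity; that is, $\text{Ex}_{\alpha X}$ is admissibly additive.

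The main obstacle I anticipate is bookkeeping in the first direction: being careful which of the axioms (A1)--(A8) genuinely need admissible additivity versus which are inherited for free from $\text{Cov}^{S}_{\alpha X}$, and handling the subtle point that $\text{Op}^{w}_{\alpha X}$ need not be closed under arbitrary unions a priori (only admissible ones), so one must route every union through a member of $\text{Cov}^{w}_{\alpha X}$ before applying the hypothesis. The monotonicity lemma and the identity $\alpha^{-1} \circ \text{Ex}_{\alpha X} = \text{id}$ on $\text{Op}_X$ are the two small facts that make all the verifications mechanical once stated; I would isolate them at the start. No choice principle beyond $\mathbf{ZF}$ is needed, since everything is definitional.
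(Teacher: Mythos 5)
Your proposal is correct and follows essentially the same route as the paper: the converse direction is exactly the paper's appeal to the axiom that unions of admissible families are open (applied to $\mathcal{V}=\{\text{Ex}_{\alpha X}(U):U\in\mathcal{U}\}$, then intersected with $X$), and the forward direction is the same axiom-by-axiom verification, with admissible additivity doing the work in the union/gluing axioms and finite additivity (the two-element case) handling the finite-intersection bookkeeping. The paper is no more detailed than you are, except that it writes out the gluing axiom (A8) explicitly as its sample verification, which your sketch would also cover.
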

\begin{proof}
If $\text{Cov}^{w}_{\alpha X}$ is a generalized topology in $\alpha X$, then it follows from condition (A4) of Definition 2.2.1 of \cite{Pie1} that $\text{Ex}_{\alpha X}$ is admissibly additive. On the other hand, if $\text{Ex}_{\alpha X}$ is admissibly additive, it is not hard to check that $\text{Cov}^w_{\alpha X}$ satisfies conditions (A1)-(A8) of Definition 2.2.1 of \cite{Pie1} or the conditions of Definition 2.2.2 of \cite{Pie1}. For instance, to verify that $\text{Cov}^w_{\alpha X}$ satisfies condition (A8) from Definition 2.2.1 of \cite{Pie1}, when $\text{Ex}_{\alpha X}$ is admissibly additive, take a collection $\mathcal{U}\in \text{Cov}^w_{\alpha X}$ and any set $W\subseteq\bigcup\mathcal{U}$, such that $W\cap U\in\text{Op}^w_{\alpha X}$ whenever $U\in\mathcal{U}$. Then, for each $U\in\mathcal{U}$, we have $W\cap U\cap X\in\text{Op}_X$. Moreover, $X\cap_1\mathcal{U}\in\text{Cov}_X$, so $W\cap X\in\text{Op}_X$ and $(W\cap X)\cap_1\mathcal{U}\in\text{Cov}_X$. Since $\text{Ex}_{\alpha X}(W\cap U\cap X)=W\cap U$ for each $U\in\mathcal{U}$, we have $W=\bigcup_{U\in\mathcal{U}}(W\cap U)\in\text{Op}^{S}_{\alpha X}$. Furthermore, $\text{Ex}_{\alpha X}(W\cap X)=\text{Ex}_{\alpha X}[\bigcup_{U\in\mathcal{U}}(W\cap U\cap X)]=\bigcup_{U\in\mathcal{U}}\text{Ex}_{\alpha X}(W\cap U\cap X)=\bigcup_{U\in\mathcal{U}}(W\cap U)= W$.  
\end{proof}

\begin{rem}
Let $\mathcal{C}$ be a Wallman base of a semi-normal space $X$. The mapping $w_{\mathcal{C}}:X\to \mathcal{W}(X, \mathcal{C})$, defined by $w_{\mathcal{C}}(x)=\{A\in\mathcal{C}: x\in A\}$ for $x\in X$, is the most commonly used homeomorphic embedding of $X$ into $\mathcal{W}(X, \mathcal{C})$. By Theorem  2.8, the pair $(\mathcal{W}(X, \mathcal{C}), w_{\mathcal{C}})$ is a Hausdorff compactification of $X$ in \textbf{ZFU} but it is not necessarily a compactification of $X$ in a model for \textbf{ZF}.
\end{rem} 

 \begin{prop} 
Suppose that $X$ is a weakly normal gts and that the Wallman space $\mathcal{W}(X, \text{Cl}_X)$ is compact. Then, for $\mathcal{C}=\text{Cl}_X$, the collection $\text{Op}^{w}_{\mathcal{W}(X,\mathcal{C})}$ is stable under finite unions and generates the topology of $\mathcal{W}(X, \mathcal{C})$.
 \end{prop}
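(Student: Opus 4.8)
The plan is to show that the wallmanian part $\text{Op}^{w}_{\mathcal{W}(X,\mathcal{C})}$ coincides with the canonical open base of the Wallman topology, after which both assertions become formal. First I would record that, since $X$ is weakly normal, $\mathcal{C}=\text{Cl}_X$ is a Wallman base of the topologization $X_{top}$: it is a ring of sets by the axioms of a gts; condition (iii) of Definition 1.6 follows by applying weak normality to a disjoint pair $A_1,A_2\in\mathcal{C}$ and taking complements of two disjoint open sets separating them; and for condition (ii), given a point $x$ outside a weakly closed set $A$, one first picks $U\in\text{Op}_X$ with $x\in U\subseteq X\setminus A$, so that $X\setminus U\in\mathcal{C}$ is a set closed in the gts, containing $A$ and missing $x$, and then separates $\{x\}$ from $X\setminus U$ by weak normality to produce the required member of $\mathcal{C}$. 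Hence $X_{top}$ is semi-normal, $w_{\mathcal{C}}:X_{top}\to\mathcal{W}(X,\mathcal{C})$ is a dense homeomorphic embedding (by the preceding remark; density because $[X]_{\mathcal{C}}=\mathcal{W}(X,\mathcal{C})$), and $\mathcal{W}(X,\mathcal{C})$ is Hausdorff by the standard argument using condition (iii) of the Wallman base. Together with the standing hypothesis that $\mathcal{W}(X,\mathcal{C})$ is compact, this makes $\alpha X:=\mathcal{W}(X,\mathcal{C})$ a topological compactification of $X_{top}$ along $\alpha=w_{\mathcal{C}}$, so that $\text{Ex}_{\alpha X}$ and $\text{Op}^{w}_{\alpha X}$ are defined as in Definition 5.8.

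The crux is the identity $\text{cl}_{\alpha X}(w_{\mathcal{C}}(A))=[A]_{\mathcal{C}}$ for every $A\in\mathcal{C}$. The inclusion $\subseteq$ is clear because $[A]_{\mathcal{C}}$ is closed and contains $w_{\mathcal{C}}(A)$. For $\supseteq$ I would use that ultrafilters of the bounded distributive lattice $\mathcal{C}$ are prime (maximal proper filters in a bounded distributive lattice are prime), so the basic open subsets of $\mathcal{W}(X,\mathcal{C})$ are exactly the sets $\alpha X\setminus[B]_{\mathcal{C}}$ with $B\in\mathcal{C}$; if $p\in[A]_{\mathcal{C}}$ and $B\notin p$, then $A\not\subseteq B$ (otherwise $B\supseteq A\in p$ would force $B\in p$), so any $x\in A\setminus B$ gives $w_{\mathcal{C}}(x)\in w_{\mathcal{C}}(A)\cap(\alpha X\setminus[B]_{\mathcal{C}})$, which proves $p\in\text{cl}_{\alpha X}(w_{\mathcal{C}}(A))$. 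Applying this with $A=X\setminus U$ for $U\in\text{Op}_X$ and recalling the definition of $\text{Ex}_{\alpha X}$ I obtain
$$\text{Ex}_{\alpha X}(U)=\alpha X\setminus\text{cl}_{\alpha X}(X\setminus U)=\alpha X\setminus[X\setminus U]_{\mathcal{C}},$$
and hence, since $U\mapsto X\setminus U$ is a bijection from $\text{Op}_X$ onto $\mathcal{C}$, that $\text{Op}^{w}_{\alpha X}=\{\alpha X\setminus[A]_{\mathcal{C}}:A\in\mathcal{C}\}$.

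Both claims now follow at once. For stability under finite unions one computes $(\alpha X\setminus[A_1]_{\mathcal{C}})\cup(\alpha X\setminus[A_2]_{\mathcal{C}})=\alpha X\setminus([A_1]_{\mathcal{C}}\cap[A_2]_{\mathcal{C}})=\alpha X\setminus[A_1\cap A_2]_{\mathcal{C}}$, with $A_1\cap A_2\in\mathcal{C}$ because $\mathcal{C}$ is a ring; equivalently, $\text{Ex}_{\alpha X}$ is finitely additive in the sense of Definition 5.11. For the generation claim, $\{[A]_{\mathcal{C}}:A\in\mathcal{C}\}$ is by Definition 2.6 a closed base of $\mathcal{W}(X,\mathcal{C})$, so the family $\text{Op}^{w}_{\alpha X}$ of its complements is an open base and therefore generates the topology of $\mathcal{W}(X,\mathcal{C})$.

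I expect the only delicate points to be the verification that $\mathcal{C}=\text{Cl}_X$ is a Wallman base of $X_{top}$ — especially checking condition (ii) for every weakly closed set, and not merely for sets closed in the gts — together with the bookkeeping that certifies $\mathcal{W}(X,\mathcal{C})$ as a genuine topological compactification of $X_{top}$, so that $\text{Ex}_{\alpha X}$ is legitimately defined; once the formula $\text{Ex}_{\alpha X}(U)=\alpha X\setminus[X\setminus U]_{\mathcal{C}}$ is established, everything else is purely formal.
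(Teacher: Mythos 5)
Your proof is correct and follows essentially the same route as the paper: both hinge on the identity $\text{cl}_{\mathcal{W}(X,\mathcal{C})}(A)=[A]_{\mathcal{C}}$, yielding $\text{Ex}_{\mathcal{W}(X,\mathcal{C})}(U)=\mathcal{W}(X,\mathcal{C})\setminus[X\setminus U]_{\mathcal{C}}$ and hence that $\text{Op}^{w}_{\mathcal{W}(X,\mathcal{C})}$ is the family of complements of the canonical closed base, from which both claims follow via $[A]_{\mathcal{C}}\cap[B]_{\mathcal{C}}=[A\cap B]_{\mathcal{C}}$. The only difference is that you spell out the facts the paper dispatches with a citation to Porter--Woods (that $\text{Cl}_X$ is a Wallman base, that $w_{\mathcal{C}}$ is a dense embedding, and the closure identity itself), which is a welcome but inessential elaboration.
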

 \begin{proof}
It is known and easy to observe that, for arbitrary $A,B\in\mathcal{C}$, the equality $[A]_{\mathcal{C}}\cap [B]_{\mathcal{C}}= [A\cap B]_{\mathcal{C}}$ holds (cf. e.g. \cite{PW}). Let $U\in\text{Op}_X$. Then $X\setminus U\in\mathcal{C}$, so $\text{cl}_{\mathcal{W}(X, \mathcal{C})}(X\setminus U)=[X\setminus U]_{\mathcal{C}}$. This implies that $\text{Ex}_{\mathcal{W}(X, \mathcal{C})}(U)=\mathcal{W}(X, \mathcal{C})\setminus [X\setminus U]_{\mathcal{C}}$. Therefore, $\text{Op}^{w}_{\mathcal{W}(X, \mathcal{C})}=\{\mathcal{W}(X, \mathcal{C})\setminus [A]_{\mathcal{C}}: A\in\mathcal{C}\}$ and since $\{[A]_{\mathcal{C}}: A\in\mathcal{C}\}$ is a stable under finite intersections closed base for $\mathcal{W}(X,\mathcal{C})$, we deduce that $\text{Op}^{w}_{\mathcal{W}(X,\mathcal{C})}$ is stable under finite unions and it generates the topology of $\mathcal{W}(X,\mathcal{C})$.
 \end{proof} 

One can easily check that Taimanov's Theorem 3.2.1 of \cite{En} is valid in \textbf{ZF}. Therefore, we can say that it is proved independently in \cite{Bl} and \cite {W1} that the following useful modification of Taimanov's theorem also holds in \textbf{ZF}.

\begin{thm}
Let $X$ be a dense subspace of a topological space $T$ and let $f$ be a continuous mapping of $X$ into a compact Hausdorff space $Y$. Then $f$ is continuously extendable to a mapping $\tilde{f}:T\to Y$ if and only if there exists a closed base $\mathcal{F}$ of $Y$ such that $\mathcal{F}$ is stable under finite intersections and has the property that $\text{cl}_{T}f^{-1}(A)\cap\text{cl}_{T}f^{-1}(B)=\emptyset$ for each pair $A,B$ of disjoint members of $\mathcal{F}$. 
\end{thm}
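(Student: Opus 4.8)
The plan is to prove the modification of Taimanov's theorem by reducing it to the classical Taimanov theorem (Theorem 3.2.1 of \cite{En}), which we have agreed holds in \textbf{ZF}. Recall that the classical statement says: a continuous $f:X\to Y$ into a compact Hausdorff $Y$ extends continuously over a space $T$ in which $X$ is dense if and only if $\mathrm{cl}_T f^{-1}(F_1)\cap\mathrm{cl}_T f^{-1}(F_2)=\emptyset$ for every pair $F_1,F_2$ of disjoint \emph{closed} sets in $Y$. So for the ``only if'' direction there is nothing new: if $\tilde f$ exists, then in particular the condition holds for \emph{all} disjoint closed pairs, and a fortiori for the members of any stable-under-finite-intersections closed base $\mathcal F$, since members of a closed base are closed. (No base is even needed here; one can take $\mathcal F$ to be the family of all closed sets of $Y$, which is trivially a closed base stable under finite intersections.)

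The substance is the ``if'' direction: we are given a closed base $\mathcal F$ of $Y$, stable under finite intersections, with $\mathrm{cl}_T f^{-1}(A)\cap\mathrm{cl}_T f^{-1}(B)=\emptyset$ for disjoint $A,B\in\mathcal F$, and we must upgrade this to the same separation statement for \emph{arbitrary} disjoint closed sets $C_1,C_2$ of $Y$, after which Taimanov's classical theorem finishes the job. First I would fix disjoint closed $C_1,C_2\subseteq Y$ and a point $t\in T$, aiming to show $t\notin \mathrm{cl}_T f^{-1}(C_1)$ or $t\notin\mathrm{cl}_T f^{-1}(C_2)$. Using that $\mathcal F$ is a closed base: for each $y\in C_1$, since $y\notin C_2$ and $C_2$ is closed, there is $F_y\in\mathcal F$ with $y\in Y\setminus F_y$ and $C_2\subseteq F_y$; then $\{Y\setminus F_y:y\in C_1\}$ is an open cover of the compact set $C_1$, so finitely many $Y\setminus F_{y_1},\dots,Y\setminus F_{y_n}$ cover $C_1$. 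Put $A=F_{y_1}\cap\cdots\cap F_{y_n}\in\mathcal F$ (stability under finite intersections). Then $C_2\subseteq A$ and $A\cap C_1=\emptyset$. Now apply the same argument to the disjoint closed pair $C_1$ and $A$: for each $y\in C_1$ pick $G_y\in\mathcal F$ with $y\notin G_y$ and $A\subseteq G_y$; compactness of $C_1$ and finite intersections give $B\in\mathcal F$ with $C_1\subseteq Y\setminus(\text{that union})$—more carefully, one gets $B\in\mathcal F$ with $A\subseteq B$ and $C_1\cap B=\emptyset$; hmm, let me instead extract a member of $\mathcal F$ containing $C_1$. Repeating the closed-base argument with the roles of $C_1$ and $A$ reversed yields $B\in\mathcal F$ with $C_1\subseteq B$ and $A\cap B=\emptyset$. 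Thus $A,B\in\mathcal F$ are disjoint with $C_1\subseteq B$, $C_2\subseteq A$, whence $f^{-1}(C_i)\subseteq f^{-1}(B)$ and $f^{-1}(C_j)\subseteq f^{-1}(A)$, so $\mathrm{cl}_T f^{-1}(C_1)\cap\mathrm{cl}_T f^{-1}(C_2)\subseteq\mathrm{cl}_T f^{-1}(B)\cap\mathrm{cl}_T f^{-1}(A)=\emptyset$ by hypothesis.

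Having established the separation condition for all disjoint closed pairs of $Y$, the classical Taimanov theorem (valid in \textbf{ZF} by inspection of the proof in \cite{En}) gives the continuous extension $\tilde f:T\to Y$, completing the ``if'' direction. The main obstacle I anticipate is purely bookkeeping: making the two applications of the closed-base property genuinely symmetric so that one extracts \emph{two} disjoint members $A,B\in\mathcal F$ sandwiching $C_2$ and $C_1$ respectively, rather than just one member containing one of them; this requires running the closed-base-plus-compactness argument twice, the second time separating $C_1$ from the already-constructed $A\in\mathcal F$. One should also double-check that no choice beyond the ``finite disjoint families have choice functions'' convention is used — it is not, since each compactness step produces a \emph{finite} subcover and the finite intersection is canonical — and that the hypothesis ``$\mathcal F$ is a closed base'' is used in exactly the form ``for a closed $C$ and $y\notin C$ there is $F\in\mathcal F$ with $y\notin F\supseteq C$,'' which is the definition of closed base from Definition 1.3(i). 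This makes the whole argument go through verbatim in \textbf{ZF}.
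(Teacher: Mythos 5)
Your argument is correct, and it is worth noting that the paper itself gives no proof of this theorem at all: it merely asserts that the result is proved independently in \cite{Bl} and \cite{W1} and remains valid in \textbf{ZF}. So your reduction to the classical Taimanov theorem is a genuine, self-contained substitute for that citation. The only-if direction is handled correctly (taking $\mathcal{F}$ to be all closed sets of $Y$), and the key upgrade in the if direction --- from separation of disjoint members of $\mathcal{F}$ to separation of arbitrary disjoint closed sets $C_1,C_2$ --- is sound: the closed-base property plus compactness of $C_1$ yields $A\in\mathcal{F}$ with $C_2\subseteq A$ and $A\cap C_1=\emptyset$, and a second pass with the roles of $C_1$ and $A$ reversed (separating points of $A$ from the closed set $C_1$) yields $B\in\mathcal{F}$ with $C_1\subseteq B$ and $A\cap B=\emptyset$; you caught and fixed your own initial misdirection here. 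One presentational refinement for strict \textbf{ZF} hygiene: rather than writing ``for each $y\in C_1$ pick $F_y$,'' which reads as an infinite simultaneous choice, observe that the family $\{Y\setminus F: F\in\mathcal{F},\ C_2\subseteq F\}$ covers $C_1$ (this follows pointwise from the closed-base property with no choice), and then invoke compactness to extract a finite subcover by existential instantiation alone. With that rewording the proof uses no choice whatsoever, not even the finite-choice convention, and the appeal to the classical Taimanov theorem (valid in \textbf{ZF} by inspection of Engelking's proof, as the paper itself remarks) completes the argument.
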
 

\begin{thm} Suppose we are given a gts $X$ and a Hausdorff topological compactification $\alpha X$ of $X_{top}$; moreover, let $\mathcal{C}=\text{Cl}_X$. Then the following conditions are equivalent:
\begin{enumerate}
\item[(i)] $\text{Ex}_{\alpha X}$ is finitely additive and $\text{Op}^{w}_{\alpha X}$ is an open base for a Hausdorff topology in $\alpha X$;
\item[(ii)] $X$ is weakly normal, the Wallman space $\mc{W}(X,\mc{C})$ is compact and, moreover, the compactification $\mathcal{W}(X,\mathcal{C})$ of $X_{top}$ is topologically equivalent with $\alpha X$.
\end{enumerate}
\end{thm}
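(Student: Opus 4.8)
The plan is to prove the two implications separately, using Theorem 5.12 (the ZF version of Taimanov's theorem) as the bridge between the abstract compactification $\alpha X$ and the concrete Wallman space $\mathcal{W}(X,\mathcal{C})$.

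First I would prove (ii)$\Rightarrow$(i). Assume $X$ is weakly normal, $\mathcal{W}(X,\mathcal{C})$ is compact, and there is a homeomorphism $h:\mathcal{W}(X,\mathcal{C})\to\alpha X$ with $h\circ w_{\mathcal{C}}=\alpha$ (identifying $X_{top}$ with its images). By Proposition 5.11, the collection $\text{Op}^{w}_{\mathcal{W}(X,\mathcal{C})}=\{\mathcal{W}(X,\mathcal{C})\setminus[A]_{\mathcal{C}}:A\in\mathcal{C}\}$ is stable under finite unions and generates the topology of $\mathcal{W}(X,\mathcal{C})$; in particular $\text{Ex}$ is finitely additive there and $\text{Op}^{w}$ is an open base for the (Hausdorff) topology of $\mathcal{W}(X,\mathcal{C})$. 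The remaining work is to check that the homeomorphism $h$ transports $\text{Op}^{w}_{\mathcal{W}(X,\mathcal{C})}$ onto $\text{Op}^{w}_{\alpha X}$: this follows because $\text{Ex}_{\alpha X}(U)$ is characterized intrinsically (Remark 5.9) as the largest open set in $\alpha X$ meeting $X$ in $U$, and $h$ being a homeomorphism fixing $X$ pointwise preserves this characterization, so $h(\text{Ex}_{\mathcal{W}(X,\mathcal{C})}(U))=\text{Ex}_{\alpha X}(U)$ for every $U\in\text{Op}_X$. Hence (i) holds.

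Next, (i)$\Rightarrow$(ii). Suppose $\text{Ex}_{\alpha X}$ is finitely additive and $\text{Op}^{w}_{\alpha X}$ is an open base for a Hausdorff topology $\sigma$ on $\alpha X$. Since $\alpha X$ is a compact Hausdorff space whose topology $\tau_{\alpha X}$ refines $\sigma$ (every member of $\text{Op}^{w}_{\alpha X}$ lies in $\text{Op}^{S}_{\alpha X}\subseteq\tau_{\alpha X}$) and compact Hausdorff topologies are minimal among Hausdorff topologies (Corollary 3.1.14 of \cite{En}), we get $\sigma=\tau_{\alpha X}$; so $\text{Op}^{w}_{\alpha X}$ is actually an open base for $\tau_{\alpha X}$ and $\{\alpha X\setminus\text{Ex}_{\alpha X}(U):U\in\text{Op}_X\}=\{\text{cl}_{\alpha X}(X\setminus U):U\in\text{Op}_X\}=\{\text{cl}_{\alpha X}C:C\in\mathcal{C}\}$ is a closed base for $\alpha X$ stable under finite intersections (using finite additivity of $\text{Ex}$ for the dual stability under finite unions and the elementary fact $\text{cl}_{\alpha X}(C_1\cap C_2)$-type identities). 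Weak normality of $X$: given $A_1,A_2$ disjoint, each a singleton or in $\mathcal{C}$, the corresponding closures $\text{cl}_{\alpha X}A_1,\text{cl}_{\alpha X}A_2$ are disjoint (a singleton's closure is itself since $\alpha X$ is $T_1$; for $C\in\mathcal{C}$ one uses that $\text{cl}_{\alpha X}C\cap X=C$, which holds because $\alpha X\setminus\text{cl}_{\alpha X}C=\text{Ex}_{\alpha X}(X\setminus C)$ meets $X$ in exactly $X\setminus C$), and by compact Hausdorffness they have disjoint open neighbourhoods $V_1,V_2$ in $\alpha X$; shrinking via the base so that $V_i=\text{Ex}_{\alpha X}(U_i)$ and intersecting with $X$ gives disjoint $U_1,U_2\in\text{Op}_X$ with $A_i\subseteq U_i$. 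Then I apply Theorem 5.12 to the dense inclusion $X_{top}\hookrightarrow\alpha X$ and the identity map $X_{top}\to X_{top}\hookrightarrow\mathcal{W}(X,\mathcal{C})$ together with its inverse (after first establishing, via weak normality plus Theorem 2.8, that $\mathcal{W}(X,\mathcal{C})$ is a compact Hausdorff space into which $X_{top}$ embeds densely): the disjoint-closure condition for the closed base $\{\text{cl}_{\alpha X}C:C\in\mathcal{C}\}$ of $\alpha X$ and the condition (iii) of the Wallman base property give continuous maps $\alpha X\to\mathcal{W}(X,\mathcal{C})$ and $\mathcal{W}(X,\mathcal{C})\to\alpha X$ fixing $X$, which are mutually inverse by density, hence a topological equivalence.

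The main obstacle I anticipate is the ZF-careful verification that $\mathcal{W}(X,\mathcal{C})$ really is a compactification of $X_{top}$ under hypothesis (i)---i.e. that it is compact---without already assuming it: the cleanest route is to first show $\mathcal{C}=\text{Cl}_X$ is a Wallman base of $X_{top}$ (disjointness of closures in $\alpha X$ of disjoint members of $\mathcal{C}$, pulled back along the base, yields condition (iii) of Definition 1.6; conditions (i),(ii) come from $\mathcal{C}$ being a complete closed base and $\alpha X$ being $T_1$), then invoke Theorem 2.8 (UFT is not needed here because the relevant compactness of $\mathcal{W}(X,\mathcal{C})$ will be deduced from compactness of $\alpha X$ via the continuous surjection $\alpha X\to\mathcal{W}(X,\mathcal{C})$, compactness being preserved under continuous images in ZF). Care is needed throughout to avoid any appeal to \textbf{AC} beyond what the paper permits, and to keep the extension arguments of Theorem 5.12 applied only to maps into spaces already known to be compact Hausdorff.
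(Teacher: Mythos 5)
Your overall architecture matches the paper's: minimality of compact Hausdorff topologies to identify $\tau_{\alpha X}$ with the topology generated by $\text{Op}^{w}_{\alpha X}$, the closure identity $\cl_{\alpha X}(A\cap B)=\cl_{\alpha X}A\cap\cl_{\alpha X}B$ coming from finite additivity, verification that $\mathcal{C}$ is a Wallman base of $X_{top}$, and Taimanov's theorem (the paper's Theorem 5.15; your ``5.12'') to produce the two mutually inverse extensions. Your (ii)$\Rightarrow$(i) direction and your weak-normality argument are fine.

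However, there is a genuine circularity in the one step you yourself flagged as the main obstacle: proving in \textbf{ZF} that $\mathcal{W}(X,\mathcal{C})$ is compact under hypothesis (i). You propose to deduce this from a continuous surjection $\alpha X\to\mathcal{W}(X,\mathcal{C})$, but the only mechanism you offer for producing that surjection is the Taimanov-type extension theorem applied to $w_{\mathcal{C}}\colon X\to\mathcal{W}(X,\mathcal{C})$ --- and that theorem requires its target to \emph{already} be a compact Hausdorff space. (Invoking Theorem 2.8 does not help either: it is an equivalence with \textbf{UFT}, not a \textbf{ZF} compactness criterion for a particular Wallman base.) The paper closes this gap with a direct argument that never mentions a surjection: given a filter $\mathcal{F}$ in $\mathcal{C}$, compactness of $\alpha X$ yields a point $p\in\bigcap_{A\in\mathcal{F}}\cl_{\alpha X}A$; one sets $\mathcal{M}=\{A\in\mathcal{C}:p\in\cl_{\alpha X}A\}$, uses finite additivity of $\text{Ex}_{\alpha X}$ to see that $\mathcal{M}$ is a filter, and uses the fact that $\mathcal{C}^{w}=\{\alpha X\setminus V: V\in\text{Op}^{w}_{\alpha X}\}$ is a complete closed base of the compact Hausdorff space $\alpha X$ to separate $p$ from $\cl_{\alpha X}B$ for any $B\in\mathcal{C}\setminus\mathcal{M}$, which shows $\mathcal{M}$ is maximal. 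Since every filter in $\mathcal{C}$ extends to an ultrafilter in $\mathcal{C}$, the Wallman space is compact; only \emph{then} can Taimanov's theorem be applied in both directions. (A salvage of your route is possible --- the assignment $p\mapsto\{A\in\mathcal{C}:p\in\cl_{\alpha X}A\}$ is itself a continuous surjection $\alpha X\to\mathcal{W}(X,\mathcal{C})$ --- but verifying that it lands in ultrafilters is exactly the filter-maximality argument above, so it cannot be skipped.)
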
 
\begin{proof} Let $\tau$ be the original topology of $\alpha X$ and let $\tau^{w}$ be the topology in $\alpha X$ generated by $\text{Op}^{w}_{\alpha X}$. Assume that (\textit{i}) holds. Then $\tau=\tau^{w}$ by Corollary 3.1.14 of \cite{En}. Furthermore, the collection $\mathcal{C}^{w}=\{A\subseteq \alpha X: \alpha X\setminus A\in \text{Op}^{w}_{\alpha X}\}$ is a complete closed base for $\alpha X$. Since $\alpha X$ is a compact Hausdorff space, the collection $\mathcal{C}^{w}$ is a Wallman base for $\alpha X$. That $\text{Op}^{w}_{\alpha X}$ is stable under finite unions implies that, for an arbitrary pair $A, B$ of members of $\mathcal{C}$, the equality $\text{cl}_{\alpha X}(A)\cap\text{cl}_{\alpha X}(B)=\text{cl}_{\alpha X}(A\cap B)$ holds. This, together with the compactness of the Hausdorff space $\alpha X$, gives that $\mathcal{C}$ is a Wallman base of $X_{top}$. Therefore, $X$ is weakly normal by Proposition 1.8. To prove that $\mc{W}(X, \mc{C})$ is compact, let us consider any filter $\mc{F}$ in $\mc{C}$. Since $\alpha X$ is compact, there exists $p\in\bigcap_{A\in\mc{F}}\text{cl}_{\alpha X}A$. Let $\mc{M}=\{ A\in\mc{C}: p\in\text{cl}_{\alpha X}A\}$. That $\text{Ex}_{\alpha X}$ is finitely additive implies that $\mc{M}$ is a filter in $\mc{C}$. Suppose that $B\in\mc{C}$ and $p\notin\text{cl}_{\alpha X}B$. Since $\mathcal{C}^{w}$ is a complete closed base for $\alpha X$, there exists $D\in\mc{C}$ such that $p\in\text{cl}_{\alpha X}D\subseteq\alpha X\setminus \text{cl}_{\alpha X} B$. Then $D\in\mc{M}$ and $D\cap B=\emptyset$. Hence, $B$ does not belong to any filter in $\mc{C}$ that contains $\mc{M}$; thus $\mc{M}$ is a maximal filter in $\mc{C}$. We have shown that every filter in $\mc{C}$ can be enlarged to a maximal filter in $\mc{C}$. This implies that the Wallman space $\mc{W}(X, \mc{C})$ is compact.
Therefore, $\mathcal{W}(X, {\mathcal{C}})$ is a Hausdorff compactification of $X_{top}$. Applying Theorem 5.15, we obtain that the mapping $\alpha$ is continuously extendable to a mapping from $\mathcal{W}(X, \mathcal{C})$ onto $\alpha X$, while $w_{\mathcal{C}}$ is continuously extendable to a mapping from $\alpha X$ onto $\mathcal{W}(X, \mathcal{C})$; thus $\alpha X=\mathcal{W}(X, \mathcal{C})$. Therefore, (\textit{ii}) follows from (\textit{i}). Now, suppose that (\textit{ii}) is satisfied. The topological equivalence $\alpha X=\mathcal{W}(X, \mathcal{C})$ implies that $\text{Op}^{w}_{\alpha X}$ is an open base for $\tau$. In addition,  for an arbitrary pair $A, B$ of sets from $\mathcal{C}$, we have $\text{cl}_{\alpha X}(A)\cap\text{cl}_{\alpha X}(B)=\text{cl}_{\alpha X}(A\cap B)$ and, in consequence,  $\text{Ex}_{\alpha X}$ is finitely additive.
\end{proof} 

When $\text{Ex}_{\alpha X}$ is finitely additive and, simultaneously, $\text{Op}^{w}_{\alpha X}$ generates the topology of $\alpha X$, that $\text{Cov}^{w}_{\alpha X}$ need not be a generalized topology in $\alpha X$ is shown by the following example: 

\begin{exam} 
(\textbf{ZFU}) For the usual topological real line $\mathbb{R}_{ut}$ (cf. Definition 1.2 (i)), put  $\mathcal{C}=\{A\subseteq \mathbb{R}: \mathbb{R}\setminus A\in \tau_{nat}\}$.  Let $\beta\mathbb{R}=\mathcal{W}(\mathbb{R},\mathcal{C})$. Then, by Theorem 2.8,  it is true in \textbf{ZFU} that $\beta\mathbb{R}$ is a Hausdorff compactification of $\mathbb{R}$; of course, it is valid in \textbf{ZFU} that $\beta\mathbb{R}$ is the greatest compactification of $\mathbb{R}$, i.e the \v{C}ech-Stone compactification of $\mathbb{R}$. Observe that  $\text{Ex}_{\beta\mathbb{R}}[(-n; n)]=(-n; n)$ for each $n\in\omega$. Moreover, $\text{Ex}_{\beta\mathbb{R}}[\bigcup_{n\in\omega}(-n; n)]=\beta\mathbb{R}$. Therefore, $\text{Ex}_{\beta\mathbb{R}}$ is not admissibly additive, so, by Proposition 5.12, the collection $\text{Cov}^{w}_{\beta\mathbb{R}}$ is not a generalized topology. By Proposition 5.14,  $\text{Op}^w_{\beta\mathbb{R}}$ is stable under finite unions and generates the topology of $\beta\mathbb{R}$. Let us observe that the space $\mathcal{W}(\mathbb{R}, \mathcal{C})$ is non-compact in Solovay's model $\mathcal{M}5(\aleph )$ of \cite{HR}. 
\end{exam}

\begin{defi} Let $X$ be a weakly normal gts such that, for $\mathcal{C}=\text{Cl}_X$, the Wallman space $\mathcal{W}(X,\mathcal{C})$ is compact. If the wallmanian part $\text{Cov}^{w}_{\mathcal{W}(X,\mathcal{C})}$ is a generalized topology in $\mathcal{W}(X, \mathcal{C})$, then the gts $$wX=(\mathcal{W}(X,\mathcal{C}), \text{Op}^{w}_{\mathcal{W}(X, \mathcal{C})}, \text{Cov}^{w}_{\mathcal{W}(X, \mathcal{C})})$$ will be called \textbf{the Wallman strict compactification} of $X$.
\end{defi}

The theorem below follows from Theorems 2.8 and 5.16 taken together with Propositions 5.12 and  5.14:
\begin{thm} 
(\textbf{ZFU}) A weakly normal gts $X$ has its strict Wallman compactification if and only if the operator $\text{Ex}_{\mathcal{W}(X, \text{Cl}_X)}$ is admissibly additive. 
\end{thm}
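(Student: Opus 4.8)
The plan is to assemble the equivalence directly from the machinery already developed in Section~5, using Theorem~2.8 to handle the compactness of the Wallman space in \textbf{ZFU}. Let $X$ be a weakly normal gts and put $\mathcal{C}=\text{Cl}_X$. Since $X$ is weakly normal, by Proposition~1.8 the collection $\mathcal{C}$ is a Wallman base of the topologization $X_{top}$, so $X_{top}$ is semi-normal; hence by Theorem~2.8, working in \textbf{ZFU}, the Wallman space $\mathcal{W}(X,\mathcal{C})$ is compact. Thus, in \textbf{ZFU}, the hypothesis ``$\mathcal{W}(X,\mathcal{C})$ is compact'' appearing in Definition~5.18 is automatically satisfied, and the definition of $wX$ reduces to the single further requirement that $\text{Cov}^{w}_{\mathcal{W}(X,\mathcal{C})}$ be a generalized topology in $\mathcal{W}(X,\mathcal{C})$.

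The first step is therefore to reduce ``$X$ has a strict Wallman compactification'' to ``$\text{Cov}^{w}_{\mathcal{W}(X,\mathcal{C})}$ is a generalized topology in $\mathcal{W}(X,\mathcal{C})$''. This is exactly Definition~5.18 once the compactness of $\mathcal{W}(X,\mathcal{C})$ has been secured as above. The second step is to apply Proposition~5.12 with $\alpha X = \mathcal{W}(X,\mathcal{C})$: this proposition says precisely that $\text{Cov}^{w}_{\alpha X}$ is a generalized topology in $\alpha X$ if and only if $\text{Ex}_{\alpha X}$ is admissibly additive. To invoke Proposition~5.12 legitimately one needs $\mathcal{W}(X,\mathcal{C})$ to be a compactification of $X_{top}$ in the topological sense, i.e.\ that $w_{\mathcal{C}}$ is a homeomorphic embedding onto a dense subspace of a compact Hausdorff space; this is supplied by Remark~5.13 together with Theorem~2.8 in \textbf{ZFU}. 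Chaining these two equivalences yields: in \textbf{ZFU}, $X$ has a strict Wallman compactification $\iff$ $\text{Cov}^{w}_{\mathcal{W}(X,\text{Cl}_X)}$ is a generalized topology $\iff$ $\text{Ex}_{\mathcal{W}(X,\text{Cl}_X)}$ is admissibly additive, which is the claim.

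It is worth noting where Proposition~5.14 enters, as the theorem's preamble cites it: Proposition~5.14 guarantees (given weak normality of $X$ and compactness of $\mathcal{W}(X,\mathcal{C})$) that $\text{Op}^{w}_{\mathcal{W}(X,\mathcal{C})}$ is stable under finite unions and generates the topology of $\mathcal{W}(X,\mathcal{C})$, so that the first two coordinates of the candidate triple $wX$ already form a legitimate partially topological structure; the only thing that can fail is the admissibility layer $\text{Cov}^{w}$, and Proposition~5.12 isolates exactly the obstruction to that. So the role of Proposition~5.14 is to confirm that ``$\text{Ex}_{\alpha X}$ finitely additive'' is not an extra hypothesis one must separately impose in the Wallman case — it is free — and hence the single clean criterion is admissible additivity.

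I expect no serious obstacle: the proof is a bookkeeping exercise that correctly quotes Theorem~2.8 (for \textbf{ZFU}-compactness of $\mathcal{W}(X,\mathcal{C})$), Remark~5.13 (to know $\mathcal{W}(X,\mathcal{C})$ is genuinely a compactification of $X_{top}$), Proposition~5.12 (the equivalence with admissible additivity), and Proposition~5.14 (free finite additivity), and then reads off Definition~5.18. The one point demanding a little care is making sure that when $X$ is empty, or when $\text{Cl}_X$ fails to contain $X$ or $\emptyset$ in degenerate cases, the various cited statements still apply; but weak normality of a gts forces $\text{Cl}_X$ to be a complete ring (the full space and the empty set are closed), so $(X,\text{Cl}_X)$ is a bona fide small space and all the Section~5 apparatus is available. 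Hence the ``hard part'' is purely a matter of citing the right earlier results in the right order rather than any new argument.
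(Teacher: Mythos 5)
Your argument is correct and follows essentially the same route as the paper, whose proof is precisely the citation chain you assemble: Theorem~2.8 for \textbf{ZFU}-compactness of $\mathcal{W}(X,\text{Cl}_X)$, Proposition~5.12 for the equivalence of ``$\text{Cov}^{w}$ is a generalized topology'' with admissible additivity, and Proposition~5.14 (with the content of Theorem~5.16, which you reach via Remark~5.13) to confirm that the resulting object is genuinely a strict compactification. No gap.
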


\begin{f} Suppose that $X$ is a small gts and that $\alpha X$ is a Hausdorff compactification of the topologization of $X$. If $\text{Op}^{w}_{\alpha X}$ is stable under finite unions, then $\text{Cov}^{w}_{\alpha X}=\text{EssFin}(\text{Op}^{w}_{\alpha X})$ is a generalized topology in $\alpha X$.
\end{f}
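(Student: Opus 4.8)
The plan is to verify the two assertions directly from the definitions, using the small-space hypothesis to pin down what the wallmanian collections look like. First I would recall that since $X$ is small, $\text{Cov}_X=\text{EssFin}(\text{Op}_X)$, and since $\alpha X$ is a Hausdorff compactification of $X_{top}$, the operator $\text{Ex}_{\alpha X}:\text{Op}_X\to\text{Op}^S_{\alpha X}$ is defined as in Definition 5.9 by $\text{Ex}_{\alpha X}(U)=\alpha X\setminus\text{cl}_{\alpha X}(X\setminus U)$, and by Remark 5.10 it is the largest member of $\text{Op}^S_{\alpha X}$ meeting $X$ in $U$. The collection $\text{Op}^w_{\alpha X}=\{\text{Ex}_{\alpha X}(U):U\in\text{Op}_X\}$ is, by Remark 5.13, always stable under finite intersections; the hypothesis here adds that it is stable under finite unions, hence $\text{Op}^w_{\alpha X}$ is a ring of open sets in $\alpha X$ containing $\emptyset$ and $\alpha X$ (since $\text{Ex}_{\alpha X}(\emptyset)=\emptyset$ and $\text{Ex}_{\alpha X}(X)=\alpha X$).

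Next I would show $\text{Cov}^w_{\alpha X}=\text{EssFin}(\text{Op}^w_{\alpha X})$. One inclusion: take $\mathcal{V}\in\text{EssFin}(\text{Op}^w_{\alpha X})$, so there is a finite $\mathcal{V}_0\subseteq\mathcal{V}$ with $\bigcup\mathcal{V}_0=\bigcup\mathcal{V}$. Each $V\in\mathcal{V}$ is $\text{Ex}_{\alpha X}(U_V)$ for some $U_V\in\text{Op}_X$, so $V\cap X=U_V\in\text{Op}_X$; hence ${\alpha}^{-1}(\mathcal{V})=\{V\cap X:V\in\mathcal{V}\}$ is a family of open sets of $X$, and it is essentially finite because $\bigcup{\alpha}^{-1}(\mathcal{V})=X\cap\bigcup\mathcal{V}=X\cap\bigcup\mathcal{V}_0$ is covered by the finite subfamily coming from $\mathcal{V}_0$. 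Since $X$ is small, ${\alpha}^{-1}(\mathcal{V})\in\text{EssFin}(\text{Op}_X)=\text{Cov}_X$, so $\mathcal{V}\in\text{Cov}^S_{\alpha X}$, and as $\mathcal{V}\subseteq\text{Op}^w_{\alpha X}$ we get $\mathcal{V}\in\text{Cov}^w_{\alpha X}$. Conversely, if $\mathcal{V}\in\text{Cov}^w_{\alpha X}$, then $\mathcal{V}\subseteq\text{Op}^w_{\alpha X}$ and ${\alpha}^{-1}(\mathcal{V})\in\text{Cov}_X=\text{EssFin}(\text{Op}_X)$, so there is a finite $\mathcal{V}_0\subseteq\mathcal{V}$ with $\bigcup\{V\cap X:V\in\mathcal{V}_0\}=\bigcup\{V\cap X:V\in\mathcal{V}\}$, i.e. $X\cap\bigcup\mathcal{V}_0=X\cap\bigcup\mathcal{V}$; here I would use the maximality property from Remark 5.10 (each $V=\text{Ex}_{\alpha X}(V\cap X)$) together with stability under finite unions: $\bigcup\mathcal{V}_0=\text{Ex}_{\alpha X}(\bigcup_{V\in\mathcal{V}_0}(V\cap X))=\text{Ex}_{\alpha X}(X\cap\bigcup\mathcal{V})$, and since this is the largest element of $\text{Op}^S_{\alpha X}$ meeting $X$ in $X\cap\bigcup\mathcal{V}$ while each element of $\mathcal{V}$ lies in $\text{Op}^S_{\alpha X}$ and meets $X$ inside $X\cap\bigcup\mathcal{V}$, we get $\bigcup\mathcal{V}\subseteq\bigcup\mathcal{V}_0$, hence $\bigcup\mathcal{V}=\bigcup\mathcal{V}_0$, so $\mathcal{V}\in\text{EssFin}(\text{Op}^w_{\alpha X})$.

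Finally, once the two collections are shown equal, the fact that $\text{EssFin}(\text{Op}^w_{\alpha X})$ is a generalized topology in $\alpha X$ is exactly the observation recorded in the introduction that $\langle\{\mathcal{O}\}\rangle=\text{EssFin}(\mathcal{O})$ for any collection $\mathcal{O}$ of open sets — so it automatically satisfies (A1)--(A8) of Definition 2.2.1 of \cite{Pie1} — which completes the proof. The only slightly delicate point, and the one I would write out most carefully, is the step $\bigcup\mathcal{V}_0=\text{Ex}_{\alpha X}(X\cap\bigcup\mathcal{V})$ in the converse inclusion: it needs both the finite-additivity hypothesis on $\text{Ex}_{\alpha X}$ (stability of $\text{Op}^w_{\alpha X}$ under finite unions, to know $\bigcup\mathcal{V}_0\in\text{Op}^w_{\alpha X}$ and equals $\text{Ex}_{\alpha X}$ of the union of its traces) and the maximality characterisation of $\text{Ex}_{\alpha X}(U)$ among sets in $\text{Op}^S_{\alpha X}$ with trace $U$ on $X$. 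Everything else is routine bookkeeping with essentially finite families.
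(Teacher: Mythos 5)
Your proof is correct. The paper states this as a Fact with no proof at all, and your verification --- $\text{Op}^{w}_{\alpha X}$ is a complete ring of open sets, the two inclusions between $\text{Cov}^{w}_{\alpha X}$ and $\text{EssFin}(\text{Op}^{w}_{\alpha X})$ via the trace map $V\mapsto V\cap X$ together with the maximality of $\text{Ex}_{\alpha X}$, and the key step $\bigcup\mathcal{V}_0=\text{Ex}_{\alpha X}(X\cap\bigcup\mathcal{V})$ in the converse inclusion --- is precisely the routine argument the authors leave to the reader. One small caution about your last step: the identity $\langle\{\mathcal{O}\}\rangle=\text{EssFin}(\mathcal{O})$ does not hold for an arbitrary collection $\mathcal{O}$ of sets (the generated generalized topology must also contain finite intersections as open sets); what actually makes $\text{EssFin}(\text{Op}^{w}_{\alpha X})$ a generalized topology is that $\text{Op}^{w}_{\alpha X}$ is a complete ring containing $\emptyset$ and $\alpha X$ --- which you did establish in your first paragraph --- via the small-space correspondence of Remark 2.2.
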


\begin{thm} 
For every small gts $X$ and for every  Hausdorff topological compactification $\alpha X$ of $X_{top}$, the following conditions are equivalent:
\begin{enumerate}
\item[(i)] $\text{Cov}^{w}_{\alpha X}$ is a generalized weakly Hausdorff topology in $\alpha X$;
\item[(ii)] $X$ is weakly normal and, simultaneously, the Wallman space $\mathcal{W}(X, \text{Cl}_X)$ of $X_{top}$ is a compactification of $X_{top}$ topologically equivalent with $\alpha X$. 
\end{enumerate}
\end{thm}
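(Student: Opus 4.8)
The plan is to reduce the statement to the already proved Theorem 5.16 by checking that, when $X$ is small, condition (i) here is equivalent to condition (i) of Theorem 5.16 and condition (ii) here is merely a reformulation of condition (ii) of Theorem 5.16. First I would record the standing facts for a small gts: the collection $\mathcal{C}:=\text{Cl}_X$ is a complete ring of subsets of $X$, the topologization $X_{top}$ is precisely the topological space whose complete closed base is $\mathcal{C}$, and $X$ is the gts induced by $\mathcal{C}$. In particular $X\in\text{Op}_X$ and $\emptyset\in\text{Op}_X$, so, because $X$ is dense in $\alpha X$, we have $\text{Ex}_{\alpha X}(X)=\alpha X$ and $\text{Ex}_{\alpha X}(\emptyset)=\emptyset$; hence $\emptyset,\alpha X\in\text{Op}^{w}_{\alpha X}$. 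Since $\alpha X$ together with $\mathcal{C}=\text{Cl}_X$ satisfies the hypotheses of Theorem 5.16, it then suffices to establish the two equivalences of conditions and invoke Theorem 5.16.

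For the core step, the equivalence of the conditions (i), I would first show that, for a small $X$, the collection $\text{Cov}^{w}_{\alpha X}$ is a generalized topology in $\alpha X$ if and only if $\text{Op}^{w}_{\alpha X}$ is stable under finite unions, i.e. if and only if $\text{Ex}_{\alpha X}$ is finitely additive. The implication $\Leftarrow$ is exactly Fact 5.20. For $\Rightarrow$: if $\text{Cov}^{w}_{\alpha X}$ is a generalized topology in $\alpha X$, then $(\alpha X,\bigcup\text{Cov}^{w}_{\alpha X},\text{Cov}^{w}_{\alpha X})$ is a generalized topological space in the sense of Delfs and Knebusch, so its collection of open sets, which by Remark 5.9 equals $\text{Op}^{w}_{\alpha X}$, is stable under finite unions. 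Once $\text{Ex}_{\alpha X}$ is finitely additive, $\text{Op}^{w}_{\alpha X}$ --- being stable under finite intersections by Remark 5.9, stable under finite unions, and containing $\emptyset$ and $\alpha X$ --- is a complete ring of sets, hence a complete open base for the topology $\tau(\text{Op}^{w}_{\alpha X})$; in particular it is genuinely an open base for that topology. Since a gts is weakly Hausdorff exactly when it is topologically Hausdorff, the gts $(\alpha X,\text{Op}^{w}_{\alpha X},\text{Cov}^{w}_{\alpha X})$ is weakly Hausdorff if and only if $\tau(\text{Op}^{w}_{\alpha X})$ is a Hausdorff topology. Putting the pieces together, "$\text{Cov}^{w}_{\alpha X}$ is a generalized weakly Hausdorff topology in $\alpha X$" is equivalent to "$\text{Ex}_{\alpha X}$ is finitely additive and $\text{Op}^{w}_{\alpha X}$ is an open base for a Hausdorff topology in $\alpha X$", which is condition (i) of Theorem 5.16.

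For the conditions (ii), the observation is that when $X$ is small and weakly normal, $\mathcal{C}=\text{Cl}_X$ is a Wallman base of $X_{top}$ by Proposition 1.8. Consequently $w_{\mathcal{C}}$ is a homeomorphic embedding of $X_{top}$ into $\mathcal{W}(X,\mathcal{C})$ with dense image (Remark 5.13), and $\mathcal{W}(X,\mathcal{C})$ is a Hausdorff space in \textbf{ZF}: a maximal filter in the distributive lattice $\mathcal{C}$ is prime, and condition (iii) in the definition of a Wallman base produces, for any two distinct maximal filters $p,q$ in $\mathcal{C}$, a pair $C_1,C_2\in\mathcal{C}$ with $C_1\cup C_2=X$, $C_1\notin p$ and $C_2\notin q$, so that $\mathcal{W}(X,\mathcal{C})\setminus[C_1]_{\mathcal{C}}$ and $\mathcal{W}(X,\mathcal{C})\setminus[C_2]_{\mathcal{C}}$ are disjoint open neighbourhoods of $p$ and $q$. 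Hence, under the assumption that $X$ is weakly normal, "$\mathcal{W}(X,\mathcal{C})$ is a compactification of $X_{top}$" is equivalent to "$\mathcal{W}(X,\mathcal{C})$ is compact", so condition (ii) here asserts exactly the same as condition (ii) of Theorem 5.16. The chain (i) $\Leftrightarrow$ 5.16(i) $\Leftrightarrow$ 5.16(ii) $\Leftrightarrow$ (ii), whose middle link is Theorem 5.16, then closes the argument.

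The step I expect to carry the weight is the analysis of the conditions (i): one must be careful that "weakly Hausdorff" for the candidate gts $(\alpha X,\text{Op}^{w}_{\alpha X},\text{Cov}^{w}_{\alpha X})$ translates precisely into the Hausdorff-base clause of Theorem 5.16(i), and this works only because finite additivity of $\text{Ex}_{\alpha X}$ upgrades $\text{Op}^{w}_{\alpha X}$ from a mere intersection-stable family containing $\emptyset$ and $\alpha X$ to a complete base of the topology it generates. A secondary care point is that the whole reduction must avoid \textbf{AC}; in particular, the \textbf{ZF}-proof that the Wallman space of a Wallman base is Hausdorff uses only the choice-free fact that a maximal filter in a ring of sets is prime.
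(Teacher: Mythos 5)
Your proposal is correct and follows essentially the same route as the paper, whose proof simply cites Proposition 5.12, Theorem 5.16 and Fact 5.20; you flesh out exactly the reduction those citations encode (identifying condition (i) with finite additivity plus the Hausdorff-base clause of Theorem 5.16(i), and condition (ii) with Theorem 5.16(ii)). The only cosmetic difference is that, to pass from ``$\text{Cov}^{w}_{\alpha X}$ is a generalized topology'' to finite additivity, you invoke the union-stability of the open sets of a gts directly instead of going through Proposition 5.12 on admissible additivity, which is an equally valid shortcut since admissible families in a small gts are essentially finite.
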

\begin{proof} It suffices to apply Proposition 5.12, Theorem 5.16 and Fact 5.20.
\end{proof} 

\begin{exam} (\textbf{ZFU}) Let us find out which ones of the real lines from Definition 1.2 have their Wallman strict compactifications. Clearly, the Wallman space of the partially topological real lines $\mb{R}_{ut}, \mb{R}_{st}, \mb{R}_{lst}, \mb{R}_{l^{+}st}$ and $\mb{R}_{l^{-}st}$ is $\beta\mb{R}$. Example 5.17 shows $\mb{R}_{ut}, \mb{R}_{lst}, \mb{R}_{l^{+}st}$ and $\mb{R}_{l^{-}st}$ do not have their Wallman strict compactifications, while, in view of Theorems 5.19 and 5.21, $\mb{R}_{st}$ has its Wallman strict compactification. The Wallman spaces of $\mb{R}_{om}$ and $\mb{R}_{rom}$ are  topologically equivalent with the two-point Hausdorff compactification of $(\mb{R}, \tau_{nat})$ and, clearly, the gtses $\mb{R}_{om}$ and $\mb{R}_{rom}$  have their Wallman strict compactifications. It is not hard to check that the Wallman spaces of $\mb{R}_{lom}$ and $\mb{R}_{slom}$ are topologically equivalent with $\beta\mb{R}$ and, in view of Example 5.17, the gtses $\mb{R}_{lom}$ and $\mb{R}_{slom}$ do not have their Wallman strict compactifications. Using similar arguments, one can deduce that $\mb{R}_{l^{+}om}, \mb{R}_{l^{-}om}, \mb{R}_{sl^{+}om}$ and $\mb{R}_{sl^{-}om}$ do not have their Wallman strict compactifications. 
\end{exam} 

Finally, as a consequence of Theorems 2.8 and 5.21, let us write down the following theorem:
\begin{thm}
That every weakly normal small gts has its Wallman strict compactification is an equivalent of \textbf{UFT}. 
\end{thm}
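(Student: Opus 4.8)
The plan is to prove the biconditional ``every weakly normal small gts has its Wallman strict compactification $\iff$ \textbf{UFT}'' by combining the machinery already established. For the direction from \textbf{UFT}, assume \textbf{ZFU} and let $X$ be a weakly normal small gts with $\mathcal{C}=\text{Cl}_X$. First I would invoke Theorem 2.8 (or rather Proposition 1.8 together with Theorem 2.8): since $\mathcal{C}$ is a Wallman base of the topologization $X_{top}$ by Proposition 1.8, and \textbf{UFT} holds, the Wallman space $\mathcal{W}(X,\mathcal{C})$ is compact, and in fact $(\mathcal{W}(X,\mathcal{C}), w_{\mathcal{C}})$ is a Hausdorff compactification of $X_{top}$ (Remark 5.13). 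Then by Proposition 5.14, the collection $\text{Op}^{w}_{\mathcal{W}(X,\mathcal{C})}$ is stable under finite unions and generates the topology of $\mathcal{W}(X,\mathcal{C})$. Since $X$ is small, Fact 5.20 gives that $\text{Cov}^{w}_{\mathcal{W}(X,\mathcal{C})}=\text{EssFin}(\text{Op}^{w}_{\mathcal{W}(X,\mathcal{C})})$ is a generalized topology in $\mathcal{W}(X,\mathcal{C})$, which is exactly the hypothesis needed in Definition 5.18 for $wX$ to exist. Hence $wX$ is the Wallman strict compactification of $X$.

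For the converse, I would argue contrapositively: assume \textbf{UFT} fails and produce a weakly normal small gts with no Wallman strict compactification. Here I would lean on the ``Sufficiency'' construction inside the proof of Theorem 2.8: when \textbf{UFT} fails there is a product $Z=\prod_{s\in S}X_s$ of finite discrete spaces that is not compact, and taking $\mathcal{C}=\mathcal{Z}(Z)$ the collection of zero-sets, the Wallman space $\mathcal{W}(Z,\mathcal{C})$ is non-compact. The small gts associated to $\mathcal{C}$ (via the identification in Remark 2.2) is weakly normal because $\mathcal{Z}(Z)$ is a Wallman base of the normal --- hence semi-normal --- space $Z$, so Proposition 1.8 applies. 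But Definition 5.18 requires $\mathcal{W}(X,\text{Cl}_X)$ to be compact as a precondition for the Wallman strict compactification to be defined at all; since it is not compact, this gts has no Wallman strict compactification. That contradicts the assumption that every weakly normal small gts has one, completing the contrapositive.

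The main obstacle I anticipate is bookkeeping about what ``has its Wallman strict compactification'' means: Definition 5.18 bundles two requirements --- compactness of $\mathcal{W}(X,\text{Cl}_X)$ and that $\text{Cov}^{w}_{\mathcal{W}(X,\mathcal{C})}$ actually be a generalized topology --- so I must make sure both are genuinely delivered by \textbf{UFT} (the second is the content of Fact 5.20 in the small case, keyed to finite-union stability of $\text{Op}^{w}$, which Proposition 5.14 supplies). On the converse side the only subtlety is confirming that the Herbrand-style space $Z$ from the proof of Theorem 2.8 is indeed semi-normal with Wallman base $\mathcal{Z}(Z)$ --- true since Tychonoff products of finite discrete spaces are compact Hausdorff in the cases that matter, or more simply since every topological space has $\mathcal{Z}(Z)$ behaving well enough, but to stay safe I would instead directly cite that $\mathcal{W}(Z,\mathcal{C})$ being non-compact already blocks Definition 5.18 regardless. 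No delicate \textbf{ZF} issues arise beyond those already handled in Sections 2 and 5, so the proof is a short assembly: ``It suffices to combine Theorem 2.8, Theorem 5.21, Proposition 5.14 and Fact 5.20,'' with the contrapositive spelled out in one sentence.
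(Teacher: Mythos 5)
Your proof is correct and follows essentially the same route as the paper, which derives the theorem as a consequence of Theorem 2.8 together with Theorem 5.21 (whose content you unpack via Proposition 5.14 and Fact 5.20 for the forward direction, and whose negation via the failure of compactness of $\mathcal{W}(X,\text{Cl}_X)$ handles the converse exactly as you describe). The only cosmetic point is that in the converse you need not argue that the witnessing product is normal: it suffices that it is Tychonoff, so that its zero-sets form a Wallman base, which is precisely what the proof of Theorem 2.8 already uses.
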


\section{A construction of a strict compactification with a given remainder}

For gtses $X$ and $K$, it is interesting to know when there exists a strict compactification $\alpha X$ of $X$ such that $K=\alpha X\setminus X$ and $K$ is a strict subspace of $\alpha X$; moreover,  if such a strict compactification exists, it is worthwhile to describe a comfortable method of constructing it. Similarly to the classical theory of compactifications, when the gts $X$ is weakly normal, we wish to obtain weakly Hausdorff strict compactifications of $X$.  
Let us consider a relatively simple case when $X$ is a weakly normal but not topologically compact gts and $K$ is a weakly Hausdorff topologically compact non-void gts.  If there exists a strict weakly Hausdorff compactification $\alpha X$ of $X$ such that $\alpha X\setminus X=K$, then $X_{top}$ is locally compact. Therefore, let us assume that $X_{top}$ is locally compact. By Proposition 3.15, the space $K$ is weakly normal. Thus, by Proposition 1.8, the collection $\text{Cl}_K$ is a  Wallman base of $K_{top}$. Clearly, $\text{Cl}_X$ is a Wallman base of $X_{top}$. Let us mimick the results of \cite{W4} and denote by $\sim $ the equivalence relation on $\text{Cl}_X$ such that, for an arbitrary pair $A, B$ of members of $\text{Cl}_X$, we have $A\sim B$ if and only if the symmetric difference $A\bigtriangleup B$ does not contain topologically non-compact members of $\text{Cl}_X$. For $A\in\text{Cl}_X$, let $[A]^X$ denote the equivalence class of $\sim$ such that $A\in[A]^X$.  The quotient set $L(\text{Cl}_X)=\{ [A]^X: A\in\text{Cl}_X\}$ is a distributive lattice if we put $[A]^X\wedge [B]^X= [A\cap B]^X$ and $[A]^X\vee  [B]^X=[A\cup B]^X$ for an arbitrary pair $A, B$ of members of $\text{Cl}_X$. In much the same way, as in the proof to Theorem 2.8, we can show that it is valid in \textbf{ZFU} that every filter in $L(\text{Cl}_X)$ is contained in an ultrafilter in $L(\text{Cl}_X)$. Let $S(\text{Cl}_X)$ be the set of all ultrafilters in $L(\text{Cl}_X)$. For $A\in\text{Cl}_X$, let $H[A]=\{a\in S(\text{Cl}_X): [A]^X\in a\}$. We consider $S(\text{Cl}_X)$ with the smallest topology having $\{ H[A]: A\in\text{Cl}_X\}$ as a base for closed sets. Although the work \cite{W4} was in \textbf{ZFC}, we can state the following relevant result: 

\begin{prop} 
(\textbf{ZFU}) The collection $\{ H[A]: A\in\text{Cl}_X\}$ is a closed base for a Hausdorff compact topology in $S(\text{Cl}_X)$.
\end{prop}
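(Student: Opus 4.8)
The plan is to show that $\mathcal{B}=\{H[A]:A\in\text{Cl}_X\}$ is (1) stable under finite unions and finite intersections and contains $\emptyset$ and $S(\text{Cl}_X)$, so it is a genuine closed base for a topology $\tau_S$ on $S(\text{Cl}_X)$; (2) that $(S(\text{Cl}_X),\tau_S)$ is compact; and (3) that it is Hausdorff. For (1), first I would record the computation $H[A\cap B]=H[A]\cap H[B]$ and $H[A\cup B]=H[A]\cup H[B]$, which hold because $a$ is an \emph{ultra}filter in the distributive lattice $L(\text{Cl}_X)$: the meet formula uses that $a$ is a filter, and the join formula uses primeness of ultrafilters in a distributive lattice (if $[A]^X\vee[B]^X\in a$ but neither $[A]^X$ nor $[B]^X$ is in $a$, then, $a$ being maximal, both $[A]^X$ and $[B]^X$ generate with $a$ the improper filter, producing $c_1,c_2\in a$ with $c_i\wedge[A_i]^X=[0]^X$, whence $c_1\wedge c_2\wedge([A]^X\vee[B]^X)=[0]^X\in a$, a contradiction). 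One also notes $[\emptyset]^X$ is the zero of the lattice, so $H[\emptyset]=\emptyset$ (no ultrafilter contains the zero — here one must check the lattice is nondegenerate, i.e. $[X]^X\neq[\emptyset]^X$, which holds since $X_{top}$ is not compact, so $X\in\text{Cl}_X$ is topologically non-compact and $X\bigtriangleup\emptyset=X$), while $[X]^X$ is the unit of the lattice, so $H[X]=S(\text{Cl}_X)$.

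For compactness, I would argue exactly as in the necessity part of the proof of Theorem~2.8. Take a centred subfamily $\mathcal{H}\subseteq\mathcal{B}$; by the finite-intersection formula above, $\{[A]^X: H[A]\in\mathcal{H}\}$ generates a proper filter $\mathcal{F}_0$ in the lattice $L(\text{Cl}_X)$. By the prime/ultrafilter extension theorem for distributive lattices, which holds in \textbf{ZFU} (this is the place where the ultrafilter theorem is used, and it is stated in the paragraph preceding the proposition that every filter in $L(\text{Cl}_X)$ is contained in an ultrafilter), there is an ultrafilter $a\supseteq\mathcal{F}_0$ in $L(\text{Cl}_X)$; then $a\in\bigcap\mathcal{H}$. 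Hence every centred subfamily of $\mathcal{B}$ has nonempty intersection, which is precisely compactness of the space whose closed sets are generated by $\mathcal{B}$ (one should spell out the standard reduction: an arbitrary family of closed sets with the finite intersection property can be refined to a centred family of basic closed sets, because each closed set is an intersection of members of $\mathcal{B}$).

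For Hausdorffness, I would use the analogue of property (iii) of a Wallman base, which descends to the lattice: given two distinct ultrafilters $a\neq b$ in $L(\text{Cl}_X)$, pick $[A]^X\in a\setminus b$; by maximality of $b$ there is $[C]^X\in b$ with $[A]^X\wedge[C]^X=[\emptyset]^X$, i.e. $[A\cap C]^X=[\emptyset]^X$, meaning $A\cap C$ contains no topologically non-compact closed set. I then want to separate $a$ and $b$ by disjoint basic open sets, equivalently to find $[C_1]^X,[C_2]^X$ with $[C_1]^X\vee[C_2]^X=[X]^X$ and $[A]^X\wedge[C_1]^X=[\emptyset]^X=[C]^X\wedge[C_2]^X$; the complements of $H[C_1],H[C_2]$ are the required disjoint neighbourhoods, since $a\notin H[C_1]$ (as $[C_1]^X\wedge[A]^X=[\emptyset]^X$ with $[A]^X\in a$ forbids $[C_1]^X\in a$) and similarly $b\notin H[C_2]$, while $H[C_1]\cup H[C_2]=H[X]=S(\text{Cl}_X)$. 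The existence of such $C_1,C_2$ comes from the weak normality of $X$ (Proposition~1.8 gives that $\text{Cl}_X$ is a Wallman base, so its property (iii) applies to the \emph{disjoint} closed sets obtained after discarding the compact overlap), together with local compactness of $X_{top}$, exactly as in \cite{W4}; this normality-type separation, pushed through the quotient by $\sim$, is the main obstacle and the step that needs the most care, because one must verify that the Wallman-base separation of honest disjoint closed sets still yields the lattice identity $[C_1]^X\vee[C_2]^X=[X]^X$ after quotienting, and that local compactness is genuinely needed to handle the overlap $A\cap C$ that is only "compact-small", not empty.
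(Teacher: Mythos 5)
Your argument is correct, and it is in fact more explicit than anything in the paper: the authors state this proposition without proof, relying on the sentence immediately preceding it (that in \textbf{ZFU} every filter in $L(\text{Cl}_X)$ extends to an ultrafilter, ``in much the same way as in the proof to Theorem 2.8'') and on \cite{W4} (Proposition 1.4 there for Hausdorffness). Your decomposition --- meet/join formulas for $H[\cdot]$ via primeness of maximal filters in a distributive lattice, compactness via filter extension plus the reduction to centred families of basic closed sets, Hausdorffness via normality of the quotient lattice --- is exactly the intended Wallman-type argument, so you have essentially supplied the proof the paper delegates to \cite{W4}. Two points deserve attention. First, the phrase ``ultrafilter extension theorem for distributive lattices'' is slightly misleading: \textbf{UFT} gives extension to a \emph{prime} filter in an arbitrary distributive lattice, and the upgrade to a \emph{maximal} filter is not free --- it uses the normality of $L(\text{Cl}_X)$, a prime filter $\mathcal{F}$ being contained in the maximal filter $\{c:\forall_{f\in\mathcal{F}}\; c\wedge f\neq[\emptyset]^X\}$ exactly as in the necessity part of Theorem 2.8; since you lean on the paper's own assertion of the extension property this is only a matter of wording, but it shows that the normality of the lattice is needed twice, not just for Hausdorffness. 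Second, the step you flag as delicate does go through, and half of your worry evaporates: the identity $[C_1]^X\vee[C_2]^X=[X]^X$ is automatic because the quotient map is a lattice homomorphism and one arranges $C_1\cup C_2=X$ literally. The only real computation is this: $[A]^X\wedge[C]^X=[\emptyset]^X$ means that $A\cap C$, being itself a member of $\text{Cl}_X$ contained in $A\cap C$, lies in $\text{Kc}_X$; local compactness of $X_{top}$ and weak normality (Lemma 7.10, upgraded from points to topologically compact sets by a finite subcover) give $W\in\text{Op}_X$ and $D\in\text{Kc}_X$ with $A\cap C\subseteq W\subseteq D$; the sets $A\setminus W$ and $C\setminus W$ are genuinely disjoint members of $\text{Cl}_X$, so property (iii) of the Wallman base $\text{Cl}_X$ yields $C_1,C_2$ with $C_1\cup C_2=X$, $(A\setminus W)\cap C_1=\emptyset$ and $(C\setminus W)\cap C_2=\emptyset$; finally $A\cap C_1\subseteq A\cap W\subseteq D$ is a weakly closed subset of a topologically compact set, hence $[A\cap C_1]^X=[\emptyset]^X$, and likewise $[C\cap C_2]^X=[\emptyset]^X$. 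With that computation inserted, your proof is complete.
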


Without any loss of generality, we may assume that the sets $X$ and $K$ are disjoint  for, if they are not disjoint, we can replace $X$ by $X\times\{0\}$ and $K$ by $K\times\{1\}$. The disjoint union of the sets $X$ and $K$ is  $(X\times\{0\})\cup(K\times\{ 1\})$ when $X\cap K\neq\emptyset$. The disjoint union of $X$ and $K$ is the set $X\cup K$ when $X\cap K=\emptyset$. 
 
\begin{defi} Suppose that, for a weakly normal gts $X$ and for a weakly Hausdorff topologically compact gts $K$,  we are given a lattice isomorphism $\psi:\text{Cl}_K\to L(\text{Cl}_X)$. Then
\begin{enumerate} 
\item[(i)] $X\cup_{\psi} K$ is the disjoint union of $X$ and $K$;
\item[(ii)] $\text{Cl}^w_{X\cup_{\psi} K}=\{ A\cup\psi^{-1}([A]^X): A\in\text{Cl}_X)\}$;
\item[(iii)] $\text{Op}^w_{X\cup_{\psi}K}=\{ U\subseteq X\cup_{\psi}K: (X\cup_{\psi}K)\setminus U\in\text{Cl}^w_{X\cup_{\psi} K}\}$;
\item[(iv)] $\tau_{X\cup_{\psi}K}= \tau(\text{Op}^w_{X\cup_{\psi}K})$;
\item[(v)] $(X\cup_{\psi}K)_{top}=(X\cup_{\psi}K, \tau_{X\cup_{\psi}K})$;
\item[(vi)] $\text{Op}^{s}_{X\cup_{\psi}K}=\{ U\in\tau_{X\cup_{\psi}K}: U\cap X\in\text{Op}_X \wedge U\cap K\in\text{Op}_K\}$;
\item[(vii)] $\text{Cov}^{s}_{X\cup_{\psi}K}=\{ \mathcal{U}\subseteq \text{Op}^{s}_{X\cup_{\psi}K}: X\cap_1\mathcal{U}\in\text{Cov}_X \wedge K\cap_1\mathcal{U}\in\text{Cov}_K\}$;
\item[(viii)] $\alpha^{s}_{\psi} X=( X\cup_{\psi}K, \text{Op}^{s}_{X\cup_{\psi}K}, \text{Cov}^{s}_{X\cup_{\psi}K})$;
\item[(ix)] we say that $\text{Cov}_X$ and $\text{Cov}_K$ are $\psi$-correlated if $$\text{Cov}_K=\{\{ K\setminus \psi^{-1}([X\setminus U]^X): U\in\mc{U}\}: \mc{U}\in\text{Cov}_X\}.$$
\end{enumerate}
\end{defi}

\begin{thm} 
Suppose we are given a weakly normal, topologically  both non-compact and locally compact gts $X$, a weakly Hausdorff topologically compact gts $K$ and a lattice isomorphism $\psi:\text{Cl}_K\to L(\text{Cl}_X)$. Then $\alpha^{s}_{\psi}X$ is a weakly Hausdorff strict compactification of $X$ such that the gts $K$ is a strict subspace of $\alpha^{s}_{\psi}X$ if at least one of the following conditions (i)--(ii) is satisfied:
\begin{enumerate}
\item[(i)] $\text{Cov}_X$ and $\text{Cov}_K$ are $\psi$-correlated,
\item[(ii)] both $X$ and $K$ are small.
\end{enumerate}
\end{thm}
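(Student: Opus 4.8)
The plan is to analyse the topologization of $\alpha^{s}_{\psi}X$ first and only afterwards the admissible coverings, isolating at the end the one place where hypothesis (i) or (ii) enters.

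\emph{Step 1 (the underlying compact Hausdorff space).} I would re-run the construction of \cite{W4}, but with $K$ supplied as data rather than produced from ultrafilters, so that the argument stays in \textbf{ZF}. Because $\psi$ is a lattice isomorphism, $A\mapsto[A]^{X}$ is a lattice homomorphism of $\text{Cl}_{X}$ onto $L(\text{Cl}_{X})$, and $X,K$ are disjoint, the assignment $A\mapsto A\cup\psi^{-1}([A]^{X})$ preserves finite unions and finite intersections and sends $\vn,X$ to $\vn,X\cup_{\psi}K$; hence $\text{Cl}^{w}_{X\cup_{\psi}K}$ is a complete ring. Intersecting this ring with $X$ recovers $\text{Cl}_{X}$ and intersecting it with $K$ recovers $\text{Cl}_{K}$ (by surjectivity of $\psi$), so $\tau_{X\cup_{\psi}K}$ induces $\tau(\text{Op}_{X})$ on $X$ and $\tau(\text{Op}_{K})$ on $K$. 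Local compactness of $X_{top}$ then shows that for each relatively compact $U\in\text{Op}_{X}$ one has $[X\setminus U]^{X}=[X]^{X}$, so $(X\setminus U)\cup K\in\text{Cl}^{w}_{X\cup_{\psi}K}$, and intersecting all these sets gives $K\in\text{Cl}^{w}_{X\cup_{\psi}K}$; thus $K$ is closed, $X$ is open, and a short computation with the lattice operations shows $X$ is dense. Compactness of $(X\cup_{\psi}K)_{top}$ follows from compactness of $K$: finitely many basic sets $(X\cup_{\psi}K)\setminus(A\cup\psi^{-1}([A]^{X}))$ cover $K$, the complement of their union inside $X$ is the intersection $A_{0}$ of the corresponding $A$'s, and the condition that these sets cover $K$ translates through $\psi$ into $[A_{0}]^{X}=[\vn]^{X}$, that is, $A_{0}$ is compact, so it is covered by finitely many further members of the cover. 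The Hausdorff property is obtained as in \cite{W4}, the inputs being that $X_{top}$ is Hausdorff --- which follows from weak normality of $X$ applied to two distinct singletons --- that $K_{top}$ is compact Hausdorff, and local compactness; in particular $X$ is a dense open subspace and $K_{top}$ the closed remainder. No choice is used, because $K$ is given; this is why the theorem lives in \textbf{ZF}.

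\emph{Step 2 ($\alpha^{s}_{\psi}X$ is a weakly Hausdorff topologically compact gts).} One checks that $\text{Op}^{s}_{X\cup_{\psi}K}$ is a complete ring with $\text{Op}^{w}_{X\cup_{\psi}K}\subseteq\text{Op}^{s}_{X\cup_{\psi}K}\subseteq\tau_{X\cup_{\psi}K}$, whence $\tau(\text{Op}^{s}_{X\cup_{\psi}K})=\tau_{X\cup_{\psi}K}$ and the topologization of $\alpha^{s}_{\psi}X$ is the compact Hausdorff space of Step~1. Then $\text{Cov}^{s}_{X\cup_{\psi}K}$ satisfies conditions (A1)--(A8) of Definition 2.2.1 of \cite{Pie1} by a routine verification: the restrictions $\mathcal{U}\mapsto X\cap_{1}\mathcal{U}$ and $\mathcal{U}\mapsto K\cap_{1}\mathcal{U}$ commute with the operations appearing in those conditions, $\text{Cov}_{X}$ and $\text{Cov}_{K}$ are generalized topologies, and $\text{Op}^{s}_{X\cup_{\psi}K}$ is a ring (for instance, a set $W$ occurring in (A8) restricts to an open set in $X$ and in $K$ and is the union of the sets $W\cap U$, hence lies in $\text{Op}^{s}_{X\cup_{\psi}K}$). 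Under (ii) this is shorter still, for then $\text{Cov}^{s}_{X\cup_{\psi}K}=\text{EssFin}(\text{Op}^{s}_{X\cup_{\psi}K})$ and $\alpha^{s}_{\psi}X$ is small. In either case $\alpha^{s}_{\psi}X$ is topologically compact and weakly Hausdorff, and the inclusion $\alpha\colon X\to\alpha^{s}_{\psi}X$ is a strictly continuous injection with dense image, since $\alpha^{-1}(\mathcal{V})=X\cap_{1}\mathcal{V}\in\text{Cov}_{X}$ for every $\mathcal{V}\in\text{Cov}^{s}_{X\cup_{\psi}K}$.

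\emph{Step 3 (the role of (i) and (ii)).} It remains to see that $\alpha$ is a \emph{strict} embedding and that $K$ is a strict subspace carrying its given structure, and this is the only place where (i) or (ii) is needed. Writing $\text{Ex}$ for the extension operator of Definition 5.6 relative to $(X\cup_{\psi}K)_{top}$, the closure formula $\text{cl}_{(X\cup_{\psi}K)_{top}}(X\setminus U)=(X\setminus U)\cup\psi^{-1}([X\setminus U]^{X})$ yields $\text{Ex}(U)\cap X=U$ and $\text{Ex}(U)\cap K=K\setminus\psi^{-1}([X\setminus U]^{X})$ for $U\in\text{Op}_{X}$. Thus for $\mathcal{U}\in\text{Cov}_{X}$ the family $\mathcal{V}=\{\,\text{Ex}(U)\colon U\in\mathcal{U}\,\}$ satisfies $X\cap_{1}\mathcal{V}=\mathcal{U}$ and $K\cap_{1}\mathcal{V}=\{\,K\setminus\psi^{-1}([X\setminus U]^{X})\colon U\in\mathcal{U}\,\}$, and the latter lies in $\text{Cov}_{K}$ precisely by the $\psi$-correlation hypothesis (i); running this correspondence backwards shows that every $\mathcal{W}\in\text{Cov}_{K}$ equals $\mathcal{V}\cap_{1}K$ for some $\mathcal{V}\in\text{Cov}^{s}_{X\cup_{\psi}K}$. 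Since $\text{Cov}^{s}_{X\cup_{\psi}K}\cap_{2}K\subseteq\text{Cov}_{K}$ holds by the very definition of $\text{Cov}^{s}_{X\cup_{\psi}K}$ while the reverse inclusion and $\alpha(\text{Cov}_{X})\subseteq\text{Cov}^{s}_{X\cup_{\psi}K}\cap_{2}X$ follow from the families $\mathcal{V}$ just built, one concludes $\text{Cov}^{s}_{X\cup_{\psi}K}\cap_{2}K=\text{Cov}_{K}$ (a generalized topology, so $K$ is strict) and that $\alpha$ is a strict embedding. Under (ii) I would argue instead that $\alpha^{s}_{\psi}X$ is small, that $X$ is open and $K$ closed in it with $\text{Op}^{s}_{X\cup_{\psi}K}\cap_{1}X=\text{Op}_{X}$ and $\text{Op}^{s}_{X\cup_{\psi}K}\cap_{1}K=\text{Op}_{K}$, and that in a small gts every open and every closed subset is strict; here the point is that the lifting $W\mapsto X\cup W$ of an open subset $W$ of $K$ is choice-free and lies in $\text{Op}^{s}_{X\cup_{\psi}K}$ because $X\cup W=X\cup U$ for any witnessing $U\in\text{Op}^{s}_{X\cup_{\psi}K}$, so that the subspaces on $X$ and on $K$ are exactly $(X,\text{Cov}_{X})$ and $(K,\text{Cov}_{K})$.

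\emph{Main obstacle.} I expect the real work to be Step~1 performed honestly in \textbf{ZF} --- proving that $(X\cup_{\psi}K)_{top}$ is a compact Hausdorff compactification of $X_{top}$ with $K_{top}$ as remainder --- since this is where local compactness, the quotient lattice $L(\text{Cl}_{X})$ and the Wallman-base properties of $\text{Cl}_{X}$ and $\text{Cl}_{K}$ must be combined without any form of choice, as in \cite{W4} but with the extra care that $K$ be used as input. Once that is in place, Steps~2--3 are essentially bookkeeping, the only mild subtleties being, under (ii), to perform every subspace identification via canonical choice-free liftings, and, under (i), to match the family $K\cap_{1}\{\text{Ex}(U):U\in\mathcal{U}\}$ precisely against the shape of the $\psi$-correlation definition.
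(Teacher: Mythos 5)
Your proposal is correct and follows essentially the same route as the paper: the topological groundwork (that $(X\cup_{\psi}K)_{top}$ is a compact Hausdorff compactification of $X_{top}$) is done as in Theorem 2.1 of \cite{W4} with $K$ given as data, case (i) is handled by the family $\{\,U\cup(K\setminus\psi^{-1}([X\setminus U]^{X})) : U\in\mathcal{U}\,\}$ (which is exactly your $\{\text{Ex}(U):U\in\mathcal{U}\}$), and case (ii) by smallness of $\alpha^{s}_{\psi}X$ together with strictness of open and closed subsets. One cosmetic slip: $K$ is not literally a member of $\text{Cl}^{w}_{X\cup_{\psi}K}$ (that collection is only a ring, not closed under infinite intersections); it is merely weakly closed as an intersection of basic closed sets, which is all your argument actually uses.
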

\begin{proof} Mimicking the proof to Theorem 2.1 of \cite{W4}, one can show that $(X\cup_{\psi}K)_{top}$ is a Hausdorff compact space and that $X$ is dense in $(X\cup_{\psi}K)_{top}$. Using simple arguments, one can verify that $\text{Cov}^{s}_{X\cup_{\psi}K}$ is a generalized topology in the set $X\cup_{\psi} K$ such that $\text{Op}^{w}_{X\cup_\psi K}\subseteq\text{Op}^{s}_{X\cup_{\psi} K}=\bigcup\text{Cov}^{s}_{X\cup_{\psi}K}$. It is easily seen that $\langle X\cap_2 \text{Cov}^{s}_{X\cup_{\psi}K}\rangle_X\subseteq \text{Cov}_X$ and $\langle K\cap_2\text{Cov}^{s}_{X\cup_{\psi}K}\rangle_K\subseteq \text{Cov}_K$. 

Assume that (i) holds. Consider arbitrary $\mc{V}\in\text{Cov}_K$ and  $\mc{U}\in\text{Cov}_X$ such that $\mc{V}=\{K\setminus\psi^{-1}([X\setminus U]^X): U\in\mc{U}\}$. It is obvious that $\mc{W}=\{U\cup(K\setminus \psi^{-1}([X\setminus U]^{X})): U\in\mc{U}\}\in \text{Cov}^{s}_{X\cup_{\psi}K}$. Moreover,  $\mc{W}\cap_1 K=\mc{V}$ and $\mc{W}\cap_1 X=\mc{U}$. This is why $\mc{V}\in K\cap_2\text{Cov}^{s}_{X\cup_{\psi}K}$ and $\mc{U}\in X\cap_2 \text{Cov}^{s}_{X\cup_{\psi}K}$. In consequence, $K\cap_2\text{Cov}^{s}_{X\cup_{\psi}K}=\text{Cov}_K$ and $X\cap_2 \text{Cov}^{s}_{X\cup_{\psi}K}=\text{Cov}_X$.  

Finally, assume that $X$ and $K$ are small. Then $\langle X\cap_2 \text{Cov}^{s}_{X\cup_{\psi}K}\rangle_X=\text{Cov}_X$ and $\langle K\cap_2\text{Cov}^{s}_{X\cup_{\psi}K}\rangle_K=\text{Cov}_K$, so both $X$ and $K$ are subspaces of $\alpha^{s}_{\psi} X$. Now, that $X$ and $K$ are strict subspaces of $\alpha^{s}_{\psi} X$ follows from Proposition 2.3.10 of \cite{Pie1}.
\end{proof}

\begin{rem} Under the assumptions of Theorem 6.3, we have $$X\in\text{Op}^{s}_{X\cup_{\psi}K}\setminus \text{Op}^{w}_{X\cup_{\psi}K}.$$  
 \end{rem}
 
\begin{prop} 
Suppose that $X$ is a weakly normal, topologically both non-compact and locally compact gts,  $K$ is a weakly Hausdorff topologically compact gts, while $\psi:\text{Cl}_K\to L(\text{Cl}_X)$ is a lattice isomorphism. Then the following conditions are satisfied:
\begin{enumerate}
\item[(i)] $(X\cup_{\psi}K)_{top}$ is a Hausdorff compactification of $X_{top}$;
\item[(ii)] $\text{Cl}^w_{X\cup_{\psi}K}$ is a Wallman base for $(X\cup_{\psi}K)_{top}$;
\item[(iii)] for each $A\in\text{Cl}_X$, the equality holds: $\text{cl}_{(X\cup_{\psi}K)_{top}}(A)=A\cup\psi^{-1}([A]^X)$;
\item[(iv)] the Wallman space $\mc{W}(X, \text{Cl}_X)$ is a compactification of $X_{top}$ topologically equivalent with $(X\cup_{\psi}K)_{top}$;
\item[(v)] the space $S(\text{Cl}_X)$ is a compact Hausdorff space homeomorphic with $K_{top}=(X\cup_{\psi}K)_{top}\setminus X_{top}$.
\end{enumerate}
\end{prop}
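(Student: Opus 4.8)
The plan is to reduce everything to the classical Wallman-type construction, which by the earlier results is available in \textbf{ZFU}, and to transfer facts through the lattice isomorphism $\psi$. First I would recall from the construction in Theorem 6.3 (and its reference to the proof of Theorem 2.1 of \cite{W4}) that $(X\cup_{\psi}K)_{top}$ carries the topology generated by $\text{Op}^w_{X\cup_{\psi}K}$, whose complementary closed base is $\text{Cl}^w_{X\cup_{\psi}K}=\{A\cup\psi^{-1}([A]^X):A\in\text{Cl}_X\}$. For (i), I would verify that this space is Hausdorff and compact by mimicking \cite{W4}: Hausdorffness uses weak normality of $X$ together with the fact that $\psi$ is a lattice isomorphism onto $L(\text{Cl}_X)$, so disjoint closed sets of $X_{top}$ are separated and points of $K$ are separated from points of $X$ because $X_{top}$ is locally compact (a non-compact closed set, whose class is nontrivial in $L(\text{Cl}_X)$, can be pushed off any point of $K$); compactness follows since $\{H[A]:A\in\text{Cl}_X\}$ is a compact Hausdorff closed base for $S(\text{Cl}_X)$ by Proposition 6.1, and $(X\cup_{\psi}K)_{top}$ is seen to be homeomorphic to that space. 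Density of $X_{top}$ in $(X\cup_{\psi}K)_{top}$ is immediate because every nonempty member of $\text{Op}^w_{X\cup_{\psi}K}$ meets $X$ (its trace on $X$ is the corresponding nonempty open set of $X$), so (i) holds.

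For (ii), I would observe that $\text{Cl}^w_{X\cup_{\psi}K}$ is a ring of sets: closedness under finite intersections and finite unions follows because $\psi$ respects $\wedge$ and $\vee$ and because $[\,\cdot\,]^X$ commutes with finite intersections and unions on $\text{Cl}_X$; that $\emptyset$ and $X\cup_{\psi}K$ lie in it is clear. Since $(X\cup_{\psi}K)_{top}$ is compact Hausdorff by (i), Remark 1.9 (every Wallman base of a non-empty semi-normal space is a complete closed base, and conversely) plus the fact that compact Hausdorff spaces are normal give that any complete closed base which is a closed base satisfying the separation condition (ii) of Definition 1.6 is automatically a Wallman base — indeed in a compact Hausdorff space the normality condition (iii) is automatic for any ring closed base. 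So I would just check condition (ii) of Definition 1.6 (disjunctiveness), which again follows from disjunctiveness of $\text{Cl}_X$ in $X_{top}$ (a consequence of $X$ being weakly $T_1$/weakly regular via Proposition 2.5, since a weakly normal $T_1$-type gts is disjunctive) together with local compactness to handle points of $K$. Statement (iii) is then the assertion that $\text{cl}_{(X\cup_{\psi}K)_{top}}(A)=A\cup\psi^{-1}([A]^X)$: the right-hand side is a member of $\text{Cl}^w_{X\cup_{\psi}K}$ containing $A$, hence contains the closure; conversely any member of $\text{Cl}^w_{X\cup_{\psi}K}$ of the form $B\cup\psi^{-1}([B]^X)$ containing $A$ forces $A\subseteq B$ hence $[A]^X\le [B]^X$, so $\psi^{-1}([A]^X)\subseteq\psi^{-1}([B]^X)$, giving the reverse inclusion.

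For (iv), I would apply the modified Taimanov theorem (Theorem 5.15) twice, exactly as in the proof of Theorem 5.16: $X_{top}$ is dense in both $(X\cup_{\psi}K)_{top}$ and in $\mathcal{W}(X,\text{Cl}_X)$ (the latter is a compactification of $X_{top}$ in \textbf{ZFU} by Theorem 2.8, since $X$ is weakly normal so $\text{Cl}_X$ is a Wallman base by Proposition 1.8), and the closed base $\{[C]_{\text{Cl}_X}:C\in\text{Cl}_X\}$ of $\mathcal{W}(X,\text{Cl}_X)$, stable under finite intersections, has the property that disjoint members have disjoint closures of preimages in $(X\cup_{\psi}K)_{top}$ — because by (iii) those closures are $A\cup\psi^{-1}([A]^X)$ and $B\cup\psi^{-1}([B]^X)$, and $A\cap B=\emptyset$ with $[A\cap B]^X$ the bottom of $L(\text{Cl}_X)$ forces $\psi^{-1}([A]^X)\cap\psi^{-1}([B]^X)=\psi^{-1}(\text{bottom of }\text{Cl}_K)=\emptyset$. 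Symmetrically, using the closed base $\text{Cl}^w_{X\cup_{\psi}K}$ of $(X\cup_{\psi}K)_{top}$ and the equality $[A\cap B]_{\text{Cl}_X}=[A]_{\text{Cl}_X}\cap[B]_{\text{Cl}_X}$, the identity map on $X_{top}$ extends continuously in the other direction; the two extensions compose to the identity on a dense set, hence everywhere, so they are mutually inverse homeomorphisms fixing $X_{top}$, which is precisely topological equivalence. Finally (v): by Proposition 6.1 the map $A\mapsto H[A]$ exhibits $S(\text{Cl}_X)$ as a compact Hausdorff space with closed base $\{H[A]\}$, and one checks that the map sending an ultrafilter of $L(\text{Cl}_X)$ to the corresponding point of $(X\cup_{\psi}K)_{top}\setminus X$ is a homeomorphism: it is a bijection because a point $p\in K$ corresponds to the ultrafilter $\{[A]^X: p\in\psi^{-1}([A]^X)\}$ of $L(\text{Cl}_X)$, and it carries $H[A]$ onto $\psi^{-1}([A]^X)=\text{cl}_{(X\cup_{\psi}K)_{top}}(A)\cap K$, which by (ii)–(iii) runs over a closed base of the subspace $K_{top}$. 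I expect the main obstacle to be the careful \textbf{ZF}-valid verification, in the proof of (i), that points of $K$ can be separated from points and closed sets of $X$; this is exactly where local compactness of $X_{top}$ is used, and it must be done by hand (choosing a compact neighborhood in $X_{top}$ and using that its complement's closure in $\text{Cl}_X$ has a nontrivial $\sim$-class) rather than by invoking any choice principle beyond \textbf{UFT}.
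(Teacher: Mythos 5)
Your overall architecture (reduce to the Wallman-type construction of \cite{W4}, transfer everything through $\psi$, and use the modified Taimanov theorem twice for (iv)) is the same as the paper's, and your treatments of (ii), (iii) and (v) are essentially right. But there are two concrete problems in how you obtain the compactness statements, and they matter because Proposition 6.5 carries no $\mathbf{ZFU}$ tag, so by the paper's conventions it is meant to be a theorem of $\mathbf{ZF}$.

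First, in (i) you derive compactness of $(X\cup_{\psi}K)_{top}$ by saying it ``is seen to be homeomorphic to'' $S(\text{Cl}_X)$ via Proposition 6.1. That identification is wrong: $S(\text{Cl}_X)$ is homeomorphic only to the remainder $K_{top}$ (this is exactly item (v)), not to the whole compactification, and in any case Proposition 6.1 is itself a $\mathbf{ZFU}$ result, so it cannot supply compactness in $\mathbf{ZF}$. The correct ($\mathbf{ZF}$) source of compactness is the compactness of $K_{top}$: given a centred family of basic closed sets $A_i\cup\psi^{-1}([A_i]^X)$, either every finite intersection meets $K$, in which case the traces $\psi^{-1}([A_i]^X)$ have the finite intersection property in the compact space $K_{top}$, or some finite intersection $A_{i_1}\cap\dots\cap A_{i_n}$ has $[A_{i_1}\cap\dots\cap A_{i_n}]^X$ equal to the bottom class of $L(\text{Cl}_X)$, i.e.\ is topologically compact, and one intersects inside it. This is what ``mimicking the proof of Theorem 2.1 of \cite{W4}'' amounts to, and it is already recorded in the proof of Theorem 6.3, which you may simply cite.

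Second, in (iv) you invoke Theorem 2.8 to make $\mathcal{W}(X,\text{Cl}_X)$ a compact Hausdorff space before applying Theorem 5.15; Theorem 2.8 is $\mathbf{UFT}$, so this turns a $\mathbf{ZF}$ proposition into a $\mathbf{ZFU}$ one. The paper avoids this: once $(X\cup_{\psi}K)_{top}$ is known to be compact Hausdorff with closure operator $\text{cl}(A)=A\cup\psi^{-1}([A]^X)$ on $\text{Cl}_X$, every filter $\mc{F}$ in $\text{Cl}_X$ extends to an ultrafilter by the device used in the proof of Theorem 5.16: choose $p\in\bigcap_{A\in\mc{F}}\text{cl}(A)$ and set $\mc{M}=\{A\in\text{Cl}_X: p\in\text{cl}(A)\}$; finite additivity of this closure operator (immediate from (iii), since $\psi$ and $[\cdot]^X$ are lattice maps) makes $\mc{M}$ a filter, and the complete closed base property makes it maximal. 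This gives compactness of $\mathcal{W}(X,\text{Cl}_X)$, and of $S(\text{Cl}_X)$, in $\mathbf{ZF}$. With these two repairs your argument goes through.
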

\begin{proof} That (i)-(iii) hold can be deduced from the proof to Theorem 2.1 of \cite{W4}. That every filter in $\text{Cl}_X$ is contained in an ultrafilter in $\text{Cl}_X$ can be proved in much the same way, as that  condition (i) of Theorem 5.16 implies the compactness of $\mc{W}(X,\text{Cl}_X)$. Using similar arguments to the ones from the proof of Theorem 2.1 in \cite{W4}, one can show that the compactifications $\mc{W}(X, \text{Cl}_X)$ and  $(X\cup_{\psi}K)_{top}$ of $X_{top}$ are topologically equivalent. Since every filter in $\text{Cl}_X$ is contained in an ultrafilter in $\text{Cl}_X$, the space $S(\text{Cl}_X)$ is also compact. That it is Hausdorff, is shown in Proposition 1.4 of \cite{W4}. Finally, by applying the arguments of the proof to Theorem 2.2 given in \cite{W4}, we obtain that (v) holds.
\end{proof}

We deduce from the proposition above, taken together with Theorem 5.16 and 5.21, that the following theorem holds:

\begin{thm} 
Let $X$ be a weakly normal, topologically both non-compact and locally compact small gts. Suppose that $K$ is a weakly Hausdorff, topologically compact small gts such that there exists a lattice isomorphism $\psi:\text{Cl}_K\to L(\text{Cl}_X)$. Then   
$$\alpha^{w}_{\psi} X=(X\cup_{\psi}K, \text{Op}^w_{X\cup_{\psi}K}, \text{EssFin}(\text{Op}^w_{X\cup_{\psi}K}))$$ 
is a strict compactification of $X$ such that $\alpha^{w}_{\psi} X$ is strictly equivalent with the Wallman strict compactification of $X$, while $K$ is a strict subspace of ${\alpha}^{w}_{\psi}X$.
\end{thm}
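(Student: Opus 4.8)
The plan is to combine the structural results already established for $\alpha^{s}_{\psi}X$ in Theorem~6.3 and Proposition~6.5 with the characterization of Wallman strict compactifications in Theorems~5.16 and~5.21. First I would note that since both $X$ and $K$ are small, condition (ii) of Theorem~6.3 is satisfied, so $\alpha^{s}_{\psi}X=(X\cup_{\psi}K,\text{Op}^{s}_{X\cup_{\psi}K},\text{Cov}^{s}_{X\cup_{\psi}K})$ is a weakly Hausdorff strict compactification of $X$ in which $K$ is a strict subspace. By Proposition~6.5(i) the topological space $(X\cup_{\psi}K)_{top}$ is a Hausdorff compactification of $X_{top}$, and by Proposition~6.5(iv) it is topologically equivalent with the Wallman space $\mathcal{W}(X,\text{Cl}_X)$; in particular, working in \textbf{ZFU}, $\mathcal{W}(X,\text{Cl}_X)$ is compact.

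Next I would identify $\alpha^{w}_{\psi}X$ with the Wallman strict compactification. Since $X$ is small, so is $\alpha^{w}_{\psi}X$ by construction. The key point is that $\text{Op}^{w}_{X\cup_{\psi}K}$ must be shown to be stable under finite unions and to generate $\tau_{X\cup_{\psi}K}$: stability under finite unions follows because, via Proposition~6.5(ii)--(iii), $\text{Cl}^{w}_{X\cup_{\psi}K}=\{\text{cl}_{(X\cup_{\psi}K)_{top}}(A):A\in\text{Cl}_X\}$ is a Wallman base of the compact Hausdorff space $(X\cup_{\psi}K)_{top}$ and closures of closed sets of $X$ intersect as $\text{cl}(A)\cap\text{cl}(B)=\text{cl}(A\cap B)$ — the same computation used in the proof of Theorem~5.16 — so the complements are stable under finite unions; and $\text{Op}^{w}_{X\cup_{\psi}K}$ generates the topology because $\text{Cl}^{w}_{X\cup_{\psi}K}$ is a closed base. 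Hence by Fact~5.20, $\text{Cov}^{w}_{X\cup_{\psi}K}=\text{EssFin}(\text{Op}^{w}_{X\cup_{\psi}K})$ is a generalized topology, so $\alpha^{w}_{\psi}X$ is a well-defined gts and, since $(X\cup_{\psi}K)_{top}$ is compact Hausdorff, it is a strict compactification of $X$. Applying Theorem~5.21 (with $\alpha X=(X\cup_{\psi}K)_{top}=\mathcal{W}(X,\text{Cl}_X)$) gives that $\text{Cov}^{w}_{X\cup_{\psi}K}$ is a generalized weakly Hausdorff topology equal to the wallmanian part of the Wallman space, i.e. $\alpha^{w}_{\psi}X$ is strictly equivalent to $wX$, the Wallman strict compactification of $X$, which exists by Theorem~5.19.

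Finally I would check that $K$ is a strict subspace of $\alpha^{w}_{\psi}X$. One computes $\text{Op}^{w}_{X\cup_{\psi}K}\cap_{1}K=\{K\setminus\psi^{-1}([X\setminus U]^X):U\in\text{Op}_X\}=\text{Op}_K$ using that $\psi$ is a lattice isomorphism onto $L(\text{Cl}_X)$ and that $K$ is small, and then, exactly as in the small case of the proof of Theorem~6.3, invokes Proposition~2.3.10 of \cite{Pie1} to conclude that $\langle\text{EssFin}(\text{Op}^{w}_{X\cup_{\psi}K})\cap_{2}K\rangle_{K}=\text{EssFin}(\text{Op}_K)=\text{Cov}_K$ and that this subspace is strict. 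I expect the main obstacle to be the bookkeeping around the two competing generalized topologies $\text{Op}^{s}$ and $\text{Op}^{w}$ on $X\cup_{\psi}K$ and verifying carefully that the smallness hypotheses on both $X$ and $K$ are genuinely used — specifically, that $\text{Op}^{w}_{X\cup_{\psi}K}$ being stable under finite unions is what makes $\text{EssFin}(\text{Op}^{w}_{X\cup_{\psi}K})$ closed under the operations of a generalized topology, and that this is precisely the place where the argument would break if $K$ were merely topologically compact rather than small.
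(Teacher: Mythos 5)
Your argument is correct and follows essentially the same route as the paper, which deduces the theorem precisely from Proposition 6.5 together with Theorems 5.16 and 5.21; you have simply filled in the intermediate steps (the computation $\text{cl}(A)\cap\text{cl}(B)=\text{cl}(A\cap B)$ giving finite additivity of $\text{Ex}$, the appeal to Fact 5.20, and the strictness check for $K$ via Proposition 2.3.10 of \cite{Pie1}). One minor caveat: the theorem is a \textbf{ZF} statement, so your appeals to \textbf{ZFU} and to Theorem 5.19 are superfluous --- the compactness of $\mathcal{W}(X,\text{Cl}_X)$ already follows in \textbf{ZF} from Proposition 6.5(iv), and the existence of the Wallman strict compactification then follows from Fact 5.20 as you yourself establish.
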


\begin{rem} Under the assumptions of Theorem 6.6, we have $\alpha^{w}_{\psi}X\preceq \alpha^{s}_{\psi}X$; however, it follows from Remark 6.4 that the strict compactifications $\alpha^{s}_{\psi}X$ and $\alpha^{w}_{\psi}X$ of $X$ are not strictly equivalent.
\end{rem}

\section{Strict compactifications with finite remainders}

Let us recall useful facts  concerning Alexandroff's compactifications of topological locally compact, non-compact Hausdorff spaces.

\begin{f} Every non-compact, locally compact Hausdorff topological space $X$ has the following properties:
\begin{enumerate}
\item[(i)] there exists exactly one (up to equivalence) Hausdorff compactification $\alpha_{0}X$ of $X$ such that $\alpha_{0}X\setminus X$ is a singleton;
\item[(ii)] if $\mathcal{C}_0$ is the collection of all closed sets $A$ in $X$ such that either $A$ is compact or every closed in $X$ set $B$ disjoint from $A$ is compact, then $\mathcal{C}_0$ is a Wallman base for $X$ such that the Wallman space $\mathcal{W}(X,\mathcal{C}_{0})$ is a Hausdorff compactification of $X$ equivalent to $\alpha_{0}X$.
\end{enumerate}
\end{f}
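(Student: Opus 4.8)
The plan is to treat parts (i) and (ii) separately, since (i) is the classical Alexandroff theorem (which I will assume is available, or sketch) and the real content is (ii): identifying a specific Wallman base whose Wallman space realizes the one-point compactification. First I would fix a non-compact, locally compact Hausdorff space $X$ and let $\alpha_0 X = X \cup \{\infty\}$ be the Alexandroff compactification, whose open sets are the open sets of $X$ together with the sets $\{\infty\} \cup (X \setminus K)$ for $K$ compact closed in $X$; local compactness and the Hausdorff property guarantee $\alpha_0 X$ is compact Hausdorff, and $X$ is dense since $X$ is not compact. For (ii), I would first check that $\mathcal{C}_0$ is a ring of sets: closure under finite unions and finite intersections follows because a finite union of compact sets is compact, and if $A_1, A_2 \in \mathcal{C}_0$ then for $A_1 \cap A_2$ one uses that any closed $B$ disjoint from $A_1 \cap A_2$ can be split, while for $A_1 \cup A_2$ one notes a closed set disjoint from the union is disjoint from each $A_i$. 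Then $\emptyset, X \in \mathcal{C}_0$ trivially ($\emptyset$ is compact; the complement of $X$ is empty hence compact). The disjunctive/separation conditions (ii) and (iii) of Definition 1.6 follow from local compactness: given a point $x$ outside a closed set $A$, choose a compact closed neighbourhood of $x$ missing $A$ (possible by local compactness and regularity of locally compact Hausdorff spaces), which lies in $\mathcal{C}_0$; and given disjoint $A_1, A_2 \in \mathcal{C}_0$, at least one of them, say $A_1$, together with the structure of $\mathcal{C}_0$ lets us find a compact closed set separating them, or else handle the case where both have compact-disjoint-complement property by a direct argument. Thus $\mathcal{C}_0$ is a Wallman base, so $X$ is semi-normal and by Theorem 2.8 (in \textbf{ZFU}) the Wallman space $\mathcal{W}(X, \mathcal{C}_0)$ is a Hausdorff compactification of $X_{top}$.

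It remains to show $\mathcal{W}(X, \mathcal{C}_0)$ is equivalent to $\alpha_0 X$, i.e. that the remainder is a single point. The key observation is that every ultrafilter $p$ in $\mathcal{C}_0$ that does not converge to a point of $X$ must contain every member $A$ of $\mathcal{C}_0$ whose complement is compact — because for such $A$, any $C \in \mathcal{C}_0$ disjoint from $A$ is compact, hence a compact closed set; a free ultrafilter cannot contain a compact closed set (since the corresponding closure-basic set $[C]_{\mathcal{C}_0}$ would force a fixed point by compactness of $C$ in $X$), so $C \notin p$, and maximality of $p$ forces $A \in p$. Hence the free ultrafilters in $\mathcal{C}_0$ all contain the same filter base (the "cocompact" members of $\mathcal{C}_0$), and since this determines the ultrafilter — any two ultrafilters extending a common maximal filter coincide — there is exactly one free ultrafilter. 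Then I would exhibit the equivalence explicitly: the map sending $\infty \mapsto$ this unique free ultrafilter and $x \mapsto w_{\mathcal{C}_0}(x)$ is a continuous bijection between compact Hausdorff spaces, hence a homeomorphism, commuting with the embeddings; this gives the equivalence with $\alpha_0 X$.

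The main obstacle I anticipate is the verification that $\mathcal{C}_0$ satisfies the Wallman separation axiom (iii) of Definition 1.6 for \emph{arbitrary} disjoint pairs in $\mathcal{C}_0$, in particular the mixed case where one set is compact and the other merely has the property that all its closed-disjoint partners are compact; here one must carefully use that a compact set $A_1$ and a closed set $A_2$ disjoint from it can be separated by a compact closed neighbourhood of $A_1$, whose complement (closed, containing $A_2$) then also lies in $\mathcal{C}_0$ because its own closed-disjoint partners lie inside the compact neighbourhood hence are compact. A secondary delicate point is the no-blank-line argument that the unique free ultrafilter is genuinely unique rather than merely that the free ultrafilters share a common core; this is handled by the general fact that an ultrafilter in a lattice is the unique ultrafilter extending any maximal filter it contains, applied to the maximal filter generated by the cocompact members. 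Everything else is routine once these two points are in place, and the whole argument lives comfortably in \textbf{ZFU} by appeal to Theorem 2.8.
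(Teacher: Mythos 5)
Your overall architecture is the same as the paper's: the paper does not prove Fact 7.1 directly but defers part (ii) to Steiner--Steiner or to its own Proposition 7.14 (via Lemma 7.10, Proposition 7.12 and Theorem 7.13), and those results carry out exactly the verification you sketch --- $\mathcal{C}_0$ is a ring, disjunctive, satisfies the Wallman separation axiom, and admits essentially one free ultrafilter. One simplification you miss: two disjoint members of $\mathcal{C}_0$ cannot both be non-compact (if $A_1$ is non-compact then every closed set disjoint from it, in particular $A_2$, is compact), so the ``both non-compact'' case of axiom (iii) that you propose to ``handle by a direct argument'' is vacuous, and the only real case is the one you correctly identify as the main obstacle.

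There is, however, a genuine soft spot in your uniqueness step. You argue that every free ultrafilter contains every $A\in\mathcal{C}_0$ \emph{whose complement is compact}, and then conclude uniqueness because two ultrafilters extending ``the maximal filter generated by the cocompact members'' coincide. But the class of members with compact complement can be trivial: for $X=\mathbb{R}$ the only closed set with compact (open!) complement is $\mathbb{R}$ itself, so that filter is $\{\mathbb{R}\}$ and determines nothing. The correct core is $\mathcal{M}=\mathcal{C}_0\setminus\mathrm{Kc}_X$, the \emph{non-compact} members of $\mathcal{C}_0$ (equivalently, those all of whose disjoint closed partners are compact); your own argument --- a free ultrafilter contains no compact set, and meets every such $A$, hence contains it by maximality --- shows every free ultrafilter equals $\mathcal{M}$, but you must additionally verify that $\mathcal{M}$ \emph{is} a filter, i.e.\ that the intersection of two non-compact members of $\mathcal{C}_0$ is non-compact; this needs local compactness (if $A_1\cap A_2$ were compact, enclose it in an open $V$ with compact closure $C$; then $A_1\setminus V$ is closed and disjoint from $A_2$, hence compact, so $A_1=(A_1\setminus V)\cup(A_1\cap C)$ would be compact). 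This is precisely the computation in the paper's Proposition 7.14 and Theorem 7.13. Finally, your appeal to Theorem 2.8 and \textbf{ZFU} for compactness of $\mathcal{W}(X,\mathcal{C}_0)$ is both unnecessary and contrary to the paper's bookkeeping: Fact 7.1 is a \textbf{ZF} statement, and the whole point of this particular base is that every filter in $\mathcal{C}_0$ extends to an ultrafilter \emph{without} any choice principle --- either it contains a compact set and is contained in a fixed ultrafilter, or it is contained in the explicitly given $\mathcal{M}$.
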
 
\begin{proof} 
That (\textit{ii}) holds can be deduced from the results of Section 2 of \cite{StSt} (cf. e.g. Theorem 5 in \cite{StSt}) or from our Proposition 7.14 given below. 
\end{proof}
\begin{f} Every locally compact Hausdorff topological space is a regular semi-normal space. 
\end{f}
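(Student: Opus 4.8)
The plan is to verify the two asserted properties separately, both in \textbf{ZF}. I first treat regularity, which the classical argument delivers without any use of \textbf{AC}. Given a point $x$ and a closed set $A$ with $x\notin A$, local compactness provides an open $V_0\ni x$ whose closure $\text{cl}_X(V_0)$ is compact; replacing $V_0$ by $V=V_0\cap(X\setminus A)$, we keep $x\in V$ and $P:=\text{cl}_X(V)$ is a closed subset of a compact set, hence compact. Inside the compact Hausdorff subspace $P$ the sets $\{x\}$ and $P\setminus V$ are disjoint and closed, so normality of $P$ separates them by relatively open sets $G',H'$; writing $G'=P\cap W$ with $W$ open in $X$ and putting $G=W\cap V$, one checks $G\subseteq G'$ and $\text{cl}_X(G)\subseteq\text{cl}_X(G')\subseteq P\setminus H'\subseteq V\subseteq X\setminus A$. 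Then $G$ and $X\setminus\text{cl}_X(G)$ are disjoint open sets containing $x$ and $A$ respectively, so $X$ is regular.

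That argument invokes the fact that a compact Hausdorff space is normal, and carrying \emph{this} out in \textbf{ZF} is the only delicate point — the textbook proof "for every point of the set choose a separating pair" secretly uses a choice function over a possibly infinite set. I would instead argue choice-freely: for a point $x$ outside a compact set $K$, the family $\{U\ \text{open}:x\notin\text{cl}_X(U)\}$ is an open cover of $K$ (Hausdorffness shows that for each $k\in K$ such a $U$ around $k$ exists, which is all that "cover" requires), so a finite subfamily covers $K$, and for its union $U$ we get $x\notin\text{cl}_X(U)=\bigcup_i\text{cl}_X(U_i)$; thus $X\setminus\text{cl}_X(U)$ and $U$ separate $x$ from $K$. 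Feeding this back, for disjoint compact sets $K,L$ the family $\{U\ \text{open}:\text{cl}_X(U)\cap L=\emptyset\}$ covers $K$, and the union of a finite subfamily again separates $K$ from $L$; since in a compact Hausdorff space the closed sets are exactly the compact sets, this is normality. This is the step I expect to be the main obstacle; once it is in place, everything else is routine.

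For semi-normality I would split on whether $X$ is compact. If $X$ is compact, it is $T_1$ (being Hausdorff) and normal by the previous paragraph, and then the collection of \emph{all} closed subsets of $X$ is a Wallman base in the sense of Definition 1.6: it is a ring of sets and a closed base trivially, condition (ii) holds because $\{x\}$ is closed whenever $x\in X$, and condition (iii) is precisely normality; the degenerate case $X=\emptyset$ is covered here. If $X$ is non-compact, Fact 7.1(ii) already exhibits a Wallman base $\mathcal{C}_0$ of $X$. In either case $X$ has a Wallman base and is therefore semi-normal. Combining this with the regularity established above yields the statement.
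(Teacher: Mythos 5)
The paper states this Fact without any proof, so there is no argument of the authors' to compare against; judged on its own, your proof is correct and properly choice-free. The key point --- separating a point from a compact set by covering the compact set with \emph{all} open sets whose closures miss the point and then passing to a finite subcover --- is exactly the right way to remove the hidden choice from the textbook proof, and it already suffices for the regularity step (in the compact Hausdorff set $P$ the closed set $P\setminus V$ is compact, so full normality of $P$ is not even needed there); taking $C=\{x\}$ to verify condition (ii) of Definition 1.6 in the compact case is a clean shortcut. The one caveat is the non-compact case, which you delegate entirely to Fact 7.1(ii). That is legitimate within the paper's ordering, but note that the paper itself only justifies Fact 7.1(ii) by a pointer to Steiner--Steiner or to Proposition 7.14, and the latter is stated for \emph{weakly normal} gtses --- a hypothesis an arbitrary locally compact Hausdorff space does not supply --- so your proof of semi-normality is only as self-contained as Fact 7.1(ii) is. If you wanted to close that loop, the regularity you establish in your first paragraph gives the purely topological analogue of Lemma 7.10 (every point of an open $V$ lies in an open $U$ with compact closure contained in $V$), and with that in hand the verification that $\mathcal{C}_0$ satisfies conditions (i)--(iii) of Definition 1.6 goes through by repeating the computation in the proof of Proposition 7.14 verbatim at the level of topological spaces.
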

\begin{rem} 
It seems unknown whether a locally compact Hausdorff topological space can be not completely regular in a model for \textbf{ZF}.
\end{rem}
 
\begin{defi}
 A one-point strict compactification of a topologically non-compact gts $X$ is a strict compactification $\hat{X}$ of $X$ such that the set $\hat{X}\setminus X$ is a singleton. 
\end{defi}

For a gts $X$, let $\text{Kc}_X$ be the collection of all topologically compact sets from $\text{Cl}_X$.
\begin{defi}
Assume that $X$ is a gts which is topologically non-compact. For an element $\infty\notin X$, let: $$\hat{X}=X\cup\{\infty\},$$
$$\text{Op}_{\hat{X}}=\text{Op}_X\cup \{ \hat{X}\setminus C: C\in\text{Kc}_X\},$$
$$\text{Cov}_{\hat{X}}= \{ \mathcal{U}\subseteq \text{Op}_{\hat{X}}: \mathcal{U}\cap_1 X
\in \text{Cov}_X \}.$$
Then $\hat{X}=(\hat{X}, \text{Op}_{\hat{X}}, \text{Cov}_{\hat{X}})$ is called \textbf{the Alexandroff strict compactification} of $X$.
 \end{defi}
 We hope that readers will explain to themselves the following facts:

\begin{f} Assume that $X$ is a gts which is topologically non-compact and $\hat{X}$ is the Alexandroff strict compactification of $X$. Then:
\begin{enumerate} 
\item[(i)] $\hat{X}$  is a strict one-point compactification of $X$.
\item[(ii)] If $\hat{X}$ is weakly Hausdorff, then $\text{Cov}_{\hat{X}}$ is the strongest generalized topology in $\hat{X}_{top}$ associated with $\text{Cov}_X$.
\item[(iii)] If $\hat{X}$ is weakly Hausdorff, then $\hat{X}$ is the unique up to strict equivalence one-point weakly Hausdorff strict compactification of $X$ equipped with the strongest generalized topology associated with $\text{Cov}_X$.
\end{enumerate}
\end{f}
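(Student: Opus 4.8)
The statement to prove is Fact 7.6, asserting three things about the Alexandroff strict compactification $\hat X$ of a topologically non-compact gts $X$: (i) $\hat X$ is a strict one-point compactification of $X$; (ii) if $\hat X$ is weakly Hausdorff, then $\text{Cov}_{\hat X}$ is the strongest generalized topology in $\hat X_{top}$ associated with $\text{Cov}_X$ (in the sense of Definition 5.7); (iii) if $\hat X$ is weakly Hausdorff, then $\hat X$ is the unique up to strict equivalence one-point weakly Hausdorff strict compactification of $X$ carrying the strongest associated generalized topology. The plan is to check (i) directly from the definitions, then derive (ii) by a set-theoretic identification of $\text{Op}^S_{\hat X}$ and $\text{Cov}^S_{\hat X}$ with the explicitly given $\text{Op}_{\hat X}$ and $\text{Cov}_{\hat X}$, and finally obtain (iii) as a formal consequence of (i), (ii) and the classical uniqueness of the one-point Hausdorff compactification (Fact 7.1(i)).

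For (i), I would first verify that $(\hat X, \text{Op}_{\hat X}, \text{Cov}_{\hat X})$ is a gts: conditions (A1)--(A8) of Definition 2.2.1 of \cite{Pie1} for $\text{Cov}_{\hat X}$ follow by pulling back along the inclusion $X\hookrightarrow\hat X$ together with the fact that $\text{Kc}_X$ is a ring (by Facts 3.12 and 3.13, finite unions of closed topologically compact sets are closed topologically compact, and a closed subset of such is again such) and that each $\hat X\setminus C$ with $C\in\text{Kc}_X$ meets the open sets of $X$ correctly, i.e. $(\hat X\setminus C)\cap X = X\setminus C\in\text{Op}_X$ since $C$ is closed in $X$. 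Next I would check that the inclusion $\alpha\colon X\to\hat X$ is a strict embedding: it is injective, $\alpha^{-1}(\mathcal U)=\mathcal U\cap_1 X\in\text{Cov}_X$ by the very definition of $\text{Cov}_{\hat X}$ so $\alpha$ is strictly continuous, and for $\mathcal U\in\text{Cov}_X$ the family $\mathcal U$ itself lies in $\text{Cov}_{\hat X}$ with $\mathcal U\cap_1 X=\mathcal U$, giving $\alpha(\text{Cov}_X)\subseteq\text{Cov}_{\hat X}\cap_2 X$; hence $\text{Op}_X=\text{Op}_{\hat X}\cap_1 X$ and $X$ is strict in $\hat X$. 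Density of $X$ in $\hat X_{top}$ amounts to: every weakly open neighbourhood of $\infty$ meets $X$; such a neighbourhood contains some $\hat X\setminus C$ with $C\in\text{Kc}_X$, and $X\setminus C\neq\emptyset$ because $X$ is topologically non-compact (otherwise $X=C$ would be compact). Topological compactness of $\hat X$: given a weakly open cover of $\hat X$, pick a member $W$ containing $\infty$; $W\supseteq\hat X\setminus C$ for some $C\in\text{Kc}_X$, the rest of the cover restricts to a weakly open cover of the topologically compact set $C$, extract a finite subcover, and adjoin $W$.

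For (ii), assume $\hat X$ is weakly Hausdorff, so $\hat X_{top}$ is a Hausdorff compactification of $X_{top}$ (using Fact 5.19-type reasoning and (i)). I would show $\text{Op}^S_{\hat X}=\text{Op}_{\hat X}$ and $\text{Cov}^S_{\hat X}=\text{Cov}_{\hat X}$, where the superscript $S$ refers to Definition 5.7 applied to the topological compactification $\hat X_{top}$. The inclusion $\text{Op}_{\hat X}\subseteq\text{Op}^S_{\hat X}$ is immediate since every member of $\text{Op}_{\hat X}$ is weakly open with the correct trace on $X$. For the converse, take $V\in\tau_{\hat X_{top}}$ with $V\cap X\in\text{Op}_X$; if $\infty\notin V$ then $V\subseteq X$ is weakly open in $X_{top}$ with $V=V\cap X\in\text{Op}_X\subseteq\text{Op}_{\hat X}$ — wait, here one must be careful: weakly open does not imply open. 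This is the point I expect to be the main obstacle. The resolution is that $V$ need not itself lie in $\text{Op}_{\hat X}$, and in fact the claim $\text{Op}^S_{\hat X}=\text{Op}_{\hat X}$ is generally false; what Definition 5.7 actually produces is a topology on $\hat X$ whose generated topology is $\tau_{\hat X_{top}}$ by Proposition 5.8(ii). So the correct formulation of (ii) is that $\text{Cov}_{\hat X}$, viewed inside $\hat X_{top}$, equals $\text{Cov}^S_{\hat X_{top}}$ meaning: $\mathcal V\subseteq\text{Op}^S_{\hat X_{top}}$ and $\alpha^{-1}(\mathcal V)\in\text{Cov}_X$ iff $\mathcal V\in\text{Cov}_{\hat X}$ after identifying $\text{Op}_{\hat X}$ with the relevant sub-collection of $\text{Op}^S$. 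Concretely I would argue: $\text{Op}_{\hat X}\subseteq\text{Op}^S_{\hat X_{top}}$, both generate $\tau_{\hat X_{top}}$, and the condition defining $\text{Cov}^S$ (namely $\alpha^{-1}(\mathcal V)\in\text{Cov}_X$) is exactly the condition $\mathcal V\cap_1 X\in\text{Cov}_X$ defining $\text{Cov}_{\hat X}$ — since for the one-point extension any weakly open $V$ with $V\cap X\in\text{Op}_X$ satisfies $V\in\text{Op}_{\hat X}$ precisely when $\infty\in V$ forces $\hat X\setminus V$ to be closed and topologically compact in $X$, which the Hausdorff hypothesis on $\hat X_{top}$ guarantees via local compactness of $X_{top}$. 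Thus I would invoke Fact 7.9 (locally compact Hausdorff spaces are the ones admitting a one-point Hausdorff compactification) to upgrade weak-openness of neighbourhoods of $\infty$ to the form $\hat X\setminus C$ with $C$ compact closed, identifying $\text{Op}^S_{\hat X_{top}}$ restricted to neighbourhoods of $\infty$ with $\{\hat X\setminus C : C\in\text{Kc}_X\}$, and hence $\text{Cov}^S_{\hat X_{top}}=\text{Cov}_{\hat X}$ as claimed.

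For (iii), suppose $\hat X$ is weakly Hausdorff and let $(\beta X,\beta)$ be any one-point weakly Hausdorff strict compactification of $X$ whose generalized topology is the strongest one associated with $\text{Cov}_X$ (with respect to the underlying topological compactification $\beta X_{top}$). Then $\beta X_{top}$ is a one-point Hausdorff compactification of $X_{top}$, hence by Fact 7.1(i) there is a homeomorphism $h\colon\hat X_{top}\to\beta X_{top}$ with $h\circ\alpha=\beta$. Since both gtses carry the strongest generalized topology associated with $\text{Cov}_X$ over homeomorphic-over-$X$ topological compactifications, $h$ transports $\text{Op}^S$ to $\text{Op}^S$ and $\text{Cov}^S$ to $\text{Cov}^S$; concretely, $h^{-1}$ of a member of $\text{Op}^S_{\beta X_{top}}$ is weakly open in $\hat X_{top}$ with the same trace $\alpha^{-1}$ on $X$, hence lies in $\text{Op}^S_{\hat X_{top}}$, and the analogous statement for the $\text{Cov}^S$-condition holds because it is phrased purely in terms of $\alpha^{-1}$, which is preserved by $h\circ\alpha=\beta$. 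Therefore $h$ is a strict homeomorphism with $h\circ\alpha=\beta$, i.e. $\beta X$ is strictly equivalent to $\hat X$. Combining with (i) and (ii) (which show $\hat X$ is itself such a compactification) completes the uniqueness claim. The main delicate point throughout is (ii): keeping straight the distinction between $\text{Op}_{\hat X}$ and the topology it generates, and checking that for the one-point extension the extra generality permitted in $\text{Op}^S_{\hat X_{top}}$ collapses — which is exactly where the weak Hausdorffness of $\hat X$, equivalently local compactness of $X_{top}$, is used.
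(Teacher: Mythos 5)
The paper gives no argument for this Fact (it is explicitly left to the reader), so your proposal can only be judged on its own merits. Parts (i) and (iii) are handled correctly: the verification that the inclusion is a strict embedding, that $X$ is dense because $X\setminus C\neq\emptyset$ for every $C\in\text{Kc}_X$, the finite-subcover argument for topological compactness of $\hat{X}$, and the transport of the $S$-structure along the homeomorphism supplied by Fact 7.1(i) are all sound.

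Part (ii), however, contains a genuine wobble. You assert that ``the claim $\text{Op}^{S}_{\hat{X}}=\text{Op}_{\hat{X}}$ is generally false'' and then replace the needed argument by an appeal to local compactness of $X_{top}$, which by itself does not show that $\hat{X}\setminus V$ is topologically compact for an arbitrary weakly open $V\ni\infty$ with $V\cap X\in\text{Op}_X$. In fact the equality $\text{Op}^{S}_{\hat{X}_{top}}=\text{Op}_{\hat{X}}$ is \emph{true}, and the statement (ii) forces it, since $\bigcup\text{Cov}_{\hat{X}}=\text{Op}_{\hat{X}}$ and $\bigcup\text{Cov}^{S}_{\hat{X}_{top}}=\text{Op}^{S}_{\hat{X}_{top}}$ (singleton coverings are admissible). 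The missing step is short: if $V\in\tau_{\hat{X}_{top}}$, $V\cap X\in\text{Op}_X$ and $\infty\notin V$, then $V=V\cap X\in\text{Op}_X$; if $\infty\in V$, then $\hat{X}\setminus V=X\setminus(V\cap X)\in\text{Cl}_X$ is a weakly closed subset of the topologically compact space $\hat{X}_{top}$, hence compact in $\hat{X}_{top}$, and since $X$ is strict in $\hat{X}$ its subspace topology from $\hat{X}_{top}$ is $\tau(\text{Op}_X)$ (Fact 3.9), so $\hat{X}\setminus V\in\text{Kc}_X$ and $V\in\text{Op}_{\hat{X}}$. This yields $\text{Op}^{S}_{\hat{X}_{top}}=\text{Op}_{\hat{X}}$ and then $\text{Cov}^{S}_{\hat{X}_{top}}=\text{Cov}_{\hat{X}}$ immediately, with no use of weak Hausdorffness or of local compactness at all; the hypothesis in (ii) is only there to make $\hat{X}_{top}$ a Hausdorff compactification. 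Replacing your local-compactness detour by this observation closes the gap, and the rest of your argument then goes through.
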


The following example shows that a weakly Hausdorff one-point compactification of a topologically non-compact locally compact gts $X$ need not be strictly equivalent with $\hat{X}$. 
\begin{exam}
Let $Y$ be the one-point Hausdorff topological compactification of the real line $\mb{R}$ equipped with its natural topology. Let $\mc{U}$ be the collection of all simultaneously open and bounded intervals in $\mb{R}$ and let $\mc{V}=\{\emptyset, Y\}\cup \mc{U}\cup\{ Y\setminus \text{cl}_{\mb{R}}U: U\in \mc{U}\}$. Define $\text{Op}_Y$ as the collection which consists of all finite unions of members of $\mc{V}$. Let $\text{Cov}_Y=\text{EssFin}(\text{Op}_Y)$. Then $(Y, \text{Cov}_Y)$ is a one-point strict compactification of its subspace $(\mb{R}, \mb{R}\cap_2\text{Cov}_Y)$. However, the Alexandorff strict compactification of $(\mb{R}, \mb{R}\cap_2\text{Cov}_Y)$ is not strictly equivalent with $(Y, \text{Cov}_Y)$ because $\mb{R}\notin\text{Op}_Y$.
\end{exam}

It is interesting to have a look at the Alexandroff strict compactifications of $\mathbb{R}^{n}$ equipped with distinct generalized topologies that induce the natural topology of $\mathbb{R}^{n}$. 

\begin{exam}\label{sfera}
The small partially topological space $(\mathbb{R}^n, \tau, \text{EssFin}(\tau))$ where $\tau$ is the natural topology of $\mathbb{R}^n$ (cf. Definition 2.2.14(7) of \cite{Pie1}), has as its one-point strict compactification the small partially topological sphere $S^n$ (with the natural topology).   
\end{exam}
\begin{exam}
For $i\in\mathbb{N}$, let $B_i$ be the standard open ball in $\mathbb{R}^n$ at centre $0$ and of radius $i$.  The generalized topology $\langle \{\{B_i: i\in\mathbb{N}\}\}\cup\text{EssFin}(\tau)\rangle$ of the localization $\mb{R}^n_{loc}$  (cf. Definition 2.1.15 of \cite{Pie2}) of the Euclidean space from Example \ref{sfera} 
is only locally small but not small, and the Alexandroff strict compactification of $\mb{R}^n_{loc}$  is a (not locally small) sphere with the point "at infinity"  having no small neighbourhood.
\end{exam}

Example 5.7 shows that the Alexandroff strict compactification of a topologically locally compact, non-compact weakly Hausdorff gts can be not weakly Hausdorff. Let us go in search for necessary and sufficient conditions for a gts to have a weakly Hausdorff one-point strict compactification. 

\begin{lem} Let $X$ be a weakly normal gts such that $X_{top}$ is locally compact. Suppose that $V\in\text{Op}_X$ and $x\in V$. Then there exist $U\in\text{Op}_X$ and $C\in\text{Kc}_X$ such that $x\in U\subseteq C\subseteq V$.
\end{lem}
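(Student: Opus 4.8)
The plan is to combine local compactness of $X_{top}$ with the weak normality of the gts $X$ to squeeze a compact closed set between the point $x$ and the open set $V$. First I would use local compactness of $X_{top}$: since $V$ is open in $X$, it is weakly open, hence open in $X_{top}$, and $x\in V$; by local compactness of the Hausdorff (in fact here we only need the space as given) space $X_{top}$, there is an open-in-$X_{top}$ neighbourhood $W$ of $x$ whose $X_{top}$-closure $K_0=\mathrm{cl}_{X_{top}}(W)$ is compact and contained in $V$. The set $K_0$ is weakly closed and topologically compact, but it need not be a member of $\mathrm{Cl}_X$, so it is not yet in $\mathrm{Kc}_X$; this gap is the crux of the argument.

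To bridge that gap I would apply weak normality of $\mathrm{Cov}_X$. The singleton $\{x\}$ and the set $X\setminus V$: the latter is in $\mathrm{Cl}_X$ (as $V\in\mathrm{Op}_X$), and $\{x\}$ is a singleton disjoint from it. By Definition 1.7 there are disjoint $W_1,W_2\in\mathrm{Op}_X$ with $x\in W_1$ and $X\setminus V\subseteq W_2$. Put $U=W_1\cap W$ (with $W$ the neighbourhood from the previous paragraph, which we may also take in $\mathrm{Op}_X$ by shrinking, since $\mathrm{Op}_X$ generates the topology we can choose $W\in\mathrm{Op}_X$ with $x\in W$ and $\mathrm{cl}_{X_{top}}(W)\subseteq V$ compact — here one uses that $\mathrm{Op}_X$ is a base for $X_{top}$ only up to the topology it generates, so more carefully one takes a basic $X_{top}$-open set inside $W$, i.e. a finite intersection of members of $\mathrm{Op}_X$; to keep $U\in\mathrm{Op}_X$ one should instead run the weak-normality argument first and then intersect, being careful that finite intersections of members of $\mathrm{Op}_X$ need not lie in $\mathrm{Op}_X$). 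The clean route: let $C=X\setminus W_2$. Then $C\in\mathrm{Cl}_X$, $x\in W_1\subseteq C$ (since $W_1\cap W_2=\emptyset$), and $C\subseteq V$ (since $X\setminus V\subseteq W_2$). It remains to see $C$ is topologically compact, i.e. $C\in\mathrm{Kc}_X$, and to exhibit the required $U$.

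For topological compactness of $C$: we have $C\subseteq V$ and we want $C$ compact in $X_{top}$. Intersect with the compact neighbourhood: choose, by local compactness applied to the $X_{top}$-open set $W_1$ (which contains $x$), an $X_{top}$-neighbourhood $N$ of $x$ with $\mathrm{cl}_{X_{top}}(N)$ compact and $\mathrm{cl}_{X_{top}}(N)\subseteq W_1$. We may take $N$ to be a finite intersection $U_1\cap\dots\cap U_k$ of members of $\mathrm{Op}_X$ containing $x$; actually for the statement it is enough to produce one $U\in\mathrm{Op}_X$, so instead take a single $U\in\mathrm{Op}_X$ with $x\in U\subseteq N$, which exists because $\mathrm{Op}_X$ generates $\tau(\mathrm{Op}_X)$ — no, again a basic open set is a finite intersection. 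The correct fix is to apply Fact 3.11/3.14-style closedness: $C\cap \mathrm{cl}_{X_{top}}(N)$ is a weakly closed subset of a topologically compact set, hence topologically compact; but we need the compact set itself inside $\mathrm{Cl}_X$. I expect the main obstacle to be exactly this interplay — getting $U\in\mathrm{Op}_X$ (not merely weakly open) and $C\in\mathrm{Kc}_X$ (not merely weakly closed and compact) simultaneously. The resolution I would pursue: apply weak normality twice. First separate $\{x\}$ from $X\setminus V$ to get $x\in W_1\subseteq X\setminus W_2=:C_1\subseteq V$ with $C_1\in\mathrm{Cl}_X$; this $C_1$ need not be compact. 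Then use local compactness to find a compact $X_{top}$-neighbourhood $Q$ of $x$ inside $W_1$; separate $\{x\}$ from $X\setminus W_1$ (or from $X\setminus \mathrm{int}_{X_{top}}Q$) by weak normality to get $x\in W_1'\subseteq C_2:=X\setminus W_2'\subseteq$ that open set; then $C_2\in\mathrm{Cl}_X$ and $C_2$ is contained in the compact set $Q$, hence, being weakly closed and a subset of a topologically compact set, $C_2$ is topologically compact (a weakly closed subset of a topologically compact subspace is topologically compact, cf. the remarks around Fact 3.11). Finally $U:=W_1'\cap W_1$ may fail to be in $\mathrm{Op}_X$, so instead simply take $U:=W_1'\in\mathrm{Op}_X$, $C:=C_2\in\mathrm{Kc}_X$, and note $x\in U\subseteq C\subseteq Q\subseteq W_1\subseteq C_1\subseteq V$, which gives $x\in U\subseteq C\subseteq V$ as required.
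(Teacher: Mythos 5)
Your overall strategy is the right one and is essentially the paper's: use local compactness of the (weakly) Hausdorff space $X_{top}$ to trap $x$ in a compact neighbourhood inside $V$, use weak normality to interpolate $U\in\text{Op}_X$ and $C\in\text{Cl}_X$ with $x\in U\subseteq C$ inside that neighbourhood, and finish by noting that a weakly closed subset of a topologically compact set is topologically compact. One preliminary remark: your recurring worry that finite intersections of members of $\text{Op}_X$ may fall outside $\text{Op}_X$ is unfounded. By Proposition 1.5, $\text{Cl}_X$ is a complete ring of sets, so every weakly closed set is an intersection of members of $\text{Cl}_X$ and, dually, $\text{Op}_X$ is literally an open base for $X_{top}$. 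This removes most of the hedging in your middle paragraphs and makes the needed shrinking steps legitimate.

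The one step that does not work as written is the second application of weak normality. Definition 1.7 separates $\{x\}$ only from sets belonging to $\text{Cl}_X$, and $X\setminus\text{int}_{X_{top}}Q$ is merely weakly closed, so you may not separate $\{x\}$ from it directly; on the other hand, separating $\{x\}$ from $X\setminus W_1\in\text{Cl}_X$ yields only $C_2\subseteq W_1$, and the inclusion you have is $Q\subseteq W_1$, not $W_1\subseteq Q$, so the assertion that $C_2$ is contained in the compact set $Q$ does not follow on that branch. The repair is exactly the base argument above: choose $W_0\in\text{Op}_X$ with $x\in W_0\subseteq\text{int}_{X_{top}}Q$ and separate $\{x\}$ from $X\setminus W_0\in\text{Cl}_X$; then $C_2\subseteq W_0\subseteq Q$, and your chain $x\in W_1'\subseteq C_2\subseteq Q\subseteq V$ goes through (the first separation producing $C_1$ then becomes superfluous, since one may take $Q\subseteq V$ from the start). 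This is in effect how the paper proceeds: it first produces $W\in\text{Op}_X$ with compact closure inside $V$, and only then applies the separation property of $\text{Cl}_X$ to the pair of disjoint closed sets obtained from $W$.
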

\begin{proof} Since $X_{top}$ is a locally compact Hausdorff space, it is regular. Therefore, there exists $W\in\text{Op}_X$ such that $x\in W\subseteq \text{cl}_{X_{top}}(W)\subseteq V$ and the set $K=\text{cl}_{X_{top}}(W)$ is topologically compact. Let $B=X\setminus W$.  There exits $A\in\text{Cl}_X$ such that $x\in A\subseteq W$. Take sets $C, D\in\text{Cl}_X$ such that $A\subseteq X\setminus D$, $B\subseteq X\setminus C$ and $C\cup D=X$. Let $U=X\setminus D$. Then $x\in U\subseteq C\subseteq W$ and, moreover,  $C\in\text{Kc}_X$ because $C$ is a closed subset of a topologically compact set $K$.
\end{proof}

It suffices to apply the lemma given above to infer that the following proposition holds:

\begin{prop} 
A weakly normal topologically non-compact gts $X$ has a weakly Hausdorff one-point strict compactification if and only if $X_{top}$ is locally compact.
\end{prop}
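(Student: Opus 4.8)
The plan is to prove both implications of the equivalence, using Fact 7.14's Alexandroff strict compactification $\hat{X}$ as the explicit witness for one direction, and classical facts about local compactness for the other.

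First I would prove sufficiency: assume $X_{top}$ is locally compact and construct a weakly Hausdorff one-point strict compactification. The natural candidate is $\hat{X}$, the Alexandroff strict compactification from Definition 7.13, and by Fact 7.14(i) it is already a strict one-point compactification of $X$. What remains is to verify that $\hat{X}$ is weakly Hausdorff, i.e. that its topologization $\hat{X}_{top}$ is Hausdorff. Two points $x,y\in X$ are separated by weakly open sets of $\hat{X}$ because they are already separated in $X_{top}$, which is Hausdorff (as it is locally compact Hausdorff). For a point $x\in X$ and the point $\infty$, I would invoke Lemma 7.18: there exist $U\in\text{Op}_X$ and $C\in\text{Kc}_X$ with $x\in U\subseteq C\subseteq X$; then $U$ and $\hat{X}\setminus C$ are disjoint members of $\text{Op}_{\hat{X}}$ separating $x$ from $\infty$. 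Hence $\hat{X}_{top}$ is Hausdorff, so $\hat{X}$ is weakly Hausdorff, and we have exhibited the desired compactification.

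For necessity, suppose $X$ has a weakly Hausdorff one-point strict compactification $\hat{X}$. Since $\hat{X}$ is topologically compact and weakly Hausdorff, Proposition 3.15 gives that $\hat{X}$ is weakly normal, and in particular $\hat{X}_{top}$ is a compact Hausdorff space. By Fact 4.10 (or directly, since a strict compactification is a strict embedding with dense image in the topologization), $X_{top}$ embeds as a dense subspace of the compact Hausdorff space $\hat{X}_{top}$ whose complement is the single point $\hat{X}\setminus X$. A dense subspace of a compact Hausdorff space whose complement is a single point is open in that space (the complement $\{\infty\}$ is closed, being a point in a $T_1$ space), hence $X_{top}$ is an open subspace of a compact Hausdorff space, and therefore locally compact. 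This completes the argument.

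The main obstacle I anticipate is purely bookkeeping in the sufficiency direction: making sure that the separating sets produced by Lemma 7.18 are genuinely open in $\hat{X}$ according to Definition 7.13, namely that $\hat{X}\setminus C\in\text{Op}_{\hat X}$ (which holds precisely because $C\in\text{Kc}_X$) and that no use of \textbf{AC} sneaks in — but Lemma 7.18 already encapsulates the delicate step, so the proof is short. In fact the whole proposition is, as the text says, an immediate application of Lemma 7.18 together with the observation that local compactness of $X_{top}$ is forced by the existence of any weakly Hausdorff one-point strict compactification; no genuinely hard point remains once Lemma 7.18 is in hand.
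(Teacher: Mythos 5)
Your proof is correct and follows the paper's intended route: the paper's entire proof is ``apply the preceding lemma'' (the lemma producing $x\in U\subseteq C\subseteq V$ with $C\in\text{Kc}_X$), and you use exactly that lemma, with $V=X$, to separate points of $X$ from $\infty$ in the Alexandroff strict compactification, while the necessity direction is the same routine observation (the one-point remainder is closed in the compact Hausdorff topologization, so $X_{top}$ is an open, hence locally compact, subspace) that the paper takes for granted. Only cosmetic remarks: your numbering of the cited lemma, definition and facts does not match the paper's (Lemma 7.10, Definition 7.5, Fact 7.6, Fact 3.9 in the compiled text), and the Hausdorffness of $X_{top}$ used to separate two points of $X$ is more directly a consequence of the weak normality hypothesis than of local compactness.
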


\begin{prop}
Let $\hat{X}$ be the Alexandroff strict compactification of a weakly normal gts $X$ such that $X_{top}$ is a locally compact, non-compact space. Then the operator $\text{Ex}_{\hat{X}}$ is finitely additive if and only if the following condition is fulfilled: for every pair $A, B$ of disjoint members of $\text{Cl}_X$, at least one of the sets $A, B$ is topologically compact. 
\end{prop}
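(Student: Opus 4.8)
The plan is to make the operator $\text{Ex}_{\hat X}$ fully explicit and then read off the finite-additivity condition. First, a direct check with Definition 7.5 shows that, for $A\in\text{Cl}_X$, the set $A\cup\{\infty\}$ is always closed in $\hat X_{top}$ (its complement $X\setminus A$ belongs to $\text{Op}_X\subseteq\text{Op}_{\hat X}$), whereas $A$ itself is closed in $\hat X_{top}$ if and only if $A$ is topologically compact, i.e. $A\in\text{Kc}_X$. Hence $\text{cl}_{\hat X}(A)=A$ when $A\in\text{Kc}_X$ and $\text{cl}_{\hat X}(A)=A\cup\{\infty\}$ otherwise, so that for $U\in\text{Op}_X$ we get $\text{Ex}_{\hat X}(U)=U$ when $X\setminus U\notin\text{Kc}_X$ and $\text{Ex}_{\hat X}(U)=U\cup\{\infty\}$ when $X\setminus U\in\text{Kc}_X$. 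In particular $\emptyset=\text{Ex}_{\hat X}(\emptyset)\in\text{Op}^w_{\hat X}$, and the only members of $\text{Op}^w_{\hat X}$ that omit $\infty$ are the sets $U\in\text{Op}_X$ with $X\setminus U\notin\text{Kc}_X$.

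Next I would reduce finite additivity to the following condition $(\ast\ast)$: for every pair $A_1,A_2\in\text{Cl}_X$, if $A_1\cap A_2$ is topologically compact then $A_1$ or $A_2$ is topologically compact. Given $U_1,U_2\in\text{Op}_X$ with $A_i=X\setminus U_i$, using that $\text{Cl}_X$ is a ring of sets (Proposition 1.5) and that a closed subset of a topologically compact set is topologically compact, one verifies that $\text{Ex}_{\hat X}(U_1)\cup\text{Ex}_{\hat X}(U_2)\subseteq\text{Ex}_{\hat X}(U_1\cup U_2)$ always holds, with equality precisely when $(\ast\ast)$ holds for the pair $A_1,A_2$; moreover, when equality fails the left-hand side equals $U_1\cup U_2$, which omits $\infty$ but satisfies $X\setminus(U_1\cup U_2)=A_1\cap A_2\in\text{Kc}_X$, so by the first step it is not in $\text{Op}^w_{\hat X}$. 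Letting $A_1,A_2$ range over $\text{Cl}_X$ (and using $\emptyset\in\text{Op}^w_{\hat X}$ together with iteration to pass from binary to arbitrary finite unions), this yields: $\text{Ex}_{\hat X}$ is finitely additive if and only if $(\ast\ast)$ holds.

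It remains to show that $(\ast\ast)$ is equivalent to the stated condition $(\ast)$. The implication $(\ast\ast)\Rightarrow(\ast)$ is immediate, since a disjoint pair in $\text{Cl}_X$ has intersection $\emptyset$, which is topologically compact. The reverse implication is where I expect the work to lie. Assume $(\ast)$ and suppose $A_1,A_2\in\text{Cl}_X$ with $K:=A_1\cap A_2$ topologically compact and, for contradiction, neither $A_i$ topologically compact. For each $x\in K$, Lemma 7.10 (applied with $V=X$) supplies $U_x\in\text{Op}_X$ and $C_x\in\text{Kc}_X$ with $x\in U_x\subseteq C_x$; since $K$ is topologically compact, finitely many of the $U_x$ cover $K$, and then $U:=\bigcup U_x\in\text{Op}_X$ and the corresponding finite union $C$ of the $C_x$ lies in $\text{Kc}_X$ (by Proposition 1.5 and Facts 3.12--3.13), with $K\subseteq U\subseteq C$. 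Put $B_i:=A_i\setminus U=A_i\cap(X\setminus U)\in\text{Cl}_X$. Then $B_1\cap B_2=K\setminus U=\emptyset$, and $B_i$ cannot be topologically compact: otherwise $A_i\subseteq(A_i\cap C)\cup B_i$ would be a closed subset of a finite union of topologically compact subspaces, hence topologically compact by Facts 3.12--3.13, contrary to assumption. Thus $B_1,B_2$ are disjoint members of $\text{Cl}_X$, neither topologically compact, contradicting $(\ast)$. Combining the three reductions proves the proposition; the main obstacle is this last implication, whose delicate point is producing the separating pair $B_1,B_2$ inside $\text{Cl}_X$ rather than merely inside the topologization, which is exactly what the local-compactness Lemma 7.10 together with the ring structure of $\text{Cl}_X$ makes possible.
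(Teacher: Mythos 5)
Your proof is correct and follows essentially the same route as the paper's: the explicit formula $\text{Ex}_{\hat{X}}(U)=U$ or $U\cup\{\infty\}$ according to whether $X\setminus U$ is topologically compact, and, for the hard direction, the use of Lemma 7.10 together with the topological compactness of $A_1\cap A_2$ to produce $U\in\text{Op}_X$ and $C\in\text{Kc}_X$ with $A_1\cap A_2\subseteq U\subseteq C$, after which the disjointness hypothesis is applied to the parts of $A_1,A_2$ lying outside $U$. The only cosmetic difference is that you route through the intermediate condition on compact intersections and argue by contradiction, whereas the paper applies the hypothesis directly to the pair $A$ and $B\setminus W$; the substance is identical.
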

\begin{proof} Let us observe that, for each $U\in\text{Op}_X$, we have $\text{Ex}_{\hat{X}}(U)=U$ if $X\setminus U\notin\text{Kc}_X$, while $\text{Ex}_{\hat{X}}(U)=U\cup\{\infty\}$ when $X\setminus U\in\text{Kc}_X$. For $A\in\text{Cl}_X$, let $U_A=X\setminus A$. 

\textsl{Necessity}. First, assume that $\text{Ex}_{\hat{X}}$ is finitely additive. Consider an arbitrary pair $A, B$ of disjoint members of $\text{Cl}_X$. Since $U_{A}\cup U_{B}=U_{A\cap B}=X$, we have $\hat{X}=\text{Ex}_{\hat{X}}(U_{A}\cup U_{B})=\text{Ex}_{\hat{X}}(U_A)\cup\text{Ex}_{\hat{X}}(U_B)$. Then $\infty\in\text{Ex}_{\hat{X}}(U_A)$ or $\infty\in\text{Ex}_{\hat{X}}(U_B)$. This implies that $A\in\text{Kc}_X$ or $B\in\text{Kc}_X$. 

\textsl{Sufficiency.} Now, suppose that, for an arbitrary pair $A, B$ of disjoint members of $\text{Cl}_X$, at least one of the sets $A, B$ is in $\text{Kc}_X$. To show that $\text{Ex}_{\hat{X}}$ is finitely additive, take any pair $U, V$ of sets from $\text{Op}_X$. Let $A=X\setminus U$ and $B=X\setminus V$. If $A\cap B\notin\text{Kc}_X$, then it is obvious that $\text{Ex}_{\hat{X}}(U\cup V)= U\cup V=\text{Ex}_{\hat{X}}(U)\cup\text{Ex}_{\hat{X}}(V)$. Suppose that $A\cap B\in\text{Kc}_X$. It follows from Lemma 7.10 that there exist sets $C\in\text{Kc}_X$ and $W\in\text{Op}_X$, such that $A\cap B\subseteq W\subseteq C$. If $A\notin\text{Kc}_X$, then $B\setminus W\in\text{Kc}_X$ since $B\setminus W\subseteq X\setminus A$. In consequence,  $B=(B\cap C)\cup (B\setminus W)\in\text{Kc}_X$. Similarly, if $B\notin\text{Kc}_X$, then $A\in\text{Kc}_X$. Therefore, $\infty\in\text{Ex}_{\hat{X}}(U)\cup\text{Ex}_{\hat{X}}(V)$ and this concludes the proof because $\infty\in\text{Ex}_{\hat{X}}(U\cup V)$ when $A\cap B\in\text{Kc}_X$. 
\end{proof}

\begin{thm} Suppose that $X$ is a weakly normal gts such that the topological space $X_{top}$ is locally compact but not compact. If the Alexandroff strict compactification $\hat{X}$ of $X$ is such that $\text{Ex}_{\hat{X}}$ is finitely additive, then the Wallman space $\mathcal{W}(X, \text{Cl}_X)$ is compact and, simultaneously, the compactifications $\hat{X}_{top}$ and $\mathcal{W}(X, \text{Cl}_X)$ of $X_{top}$ are topologically equivalent.
\end{thm}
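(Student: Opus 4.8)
The goal is to show that, under the stated hypotheses, (a) $\mathcal{W}(X,\mathrm{Cl}_X)$ is compact and (b) it is topologically equivalent, as a compactification of $X_{top}$, to the Alexandroff compactification $\hat X_{top}$. I would get this essentially by verifying hypothesis (i) of Theorem 5.16 for $\alpha X = \hat X_{top}$, and then reading off both conclusions from Theorem 5.16 itself. So the plan is to check that $\mathrm{Ex}_{\hat X}$ is finitely additive (this is given) and that $\mathrm{Op}^{w}_{\hat X}$ is an open base for a Hausdorff topology on $\hat X$ which coincides with the compact topology $\tau_{\hat X}$.

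The first substantive step is to identify the sets $\mathrm{Ex}_{\hat X}(U)$ concretely. As noted in the proof of Proposition 7.12, for $U\in\mathrm{Op}_X$ one has $\mathrm{Ex}_{\hat X}(U)=U$ if $X\setminus U\notin\mathrm{Kc}_X$ and $\mathrm{Ex}_{\hat X}(U)=U\cup\{\infty\}$ if $X\setminus U\in\mathrm{Kc}_X$; hence $\mathrm{Op}^{w}_{\hat X}=\mathrm{Op}_X\cup\{\,U\cup\{\infty\}: U\in\mathrm{Op}_X,\ X\setminus U\in\mathrm{Kc}_X\,\}$. Next I would argue that this collection is a base for $\tau_{\hat X}$: every basic open set of $\hat X_{top}$ is either an open subset of $X_{top}$ (which, since $X$ is weakly normal, hence weakly regular enough, is a union of members of $\mathrm{Op}_X$) or a set of the form $\hat X\setminus K$ with $K\subseteq X$ compact closed, and by Lemma 7.10 together with weak normality such a set is a union of members of $\mathrm{Op}^{w}_{\hat X}$. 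Conversely every member of $\mathrm{Op}^{w}_{\hat X}$ is $\tau_{\hat X}$-open. Thus $\tau(\mathrm{Op}^{w}_{\hat X})=\tau_{\hat X}$, which is compact Hausdorff, so condition (i) of Theorem 5.16 holds for $\alpha X=\hat X_{top}$.

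Applying Theorem 5.16 then yields exactly that $X$ is weakly normal (known), that $\mathcal{W}(X,\mathcal{C})$ is compact, and that the compactification $\mathcal{W}(X,\mathcal{C})$ of $X_{top}$ is topologically equivalent with $\hat X_{top}$ — which is the full statement. One small bookkeeping point: Theorem 5.16 is stated for a \emph{Hausdorff} topological compactification $\alpha X$ of $X_{top}$; here $\hat X_{top}$ is Hausdorff precisely because, by Fact 7.9 and local compactness, the one-point compactification of the locally compact Hausdorff space $X_{top}$ is Hausdorff, so this hypothesis is automatically satisfied and should be invoked explicitly.

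The main obstacle I anticipate is the verification that $\mathrm{Op}^{w}_{\hat X}$ is genuinely a base for $\tau_{\hat X}$ — more precisely, that neighbourhoods of $\infty$ of the form $\hat X\setminus K$ with $K$ compact closed can be written as unions of sets $U\cup\{\infty\}$ with $U\in\mathrm{Op}_X$ and $X\setminus U\in\mathrm{Kc}_X$. This is where Lemma 7.10 does the real work: given $\infty\in\hat X\setminus K$, one covers $K$ by finitely many open sets whose closures are compact, takes $U$ to be the complement of a closed compact set sandwiched appropriately, and checks $U\cup\{\infty\}\subseteq\hat X\setminus K$; here finite additivity of $\mathrm{Ex}_{\hat X}$ (equivalently, Proposition 7.12's dichotomy that disjoint closed sets have a compact one) is what guarantees the relevant complements land in $\mathrm{Kc}_X$ and that $\mathrm{Op}^{w}_{\hat X}$ is closed under the finite unions needed. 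Everything else is routine unwinding of the definitions in Section 5.
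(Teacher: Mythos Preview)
Your proof is correct and, for the topological-equivalence part, coincides with what the paper does: verify that $\tau(\mathrm{Op}^{w}_{\hat X})=\tau(\mathrm{Op}_{\hat X})$ (the paper just says this ``is easily seen'') and invoke Theorem~5.16. The difference lies in how compactness of $\mathcal{W}(X,\mathrm{Cl}_X)$ is obtained. You read it off from the implication (i)$\Rightarrow$(ii) of Theorem~5.16, which is perfectly legitimate and economical. The paper instead gives a direct, more explicit argument: using Proposition~7.12 it shows that $\mathcal{M}=\mathrm{Cl}_X\setminus\mathrm{Kc}_X$ is an ultrafilter in $\mathrm{Cl}_X$, and that every free filter is contained in $\mathcal{M}$ while every filter meeting $\mathrm{Kc}_X$ extends to a fixed ultrafilter; hence every filter in $\mathrm{Cl}_X$ extends to an ultrafilter, which yields compactness. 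The paper's route thus exhibits the remainder structure concretely (there is exactly one free ultrafilter, matching the single point $\infty$), whereas your route hides this inside the black box of Theorem~5.16.

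Two minor corrections to your write-up: first, ``Fact~7.9'' is an example, not the statement you want; the Hausdorffness of $\hat X_{top}$ follows from Lemma~7.10 (or Proposition~7.11) under the standing hypotheses. Second, finite additivity of $\mathrm{Ex}_{\hat X}$ is not actually needed to show that $\mathrm{Op}^{w}_{\hat X}$ is a base for $\tau_{\hat X}$---that follows from Lemma~7.10 and the description of $\mathrm{Ex}_{\hat X}(U)$ alone; finite additivity enters only as the other half of hypothesis~(i) in Theorem~5.16.
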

\begin{proof} Assume that $\text{Ex}_{\hat{X}}$ is finitely additive. Let $\mathcal{F}$ be a filter in $\text{Cl}_X$. If $\mathcal{F}\cap\text{Kc}_X\neq\emptyset$, then $\mathcal{F}$ is contained in a fixed maximal filter in $\text{Cl}_X$. Suppose that $\mathcal{F}\cap\text{Kc}_X=\emptyset$. Let $\mathcal{M}=\text{Cl}_X\setminus\text{Kc}_X$. With Proposition 7.12 in hand, it is easy to check that $\mathcal{M}$ is a maximal filter in $\text{Cl}_X$ such that $\mathcal{F}\subseteq\mathcal{M}$. In consequence, since every filter in $\text{Cl}_X$ is contained in an ultrafilter in $\text{Cl}_X$, the Wallman space $\mathcal{W}(X, \text{Cl}_X)$ is compact. It is easily seen that $\tau(\text{Op}^{w}_{\hat{X}})$ coincides with $\tau(\text{Op}_{\hat{X}})$. Now, analysing the proof to Theorem 5.16, we infer that the compactifications $\mathcal{W}(X, \text{Cl}_X)$ and $\hat{X}_{top}$ of $X_{top}$ are topologically equivalent.
\end{proof}

Our next proposition, more general than Fact 7.1 (\textit{ii}), gives a method of finding weakly normal gtses $X$ such that the operators $\text{Ex}_{\hat{X}}$ are finitely additive and says more about the Alexandroff strict compactification of a topologically non-compact locally compact gts.  

\begin{prop} 
Let $X$ be a weakly normal gts such that $X_{top}$ is locally compact and topologically non-compact. Then the following properties hold:
\item[(i)] the collection
$$\mathcal{C}_0=\{ A\in\text{Cl}_X: A\in\text{Kc}_X \vee \forall_{B\in\text{Cl}_X}[B\subseteq X\setminus A\Rightarrow B\in\text{Kc}_X]\}$$
is a Wallman base for $X_{top}$ such that the Wallman space $\mc{W}(X, \mc{C}_0)$ is compact;
\item[(ii)] if $\mc{W}(X, \mc{C}_0)^S$ is $\mc{W}(X, \mc{C}_0)$ equipped with the strongest generalized topology associated with $\text{Cov}_X$, then the strict compactification $\mc{W}(X, \mc{C}_0)^S$ of $X$ is strictly equivalent with the Alexandroff strict compactification $\hat{X}$ of $X$.
\end{prop}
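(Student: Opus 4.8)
The plan is to prove (i) by checking, entirely in \textbf{ZF}, that $\mathcal{C}_0$ is a complete ring, a closed base for $X_{top}$, disjunctive, and satisfies the third Wallman-base condition, and then to describe all maximal filters in $\mathcal{C}_0$ explicitly. Write $\mathcal{M}$ for the collection of those $A\in\text{Cl}_X$ with $\{B\in\text{Cl}_X:B\subseteq X\setminus A\}\subseteq\text{Kc}_X$, so that $\mathcal{C}_0=\text{Kc}_X\cup\mathcal{M}$. Since $\emptyset\in\text{Kc}_X$ and $X\in\mathcal{M}$ (the only member of $\text{Cl}_X$ inside $X\setminus X$ is $\emptyset$), and since whenever, say, $A_1\in\mathcal{M}$ one has $X\setminus(A_1\cup A_2)\subseteq X\setminus A_1$ and hence $A_1\cup A_2\in\mathcal{M}$, while unions of members of $\text{Kc}_X$ stay in $\text{Kc}_X$ by Fact 3.13, only closure under finite intersections needs work. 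The one genuinely delicate case is $A_1,A_2\in\mathcal{M}$: given $B\in\text{Cl}_X$ with $B\subseteq X\setminus(A_1\cap A_2)$, the sets $B\cap A_1$ and $B\cap A_2$ are \emph{disjoint} members of $\text{Cl}_X$, so weak normality gives disjoint $W_1,W_2\in\text{Op}_X$ with $B\cap A_i\subseteq W_i$; then $C_i:=X\setminus W_i\in\text{Cl}_X$ satisfy $C_1\cup C_2=X$, and $B=(B\cap C_1)\cup(B\cap C_2)$ with $B\cap C_i\subseteq X\setminus A_i$, so each $B\cap C_i\in\text{Kc}_X$ because $A_i\in\mathcal{M}$, whence $B\in\text{Kc}_X$ by Fact 3.13. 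Thus $A_1\cap A_2\in\mathcal{M}$ and $\mathcal{C}_0$ is a complete ring. This splitting of $B$ is the heart of the matter, since the naive guess that "$A\in\mathcal{M}$" means "$X\setminus A$ is relatively compact" is false (e.g.\ on the long line), so a direct computation with compact sets is not available.

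Next I would use Lemma 7.10 for the remaining Wallman-base conditions. Given $A$ closed in $X_{top}$ (singletons qualify, since weak normality makes $X_{top}$ Hausdorff) and $x\notin A$, choose $O\in\text{Op}_X$ with $x\in O\subseteq X\setminus A$ and apply Lemma 7.10 to get $U\in\text{Op}_X$, $K\in\text{Kc}_X$ with $x\in U\subseteq K\subseteq O$: then $K\in\mathcal{C}_0$ witnesses disjunctivity and condition (ii), while $X\setminus U\in\mathcal{M}\subseteq\mathcal{C}_0$ (every member of $\text{Cl}_X$ inside $U\subseteq K$ lies in $\text{Kc}_X$ by Fact 3.12) contains $A$ but not $x$, so $\mathcal{C}_0$ is a closed base. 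For condition (iii), first note that of two disjoint members of $\mathcal{C}_0$ one always lies in $\text{Kc}_X$ (if $A_1\in\mathcal{M}$ then $A_2\subseteq X\setminus A_1$ forces $A_2\in\text{Kc}_X$); so, with $A_1\in\text{Kc}_X$ and $A_1\cap A_2=\emptyset$, take disjoint $W_1,W_2\in\text{Op}_X$ separating $A_1,A_2$, cover the compact set $A_1$ by finitely many Lemma-7.10 sets lying inside $W_1$ to obtain $U\in\text{Op}_X$, $K\in\text{Kc}_X$ with $A_1\subseteq U\subseteq K\subseteq W_1$, and put $C_1=X\setminus U\in\mathcal{M}$, $C_2=K\in\text{Kc}_X$; then $C_1\cup C_2=X$, $A_1\cap C_1=\emptyset$, and $A_2\cap C_2\subseteq W_2\cap W_1=\emptyset$. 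Hence $\mathcal{C}_0$ is a Wallman base of $X_{top}$, and in particular $\mathcal{W}(X,\mathcal{C}_0)$ is Hausdorff.

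Compactness of $\mathcal{W}(X,\mathcal{C}_0)$ is then proved in \textbf{ZF} by showing every filter $\mathcal{F}$ in $\mathcal{C}_0$ is contained in a maximal one. If $\mathcal{F}$ meets $\text{Kc}_X$, say $K_0\in\mathcal{F}\cap\text{Kc}_X$, then $\{F\cap K_0:F\in\mathcal{F}\}$ is a family of closed subsets of the compact space $K_0$ with the finite intersection property, so $\bigcap\mathcal{F}\neq\emptyset$; picking $p\in\bigcap\mathcal{F}$, the fixed filter $\{A\in\mathcal{C}_0:p\in A\}$ is maximal (by disjunctivity of $\mathcal{C}_0$) and contains $\mathcal{F}$. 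If $\mathcal{F}\cap\text{Kc}_X=\emptyset$, then $\mathcal{F}\subseteq\mathcal{M}$; the computations above show $\mathcal{M}$ is closed under finite intersections and upward closed in $\mathcal{C}_0$, contains $X$ but not $\emptyset$, hence is a filter, and it is maximal because any $A\in\mathcal{C}_0\setminus\mathcal{M}$ lies in $\text{Kc}_X$ and Lemma 7.10 yields $U\in\text{Op}_X$, $K\in\text{Kc}_X$ with $A\subseteq U\subseteq K$, so $X\setminus U\in\mathcal{M}$ is disjoint from $A$. A centred subfamily of $\{[A]_{\mathcal{C}_0}:A\in\mathcal{C}_0\}$ then determines a filter in $\mathcal{C}_0$ whose maximal extension belongs to every member of the subfamily, so $\mathcal{W}(X,\mathcal{C}_0)$ is compact; together with Hausdorffness and the standard dense homeomorphic embedding $w_{\mathcal{C}_0}$ of $X_{top}$ (cf.\ Remark 5.13), this makes $\mathcal{W}(X,\mathcal{C}_0)$ a Hausdorff compactification of $X_{top}$ and proves (i). I expect no obstacle beyond the intersection step of the ring property.

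For (ii), the maximal filters just identified are exactly the fixed ones $\mathcal{F}_x$ $(x\in X)$ together with the single free filter $\mathcal{M}$, so the map $h:\hat{X}\to\mathcal{W}(X,\mathcal{C}_0)$ with $h(x)=\mathcal{F}_x$ and $h(\infty)=\mathcal{M}$ is a bijection. Computing $[A]_{\mathcal{C}_0}=A$ for $A\in\text{Kc}_X$ and $[A]_{\mathcal{C}_0}=A\cup\{\infty\}$ for $A\in\mathcal{M}$, and observing (via Lemma 7.10 and Facts 3.12--3.13) that $\{\infty\}$ and every $A\cup\{\infty\}$ with $A\in\text{Cl}_X$ are closed in the Wallman topology, one sees that the Wallman topology and $\tau(\text{Op}_{\hat X})$ share the closed base $\text{Kc}_X\cup\{A\cup\{\infty\}:A\in\text{Cl}_X\}$, so $h$ is a homeomorphism onto the one-point compactification of $X_{top}$; thus $\mathcal{W}(X,\mathcal{C}_0)^S$ is a strict compactification of $X$ by Proposition 5.9(i). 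Finally, unwinding Definition 5.8 with the embedding $x\mapsto\mathcal{F}_x$ (so $\alpha^{-1}(V)=V\cap X$): a set $V$ in the Wallman topology has $V\cap X\in\text{Op}_X$ exactly when either $\infty\notin V$ and $V\in\text{Op}_X$, or $\infty\in V$ and $V$ is the complement of some $K\in\text{Kc}_X$; hence $h$ carries $\text{Op}^S_{\mathcal{W}(X,\mathcal{C}_0)}$ onto $\text{Op}_{\hat X}$ and, likewise, $\text{Cov}^S_{\mathcal{W}(X,\mathcal{C}_0)}$ onto $\text{Cov}_{\hat X}$. Therefore $h$ is a strict homeomorphism commuting with the two embeddings of $X$, i.e.\ $\mathcal{W}(X,\mathcal{C}_0)^S$ and $\hat{X}$ are strictly equivalent. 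This part is bookkeeping once (i) has pinned down the maximal filters.
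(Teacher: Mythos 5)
Your proof is correct, and its overall architecture matches the paper's: Lemma 7.10 is the engine, $\mathcal{C}_0$ is analysed as $\text{Kc}_X\cup\mathcal{M}$, the ring property reduces to closure of $\mathcal{M}$ under intersection, and compactness of $\mathcal{W}(X,\mathcal{C}_0)$ comes from showing that every filter in $\mathcal{C}_0$ extends either to a fixed ultrafilter or to the single free ultrafilter $\mathcal{M}$. The differences are local but real. At the crux step the paper takes $B\subseteq X\setminus(A_1\cap A_2)$, shows $B\cap A_1,B\cap A_2\in\text{Kc}_X$, uses Lemma 7.10 to trap the compact set $B\cap(A_1\cup A_2)$ in an open $V$ with $V\subseteq C\in\text{Kc}_X$, and splits $B=(B\setminus V)\cup(B\cap C)$; you instead separate the disjoint closed sets $B\cap A_1$ and $B\cap A_2$ by weak normality and split $B$ along the complementary closed sets $X\setminus W_i$, trading one use of local compactness for one of weak normality --- both are valid in \textbf{ZF}. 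You also observe that two disjoint members of $\mathcal{C}_0$ cannot both lie outside $\text{Kc}_X$, which makes the ``neither compact'' case of the paper's verification of the third Wallman-base condition vacuous; the paper treats that case anyway. For part (ii) the paper simply cites Theorem 7.13 and Fact 7.6(iii), whereas your explicit identification of the ultrafilters, the matching of closed bases, and the computation showing $\text{Op}^{S}_{\mathcal{W}(X,\mathcal{C}_0)}$ corresponds to $\text{Op}_{\hat{X}}$ amount to a re-derivation of Facts 7.6(ii)--(iii) (and of Fact 7.1(i) for the topological uniqueness of the one-point Hausdorff compactification) in this special case; you could shorten your write-up by invoking those facts, but nothing in your version is wrong. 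The only items I would ask you to make fully explicit are that $\mathcal{M}$ is free (for each $x$, Lemma 7.10 produces $X\setminus U_x\in\mathcal{M}$ omitting $x$, so $\bigcap\mathcal{M}=\emptyset$ and $\mathcal{M}$ is a genuinely new point distinct from every $\mathcal{F}_x$) and that $\emptyset\notin\mathcal{M}$ because $X\notin\text{Kc}_X$; you use both silently when declaring $h$ a bijection.
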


\begin{proof} 
First, we are going to prove that $\mc{C}_0$ is a Wallman base. Let $A_1,A_2\in\mathcal{C}_0$. We need to show that $A_1\cap A_2\in \mc{C}_0$. If $A_1\cap A_2\in\text{Kc}_X$, then $A_1\cap A_2\in\mathcal{C}_0$. Suppose that $A_1\cap A_2\notin\text{Kc}_X$. Take $B\in\text{Cl}_X$ such that $B\subseteq X\setminus (A_1\cap A_2)$. Then $B\cap A_1\subseteq X\setminus A_2$, so $B\cap A_1\in\text{Kc}_X$. Similarly, $B\cap A_2\in\text{Kc}_X$. It follows from Lemma 7.10 that there exist sets $V\in\text{Op}_X$ and $C\in\text{Kc}_X$, such that $B\cap(A_1\cup A_2)\subseteq V\subseteq C$. Then $B\setminus V\in\text{Cl}_X$ and $B\setminus V\subseteq X\setminus (A_1\cup A_2)$. Since $A_1\cup A_2\notin\text{Kc}_X$, then $B\setminus V\in\text{Kc}_X$ and, in consequence, $B=(B\setminus V)\cup(B\cap C)\in \text{Kc}_X$. Therefore, $A_1\cap A_2\in\mathcal{C}_0$. If $A_1\cup A_2\notin\text{Kc}_X$, $D\in\text{Cl}_X$ and $D\subseteq X\setminus (A_1\cup A_2)$, then it is obvious that $D\in\text{Kc}_X$. All this taken together implies that $\mathcal{C}_0$ is stable under finite unions and under finite intersections. If $H\in\text{Cl}_X$ and $x\notin H$, it follows from Lemma 7.10 that there exist $U_x\in\text{Op}_X$ and $H_x\in\text{Kc}_X$ such that $x\in U_x\subseteq H_x\subseteq X\setminus H$. This proves that the ring $\mathcal{C}_0$ is disjunctive. Furthermore, $X\setminus U_x\in\mathcal{C}_0$, $x\notin X\setminus U_x$ and $H\subseteq X\setminus U_x$. This proves that $\mathcal{C}_0$ is a closed base of $X_{top}$

Assume that the sets $A_1, A_2\in\mathcal{C}_0$ are disjoint. Suppose that one of the sets $A_1, A_2$ is topologically compact. Let $A_1$ be topologically compact. In view of Lemma 7.10, there exist $W_1\in\text{Op}_X$ and $F_1\in\text{Kc}_X$, such that $A_1\subseteq W_1\subseteq F_1\subseteq X\setminus A_2$. Put $F_2=X\setminus W_1$. Then $F_1, F_2\in\mathcal{C}_0$. $F_1\cup F_2=X$ and $A_1\subseteq X\setminus F_2$, while $A_2\subseteq X\setminus F_1$. 

Now, assume that none of the sets $A_1, A_2$ is topologically compact.  There exist $C_1, C_2\in\text{Cl}_X$ such that $C_1\cup C_2= X$, $A_1\subseteq X\setminus C_1$ and $A_2\subseteq X\setminus C_2$. Take any $E\in\text{Cl}_X$ such that $E\subseteq X\setminus C_1$. Then $E\subseteq C_2\subseteq X\setminus A_2$, so $E\in\text{Kc}_X$. In consequence, $C_1\in\mathcal{C}_0$. Similarly, $C_2\in\mathcal{C}_0$. Hence $\mc{C}_0$ is a Wallman base for $X_{top}$. The compactness of $\mc{W}(X, \mc{C}_0)$ follows from Proposition 7.12 and Theorem 7.13. That (ii) holds can be deduced from (i) taken together with Theorem 7.13 and fact 7.6 (iii).
\end{proof}

\begin{rem} 
Suppose we are given a weakly normal topologically non-compact gts $(X, \text{Op}_X, \text{Cov}_X)$ such that $X_{top}$ is locally compact. Let $\mathcal{C}_0$ be as in Proposition 7.14. Put $\text{Op}^{\ast}=\{U\subseteq X: X\setminus U\in\mathcal{C}_0\}$ and $\text{Cov}^{\ast}=\text{EssFin}(\text{Op}^{\ast})$. Notice that $X^{\ast}=(X,\text{Op}^{\ast}, \text{Cov}^{\ast})$ is a gts such that, when $\hat{X}^{\ast}$ is the Alexandroff strict compactification of $X^{\ast}$, then $\text{Ex}_{\hat{X}^{\ast}}$ is finitely additive. 
\end{rem}

Remarks 7.16 and 7.17 below are relevant to Magill's theorem on when a locally compact Hausdorff space can have a Hausdorff compactification with its remainder of cardinality $n\in\omega$ (cf. Theorem 2.1 \cite{Mag} or Theorem 6.8 of \cite{Ch}). 

\begin{rem} Suppose that $\alpha X$ is a weakly Hausdorff strict compactification of a gts $X$ such that $X_{top}$ is not compact. Assume that the set $\alpha X\setminus X$ is of cardinality $n\in\omega\setminus\{0\}$. Then there exists a collection $\{U_i: i\in n\}\subseteq\text{Op}_{X}$ such that $C=X\setminus\bigcup_{i\in n}U_i\in\text{Kc}_X$, $U_i\cap U_j=\emptyset$ for each pair $i, j$ of distinct elements of $n$,  and $U_i\cup C\notin\text{Kc}_X$ for each $i\in n$.
\end{rem}

\begin{rem}
Let $X$ be a weakly normal gts such that $X_{top}$ is locally compact. Assume that $n\in\omega\setminus\{0\}$ and that $\{U_i: i\in n\}\subseteq\text{Op}_{X}$ is a collection of pairwise disjoint sets such that $C=X\setminus\bigcup_{i\in n}U_i\in\text{Kc}_X$, $U_i\cup C\notin\text{Kc}_X$ for each $i\in n$ and, moreover,  every set $A\in\text{Cl}_X$ is such that, for each $i\in n$ and for each $B\in\text{Cl}_X$, if $B\cap(U_i\cup C)\subseteq X\setminus A$ and $A\cap(U_i\cup C)\notin\text{Kc}_X$, then $B\cap(U_i\cup C)\in\text{Kc}_X$. Let $K=\{p_i: i\in n\}$ be a set of cardinality $n$. Put $\text{Cl}_K=\mathcal{P}(K)$ and $\text{Cov}_K=\mathcal{P}^{2}(K)$. Define a mapping $\psi:\text{Cl}_K\to L(\text{Cl}_X)$ as follows: for $A\in \mathcal{P}(K)$, let $\psi(A)$ be the collection of all $D\in\text{Cl}_X$ such that $D\cap(U_i\cup C)\notin\text{Kc}_X$ if and only if $p_i\in A$. Then $\psi$ is a lattice isomorphism of $\text{Cl}_K$ onto $L(\text{Cl}_X)$. The compactification ${\alpha}^s_{\psi} X$ constructed in Section 6 is a weakly Hausdorff strict compactification of $X$ such that ${\alpha}^{s}_{\psi} X\setminus X$ is of cardinality $n$.
\end{rem} 
 
\section{Continuous extensions of mappings}

An interesting problem is to find reasonable necessary and sufficient conditions for a mapping to be extendable to a strictly continuous mapping over a strict compactification. In this section, we offer some solutions to this problem mainly for compactifications of Wallman type.

For collections $\mathcal{U}$ and $\mathcal{V}$ of sets, we write $\mathcal{U}\preceq \mathcal{V}$ when every member of $\mathcal{U}$ is contained in a member of $\mathcal{V}$ and $\bigcup\mc{U}=\bigcup\mc{V}$. Let us recall the notion of weak continuity (cf. \cite{Pie1}) and introduce the following important concepts of w-continuity and of W-continuity:

\begin{defi} Let $X$ and $Y$ be gtses and let $f:X\to Y$. We say that:
\begin{enumerate}
\item[(i)] $f$ is \textbf{w-continuous} if, for each (essentially) finite family $\mc{V}\subseteq\text{Op}_Y$ such that $\mathcal{V}$ covers $Y$, there exists an (essentially) finite family $\mc{U}\subseteq\text{Op}_X$ such that $\mc{U}$ covers $X$ and $\mc{U}\preceq f^{-1}(\mc{V})$.
\item[(ii)] $f$ is \textbf{W-continuous} if, for each family $\mc{V}\in\text{Cov}_Y$ such that $\mc{V}$ covers $Y$, there exists $\mc{U}\in\text{Cov}_X$ such that $\mc{U}$ covers $X$ and $\mc{U}\preceq f^{-1}(\mc{V})$.
\item[(iii)] $f$ is \textbf{weakly continuous} if $f:X_{top}\to Y_{top}$ is continuous.
\end{enumerate} 
\end{defi}

\begin{f}
Every strictly continuous mapping is simultaneously W-continuous and w-continuous.
\end{f}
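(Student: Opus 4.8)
The plan is to unwind the definitions and show that strict continuity of $f:X\to Y$ forces both the w-continuity and the W-continuity conditions, the only subtlety being the passage from "an admissible covering of $Y$" to "an admissible covering of $X$ that refines $f^{-1}(\mathcal{V})$ and covers all of $X$".

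First I would recall that $f$ strictly continuous means $f^{-1}(\mathcal{V})\in\text{Cov}_X$ for every $\mathcal{V}\in\text{Cov}_Y$. For W-continuity, take $\mathcal{V}\in\text{Cov}_Y$ with $\bigcup\mathcal{V}=Y$ (i.e. $\mathcal{V}$ covers $Y$, so $Y=\text{Op}_Y$ here in the sense that $\mathcal{V}$ is an admissible covering of the whole underlying set). Put $\mathcal{U}=f^{-1}(\mathcal{V})=\{f^{-1}(V):V\in\mathcal{V}\}$. Then $\mathcal{U}\in\text{Cov}_X$ by strict continuity, and $\bigcup\mathcal{U}=f^{-1}(\bigcup\mathcal{V})=f^{-1}(Y)=X$ since $f$ is a function on all of $X$ (here I use that $\text{Op}_Y=\bigcup\text{Cov}_Y$ contains $Y$ when $\mathcal{V}$ covers $Y$, and $f$ maps into $Y=\text{Op}_Y$; strict continuity of $f:X\to Y$ in $\gts$ forces $\text{Op}_X=\bigcup\text{Cov}_X$ to contain $X$, or one observes $X=f^{-1}(\bigcup\mathcal{V})\in\text{Op}_X$ directly). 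Finally $\mathcal{U}=f^{-1}(\mathcal{V})\preceq f^{-1}(\mathcal{V})$ trivially, since $\preceq$ is reflexive. This gives W-continuity.

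Next I would do w-continuity by the same device, restricted to essentially finite families. Given an essentially finite $\mathcal{V}\subseteq\text{Op}_Y$ covering $Y$, choose a finite $\mathcal{V}_0\subseteq\mathcal{V}$ with $\bigcup\mathcal{V}_0=\bigcup\mathcal{V}=Y$. Since each $V\in\mathcal{V}$ is open, $f^{-1}(V)\in\text{Op}_X$ (strict continuity implies weak continuity, or directly: $\{V\}$ is a coarsening of some admissible covering, hence $\{V\}\in\text{Cov}_Y$ if one assumes $V\in\text{Op}_Y$, actually more carefully one uses that singletons of open sets need not be admissible, so instead take any admissible $\mathcal{W}\in\text{Cov}_Y$ with $V\in\mathcal{W}$ and apply strict continuity to get $f^{-1}(\mathcal{W})\in\text{Cov}_X$, whence $f^{-1}(V)\in\text{Op}_X$). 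Then $\mathcal{U}=\{f^{-1}(V):V\in\mathcal{V}_0\}$ is a finite subfamily of $\text{Op}_X$, it covers $X$ because $\bigcup\mathcal{U}=f^{-1}(Y)=X$, and $\mathcal{U}\preceq f^{-1}(\mathcal{V})$ since every $f^{-1}(V)$ with $V\in\mathcal{V}_0\subseteq\mathcal{V}$ is (equal to, hence contained in) a member of $f^{-1}(\mathcal{V})$ and the unions agree. Thus $f$ is w-continuous.

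The main obstacle I anticipate is purely bookkeeping: making sure that "$\mathcal{V}$ covers $Y$" is interpreted consistently (as $\bigcup\mathcal{V}$ being the whole underlying set, not merely $\bigcup\mathcal{V}\in\text{Op}_Y$), and that the preimage of an open set is open without sneaking in a choice principle or an unjustified appeal to admissibility of singletons — one should route this through an honest admissible covering of $Y$ containing the given open set and apply strict continuity to that covering. No nontrivial combinatorics or axiom-of-choice issues arise, so the proof is short; in the paper it is reasonable to simply remark that the claim follows immediately by taking $\mathcal{U}=f^{-1}(\mathcal{V})$ (or a finite subfamily thereof in the w-continuous case) and using reflexivity of $\preceq$ together with strict continuity.
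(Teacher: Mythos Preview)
Your proof is correct, and since the paper states this as a Fact without proof, there is nothing to compare against; your argument is exactly the routine verification the paper leaves to the reader. One minor simplification: for w-continuity you need not worry about whether singletons of open sets are admissible, because any essentially finite family $\mathcal{V}\subseteq\text{Op}_Y$ already lies in $\text{EssFin}(\text{Op}_Y)\subseteq\text{Cov}_Y$, so strict continuity gives $f^{-1}(\mathcal{V})\in\text{Cov}_X$ directly, and hence every $f^{-1}(V)\in\text{Op}_X$ and $f^{-1}(\mathcal{V})$ is itself an essentially finite open cover of $X$ refining $f^{-1}(\mathcal{V})$.
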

\begin{f} Let $f:X\to Y$ be a mapping of a gts $X$ into a gts $Y$. Then the following properties hold:
\begin{enumerate}
\item[(i)] if $X$ is admissibly compact and if $f$ is W-continuous, then $f$ is w-continuous;
\item[(ii)] if $Y$ is admissibly compact and if $f$ is w-continuous, then $f$ is W-continuous;
\item[(iii)] when both $X$ and $Y$ are admissibly compact, then $f$ is W-continuous if and only if it is w-continuous.
\end{enumerate}
\end{f}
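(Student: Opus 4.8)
The plan is to prove the three parts directly from the definitions in Definition 8.1, using the characterizations of admissible compactness supplied by Definition 3.3 together with Remark 3.11; recall that a gts $Z$ is admissibly compact precisely when every $\mathcal{W}\in\text{Cov}_Z$ that covers $Z$ has a finite subfamily that still covers $Z$. Since (iii) is the conjunction of (i) and (ii), it suffices to prove (i) and (ii).

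For (i), suppose $X$ is admissibly compact and $f$ is $W$-continuous; I must show $f$ is $w$-continuous. Take an essentially finite family $\mathcal{V}\subseteq\text{Op}_Y$ covering $Y$. First replace $\mathcal{V}$ by a finite subfamily $\mathcal{V}'\subseteq\mathcal{V}$ with $\bigcup\mathcal{V}'=\bigcup\mathcal{V}=Y$; note $\mathcal{V}'\in\text{Cov}_Y$ (a finite family of open sets is admissible). Apply $W$-continuity to $\mathcal{V}'$: there is $\mathcal{U}\in\text{Cov}_X$ with $\mathcal{U}$ covering $X$ and $\mathcal{U}\preceq f^{-1}(\mathcal{V}')$. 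Now invoke admissible compactness of $X$ to extract a \emph{finite} $\mathcal{U}'\subseteq\mathcal{U}$ that still covers $X$; clearly $\mathcal{U}'\preceq f^{-1}(\mathcal{V}')\preceq f^{-1}(\mathcal{V})$ (the refinement relation is transitive here because the unions are all of $X$ and $Y$ respectively, so the $\bigcup$-equality clauses are automatic). Thus $\mathcal{U}'$ is the required essentially (indeed genuinely) finite open cover of $X$ refining $f^{-1}(\mathcal{V})$, and $f$ is $w$-continuous.

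For (ii), suppose $Y$ is admissibly compact and $f$ is $w$-continuous; I must show $f$ is $W$-continuous. Take $\mathcal{V}\in\text{Cov}_Y$ with $\bigcup\mathcal{V}=Y$. By admissible compactness of $Y$ there is a finite $\mathcal{V}'\subseteq\mathcal{V}$ with $\bigcup\mathcal{V}'=Y$; so $\mathcal{V}'$ is a finite (hence essentially finite) subfamily of $\text{Op}_Y$ covering $Y$. Apply $w$-continuity to $\mathcal{V}'$: there is an essentially finite $\mathcal{U}\subseteq\text{Op}_X$ covering $X$ with $\mathcal{U}\preceq f^{-1}(\mathcal{V}')\preceq f^{-1}(\mathcal{V})$. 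It remains to see $\mathcal{U}\in\text{Cov}_X$: $\mathcal{U}$ is essentially finite and consists of open sets, so $\mathcal{U}\in\langle\{\text{Op}_X\}\rangle = \text{EssFin}(\text{Op}_X)\subseteq\text{Cov}_X$ (using the identity $\langle\{\text{Op}_X\}\rangle=\text{EssFin}(\text{Op}_X)$ recorded in Section 1, and the inclusion $\text{EssFin}(\text{Op}_X)\subseteq\text{Cov}_X$, which holds because $\text{Cov}_X$ is a generalized topology containing $\text{Op}_X$ hence contains its smallification). Therefore $\mathcal{U}$ witnesses $W$-continuity of $f$ at $\mathcal{V}$.

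The only mild subtlety — and hence the part warranting care rather than a genuine obstacle — is bookkeeping with the relation $\preceq$: one must check that replacing a cover by a finite subcover or by a subfamily with the same union does not disturb the clause $\bigcup\mathcal{U}=\bigcup\mathcal{V}$ in the definition of $\preceq$, and that $\preceq$ composes. Both are immediate since at every stage the families in question cover the whole of $X$, respectively $Y$, so all the union-equality side conditions read $X=X$ or $Y=Y$. No choice principle is used: each extraction of a finite subcover is a single instance of admissible compactness, which is part of the data, and the refinements are produced by the hypothesized continuity notions, so the whole argument is carried out in \textbf{ZF}.
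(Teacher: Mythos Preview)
Your proof is correct and is precisely the routine argument the paper has in mind; the paper states this result as a ``Fact'' without proof, and your derivation from Definitions 3.3 and 8.1 is the natural one. One minor simplification: in part (ii) you may invoke the \emph{finite} form of w-continuity (the parenthetical ``(essentially)'' in Definition 8.1(i) signals the two versions are interchangeable) to obtain a genuinely finite $\mathcal{U}$, which lies in $\text{Cov}_X$ directly by axiom (A3), avoiding the detour through $\text{EssFin}(\text{Op}_X)\subseteq\text{Cov}_X$.
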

\begin{cor}
In the class of mappings between small gtses, the notions of W-continuity and w-continuity are equivalent. 
\end{cor}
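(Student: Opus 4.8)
The statement to prove is Corollary 8.4: for mappings between small gtses, W-continuity and w-continuity coincide. The plan is to derive this immediately from Fact 8.3(iii) after observing that every small gts is admissibly compact. So the proof has essentially two ingredients, and the bulk of the work has already been done upstream.

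\emph{Step 1: every small gts is admissibly compact.} This is precisely Example 3.5, which asserts that every subspace of a small gts is admissibly compact (and a small gts is a subspace of itself). Unpacking why: if $X$ is small, then $\text{Cov}_X=\text{EssFin}(\text{Op}_X)$, so any admissible open cover $\mathcal{V}$ of $X$ is an essentially finite family of open sets with $\bigcup\mathcal{V}=X$; by definition of ``essentially finite'' there is a finite subfamily $\mathcal{U}\subseteq\mathcal{V}$ with $\bigcup\mathcal{U}=\bigcup\mathcal{V}=X$, which is exactly admissible compactness. No choice is needed here since $\mathcal{U}$ is extracted directly from the definition.

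\emph{Step 2: invoke Fact 8.3(iii).} Given small gtses $X$ and $Y$ and a mapping $f:X\to Y$, both $X$ and $Y$ are admissibly compact by Step 1. Fact 8.3(iii) then says $f$ is W-continuous if and only if it is w-continuous. Since $X,Y$ were arbitrary small gtses, this is the assertion of the corollary.

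\emph{Where the difficulty sits.} There is no real obstacle in the corollary itself; it is a formal consequence of Fact 8.3 plus Example 3.5. The only thing worth a sentence of care is making sure the reader sees that ``admissibly compact'' in Fact 8.3 applies to $X$ and $Y$ as objects (i.e. $X$ admissibly compact in $X$), which is exactly Definition 3.3 specialized to small gtses via Example 3.5. If one wanted a self-contained argument avoiding Fact 8.3, one would argue directly: for w-continuity of a w-continuous $f$ between small gtses, any $\mathcal{V}\in\text{Cov}_Y$ covering $Y$ is essentially finite, so contains a finite subfamily $\mathcal{V}'$ still covering $Y$; apply w-continuity to $\mathcal{V}'$ to get an essentially finite $\mathcal{U}\subseteq\text{Op}_X$ covering $X$ with $\mathcal{U}\preceq f^{-1}(\mathcal{V}')\preceq f^{-1}(\mathcal{V})$, and $\mathcal{U}\in\text{EssFin}(\text{Op}_X)=\text{Cov}_X$. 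The reverse implication is analogous. But the shortest route is simply to cite Example 3.5 and Fact 8.3(iii), which is what I would write.
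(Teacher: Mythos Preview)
Your proof is correct and matches the paper's intended approach: the corollary is stated immediately after Fact 8.3 without proof, and you have correctly identified that it follows from Example 3.5 (small gtses are admissibly compact) together with Fact 8.3(iii). Your additional unpacking of why small gtses are admissibly compact and the self-contained alternative argument are both sound, though the paper itself is content to leave the derivation implicit.
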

\begin{exam} Let $X=Y=\omega, \text{Cov}_X=\text{EssFin}(\mc{P}({X}))$ and $\text{Cov}_Y=\mc{P}^2(Y)$. Then the identity $id_{\omega}$ as a mapping from the gts $(X, \text{Cov}_X)$ to the gts $(Y, \text{Cov}_Y)$ is w-continuous but not W-continuous.
\end{exam}

\begin{defi} Let us call a gts $Y$ zero-dimensional if, for each finite covering $\mc{V}\subseteq\text{Op}_Y$ of $Y$, there exists a pairwise disjoint finite covering $\mc{W}\subseteq\text{Op}_Y$ of $Y$ such that $\mc{W}\preceq \mc{V}$.
\end{defi}
\begin{prop}
Suppose that $f:X\to Y$ is a W-continuous mapping of a gts $X$ to a zero-dimensional gts $Y$. Then $f$ is w-continuous.
\end{prop}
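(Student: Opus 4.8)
The plan is to take an arbitrary family $\mc{V}\in\text{Cov}_Y$ covering $Y$, use $W$-continuity of $f$ to pull it back, and then exploit zero-dimensionality of $Y$ to replace $\mc{V}$ by a pairwise disjoint refinement before pulling back, so that the resulting cover of $X$ is forced to be (essentially) finite. First I would note that since $Y$ is zero-dimensional and we eventually want an \emph{essentially finite} cover of $X$, the natural move is to first reduce $\mc{V}$ to a finite subfamily: if $\mc{V}$ covers $Y$ then, working in the smallification (recall $\langle\{\text{Op}_Y\}\rangle=\text{EssFin}(\text{Op}_Y)$), there is a finite subfamily $\mc{V}_0\subseteq\mc{V}$ with $\bigcup\mc{V}_0=\bigcup\mc{V}=Y$; this $\mc{V}_0$ is a finite covering of $Y$ by members of $\text{Op}_Y$.

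Next I would apply the definition of zero-dimensionality to $\mc{V}_0$: there is a pairwise disjoint finite covering $\mc{W}\subseteq\text{Op}_Y$ of $Y$ with $\mc{W}\preceq\mc{V}_0$. Since $\mc{W}$ is a \emph{finite} subfamily of $\text{Op}_Y$ and $\text{Cov}_Y$ is a generalized topology containing all of $\text{EssFin}(\text{Op}_Y)$ — wait, this is the point that needs care, so let me instead keep $\mc{W}$ only as a finite open cover and not assume $\mc{W}\in\text{Cov}_Y$ directly; I would instead argue that a finite pairwise disjoint open cover is automatically admissible. Concretely, because $\mc{V}_0\subseteq\mc{V}\in\text{Cov}_Y$ and $\bigcup\mc{V}_0=\bigcup\mc{V}$, coarsening/axiom (A7) of Definition 2.2.1 of \cite{Pie1} gives $\mc{V}_0\in\text{Cov}_Y$; then, since each $W\in\mc{W}$ is open and $\mc{W}\preceq\mc{V}_0$ with $\bigcup\mc{W}=Y=\bigcup\mc{V}_0$, a second application of the coarsening/refinement axioms yields $\mc{W}\in\text{Cov}_Y$.

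Now I would invoke $W$-continuity of $f$ applied to $\mc{W}\in\text{Cov}_Y$: there exists $\mc{U}\in\text{Cov}_X$ such that $\mc{U}$ covers $X$ and $\mc{U}\preceq f^{-1}(\mc{W})$. Because $\mc{W}$ is finite and pairwise disjoint, $f^{-1}(\mc{W})$ is a finite family; hence $\mc{U}\preceq f^{-1}(\mc{W})$ forces $\bigcup\mc{U}$ to equal $\bigcup f^{-1}(\mc{W})$, and $\mc{U}$ is covered by finitely many members of $f^{-1}(\mc{W})$, so $\mc{U}$ is essentially finite: picking for each $W\in\mc{W}$ one member $U_W\in\mc{U}$ with $U_W\subseteq f^{-1}(W)$ and $\bigcup_{W}U_W=\bigcup\mc{U}=X$ exhibits a finite subfamily with the same union (this finite selection over the finite set $\mc{W}$ needs no choice). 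Finally, since $\mc{W}\preceq\mc{V}_0\preceq\mc{V}$, we get $f^{-1}(\mc{W})\preceq f^{-1}(\mc{V})$ and therefore $\mc{U}\preceq f^{-1}(\mc{V})$, while the finite selection above is an essentially finite cover of $X$ refining $f^{-1}(\mc{V})$. This is exactly what $w$-continuity of $f$ requires, completing the proof.

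The main obstacle I anticipate is the bookkeeping around which families are guaranteed to lie in $\text{Cov}_Y$ versus merely being open families: zero-dimensionality as stated produces a disjoint open refinement $\mc{W}$ but does not \emph{a priori} say $\mc{W}\in\text{Cov}_Y$, so one must go through the covering/refinement axioms (A5)–(A7) of \cite{Pie1} to promote the finite disjoint open cover to an admissible one before $W$-continuity can be applied; and dually one must be careful that the definition of $w$-continuity in Definition 8.1(i) asks for an \emph{essentially} finite cover, which is why the disjointness of $\mc{W}$ (making $f^{-1}(\mc{W})$ finite, hence any refinement of it essentially finite) is the crucial structural input.
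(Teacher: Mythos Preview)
Your proof has a genuine gap, and it also starts from the wrong place.

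First, the setup: to prove $w$-continuity you must begin with a \emph{finite} family $\mc{V}\subseteq\text{Op}_Y$ covering $Y$, not with an admissible $\mc{V}\in\text{Cov}_Y$. Your opening move of extracting a finite subcover $\mc{V}_0$ from an admissible $\mc{V}$ is unjustified (it would amount to assuming $Y$ is admissibly compact). This is easily repaired: just start with finite $\mc{V}$, as the definition requires. Note also that once $\mc{W}$ is a finite family of open sets, $\mc{W}\in\text{EssFin}(\text{Op}_Y)\subseteq\text{Cov}_Y$ automatically by the axioms of a gts, so your detour through (A7) is unnecessary.

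The real problem is the step after applying $W$-continuity. You obtain $\mc{U}\in\text{Cov}_X$ with $\mc{U}\preceq f^{-1}(\mc{W})$ and then assert that $\mc{U}$ is essentially finite because $f^{-1}(\mc{W})$ is finite. This is false: $\mc{U}$ can be arbitrarily large and fine, with many members sitting inside each $f^{-1}(W)$, and there is no reason a single $U_W\in\mc{U}$ should cover $f^{-1}(W)$. (Take $X=Y$, $f=\text{id}$, $\mc{W}=\{Y\}$, and $\mc{U}$ any infinite admissible cover of $X$ with no finite subcover.) The disjointness of $\mc{W}$ does \emph{not} make $\mc{U}$ essentially finite; what it buys is something different. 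The paper's proof forms, for each $W\in\mc{W}$, the union $U[W]=\bigcup\{U\in\mc{U}:f(U)\subseteq W\}$ and then uses disjointness of $\mc{W}$ to see that $U[W]\cap U$ is either $U$ or $\emptyset$ for every $U\in\mc{U}$; axiom (A8) then forces $U[W]\in\text{Op}_X$. The finite family $\{U[W]:W\in\mc{W}\}$ is the required finite open cover of $X$ refining $f^{-1}(\mc{V})$. This openness-of-the-union argument via (A8) is the missing idea in your attempt.
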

\begin{proof} 
Let $\mc{V}\subseteq\text{Op}_Y$ be a finite covering of $Y$. Take a finite covering $\mc{W}\subseteq\text{Op}_Y$ of $Y$ such that $\mc{W}\preceq \mc{V}$ and $V\cap W=\emptyset$ for each pair $V, W$ of distinct members of $\mc{W}$. Since $f$ is W-continuous, there exists an admissible open covering $\mc{U}$ of $X$ such that $\mc{U}\preceq f^{-1}(\mc{W})$. For each $W\in\mc{W}$, let $\mc{U}[W]=\{ U\in\mc{U}: f(U)\subseteq W\}$ and let $U[W]=\bigcup\mc{U}[W]$.  Consider any $W\in\mc{W}$. To show that $U[W]\in\text{Op}_X$, let us observe that $U[W]\cap U= U$ if $U\in \mc{U}[W]$, while $U[W]\cap U=\emptyset$ if $U\in\mc{U}\setminus\mc{U}[W]$. Therefore, $U[W]\cap U\in\text{Op}_X$ for each $U\in\mc{U}$; thus, it follows from condition (A8) of Definition 2.2.1 of \cite{Pie1} that $U[W]=\bigcup \{ U[W]\cap U: U\in\mc{U}\}\in\text{Op}_X$. Then $\mc{U}_0=\{ U[W]: W\in\mc{W}\}\in\text{Cov}_X$ by condition (A3) of Definition 2.2.1 of \cite{Pie1}. Of course, $\mc{U}_0\preceq f^{-1}(\mc{V})$ which concludes the proof that $f$ is w-continuous.
\end{proof}

We regret that we cannot offer any example of a W-continuous but not w-continuous mapping.  

\begin{prop}
If $f:X\to Y$ is w-continuous or W-continuous and, simultaneously, if $\text{Cl}_Y$ is disjunctive, then $f$ is weakly continuous.
\end{prop}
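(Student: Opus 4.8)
The plan is to establish weak continuity of $f$ directly, by showing that $f^{-1}(V)$ is weakly open in $X$ for every $V\in\text{Op}_Y$. This suffices: since $\text{Op}_Y$ is stable under finite intersections, each weakly open subset of $Y$ is either $Y$ itself or a union of members of $\text{Op}_Y$; as $f^{-1}(Y)=X$ is weakly open and taking preimages commutes with unions, it is enough to treat a single $V\in\text{Op}_Y$.

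So I would fix $V\in\text{Op}_Y$ and produce, for each $x\in f^{-1}(V)$, a set $U_0\in\text{Op}_X$ with $x\in U_0\subseteq f^{-1}(V)$; this makes $f^{-1}(V)$ a union of members of $\text{Op}_X$, hence weakly open. If $V=Y$ there is nothing to do, since $f^{-1}(V)=X$ is weakly open, so assume $V\neq Y$ and fix $x\in f^{-1}(V)$. Then $Y\setminus V\in\text{Cl}_Y$ and $f(x)\notin Y\setminus V$, so applying the disjunctivity of $\text{Cl}_Y$ to the closed set $Y\setminus V$ and the point $f(x)$ yields a set $C\in\text{Cl}_Y$ with $f(x)\in C\subseteq V$.

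The heart of the argument is to feed the two-element family $\mathcal{V}_x=\{V,\,Y\setminus C\}$ into the continuity hypothesis. Both members lie in $\text{Op}_Y$ (indeed $Y\setminus C\in\text{Op}_Y$ because $C\in\text{Cl}_Y$), and $C\subseteq V$ forces $V\cup(Y\setminus C)=Y$, so $\mathcal{V}_x$ is a finite open covering of $Y$; moreover $\mathcal{V}_x\in\text{Cov}_Y$, since a finite family of open sets of a gts is an admissible open family. Hence, whether $f$ is w-continuous or W-continuous, the hypothesis supplies a family $\mathcal{U}$ covering $X$ — an element of $\text{Cov}_X$ in the W-continuous case, an essentially finite subfamily of $\text{Op}_X$ in the w-continuous case — with $\mathcal{U}\preceq f^{-1}(\mathcal{V}_x)=\{f^{-1}(V),\,f^{-1}(Y\setminus C)\}$. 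Choosing $U_0\in\mathcal{U}$ with $x\in U_0$, the relation $\preceq$ gives $U_0\subseteq f^{-1}(V)$ or $U_0\subseteq f^{-1}(Y\setminus C)$, and the second option is impossible since $f(x)\in C$, so that $x\notin f^{-1}(Y\setminus C)$. Thus $x\in U_0\subseteq f^{-1}(V)$ with $U_0\in\text{Op}_X$, which completes the proof.

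I expect the only step beyond a routine unwinding of the definitions to be the claim, needed solely in the W-continuous case, that the two-element open covering $\mathcal{V}_x$ actually belongs to $\text{Cov}_Y$; this has to be read off from the axioms of a generalized topology in \cite{Pie1}, and it is the same fact that is tacitly used whenever W-continuity is applied to a finite disjoint open refinement (as in the proof just above). For the w-continuous case no admissibility is involved, and once $\mathcal{V}_x$ is in hand both hypotheses run through identical steps, so no genuine case distinction is required.
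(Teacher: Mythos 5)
Your proof is correct and follows essentially the same route as the paper's: reduce to a basic open set, use disjunctivity of $\text{Cl}_Y$ to produce a closed set $C$ with $f(x)\in C\subseteq V$, feed the two-element open cover $\{V, Y\setminus C\}$ into the w- or W-continuity hypothesis, and rule out the second alternative for the member of the refinement containing $x$ because $f(x)\in C$. The admissibility of the two-element cover, which you flag as the only nonroutine point, does hold since $\text{EssFin}(\text{Op}_Y)\subseteq\text{Cov}_Y$ for every gts, and the paper uses the same fact tacitly.
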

\begin{proof}
For $x\in X$ and a weakly open $V$ in $Y$ such that  $f(x)\in V$, there exists $A\in Cl_Y$
such that $f(x)\in Y\setminus A\subseteq V$. When $\text{Cl}_Y$ is disjunctive, there exists $B\in Cl_Y$ such  that $f(x)\in B\subseteq Y\setminus A$. By the w-continuity  or the W-continuity of $f$, for $\mc{V}=\{Y\setminus A,Y\setminus B\}$, one can find $\mc{U}\in\text{Cov}_X$ such that $\mc{U}$ covers $X$ and $\mc{U}\preceq f^{-1}(\mc{V})$. There is $U_x\in \mc{U}$ such that $x\in U_x$. Then $f(U_x)\subseteq Y\setminus A\subseteq V$ because $f(x)\in f(U_x)\cap B$.  
\end{proof}
 \begin{defi} Let $X$ and $Y$ be weakly normal gtses. We say that a mapping $\hat{f}:\mathcal{W}(X,\text{Cl}_X)\to \mathcal{W}(Y, \text{Cl}_Y)$ is a Wallman extension of a mapping $f:X\to Y$ if $\hat{f}(x)=f(x)$ for each $x\in X$.
 \end{defi}
\begin{thm}
Let $X$ and $Y$ be weakly normal gtses such that the Wallman spaces $\mathcal{W}(X,\text{Cl}_X)$ and $\mathcal{W}(Y, \text{Cl}_Y)$ are compact. Then a mapping $f:X\to Y$ has a weakly continuous Wallman extension if and only if $f$ is w-continuous.
\end{thm}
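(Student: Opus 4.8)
The plan is to work in the standard picture in which $X$ and $Y$ are identified, via the Wallman embeddings $w_{\text{Cl}_X}$ and $w_{\text{Cl}_Y}$, with dense subspaces of the compact spaces $\mathcal{W}(X,\text{Cl}_X)$ and $\mathcal{W}(Y,\text{Cl}_Y)$. Since $X$ and $Y$ are weakly normal, $\text{Cl}_X$ and $\text{Cl}_Y$ are Wallman bases of $X_{top}$ and $Y_{top}$ (Proposition 1.8, noting that weak normality refers only to the open and the closed sets), so together with the assumed compactness these Wallman spaces are Hausdorff compactifications. The one ingredient I would fix at the outset is the familiar description of closures in a Wallman space: a point $p$ lies in the closure of $S\subseteq X$ inside $\mathcal{W}(X,\text{Cl}_X)$ if and only if $C\cap S\neq\emptyset$ for every $C\in p$, since $\{[C]_{\text{Cl}_X}:C\in p\}$ is a neighbourhood base at $p$ and $[C]_{\text{Cl}_X}$ meets $S$ exactly when $C\cap S\neq\emptyset$.

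For sufficiency I would first deduce that a w-continuous $f$ is weakly continuous: $\text{Cl}_Y$ is disjunctive (weak normality of $Y$ provides the required separation of points from closed sets), so Proposition 8.8 applies, and hence $g:=w_{\text{Cl}_Y}\circ f$ is a continuous map of the dense subspace $X$ of $T:=\mathcal{W}(X,\text{Cl}_X)$ into the compact Hausdorff space $\mathcal{W}(Y,\text{Cl}_Y)$. I would then invoke the ZF form of Taimanov's theorem (Theorem 5.15) with the closed base $\mathcal{F}=\{[A]_{\text{Cl}_Y}:A\in\text{Cl}_Y\}$, which is stable under finite intersections since $[A]_{\text{Cl}_Y}\cap[B]_{\text{Cl}_Y}=[A\cap B]_{\text{Cl}_Y}$; as $g^{-1}([A]_{\text{Cl}_Y})=f^{-1}(A)$, the only point to verify is that $\text{cl}_T f^{-1}(A)\cap\text{cl}_T f^{-1}(B)=\emptyset$ whenever $A,B\in\text{Cl}_Y$ are disjoint. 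This is where w-continuity enters: $\{Y\setminus A,Y\setminus B\}$ is a finite open cover of $Y$, so it yields a finite family $\mathcal{U}\subseteq\text{Op}_X$ covering $X$ and refining $\{X\setminus f^{-1}(A),X\setminus f^{-1}(B)\}$; taking $B_1$ and $B_2$ to be the complements (hence members of $\text{Cl}_X$) of the unions of, respectively, those members of $\mathcal{U}$ disjoint from $f^{-1}(A)$ and those disjoint from $f^{-1}(B)$, I obtain disjoint closed sets with $f^{-1}(A)\subseteq B_1$ and $f^{-1}(B)\subseteq B_2$. Any $p\in\text{cl}_T f^{-1}(A)\cap\text{cl}_T f^{-1}(B)$ would, by the closure description and the maximality of the proper filter $p$, have to contain both $B_1$ and $B_2$, hence $\emptyset$ — impossible. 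Theorem 5.15 then produces the desired weakly continuous Wallman extension of $f$.

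For necessity, given a weakly continuous Wallman extension $\hat f$, so $\hat f\circ w_{\text{Cl}_X}=w_{\text{Cl}_Y}\circ f$, and an essentially finite open cover $\mathcal{V}=\{V_1,\dots,V_n\}$ of $Y$ (which I may take finite by passing to a subfamily with the same union), I would put $A_i=Y\setminus V_i$. Then $\bigcap_i[A_i]_{\text{Cl}_Y}=\emptyset$, so the open sets $\hat f^{-1}(\mathcal{W}(Y,\text{Cl}_Y)\setminus[A_i]_{\text{Cl}_Y})$ cover $\mathcal{W}(X,\text{Cl}_X)$, and a short computation using $\hat f\circ w_{\text{Cl}_X}=w_{\text{Cl}_Y}\circ f$ shows that the trace of the $i$-th of them on $X$ is exactly $f^{-1}(V_i)$. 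The crucial move is to refine this cover by the base $\{\text{Ex}_{\mathcal{W}(X,\text{Cl}_X)}(U):U\in\text{Op}_X\}$ supplied by Proposition 5.14, extract a finite subcover $\text{Ex}_{\mathcal{W}(X,\text{Cl}_X)}(U_1),\dots,\text{Ex}_{\mathcal{W}(X,\text{Cl}_X)}(U_m)$ by the compactness of $\mathcal{W}(X,\text{Cl}_X)$, and then intersect everything with $X$: since $\text{Ex}_{\mathcal{W}(X,\text{Cl}_X)}(U_j)\cap X=U_j$ (Remark 5.9), the family $\{U_1,\dots,U_m\}\subseteq\text{Op}_X$ covers $X$ and each $U_j$ is contained in some $f^{-1}(V_i)$, i.e.\ $\{U_1,\dots,U_m\}\preceq f^{-1}(\mathcal{V})$, which is precisely w-continuity of $f$.

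The main difficulty, common to both directions, is that w-continuity is a condition on genuine open families in $\text{Op}_X$, whereas the Wallman space and Taimanov's theorem naturally manipulate only weakly open sets. In the sufficiency direction I sidestep this by building $B_1$ and $B_2$ directly as complements of finite unions of members of $\text{Op}_X$; in the necessity direction by insisting that the refining base be the wallmanian base $\{\text{Ex}_{\mathcal{W}(X,\text{Cl}_X)}(U):U\in\text{Op}_X\}$ of Proposition 5.14, whose members have traces on $X$ lying in $\text{Op}_X$. A second point needing care in ZF is that ``ultrafilter in a ring of sets'' must be read as ``maximal proper filter'', which is exactly what licenses passing from $B_1,B_2\in p$ to $\emptyset\in p$; the compactness hypothesis on the Wallman spaces is used only to turn the open covers of $\mathcal{W}(X,\text{Cl}_X)$ into finite subcovers and to guarantee that these spaces really are compactifications.
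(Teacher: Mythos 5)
Your proof is correct and follows essentially the same route as the paper's: Proposition 8.8 plus the disjoint closed sets $B_1,B_2$ manufactured from the w-continuity refinement of $\{Y\setminus A, Y\setminus B\}$ feed into Theorem 5.15 for sufficiency, and for necessity the cover is lifted to $\mathcal{W}(Y,\text{Cl}_Y)$ via the sets $\mathcal{W}(Y,\text{Cl}_Y)\setminus[A_i]_{\text{Cl}_Y}=\text{Ex}(V_i)$, pulled back, refined by the wallmanian base of $\mathcal{W}(X,\text{Cl}_X)$, and cut down to $X$ after extracting a finite subcover. The only (harmless) deviations are that you apply Taimanov directly to the canonical closed base $\{[A]_{\text{Cl}_Y}\}$ instead of first separating arbitrary disjoint closed sets by base elements, and you obtain the covering of $\mathcal{W}(Y,\text{Cl}_Y)$ from the ultrafilter property rather than by citing finite additivity of $\text{Ex}$.
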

\begin{proof}
\textsl{Necessity.}
Suppose $\hat{f}$ is a weakly continuous Wallman extension of $f$. For a finite covering $\mc{V}\subseteq\text{Op}_Y$ of $Y$, define $\bar{\mc{V}}=\{ \text{Ex}_{\mathcal{W}(Y, \text{Cl}_Y)}(V):V\in \mc{V}\}$. Then $\bar{\mc{V}}$ is weakly open and it covers $\mathcal{W}(Y, \text{Cl}_Y)$ because, by Theorem 5.16, the operator $\text{Ex}_{\mathcal{W}(Y, \text{Cl}_Y)}$ is finitely additive. Let $\mathcal{U}$ be the collection of all sets $U\in\text{Op}^{w}_{\mathcal{W}(X,\text{Cl}_X)}$ such that $\hat{f}(U)$ is contained in a member of $\bar{\mc{V}}$. Since $\hat{f}$ is weakly continuous, $\mathcal{U}$ is an open cover of $\mathcal{W}(X,\text{Cl}_X)$. By the compactness of $\mathcal{W}(X, \text{Cl}_X)$, there exists a finite $\mathcal{U}_0\subseteq\mathcal{U}$ such that $\mathcal{U}_0$ covers $X$. Then the collection $X\cap_1\mathcal{U}_0\in\text{Cov}_X$ covers $X$ and $X\cap_1\mathcal{U}_0\preceq f^{-1}(\mathcal{V})$, so $f$ is w-continuous.

\textsl{Sufficiency.} Assume $f$ is w-continuous. Then, in view of Proposition 8.8, $f:X\to Y$ is weakly continuous. By the compactness of $\mathcal{W}(Y,\text{Cl}_Y)$,  for a pair $A_1, A_2$ of  disjoint sets that are closed in the Wallman space $\mathcal{W}(Y, \text{Cl}_Y)$,  there exists a pair $D_1,D_2$ of disjoint members of $\text{Cl}_Y$ such that $A_i\subseteq [D_i]_{\text{Cl}_Y}$ for $i\in\{1, 2\}$. It follows from the
w-continuity of $f$ that the finite covering $\mc{W}=\{Y\setminus D_1, Y\setminus D_2\}\subseteq\text{Op}_Y$ of $Y$
admits a finite covering $\mc{G}\subseteq\text{Op}_X$ of $X$ such that $\mc{G}\preceq f^{-1}(\mc{W})$. Let $\mc{G}_i=\{ G\in \mc{G}:f(G)\subseteq Y\setminus D_i\}$ and $B_i=X\setminus \bigcup \mc{G}_i$ for $i\in\{1, 2\}$. Since the sets $B_i\supseteq f^{-1}(A_i)$ are disjoint and they belong to $\text{Cl}_X$, the sets $[B_i]_{\text{Cl}_X}$ are disjoint. Hence  $ \cl_{\mc{W}(X, \text{Cl}_X)} f^{-1}(A_1)\cap\cl_{\mc{W}(X,\text{Cl}_X)} f^{-1}(A_2)=\vn$. In the light of Theorem 5.15, the mapping $f:X\to \mc{W}(Y, \text{Cl}_Y)$ is extendable to a weakly continuous mapping $\hat{f}:\mathcal{W}(X,\text{Cl}_X)\to \mathcal{W}(Y, \text{Cl}_Y)$.  
\end{proof}

\begin{rem} Let $\mathcal{C}$ be a Wallman base of a semi-normal space $X$  and let $Y$ be a compact Hausdorff topological space. In \cite{Fr}, O. Frink called a mapping $f:X\to Y$ uniformly continuous relative to $\mc{C}$ and $Y$ if, for every finite open cover $\mc{V}$ of $Y$, there exists a finite cover $\mc{U}\subseteq\{ X\setminus C: C\in\mc{C}\}$ of $X$, such that $\mc{U}\preceq f^{-1}(\mc{V})$. Using \textbf{AC}, he proved the following extension theorem: a mapping $f:X\to Y$ is continuously extendable to a mapping $\hat{f}:\mc{W}(X, \mc{C})\to Y$ if and only if $f$ is uniformly continuous relative to $\mc{C}$ and $Y$. One can see that our notion of w-continuity can be regarded as a generalization of Frink's notion of uniform continuity relative to $\mc{C}$ and $Y$. Frink's extension theorem can be deduced from our Theorem 8.10. Moreover, in view of our Theorems 2.8 and 8.10, Frink's extension theorem is valid in \textbf{ZFU}. By Theorems 2.8 and 8.10,  also Theorem 2 of \cite{Fr}, although proved by Frink only in \textbf{ZFC}, is valid in \textbf{ZFU}.
\end{rem}
 
\begin{prop}
(\textbf{ZFU}) Every strictly continuous mapping of a weakly normal gts to a weakly normal gts has a weakly continuous Wallman extension.
\end{prop}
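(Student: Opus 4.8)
The plan is to combine three ingredients already assembled in the paper: (1) every strictly continuous map is w-continuous (Fact 8.4); (2) in \textbf{ZFU}, the Wallman space $\mathcal{W}(X,\text{Cl}_X)$ of any weakly normal gts $X$ is compact (this is the necessity part of Theorem 2.8 applied to $\mathcal{C}=\text{Cl}_X$, which is a Wallman base for $X_{top}$ since $X$ is weakly normal by Proposition 1.8); and (3) the w-continuity criterion for the existence of a weakly continuous Wallman extension (Theorem 8.10). So the skeleton is very short: given a strictly continuous $f:X\to Y$ between weakly normal gtses, pass to \textbf{ZFU}, invoke Theorem 2.8 to get that both $\mathcal{W}(X,\text{Cl}_X)$ and $\mathcal{W}(Y,\text{Cl}_Y)$ are compact, invoke Fact 8.4 to get that $f$ is w-continuous, and then invoke Theorem 8.10 (sufficiency direction) to conclude that $f$ has a weakly continuous Wallman extension $\hat f:\mathcal{W}(X,\text{Cl}_X)\to\mathcal{W}(Y,\text{Cl}_Y)$.

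First I would state the hypotheses cleanly: $X,Y$ are weakly normal gtses and $f:X\to Y$ is strictly continuous. By Proposition 1.8 (together with Remark 1.9), $\text{Cl}_X$ is a Wallman base of the semi-normal space $X_{top}$, and likewise $\text{Cl}_Y$ is a Wallman base of $Y_{top}$. Under \textbf{UFT}, the necessity half of Theorem 2.8 then gives that the Wallman spaces $\mathcal{W}(X,\text{Cl}_X)$ and $\mathcal{W}(Y,\text{Cl}_Y)$ are both compact. Next, Fact 8.4 says $f$, being strictly continuous, is w-continuous. Now every hypothesis of Theorem 8.10 is met, so the ``if'' direction of that theorem produces the desired weakly continuous Wallman extension.

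There is essentially no obstacle here — the statement is a corollary assembled from earlier results — but the one point that merits a word of care is making sure the compactness hypothesis of Theorem 8.10 is genuinely available, i.e. that we really are entitled to conclude $\mathcal{W}(X,\text{Cl}_X)$ and $\mathcal{W}(Y,\text{Cl}_Y)$ are compact from weak normality alone. This is exactly why the statement carries the \textbf{(ZFU)} label: weak normality makes $\text{Cl}_X$ a Wallman base, and \textbf{UFT} upgrades ``Wallman base of a semi-normal space'' to ``Wallman space is compact'' via Theorem 2.8. In \textbf{ZF} alone this step can fail (the Wallman space may even be empty, as noted after Definition 2.6), which is precisely the reason for working in \textbf{ZFU}. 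So the proof I would write is: ``Since $X$ and $Y$ are weakly normal, by Proposition 1.8 the collections $\text{Cl}_X$ and $\text{Cl}_Y$ are Wallman bases of the semi-normal spaces $X_{top}$ and $Y_{top}$ respectively; by Theorem 2.8, the Wallman spaces $\mathcal{W}(X,\text{Cl}_X)$ and $\mathcal{W}(Y,\text{Cl}_Y)$ are compact. A strictly continuous mapping is w-continuous by Fact 8.4, so Theorem 8.10 applies and yields a weakly continuous Wallman extension $\hat f$ of $f$.''
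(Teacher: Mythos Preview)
Your proposal is correct and matches the paper's own proof, which reads simply ``It suffices to apply Theorems 2.8 and 8.10 together with Fact 8.2.'' The only slip is a numbering error: the fact that every strictly continuous mapping is w-continuous is Fact 8.2, not Fact 8.4 (which is a corollary about small gtses).
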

\begin{proof} It suffices to apply Theorems 2.8 and 8.10 together with Fact 8.2.
\end{proof}

\begin{prop}
Let  $\alpha X$ and $\gamma Y$ be strict compactifications of gtses $X$ and $Y$, respectively. Suppose that $f: X\to Y$ is a strictly continuous mapping which has a weakly continuous extension $\hat{f}:\alpha X\to\gamma Y$. If the generalized topologies of $\alpha X$ and of $\gamma Y$ are the strongest generalized topologies associated with $\text{Cov}_X$ and $\text{Cov}_Y$, respectively, then $\hat{f}$ is strictly continuous.
\end{prop}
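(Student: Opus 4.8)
The plan is to check the definition of strict continuity by hand against the description of the strongest generalized topologies (Definition 5.5). Write $\alpha: X\to\alpha X$ and $\gamma: Y\to\gamma Y$ for the two strict embeddings; that $\hat f$ extends $f$ means exactly $\hat f\circ\alpha=\gamma\circ f$ (via the identifications of Remark 5.4), and the hypothesis on the two generalized topologies says $\text{Op}_{\alpha X}=\text{Op}^{S}_{\alpha X}$, $\text{Cov}_{\alpha X}=\text{Cov}^{S}_{\alpha X}$ and likewise for $\gamma Y$. Thus I must prove: for each $\mathcal{V}\in\text{Cov}^{S}_{\gamma Y}$ one has $\hat f^{-1}(\mathcal{V})\in\text{Cov}^{S}_{\alpha X}$; by the definition of $\text{Cov}^{S}_{\alpha X}$ this amounts to (a) $\hat f^{-1}(V)\in\text{Op}^{S}_{\alpha X}$ for every $V\in\mathcal{V}$, and (b) $\alpha^{-1}\big(\hat f^{-1}(\mathcal{V})\big)\in\text{Cov}_X$.

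Two preliminary observations make both parts short. First, a strictly continuous map pulls open sets back to open sets: if $U\in\text{Op}_Y$, choose $\mathcal{U}\in\text{Cov}_Y$ with $U\in\mathcal{U}$; then $f^{-1}(\mathcal{U})\in\text{Cov}_X$, so $f^{-1}(U)\in\text{Op}_X$. Second, from $\hat f\circ\alpha=\gamma\circ f$ we get $\alpha^{-1}\big(\hat f^{-1}(S)\big)=f^{-1}\big(\gamma^{-1}(S)\big)$ for every $S\subseteq\gamma Y$, and therefore also $\alpha^{-1}\big(\hat f^{-1}(\mathcal{V})\big)=f^{-1}\big(\gamma^{-1}(\mathcal{V})\big)$ as families of subsets of $X$.

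Now part (a): fix $V\in\mathcal{V}\subseteq\text{Op}^{S}_{\gamma Y}=\text{Op}_{\gamma Y}$. Then $V$ is in particular weakly open in $\gamma Y$, so by weak continuity of $\hat f$ the set $\hat f^{-1}(V)$ is weakly open in $\alpha X$, i.e. $\hat f^{-1}(V)\in\tau\big(\text{Op}^{S}_{\alpha X}\big)\subseteq\tau_{\alpha X}$, the final inclusion holding because $\text{Op}^{S}_{\alpha X}\subseteq\tau_{\alpha X}$ by Definition 5.5 and $\tau_{\alpha X}$ is a topology. Also $V\in\text{Op}^{S}_{\gamma Y}$ gives $\gamma^{-1}(V)\in\text{Op}_Y$, so $\alpha^{-1}\big(\hat f^{-1}(V)\big)=f^{-1}\big(\gamma^{-1}(V)\big)\in\text{Op}_X$ by the first observation. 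Hence $\hat f^{-1}(V)\in\text{Op}^{S}_{\alpha X}$. For part (b): $\mathcal{V}\in\text{Cov}^{S}_{\gamma Y}$ gives $\gamma^{-1}(\mathcal{V})\in\text{Cov}_Y$, and strict continuity of $f$ then gives $\alpha^{-1}\big(\hat f^{-1}(\mathcal{V})\big)=f^{-1}\big(\gamma^{-1}(\mathcal{V})\big)\in\text{Cov}_X$. Therefore $\hat f^{-1}(\mathcal{V})\in\text{Cov}^{S}_{\alpha X}=\text{Cov}_{\alpha X}$, which is precisely strict continuity of $\hat f$.

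No step carries a genuine difficulty; the work is bookkeeping. The one thing to keep straight is that the topology $\tau_{\alpha X}$ of the ambient compactification need not coincide with the topologization $\tau\big(\text{Op}^{S}_{\alpha X}\big)$ of the gts $\alpha^{S}X$ (they agree only when the latter is Hausdorff, by Proposition 5.6), but the inclusion $\text{Op}^{S}_{\alpha X}\subseteq\tau_{\alpha X}$ always holds, and this is exactly what lets weak continuity of $\hat f$ deliver membership of $\hat f^{-1}(V)$ in $\tau_{\alpha X}$ as required by the definition of $\text{Op}^{S}_{\alpha X}$; the other delicate point is merely rendering ``$\hat f$ extends $f$'' as the identity $\hat f\circ\alpha=\gamma\circ f$ despite the identification conventions.
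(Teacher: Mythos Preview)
Your proof is correct and follows essentially the same approach as the paper's: take $\mathcal{V}\in\text{Cov}^{S}_{\gamma Y}$, use weak continuity of $\hat f$ to get that each $\hat f^{-1}(V)$ is weakly open in $\alpha X$, and use strict continuity of $f$ together with $\gamma^{-1}(\mathcal{V})\in\text{Cov}_Y$ to get $\alpha^{-1}(\hat f^{-1}(\mathcal{V}))\in\text{Cov}_X$. Your version is more explicit than the paper's in two places: you spell out the identity $\hat f\circ\alpha=\gamma\circ f$ rather than relying on the identification conventions, and you carefully justify the inclusion $\tau(\text{Op}^{S}_{\alpha X})\subseteq\tau_{\alpha X}$ needed to conclude $\hat f^{-1}(V)\in\tau_{\alpha X}$, a point the paper's proof leaves implicit.
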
 
\begin{proof}  Let $\mc{V}\in\text{Cov}^{S}_{\gamma Y}$. It follows from the weak continuity of $\hat{f}$ that $\hat{f}^{-1}(V)$ is weakly open in $\alpha X$ whenever $V\in\mc{V}$. Since $\mc{V}\cap_1 Y\in\text{Cov}_Y$, we have $\hat{f}^{-1}(\mc{V})\cap_1 X\in\text{Cov}_X$ because $f$ is strictly continuous. This implies that $\hat{f}^{-1}(\text{Cov}^{S}_{\gamma Y})\subseteq \text{Cov}^{S}_{\alpha X}$. 
\end{proof}

From Theorem 8.10 and Proposition 8.13, we immediately deduce the following:

\begin{thm} 
Suppose that $X_1$ and $X_2$ are weakly normal gtses such that the Wallman spaces $\mc{W}(X_i, \text{Cl}_{X_i})$ are compact for $i\in\{0, 1\}$. Let $\text{COV}_{i}$ be the strongest generalized topology in $\mc{W}(X_i,\text{Cl}_{X_i})$ associated with $\text{Cov}_{X_i}$ for $i\in\{0, 1\}$. Then every strictly continuous mapping $f:X_1\to X_2$ has a strictly continuous extension $\hat{f}: (\mc{W}(X_1, \text{Cl}_{X_1}), \text{COV}_1)\to (\mc{W}(X_2, \text{Cl}_{X_2}), \text{COV}_2)$. 
\end{thm}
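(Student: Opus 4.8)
The plan is to derive Theorem 8.15 directly from the two results cited, exactly as the remark preceding it suggests. First I would invoke Theorem 8.10: since each $\mc{W}(X_i,\text{Cl}_{X_i})$ is compact and each $X_i$ is weakly normal, a strictly continuous $f:X_1\to X_2$ is in particular w-continuous by Fact 8.2, so Theorem 8.10 yields a weakly continuous Wallman extension $\hat{f}:\mc{W}(X_1,\text{Cl}_{X_1})\to\mc{W}(X_2,\text{Cl}_{X_2})$, i.e. a continuous map of the topologizations with $\hat{f}(x)=f(x)$ for $x\in X_1$.

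Next I would upgrade weak continuity to strict continuity using Proposition 8.13. The hypotheses of that proposition require that $\hat f$ be a weakly continuous extension of a strictly continuous $f$ to strict compactifications whose generalized topologies are the \emph{strongest} ones associated with $\text{Cov}_{X_i}$. Here $\text{COV}_i$ is by definition the strongest generalized topology in $\mc{W}(X_i,\text{Cl}_{X_i})$ associated with $\text{Cov}_{X_i}$, so the only thing to check before applying Proposition 8.13 is that $(\mc{W}(X_i,\text{Cl}_{X_i}),\text{COV}_i)$ is genuinely a strict compactification of $X_i$ — but this is exactly Proposition 5.10(i) applied to the topological compactification $\alpha X_i=\mc{W}(X_i,\text{Cl}_{X_i})$ of $(X_i)_{top}$, which is a compactification since $\mc{W}(X_i,\text{Cl}_{X_i})$ is compact and, $X_i$ being weakly normal, $w_{\text{Cl}_{X_i}}$ embeds $(X_i)_{top}$ densely (as recorded in Remark 5.13). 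With these identifications in place, Proposition 8.13 gives that $\hat f:(\mc{W}(X_1,\text{Cl}_{X_1}),\text{COV}_1)\to(\mc{W}(X_2,\text{Cl}_{X_2}),\text{COV}_2)$ is strictly continuous, which is the assertion.

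I expect the routine bookkeeping — that $f$ is w-continuous, and that $(\mc W(X_i,\text{Cl}_{X_i}),\text{COV}_i)$ satisfies the precise hypotheses of Proposition 8.13 — to be entirely mechanical. The only point demanding a little care is the mild index slip in the statement (it writes $i\in\{0,1\}$ while speaking of $X_1,X_2$); I would simply read it as $i\in\{1,2\}$ throughout and proceed. No new argument is needed: the theorem is a two-line corollary of Theorem 8.10 and Proposition 8.13, with Proposition 5.10 supplying the structural fact that the Wallman space carrying the strongest associated generalized topology is a strict compactification. Accordingly the proof is: \emph{It suffices to combine Theorem 8.10 (via Fact 8.2) with Proposition 8.13, noting that by Proposition 5.10(i) each $(\mc{W}(X_i,\text{Cl}_{X_i}),\text{COV}_i)$ is a strict compactification of $X_i$ whose generalized topology is the strongest one associated with $\text{Cov}_{X_i}$.}
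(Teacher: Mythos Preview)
Your proposal is correct and follows exactly the paper's approach: the paper's entire proof is the sentence ``From Theorem 8.10 and Proposition 8.13, we immediately deduce the following,'' and you have simply unpacked this, supplying Fact~8.2 and the strict-compactification check. The only slips are in numbering: the result you call Proposition~5.10(i) is Proposition~5.6(i) in the paper, and the theorem itself is 8.14, not 8.15.
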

 
As usual, for a topological space $X$,  we denote by $C(X)$ the ring of all continuous functions from $X$ into the real line $\mathbb{R}$ equipped with its natural topology. For a Hausdorff compactification $\alpha X$ of a topological space $X$, let $C_{\alpha}(X)$ be the ring of all functions $f\in C(X)$ that are continuously extendable over $\alpha X$ and let $\mathcal{Z}_{\alpha}(X)=\{ f^{-1}(0): f\in C_{\alpha}(X)\}$. The sets $C(X)$ and $C_{\alpha}(X)$ will be introduced also for gtses $X$ in Definition 10.7.

\begin{thm}
(\textbf{ZFC}) Suppose that $X$ and $Y$ are weakly normal small gtses such that there exist Hausdorff compactifications $\alpha X$ of $X_{top}$ and  $\gamma Y$ of $Y_{top}$ with the following properties: $\text{Cl}_X=\mc{Z}_{\alpha}(X)$ and $\text{Cl}_Y=\mc{Z}_{\gamma}(Y)$. Then a mapping $f: X\to Y$ has a weakly continuous Wallman extension if and only if $f$ is strictly continuous. 
\end{thm}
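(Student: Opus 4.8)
The plan is to reduce this to Theorem 8.10 by showing that, under the stated hypotheses, the Wallman spaces $\mathcal{W}(X,\text{Cl}_X)$ and $\mathcal{W}(Y,\text{Cl}_Y)$ are compact, so that Theorem 8.10 applies and gives the equivalence between having a weakly continuous Wallman extension and being w-continuous; then, since $X$ and $Y$ are small, Corollary 8.4 identifies w-continuity with W-continuity, and it remains to upgrade the equivalence so that the right-hand condition becomes strict continuity rather than just w-continuity. First I would observe that the hypotheses $\text{Cl}_X=\mathcal{Z}_\alpha(X)$ and $\text{Cl}_Y=\mathcal{Z}_\gamma(Y)$ force $\text{Cl}_X$ and $\text{Cl}_Y$ to be Wallman bases of $X_{top}$ and $Y_{top}$: a ring of zero-sets of a compactification is always a normal (disjunctive) base, and the fact that it comes from \emph{all} of $\text{Cl}_X$ means $X$ is weakly normal (as assumed) and semi-normal, so by Proposition 1.8 the gts induced is weakly normal and $\mathcal{C}=\text{Cl}_X$ is a Wallman base. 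In \textbf{ZFC}, \textbf{UFT} holds, so by Theorem 2.8 the Wallman spaces $\mathcal{W}(X,\text{Cl}_X)$ and $\mathcal{W}(Y,\text{Cl}_Y)$ are compact; this puts us squarely in the setting of Theorem 8.10.

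Next I would prove the equivalence. For the direction "strictly continuous $\Rightarrow$ weakly continuous Wallman extension", one applies Fact 8.2 (strict continuity implies w-continuity) followed by Theorem 8.10, or more directly Proposition 8.12, which already delivers this in \textbf{ZFU} hence in \textbf{ZFC}. The substantive direction is the converse: suppose $f:X\to Y$ has a weakly continuous Wallman extension $\hat f$. By Theorem 8.10, $f$ is w-continuous, and by Corollary 8.4 (as $X$ and $Y$ are small) $f$ is W-continuous. I must now show $f$ is strictly continuous, i.e. $f^{-1}(\mathcal{V})\in\text{Cov}_X$ for every $\mathcal{V}\in\text{Cov}_Y=\text{EssFin}(\text{Op}_Y)$. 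So fix an essentially finite family $\mathcal{V}\subseteq\text{Op}_Y$; since being strictly continuous only needs to be checked on essentially finite families and smallness lets us pass to the finite subfamily with the same union, it suffices to handle a finite $\mathcal{V}$ covering $\bigcup\mathcal{V}=:V_0\in\text{Op}_Y$. Here I would use that $\text{Cl}_Y=\mathcal{Z}_\gamma(Y)$ together with the compactness of $\mathcal{W}(Y,\text{Cl}_Y)$ to get, for each $y\in V_0$, a zero-set neighbourhood contained in some member of $\mathcal{V}$ (shrinking the finite cover of $V_0$ through a zero-set partition-of-unity argument available in \textbf{ZFC} on the closed-in-$Y_{top}$ set $Y_{top}\setminus V_0$ against the closed complements), thereby refining $\mathcal{V}$ to an essentially finite open cover $\mathcal{V}'$ of $V_0$ by open sets of the form $Y\setminus Z$ with $Z\in\text{Cl}_Y$ and $\mathcal{V}'\preceq\mathcal{V}$. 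Then W-continuity of $f$ applied to the finite cover $\{Y\setminus Z : (Y\setminus Z)\in\mathcal{V}'\}\cup\{Y\setminus V_0\}$ of $Y$ yields an admissible open cover $\mathcal{U}\in\text{Cov}_X$ of $X$ refining the preimage; intersecting with $f^{-1}(V_0)$ and using (A8) of Definition 2.2.1 of \cite{Pie1} exactly as in the proof of Proposition 8.5 produces an essentially finite admissible family whose union is $f^{-1}(V_0)$ and which refines $f^{-1}(\mathcal{V})$, and since $X$ is small this forces $f^{-1}(\mathcal{V})\in\text{Cov}_X$.

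The main obstacle is this last refinement step: converting the \emph{covering}-style information supplied by W-continuity (a cover that refines $f^{-1}(\mathcal{V})$ exists) into the \emph{membership} statement $f^{-1}(\mathcal{V})\in\text{Cov}_X$, which is where smallness of $X$ and the zero-set structure of $\text{Cl}_Y$ must both be used; in particular one needs each member of the refined cover $\mathcal{V}'$ to sit inside a \emph{single} member of $\mathcal{V}$ and to have a complement in $\text{Cl}_Y$, and this is exactly what $\text{Cl}_Y=\mathcal{Z}_\gamma(Y)$ buys via the usual \textbf{ZFC} manipulations with continuous real-valued functions (forming $\max/\min$ of finitely many functions, separating a point from a zero-set). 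Once that is in place the argument is a routine combination of Theorem 8.10, Corollary 8.4, Proposition 8.5's technique, and condition (A8) of Definition 2.2.1 of \cite{Pie1}; I would also remark that it is the hypothesis $\text{Cl}=\mathcal{Z}_{(\cdot)}(\cdot)$ — not merely "Wallman base" — that makes the w-continuous mapping automatically strict here, which is why the theorem is stated with that extra assumption and flagged \textbf{ZFC}.
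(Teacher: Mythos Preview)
Your setup and the easy direction are fine: in \textbf{ZFC} the Wallman spaces are compact, Theorem 8.10 applies, and strict continuity $\Rightarrow$ w-continuity $\Rightarrow$ existence of a weakly continuous Wallman extension is exactly Proposition 8.12.

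The hard direction, however, has a genuine gap. For small gtses, $\text{Cov}_X=\text{EssFin}(\text{Op}_X)$, so the statement $f^{-1}(\mathcal{V})\in\text{Cov}_X$ is \emph{equivalent} to the assertion that $f^{-1}(V)\in\text{Op}_X$ for every $V\in\mathcal{V}$. Your refinement argument produces an admissible family $\mathcal{U}'$ with $\bigcup\mathcal{U}'=\bigcup f^{-1}(\mathcal{V})$ and $\mathcal{U}'\preceq f^{-1}(\mathcal{V})$, but neither (A7) nor (A8) lets you conclude from this that the coarser family $f^{-1}(\mathcal{V})$ is admissible: (A7) requires the coarser family to already consist of open sets, and (A8) would require you to know that $f^{-1}(V)\cap U$ is open for each $U\in\mathcal{U}'$, which you do not. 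The sentence ``since $X$ is small this forces $f^{-1}(\mathcal{V})\in\text{Cov}_X$'' is therefore unjustified. (Minor additional issues: your proposed cover contains $Y\setminus V_0$, which is closed rather than open; and there is no Proposition 8.5 --- you presumably mean Proposition 8.7.)

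The paper's argument avoids covers entirely and goes straight to the equivalent condition $f^{-1}(\text{Cl}_Y)\subseteq\text{Cl}_X$. Given $A\in\text{Cl}_Y=\mathcal{Z}_\gamma(Y)$, one uses that $\gamma Y\leq\mathcal{W}(Y,\text{Cl}_Y)$ (Theorem 2.2 of \cite{W1}) to write $A=Y\cap h^{-1}(0)$ for some $h\in C(\mathcal{W}(Y,\text{Cl}_Y))$; then $g=h\circ\hat{f}\in C(\mathcal{W}(X,\text{Cl}_X))$ and $f^{-1}(A)=X\cap g^{-1}(0)$, which by Theorem 2.9 of \cite{W1} lies in $\mathcal{Z}_\alpha(X)=\text{Cl}_X$. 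The point is that the weakly continuous extension $\hat{f}$ is used \emph{directly} to transport continuous real-valued functions, and the hypothesis $\text{Cl}_X=\mathcal{Z}_\alpha(X)$ is what turns the resulting trace zero-set back into a member of $\text{Cl}_X$. This bypasses the cover-to-openness conversion that your sketch cannot complete.
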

\begin{proof}
In view of Theorem 8.10 and Proposition 8.12, it suffices to prove that if $f$ is w-continuous and $Y$ is small, then $f$ is strictly continuous. Therefore, assume that $f$ is w-continuous and that $\hat{f}$ is its weakly continuous Wallman extension.  Let $A\in\text{Cl}_Y$. Since, by Theorem 2.2 of \cite{W1}, $\gamma Y\leq \mc{W}(Y, \text{Cl}_Y)$, there exists $h\in C(\mc{W}(Y, \text{Cl}_Y))$ such that $A=Y\cap h^{-1}(0)$. Then $g=h\circ\hat{f}\in C(\mc{W}(X, \text{Cl}_X))$ and $f^{-1}(A)=X\cap g^{-1}(0)$.  This,  together with Theorem 2.9 of \cite{W1}, shows that $f^{-1}(A)\in\text{Cl}_X$. Hence $f^{-1}(\text{Cl}_Y)\subseteq \text{Cl}_X$ and, in consequence. $f$ is strictly continuous.   
\end{proof}

That not every w-continuous mapping between weakly normal small gtses is strictly continuous is shown by the following example:

\begin{exam} We consider the real line $\mathbb{R}$ equipped with its natural topology. Let $\mc{C}$ be the collection of all the closed sets in $\mathbb{R}$ that are either compact or their complements do not contain non-compact closed sets in $\mathbb{R}$. Let $\mc{D}$ be the collection of all sets $D\in\mc{C}$ that $D$ has a finite number of connected components. Then both $\mc{C}$ and $\mc{D}$ are Wallman bases for $\mathbb{R}$ such that $\mc{C}\neq\mc{D}$, while the Wallman spaces $\mc{W}(\mathbb{R}, \mc{C})$ and $\mc{W}(\mathbb{R},\mc{D})$ are topologically equivalent one-point compactifications of $\mathbb{R}$. Therefore, in view of Theorem 8.10, the mapping $\text{id}_{\mathbb{R}}$ is a w-continuous but not strictly continuous mapping of the small gts $(\mathbb{R},\mathcal{D})$ into the small gts $(\mathbb{R}, \mathcal{C})$ (cf. Remark 2.2).
\end{exam}

 \section{$I_g$-Tychonoff spaces}
 
We are going to apply embeddings into products to our theory of strict compactifications. To do this well, we need a notion of a completely regular gts. Of course, we have already defined topologically completely regular gtses (cf. Definition 3.1); however, we want to equip $\mathbb{R}$ with a generalized topology $\text{Cov}$ to use strictly continuous mappings of gtses into $(\mathbb{R}, \text{Cov})$ in our concept of a Tychonoff gts. Among many generalized topologies in $\mathbb{R}$ that induce the natural topology $\tau_{nat}$ of $\mathbb{R}$, there are all generalized topologies described in Definition 1.2. It is good to learn how many distinct weakly normal generalized topologies inducing the discrete topology of a given infinite set $X$ there are in $X$.  
 
\begin{defi} 
Let $\mathbf{ON}$ be the class of all ordinal numbers of von Neumann and let $X$ be a set. Then:
\begin{enumerate}
\item[(i)] we say that $X$ has its cardinality if there exists $\alpha\in\mathbf{ON}$ equipollent with $X$;
\item[(ii)] if $X$ has its cardinality, then the initial ordinal number $\mid X\mid \in\mathbf{ON}$ equipollent with $X$ is called the cardinality of $X$.
\end{enumerate}
\end{defi}

\begin{rem} It is valid in \textbf{ZFC} that every set has its cardinality. In every model for \textbf{ZF+$\neg$AC($\mathbb{R}$)}, the set $\mathbb{R}$ does not have its cardinality.
\end{rem}

\begin{thm}
(\textbf{ZFC}) Let $X$ be an infinite set of cardinality $\kappa$. Then there are exactly $2^{2^{\kappa}}$ weakly normal small generalized topologies in $X$ that induce the discrete topology of $X$.
\end{thm}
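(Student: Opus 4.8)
The plan is to establish the upper and lower bounds separately. For the upper bound, I would observe that any generalized topology in $X$ is a subcollection of $\mathcal{P}^{2}(X)$, and a small generalized topology is determined by its collection $\text{Op}_X$ of open sets, equivalently by $\text{Cl}_X=\mathcal{C}$, which is a complete ring of subsets of $X$. Since $\mathcal{C}\subseteq\mathcal{P}(X)$ and $|\mathcal{P}(X)|=2^{\kappa}$, there are at most $2^{2^{\kappa}}$ such rings, hence at most $2^{2^{\kappa}}$ small generalized topologies of any kind in $X$, in particular at most that many weakly normal ones inducing the discrete topology. (In \textbf{ZFC} all the cardinal arithmetic here is unproblematic.)

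For the lower bound, the work is to exhibit $2^{2^{\kappa}}$ pairwise distinct complete rings $\mathcal{C}$ of subsets of $X$, each of which is a Wallman base of the discrete space $(X,\mathcal{P}(X))$; then by Proposition 1.8 each induces a weakly normal gts, and since $\text{Op}_X\supseteq\{\{x\}:x\in X\}$ forces every such gts to induce the discrete topology, and distinct rings give distinct gtses (Remark 2.2), we are done. The key device is an almost disjoint family: fix a point $\ast\in X$, write $X'=X\setminus\{\ast\}$, and take an almost disjoint family $\{A_t:t\in T\}$ of subsets of $X'$ with $|T|=2^{\kappa}$ (such a family exists in \textbf{ZFC} for every infinite $\kappa$, e.g.\ by the standard branches-of-a-tree construction). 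For each subset $S\subseteq T$ let $\mathcal{C}_S$ be the ring of subsets of $X$ generated by all finite subsets of $X$ together with all sets of the form $X\setminus F$ where $F$ ranges over finite subsets of $X$, and additionally by the sets $A_t\cup\{\ast\}$ for $t\in S$ — more precisely, I would let $\mathcal{C}_S$ consist of: all finite subsets of $X'$, all sets $A\cup\{\ast\}$ where $A$ is a finite union of sets from $\{A_t:t\in S\}$ together with a finite subset of $X'$, and the complements in $X$ of all the preceding sets. One checks this is closed under finite unions and intersections using almost disjointness (intersections of the $A_t\cup\{\ast\}$ are $\{\ast\}$ plus a finite set), contains $\emptyset$ and $X$, is a closed base for the discrete topology (every singleton and its complement lie in $\mathcal{C}_S$), is disjunctive for the same reason, and satisfies the Wallman separation condition (iii) of Definition 1.6 — disjoint members can be separated because one of any two disjoint members of $\mathcal{C}_S$ is a finite subset of $X'$, which is clopen-complemented in the ring.

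The main obstacle, and the point deserving the most care, is verifying that the assignment $S\mapsto\mathcal{C}_S$ is injective, i.e.\ that the $2^{2^{\kappa}}$ rings are genuinely distinct. This reduces to showing that for $t\in T$ the membership $A_t\cup\{\ast\}\in\mathcal{C}_S$ holds if and only if $t\in S$: the ``if'' direction is by construction, and the ``only if'' direction requires arguing that $A_t\cup\{\ast\}$ cannot be obtained as a finite Boolean combination of finite sets and the generators $A_s\cup\{\ast\}$, $s\in S$, when $t\notin S$ — here almost disjointness of the family is exactly what prevents $A_t$ from being covered modulo finite sets by finitely many $A_s$ with $s\neq t$, since $A_t\cap A_s$ is finite for each such $s$ but $A_t$ is infinite. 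Once injectivity is in hand, the bounds meet and the count is exactly $2^{2^{\kappa}}$; a concluding remark would record that each $\mathcal{C}_S$ being a Wallman base yields, via Proposition 1.8, the desired weakly normal small generalized topology inducing the discrete topology on $X$.
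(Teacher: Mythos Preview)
Your upper bound matches the paper's and is fine. The lower bound has a fatal gap and a secondary one.

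The fatal gap: an almost disjoint family of size $2^{\kappa}$ on a set of size $\kappa$, with pairwise \emph{finite} intersections, need not exist in \textbf{ZFC} when $\kappa$ is uncountable. Take $\kappa=\omega_1$ under GCH. If $\{A_\alpha:\alpha<\omega_2\}$ were such a family, let $B_\alpha$ be the set of the first $\omega$ elements of $A_\alpha$; the $B_\alpha$ are distinct countable subsets of $\omega_1$, each bounded below some $\delta_\alpha<\omega_1$. Pigeonhole gives $\omega_2$ many $B_\alpha$ inside a fixed countable $\delta$, but $|\mathcal{P}(\delta)|\le 2^{\omega}=\omega_1<\omega_2$, a contradiction. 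The branches-of-a-tree construction you cite gives intersections of size $<\kappa$, not finite (and even then the tree $2^{<\kappa}$ need not have only $\kappa$ nodes). Your ring calculation and your injectivity argument both explicitly use finiteness of $A_t\cap A_s$, so neither survives the obvious weakening.

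The secondary gap: your explicit description of $\mathcal{C}_S$ is not closed under intersection. For distinct $s,t\in S$ one has $(A_t\cup\{\ast\})\cap\bigl(X\setminus(A_s\cup\{\ast\})\bigr)=A_t\setminus A_s$, which is infinite, misses $\ast$, and is of none of your four types. Your separation claim that one of any two disjoint members of $\mathcal{C}_S$ is a finite subset of $X'$ is also false: $A_t\cup\{\ast\}$ and its complement $X'\setminus A_t$ are disjoint members and both are infinite. Passing to the Boolean subalgebra actually generated would repair closure (and Boolean subalgebras of $\mathcal{P}(X)$ containing all finite sets are automatically Wallman bases for discrete $X$), but then your explicit membership test for injectivity has to be redone for the larger algebra.

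The paper avoids all of this via $\beta X$ for discrete $X$: for each two-element $K\subseteq\beta X\setminus X$ it sets $\mathcal{C}_K=\{A\subseteq X:\ K\cap\text{cl}_{\beta X}A=\emptyset\ \text{or}\ K\subseteq\text{cl}_{\beta X}A\}$, a Wallman base whose Wallman space is the quotient of $\beta X$ collapsing $K$ to a point. Distinct $K$'s give distinct $\mathcal{C}_K$'s, and $|\beta X\setminus X|=2^{2^{\kappa}}$ supplies the count directly.
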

\begin{proof} Let us consider $X$ with its discrete topology. We denote by $\mc{WNS}(X)$  the set of all weakly normal small generalized topologies that induce the discrete topology of $X$. Let $\mathcal{K}$ be the collection of all two-point subsets for $\beta X\setminus X$ where $\beta X$ stands for the \v{C}ech-Stone compactification of the discrete space $X$. For each $K\in\mathcal{K}$, let 
$$\mc{C}_K=\{ A\in\mathcal{P}(X): K\cap\text{cl}_{\beta X}A=\emptyset \vee  K\subseteq\text{cl}_{\beta X}A \}.$$ 
Then $\mc{C}_K$ is a Wallman base of $X$ such that the compactification $\mc{W}(X, \mc{C}_K)$ is topologically equivalent with the compactification of $X$ obtained from $\beta X$ by identifying the set $K$ with a point. The generalized topologies $\text{Cov}(\mc{C}_K)=\text{EssFin}(\{ X\setminus A: A\in\mc{C}_K\})$ with $K\in\mc{K}$ are all weakly normal, they induce the discrete topology in $X$ and they are pairwise distinct. By Theorem 3.6.11 of \cite{En}, the collection $\{\text{Cov}(\mc{C}_K): K\in\mc{K}\}$ is of cardinality $2^{2^{\kappa}}$; hence $2^{2^{\kappa}}\leq \mid\mc{WNS}(X)\mid$. On the other hand, since the set of all Wallman bases of the discrete space $X$ is a subset of $\mc{P}^2(X)$ and it is equipollent with  $\mc{WNS}(X)$, we have  $\mid\mc{WNS}(X)\mid\leq 2^{2^{\kappa}}$.
\end{proof}
 
\begin{cor}
(\textbf{ZFC}) There are exactly  $2^{2^{\omega}}$ weakly normal small generalized topologies in $\mathbb{R}$ that induce the natural topology of $\mathbb{R}$.
\end{cor}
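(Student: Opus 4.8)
The plan is to recast the problem in terms of Wallman bases. First I would check that assigning to a generalized topology its collection of closed sets, $\text{Cov}_{\mathbb{R}}\mapsto\text{Cl}_{\mathbb{R}}$, is a bijection from the class of weakly normal small generalized topologies in $\mathbb{R}$ that induce $\tau_{nat}$ onto the set of Wallman bases of the topological space $(\mathbb{R},\tau_{nat})$. By Remark 2.2 a small generalized topology in $\mathbb{R}$ is determined by the complete ring $\mathcal{C}=\text{Cl}_{\mathbb{R}}$; for such a ring the family $\text{Op}_{\mathbb{R}}=\{\mathbb{R}\setminus C:C\in\mathcal{C}\}$ is closed under finite intersections, hence is a base of the topology it generates, so the gts induced by $\mathcal{C}$ induces $\tau_{nat}$ exactly when $\mathcal{C}$ is a complete closed base of $(\mathbb{R},\tau_{nat})$, and then, by Proposition 1.8, that gts is weakly normal if and only if $\mathcal{C}$ is a Wallman base of $(\mathbb{R},\tau_{nat})$. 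Conversely, by Remark 1.10 every Wallman base of the semi-normal space $(\mathbb{R},\tau_{nat})$ is a complete closed base, hence a complete ring giving a weakly normal small gts that induces $\tau_{nat}$. So it suffices to prove that $(\mathbb{R},\tau_{nat})$ has exactly $2^{2^{\omega}}$ Wallman bases. The upper bound is immediate: $(\mathbb{R},\tau_{nat})$ is second countable, hence has at most $2^{\omega}$ open, thus at most $2^{\omega}$ closed, sets, and a Wallman base is a family of closed sets, so there are at most $2^{2^{\omega}}$ of them. (For the natural topology $\mathbb{R}$ has only $2^{\omega}$ closed sets, which is why the count here is $2^{2^{\omega}}$ and not larger, in contrast with Theorem 9.3 where the underlying space is discrete.)

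For the lower bound I would follow the proof of Theorem 9.3 with the \v{C}ech--Stone compactification $\beta\mathbb{R}$ of $(\mathbb{R},\tau_{nat})$ in place of $\beta X$. Since $\mathbb{N}$ is closed, hence $C^{*}$-embedded, in $\mathbb{R}$, we have $\text{cl}_{\beta\mathbb{R}}\mathbb{N}=\beta\mathbb{N}$, so $\mid\beta\mathbb{R}\setminus\mathbb{R}\mid\geq\mid\beta\mathbb{N}\mid=2^{2^{\omega}}$ by Theorem 3.6.11 of \cite{En}. For a two-element subset $K$ of $\beta\mathbb{R}\setminus\mathbb{R}$, let $\mathcal{C}_K$ be the family of all closed subsets $A$ of $(\mathbb{R},\tau_{nat})$ whose closure in $\beta\mathbb{R}$ does not separate the two points of $K$, that is,
$$\mathcal{C}_K=\{A:A\text{ is closed in }(\mathbb{R},\tau_{nat}),\ K\cap\text{cl}_{\beta\mathbb{R}}A=\emptyset\ \vee\ K\subseteq\text{cl}_{\beta\mathbb{R}}A\}.$$
I would verify, exactly as in the discrete case, that each $\mathcal{C}_K$ is a Wallman base of $(\mathbb{R},\tau_{nat})$: the ring property rests on the identity $\text{cl}_{\beta\mathbb{R}}(A\cap B)=\text{cl}_{\beta\mathbb{R}}A\cap\text{cl}_{\beta\mathbb{R}}B$ for closed $A,B\subseteq\mathbb{R}$, which is available because $\beta\mathbb{R}$ is the Wallman space of the lattice of all closed sets of $\mathbb{R}$; the disjunctiveness and closed-base properties and condition (iii) of Definition 1.6 follow from normality and local compactness of $\mathbb{R}$ together with compactness of $\beta\mathbb{R}$. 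Moreover, distinct $K$ give distinct $\mathcal{C}_K$, since $\mathcal{W}(\mathbb{R},\mathcal{C}_K)$ is the compactification of $\mathbb{R}$ obtained from $\beta\mathbb{R}$ by collapsing $K$ to one point, by the universal property of $\beta\mathbb{R}$ two such collapses are equivalent over $\mathbb{R}$ only when the collapsed sets coincide, and a Wallman base determines its Wallman space. As a set of cardinality at least $2^{2^{\omega}}$ has at least $2^{2^{\omega}}$ two-element subsets, this produces $2^{2^{\omega}}$ pairwise distinct Wallman bases, and with the upper bound we conclude there are exactly $2^{2^{\omega}}$.

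The step I expect to be the main obstacle is the lower bound, specifically checking that the two-point collapsing of $\beta\mathbb{R}$ still yields Wallman bases of $(\mathbb{R},\tau_{nat})$ --- above all establishing the closure identity $\text{cl}_{\beta\mathbb{R}}(A\cap B)=\text{cl}_{\beta\mathbb{R}}A\cap\text{cl}_{\beta\mathbb{R}}B$ and verifying condition (iii) of Definition 1.6 --- and confirming that inequivalent collapses of $\beta\mathbb{R}$ give distinct bases; the reduction to counting Wallman bases and the second-countability upper bound are routine.
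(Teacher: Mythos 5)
Your argument is correct and takes essentially the same route as the paper: both reduce the count to Wallman bases of $(\mathbb{R},\tau_{nat})$, obtain the upper bound from the fact that $\mathbb{R}$ has only $2^{\omega}$ closed sets, and obtain the lower bound by transplanting the $\mathcal{C}_K$-construction of Theorem 9.3 into $\beta\mathbb{R}$ via $\mathrm{cl}_{\beta\mathbb{R}}\omega=\beta\omega$. You simply make explicit the details (the bijection between weakly normal small generalized topologies inducing $\tau_{nat}$ and Wallman bases, and the verification that each $\mathcal{C}_K$ is a Wallman base of $(\mathbb{R},\tau_{nat})$) that the paper's one-line appeal to Theorem 9.3 leaves implicit.
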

\begin{proof}
Let $\mc{WN}(\mathbb{R})$ be the set of all Wallman bases for $\mathbb{R}$ with the natural topology.  We assume that $\omega$ is the set of all non-negative integers of $\mathbb{R}$. Since $\beta\omega=\text{cl}_{\beta\mathbb{R}}\omega$, we infer from Theorem 9.3 that the cardinality of $\mc{WN}(\mathbb{R})$ is at least $2^{2^{\omega}}$. Since the collection of all closed sets of $\mathbb{R}$ is of cardinality $2^{\omega}$, the cardinality of $\mc{WN}(\mathbb{R})$ is at most $2^{2^{\omega}}$ which completes the proof.
\end{proof}

Let us establish  several simple facts about the generalized topologies from Definition 1.2 to make it clear that they are pairwise non-isomorphic, i.e. not strictly homeomorphic.

\begin{f}
\begin{enumerate}
\item[(i)] The line $\mb{R}_{ut}$ is not locally small.
\item[(ii)] The real lines $\mb{R}_{lom}$, $\mb{R}_{lst}$,$\mb{R}_{l^+om}$, $\mb{R}_{l^+st}$ are locally small, but not small. 
\item[(iii)] Each open set in small $\mb{R}_{om}$ has a finite number of connected components, which is false in small spaces $\mb{R}_{st}$, $\mb{R}_{slom}$, $\mb{R}_{sl^+om}$. 
\item[(iv)] Each bounded open interval of $\mb{R}_{lom}$, $\mb{R}_{l^+om}$ $\mb{R}_{slom}$ or $\mb{R}_{sl^+om}$ is isomorphic to $\mb{R}_{om}$, which is false for $\mb{R}_{st}$, $\mb{R}_{lst}$ and $\mb{R}_{l^+st}$. 
\item[(v)]Extracting one point from $\mb{R}_{l^+om}$ (or $\mb{R}_{sl^+om}$, resp.) disconnects the line into two pieces: a piece isomorphic to $\mb{R}_{om}$ and a piece isomorphic to $\mb{R}_{lom}$ (or $\mb{R}_{slom}$, resp.). 
\item[(vi)] Extracting one point from $\mb{R}_{om}$,$\mb{R}_{lom}$,$\mb{R}_{st}$, $\mb{R}_{lst}$, $\mb{R}_{slom}$  disconnects the line into two mutually isomorphic pieces. 
\item[(vii)] None of the real lines from Definition 1.2 (i)-(ix) is strictly homeomorphic with $\mb{R}_{rom}$ because $\mb{R}_{rom}$ has only countably many open sets.
\end{enumerate}
\end{f}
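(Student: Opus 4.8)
The plan is to treat the seven items one at a time, in each isolating a single invariant of generalized topologies that is preserved by strict homeomorphisms and then reading it off the descriptions in Definition~\ref{proste}. Two elementary observations do most of the work. First, a strict homeomorphism $h\colon X\to Y$ restricts to a bijection $\text{Op}_X\to\text{Op}_Y$ which is an isomorphism of the posets $(\text{Op}_X,\subseteq)$, $(\text{Op}_Y,\subseteq)$ and to a bijection $\text{Cov}_X\to\text{Cov}_Y$; hence $|\text{Op}_X|=|\text{Op}_Y|$, and the property ``$\text{Op}$ admits an infinite pairwise disjoint subfamily whose union again belongs to $\text{Op}$'' is an isomorphism invariant. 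Second, if $h\colon I\to J$ is a monotone homeomorphism between open intervals (or open half-lines) of $(\mathbb{R},\tau_{nat})$, then a collection of open intervals contained in $I$ is locally finite in $(\mathbb{R},\tau_{nat})$ if and only if its image under $h$ is locally finite in $(\mathbb{R},\tau_{nat})$ --- the only places this could fail are the endpoints of $I$, and $h$ carries the finite missing endpoints of $I$ to $\pm\infty$; the analogous equivalences hold for essential finiteness, local essential finiteness, and the property of being a finite union of intervals.

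For (i): since $\mathbb{R}_{ut}$ is a topological gts, each of its subspaces is again a topological gts, so any $\tau_{nat}$-open neighbourhood $V$ of a point of $\mathbb{R}_{ut}$ carries all families of its $\tau_{nat}$-open subsets; as $V$ is infinite this strictly contains $\text{EssFin}(\text{Op}_V)$, so $V$ is not small and $\mathbb{R}_{ut}$ is not locally small. For (ii): each of $\mathbb{R}_{lom},\mathbb{R}_{lst},\mathbb{R}_{l^+om},\mathbb{R}_{l^+st}$ is locally small because around any point a bounded open interval $I$ is a small subspace --- on such an $I$ local essential finiteness coincides with essential finiteness, a locally finite family of intervals is finite, and $\text{Cov}\cap_2 I$ is already a generalized topology equal to $\text{EssFin}(\text{Op}_I)$ --- while none of the four is small, as shown by the admissible but not essentially finite families $\{(n,n+2):n\in\mathbb{Z}\}$ (for $\mathbb{R}_{lom},\mathbb{R}_{lst}$) and $\{(n,n+2):n\ge 0\}\cup\{(-\infty,1)\}$ (for $\mathbb{R}_{l^+om},\mathbb{R}_{l^+st}$). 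For (iii): in $\mathbb{R}_{om}$ every admissible family has, after replacement by a finite subfamily with the same union, a union that is a finite union of open intervals, so $\text{Op}_{\mathbb{R}_{om}}$ is exactly the collection of subsets of $\mathbb{R}$ with finitely many connected components, and a short connectedness argument shows $\text{Op}_{\mathbb{R}_{om}}$ has no infinite pairwise disjoint subfamily whose union lies in $\text{Op}_{\mathbb{R}_{om}}$, whereas $\{(n,n+\tfrac{1}{2}):n\ge 1\}$ exhibits such a subfamily in each of $\mathbb{R}_{st},\mathbb{R}_{slom},\mathbb{R}_{sl^+om}$. For (vii): likewise $\text{Op}_{\mathbb{R}_{rom}}$ is the countable collection of finite unions of open intervals with rational or infinite endpoints, while every line in Definition~\ref{proste}(i)--(ix) has $\text{Op}$ uncountable since it contains all $2^{\aleph_0}$ of the intervals $(0,r)$ with $r\in(0,\infty)$; hence none of these is strictly homeomorphic to $\mathbb{R}_{rom}$.

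For (iv)--(vi) the task is to describe the subspaces supported by open intervals and half-lines. The plan is to verify, for each relevant generalized topology $\text{Cov}$ on $\mathbb{R}$ and each open interval or half-line $I$, that $\text{Cov}\cap_2 I$ is already a generalized topology (so $I$ is a strict subspace with $\text{Cov}_I=\text{Cov}\cap_2 I$), and then to compute $\text{Cov}_I$: on a bounded $I$ one gets the $\mathbb{R}_{om}$-type structure, whereas on an unbounded $I$ the localization or smallification constraint survives only on the unbounded side, a bounded part of any constraint ``at $+\infty$'' becoming vacuous. For (iv), a monotone homeomorphism from a bounded open interval onto $\mathbb{R}$ then transports $\text{Cov}_I$ onto $\text{Cov}_{\mathbb{R}_{om}}$, giving the asserted isomorphisms; for $\mathbb{R}_{st},\mathbb{R}_{lst},\mathbb{R}_{l^+st}$ a bounded interval still contains a set $\bigcup_n(x_n-\varepsilon_n,x_n+\varepsilon_n)$ with infinitely many components, so the invariant of (iii) rules out an isomorphism with $\mathbb{R}_{om}$. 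For (vi), for the reflection-symmetric lines $\mathbb{R}_{om},\mathbb{R}_{lom},\mathbb{R}_{st},\mathbb{R}_{lst},\mathbb{R}_{slom}$ the map $x\mapsto 2p-x$ is a strict homeomorphism of the line interchanging $(-\infty,p)$ and $(p,\infty)$, so after deleting $p$ the two subspaces are mutually isomorphic. For (v), deleting $p$ from $\mathbb{R}_{l^+om}$ (respectively $\mathbb{R}_{sl^+om}$) yields $(-\infty,p)$, whose subspace structure is of $\mathbb{R}_{om}$ type --- the $\mathbb{R}_{om}$-constraint on $(-\infty,0)$ is fully in force while $[0,p)$ is bounded --- and $(p,\infty)$, whose subspace structure is of $\mathbb{R}_{lom}$ (respectively $\mathbb{R}_{slom}$) type, since the $\mathbb{R}_{om}$-constraint now bears only on a bounded set; monotone homeomorphisms of these two pieces onto $\mathbb{R}$ complete the identifications.

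I expect the main obstacle to be the bookkeeping in (iv)--(vi): establishing strictness of these particular interval subspaces --- which is not supplied by the results of Section~3, since the lines in question are not topological, and is an instance of the kind of question flagged as open in general in Question~2.2.96 of \cite{Pie1} --- by checking directly that each $\text{Cov}\cap_2 I$ is closed under the operations (A1)--(A8) of Definition~2.2.1 of \cite{Pie1}, together with the careful verification that ``locally finite in $(\mathbb{R},\tau_{nat})$'' is correctly preserved both under restriction to $I$ and under the monotone homeomorphisms $I\to\mathbb{R}$; the delicate point is the behaviour of collections of intervals near the finite endpoints of $I$, which escape to $\pm\infty$ under the homeomorphism.
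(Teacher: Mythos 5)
The paper states this Fact without proof, so your argument can only be judged on its own terms. Items (i)--(iv), (vi) and (vii) are in order: the invariants you isolate (the cardinality of $\text{Op}_X$, the existence of an infinite pairwise disjoint open subfamily with open union, smallness of subspaces) are preserved by strict homeomorphisms; the bounded-interval computations in (ii) and (iv) do collapse to the $\mb{R}_{om}$ structure because the closure of a bounded interval is compact, so ``locally finite'' and ``locally essentially finite'' degenerate there to ``finite'' and ``essentially finite''; and (vi) needs nothing beyond the reflection $x\mapsto 2p-x$, which is a strict automorphism of each of the five symmetric lines.

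The genuine gap is in your ``second elementary observation'', and it is fatal to your item (v). For a monotone homeomorphism $h\colon(p,\infty)\to\mb{R}$, the equivalence ``$\mathcal{U}$ is locally finite in $(\mb{R},\tau_{nat})$ iff $h(\mathcal{U})$ is'' fails in exactly the direction you need: local finiteness and local essential finiteness \emph{at the finite endpoint} $p$ --- a point of $\mb{R}$, though not of the subspace --- are genuine restrictions on the traces $\text{Cov}_{\mb{R}_{l^+om}}\cap_2(p,\infty)$, whereas nothing restricts $h(\mathcal{U})$ ``at $-\infty$''. Concretely, $\{(-n-1,-n):n\ge 1\}$ is admissible in $\mb{R}_{lom}$, but its $h$-preimage is an infinite pairwise disjoint family of intervals accumulating at $p$ from the right; such a family is not the trace of any locally essentially finite family (near $p$ every such trace family must be essentially finite, which an infinite disjoint family of nonvoid sets cannot be), and the property ``essentially finite, with finite-union members, on some right neighbourhood of $p$'' is preserved by each of the generating operations of Definition 2.2.1 of \cite{Pie1}, hence holds throughout $\langle\text{Cov}\cap_2(p,\infty)\rangle_{(p,\infty)}$. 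Thus $h$ carries the subspace $(p,\infty)$ of $\mb{R}_{l^+om}$ not onto $\mb{R}_{lom}$ but onto $\mb{R}_{l^+om}$ itself (a decreasing $h$ gives $\mb{R}_{l^-om}$); and this piece is \emph{not} strictly homeomorphic to $\mb{R}_{lom}$, since every admissible covering of the piece has a member containing a deleted one-sided neighbourhood $(p,c)$ of an end and hence a member with non-compact closure, while $\{(n,n+2):n\in\mathbb{Z}\}$ is an admissible covering of $\mb{R}_{lom}$ by sets with topologically compact closures. The same obstruction appears in the $sl^{+}om$ case. So either (v) presupposes a subspace structure other than $\langle\text{Cov}\cap_2 Y\rangle_Y$ with local finiteness taken in $(\mb{R},\tau_{nat})$, or the unbounded piece should be identified with $\mb{R}_{l^+om}$ (resp.\ $\mb{R}_{sl^+om}$) rather than $\mb{R}_{lom}$ (resp.\ $\mb{R}_{slom}$); in either case your argument, which waves this off as endpoint bookkeeping, does not establish the stated isomorphism. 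The issue does not propagate to (iv) (everything there is already essentially finite) or to (vi) (reflection is a global automorphism), so only (v) is affected.
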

\begin{cor}
The lines from Definition 1.2 (i)--(x)  are pairwise non-isomorphic when, among the lines localized at $+\infty$ or $-\infty$, only the ones localized at $+\infty$ are taken into consideration. 
\end{cor}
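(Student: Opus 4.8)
The plan is to separate the ten real lines into successively finer isomorphism classes by invoking, one after another, the invariants collected in Fact 9.5, together with one extra observation for the single pair that Fact 9.5 does not directly settle. Throughout, I shall use that a strict homeomorphism of gtses preserves the topologization as a topological space --- hence connectedness, connected components, the operation of deleting a point, the cardinality of the collection of open sets, and relative compactness of open sets --- maps admissible coverings to admissible coverings, respects subspaces, and preserves both smallness and local smallness.

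First, among the ten lines only $\mb{R}_{ut}$ fails to be locally small (Fact 9.5(i),(ii), together with the fact that a small gts is locally small), so $\mb{R}_{ut}$ is isomorphic to none of the others; and $\mb{R}_{rom}$ has only countably many open sets while the remaining eight have uncountably many (Fact 9.5(vii)), so $\mb{R}_{rom}$ is likewise isolated. By Fact 9.5(ii) and the smallness of $\mb{R}_{om},\mb{R}_{st},\mb{R}_{slom},\mb{R}_{sl^+om}$, the other eight lines split into a small group $\{\mb{R}_{om},\mb{R}_{st},\mb{R}_{slom},\mb{R}_{sl^+om}\}$ and a non-small group $\{\mb{R}_{lom},\mb{R}_{lst},\mb{R}_{l^+om},\mb{R}_{l^+st}\}$, with no isomorphisms across the two groups. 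Inside the small group, Fact 9.5(iii) isolates $\mb{R}_{om}$ (in which every open set has finitely many components) from the other three, and Fact 9.5(iv) separates $\mb{R}_{st}$ (some bounded open interval of which is not isomorphic to $\mb{R}_{om}$) from $\mb{R}_{slom}$ and $\mb{R}_{sl^+om}$. For $\mb{R}_{slom}$ versus $\mb{R}_{sl^+om}$ I would delete a point: a strict homeomorphism between them restricts to one between the punctured lines, whose two connected pieces are mutually isomorphic on the $\mb{R}_{slom}$ side (Fact 9.5(vi)) but consist of a copy of $\mb{R}_{om}$ and a copy of $\mb{R}_{slom}$ on the $\mb{R}_{sl^+om}$ side (Fact 9.5(v)); this forces $\mb{R}_{om}\cong\mb{R}_{slom}$, contradicting Fact 9.5(iii). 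The non-small group is handled in the same spirit: Fact 9.5(iv) separates $\{\mb{R}_{lom},\mb{R}_{l^+om}\}$ from $\{\mb{R}_{lst},\mb{R}_{l^+st}\}$, and the deletion argument with Fact 9.5(v),(vi) separates $\mb{R}_{lom}$ from $\mb{R}_{l^+om}$, since it would force $\mb{R}_{om}\cong\mb{R}_{lom}$, impossible because only the former is small.

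The one pair left untouched by Fact 9.5 is $\mb{R}_{lst}$ versus $\mb{R}_{l^+st}$, and I expect this to be the main obstacle, because the deletion trick is here circular: puncturing $\mb{R}_{l^+st}$ yields a copy of $\mb{R}_{lst}$ together with a copy of $\mb{R}_{l^-st}$, which is isomorphic to $\mb{R}_{l^+st}$ itself, so no contradiction arises. Instead I would use the strict-homeomorphism invariant ``the whole space admits an admissible covering by open sets each having topologically compact closure.'' For $\mb{R}_{lst}$ the family $\{(n,n+2):n\in\mathbb{Z}\}$ is such a covering, being locally finite with bounded members. For $\mb{R}_{l^+st}$ no such covering exists: were $\mathcal{U}$ one, it would be essentially finite on the negative half-line, so finitely many of its members would cover $(-\infty,0)$, which is impossible because each of them is bounded. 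Hence $\mb{R}_{lst}\not\cong\mb{R}_{l^+st}$, and, combining this with all the separations above, the ten lines are pairwise non-isomorphic.
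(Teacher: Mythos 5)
Your proof is correct, and for almost all pairs it follows exactly the route the paper intends: Corollary 9.6 is stated without proof immediately after Fact 9.5, and the cascade you run --- local smallness to isolate $\mb{R}_{ut}$, cardinality of the open-set collection to isolate $\mb{R}_{rom}$, smallness to split the remaining eight, the finite-components property of open sets, the structure of bounded open intervals, and the point-deletion argument combining Fact 9.5(v),(vi) with (ii) or (iii) --- is evidently the reading the authors have in mind, and each of your invariants is indeed preserved by strict homeomorphisms for the reasons you give. Where you genuinely add something is the pair $\mb{R}_{lst}$ versus $\mb{R}_{l^+st}$: you are right that Fact 9.5 as stated does not settle it, since 9.5(v) describes the punctured $\mb{R}_{l^+om}$ and $\mb{R}_{sl^+om}$ but is silent on $\mb{R}_{l^+st}$. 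Your substitute invariant --- the existence of an admissible open covering all of whose members have topologically compact closure --- is clearly a strict-homeomorphism invariant, holds for $\mb{R}_{lst}$ via the locally finite family $\{(n,n+2):n\in\mathbb{Z}\}$, and fails for $\mb{R}_{l^+st}$ because essential finiteness on the negative half-line would force finitely many bounded sets to cover it; this is a clean, self-contained repair of a genuine gap in Fact 9.5. One small quibble: your claim that the deletion trick is ``circular'' for this pair rests on describing the pieces of punctured $\mb{R}_{l^+st}$ as a copy of $\mb{R}_{lst}$ and a copy of $\mb{R}_{l^-st}$; by analogy with Fact 9.5(v), where the ``essentially finite'' side of $\mb{R}_{l^+om}$ yields a copy of $\mb{R}_{om}$ rather than of $\mb{R}_{l^-om}$, one would instead expect the pieces to be $\mb{R}_{st}$ and $\mb{R}_{lst}$ --- one small and one not, hence non-isomorphic --- in which case the deletion argument would also succeed. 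This does not affect your proof, since you never use that description, but it means your extra invariant is an alternative to, rather than the only possible repair of, the omission.
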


For real numbers $a, b$ such that $a<b$, let $\tau_{nat}^{[a, b]}$ be the natural topology in $[a, b]$ and let $$\mc{G}_{\mathbb{R}}=\{ ut, om, st, lom, lst, slom, l^{-}om, l^{+}om, l^{-}st, l^{+}st, sl^{-}om, sl^{+}om, rom\}.$$ For $g\in \mc{G}_{\mathbb{R}}$, the set $[a, b]$ is strict in the gts $\mathbb{R}_g$ and we denote by $[a, b]_g$ the interval $[a, b]$ equipped with the generalized topology of a subspace of the gts ${\mathbb{R}}_{g}$. It is easily seen that $\tau_{nat}^{[a, b]}$ is induced by the generalized topology of $[a, b]_{g}$ and that the gts $[a, b]_{g}$ is weakly normal whenever $g\in \mc{G}_{\mathbb{R}}$.  Moreover,  $\{ [a, b]_g: g\in \mc{G}_{\mathbb{R}}\}= \{[a, b]_{ut}, [a, b]_{om}, [a, b]_{st}, [a, b]_{rom}\}$.

\begin{defi}\label{odcinki}
Let $I=[0, 1]$ be the unit interval of $\mathbb{R}$. Then:
\begin{enumerate}
\item[(i)] the gts $I_{ut}$ will be called the \textbf{topological unit interval};
\item[(ii)] the gts $I_{om}$ will be called the \textbf{o-minimal unit interval};
\item[(iii)] the gts $I_{st}$ will be called the \textbf{smallified topological unit interval};
\item[(iv)] the gts $I_{rom}$ will be called the \textbf{rationalized o-minimal unit interval}.
\end{enumerate}
\end{defi}

\begin{f} 
\begin{enumerate}
\item[(i)] The interval $I_{ut}$ is not even locally small. 
\item[(ii)] The intervals $I_{om}$, $I_{st}$ and $I_{rom}$ are small.
\item[(iii)]  Each open set in $I_{om}$ has a finite number of connected components and this is false for $I_{st}$.
\item[(iv)]  The collection of all open sets of $I_{om}$ is not equipollent with the collection of all open sets of $I_{rom}$. 
\end{enumerate}
\end{f}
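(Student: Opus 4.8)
The plan is to compute, in each of the four items, the open sets and the admissible coverings of the subspace $I_g=[0,1]_g$ of $\mathbb{R}_g$, using that $[0,1]$ is strict in $\mathbb{R}_g$ for every $g$ involved (so $\text{Op}_{I_g}=\text{Op}_{\mathbb{R}_g}\cap_1[0,1]$ by Remark~3.7 and $\text{Cov}_{I_g}=\text{Cov}_{\mathbb{R}_g}\cap_2[0,1]$), together with the elementary descriptions of the ambient collections: $\text{Op}_{\mathbb{R}_{ut}}=\text{Op}_{\mathbb{R}_{st}}=\tau_{nat}$, while $\text{Op}_{\mathbb{R}_{om}}$ is the ring of all finite unions of open intervals and $\text{Op}_{\mathbb{R}_{rom}}$ is the countable ring of all finite unions of open intervals with endpoints in $\mathbb{Q}\cup\{-\infty,+\infty\}$ (cf.\ Fact~9.5(vii)).

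For (i): since $\mathbb{R}_{ut}$ is a topological gts, Fact~3.8 shows that $I_{ut}$ and each of its subspaces is topological, so every open subspace $V$ of $I_{ut}$ satisfies $\text{Cov}_V=\mathcal{P}(\text{Op}_V)$, with $\text{Op}_V$ the relative natural topology of $V$. If $V\neq\emptyset$, pick $x\in V$ and a nondegenerate relatively open interval $J$ of $[0,1]$ with $x\in J\subseteq V$; then $J$ carries a strictly increasing chain $J_1\subsetneq J_2\subsetneq\cdots$ of open subsets of $V$ with $\bigcup_{n}J_n=J$, and $\{J_n:n\ge 1\}$ belongs to $\text{Cov}_V=\mathcal{P}(\text{Op}_V)$ but is not essentially finite, so $V$ is not small. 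Hence no nonempty open subspace of $I_{ut}$ is small, so $I_{ut}$ is not locally small.

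For (ii): the gtses $\mathbb{R}_{om},\mathbb{R}_{st},\mathbb{R}_{rom}$ are small, by inspection of Definition~1.2 (see also Example~3.5), so it is enough to check that a strict subspace of a small gts is small. The inclusion $\text{Cov}_{I_g}\subseteq\text{EssFin}(\text{Op}_{I_g})$ is immediate, because intersecting with $[0,1]$ sends essentially finite families of open sets of $\mathbb{R}_g$ to essentially finite families of open sets of $I_g$. For the reverse inclusion, let $\mathcal{W}\subseteq\text{Op}_{I_g}$ be essentially finite, say $\bigcup\mathcal{W}=\bigcup\mathcal{W}_0$ with $\mathcal{W}_0\subseteq\mathcal{W}$ finite; to each $W\in\mathcal{W}$ attach, by an explicit endpoint-preserving rule (so that no choice function is needed and the argument stays in \textbf{ZF}), a set $V_W\in\text{Op}_{\mathbb{R}_g}$ with $V_W\cap[0,1]=W$, put $G=\bigcup_{W\in\mathcal{W}_0}V_W\in\text{Op}_{\mathbb{R}_g}$, and replace each $V_W$ by $V_W\cap G$. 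Since finite intersections of open sets are open, $\{V_W\cap G:W\in\mathcal{W}\}\subseteq\text{Op}_{\mathbb{R}_g}$; its union is $G$, so it is essentially finite and hence lies in $\text{Cov}_{\mathbb{R}_g}$; and it restricts to $\mathcal{W}$, so $\mathcal{W}\in\text{Cov}_{\mathbb{R}_g}\cap_2[0,1]=\text{Cov}_{I_g}$. Thus $\text{Cov}_{I_g}=\text{EssFin}(\text{Op}_{I_g})$ and $I_{om},I_{st},I_{rom}$ are small.

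For (iii) and (iv) one argues directly on the open-set collections. A finite union of open intervals intersected with $[0,1]$ has only finitely many connected components, so every member of $\text{Op}_{I_{om}}$ does; whereas $\text{Op}_{I_{st}}=\tau_{nat}\cap_1[0,1]$ contains $\bigcup_{n\ge 1}\left(\tfrac{1}{2n+1},\tfrac{1}{2n}\right)$, which has infinitely many components, and this gives (iii). For (iv), $\text{Op}_{I_{rom}}$ is the image of the countable set $\text{Op}_{\mathbb{R}_{rom}}$ under $V\mapsto V\cap[0,1]$, hence at most countable, while $\text{Op}_{I_{om}}$ contains the pairwise distinct sets $(0,a)$ for $a\in(0,1)$ and is therefore uncountable, so the two collections are not equipollent. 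The one genuinely delicate step is the extension in item (ii): one must produce the sets $V_W$ by a canonical rule rather than by appealing to a choice function; the remaining verifications are routine bookkeeping.
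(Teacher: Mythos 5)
The paper states this as a Fact with no proof at all (it is one of the items the authors leave to the reader), so there is no official argument to compare against; judged on its own, your proof is correct and complete in all four items, and correctly exploits the strictness of $[0,1]$ in $\mathbb{R}_g$ asserted in the paper together with Fact 3.8 for the $ut$ case. The only step you flag but do not carry out is the \textbf{ZF}-canonical choice of extensions $V_W$ in item (ii), and it is worth recording that a single uniform rule settles all three cases at once: take $V_W=W\cup(\mathbb{R}\setminus[0,1])$. This set is open in $\tau_{nat}$ (the possible boundary points $0,1$ of $W$ are absorbed by the two unbounded rays), it is a finite union of open intervals whenever $W$ is a finite union of relatively open intervals of $[0,1]$, it preserves rationality of endpoints since $0$ and $1$ are rational, and its trace on $[0,1]$ is exactly $W$. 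With this rule the family $\{V_W: W\in\mathcal{W}\}$ is already essentially finite (its union is $(\mathbb{R}\setminus[0,1])\cup\bigcup\mathcal{W}_0$), so even your intersection with $G$ becomes unnecessary, though it does no harm. The remaining verifications — the non-essentially-finite increasing chain in (i), the component count in (iii), and the countable-versus-uncountable comparison in (iv), both of which are \textbf{ZF}-safe since $\mathbb{Q}$ is well-orderable and Cantor's diagonal argument needs no choice — are all sound.
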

\begin{cor}
The intervals $I_{ut}, I_{om}, I_{st}, I_{rom}$ are pairwise non-isomorphic.
\end{cor}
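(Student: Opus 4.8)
The plan is to show, for each of the $\binom{4}{2}=6$ pairs, that no strict homeomorphism between the two gtses exists, by naming a property preserved under strict homeomorphisms that one member of the pair enjoys and the other does not. First I would record three elementary invariance facts. A strict homeomorphism $f\colon X\to Y$ of gtses is a bijection carrying $\text{Cov}_X$ onto $\text{Cov}_Y$; hence it carries $\text{Op}_X$ bijectively onto $\text{Op}_Y$, so the cardinality of $\text{Op}_X$ is an isomorphism invariant, and being small --- and therefore also being locally small --- is an isomorphism invariant. Moreover $f$ induces a homeomorphism $X_{top}\to Y_{top}$ that restricts, on each $U\in\text{Op}_X$, to a homeomorphism onto a member of $\text{Op}_Y$; consequently the property ``every open set of the gts has only finitely many connected components'' is an isomorphism invariant. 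All three assertions are immediate from the definitions of strictly continuous mapping and of the topologization.

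Granting these invariants, the six pairs are dispatched using Facts 9.5 and 9.8. For $I_{ut}$ against each of $I_{om}$, $I_{st}$, $I_{rom}$: by Fact 9.8(i) the gts $I_{ut}$ is not even locally small, whereas by Fact 9.8(ii) each of $I_{om}$, $I_{st}$, $I_{rom}$ is small, hence locally small; since local smallness is preserved, $I_{ut}$ is isomorphic to none of the other three. For $I_{om}$ against $I_{st}$: by Fact 9.8(iii) every open set of $I_{om}$ has finitely many connected components whereas some open set of $I_{st}$ does not, and this property is preserved, so $I_{om}$ is not isomorphic to $I_{st}$. For $I_{om}$ against $I_{rom}$: by Fact 9.8(iv) the collections of all open sets of $I_{om}$ and of $I_{rom}$ are not equipollent, and the cardinality of the collection of open sets is preserved, so $I_{om}$ is not isomorphic to $I_{rom}$.

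The one pair not handed to us directly by Fact 9.8 is $I_{st}$ against $I_{rom}$. Here I would argue that $\mb{R}_{rom}$ has only countably many open sets (Fact 9.5(vii)); since $[0,1]$ is a strict subspace of $\mb{R}_{rom}$, one has $\text{Op}_{I_{rom}}=\text{Op}_{\mb{R}_{rom}}\cap_1[0,1]$, which is again countable. On the other hand $[0,1]$ is a strict subspace of $\mb{R}_{st}$ and $\text{Op}_{\mb{R}_{st}}=\tau_{nat}$, so $\text{Op}_{I_{st}}=\tau_{nat}\cap_1[0,1]$ has at least continuum many members --- for instance the pairwise distinct sets $(t,1]$ with $t\in(0,1)$. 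Since the cardinality of the collection of open sets is an isomorphism invariant, $I_{st}$ is not isomorphic to $I_{rom}$, and all six cases are settled. I do not anticipate a genuine obstacle; the only point that deserves a line of care is the verification that the three listed properties really are isomorphism invariants --- in particular that a gts-isomorphism transports the connected-components count --- and this follows at once from the observation that such an isomorphism induces a homeomorphism of topologizations under which the families of open sets correspond.
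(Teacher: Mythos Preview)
Your proposal is correct and is precisely the argument the paper intends: the corollary is stated without proof, as an immediate consequence of the invariants listed in Fact~9.8. The only pair not literally covered by Fact~9.8 is $I_{st}$ versus $I_{rom}$, and your cardinality-of-opens argument (via Fact~9.5(vii)) is exactly the kind of one-line observation the paper leaves to the reader; one could equally note that every open set of $I_{rom}$ has finitely many connected components, so Fact~9.8(iii) also separates $I_{st}$ from $I_{rom}$.
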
 

In what follows, let $I_g$ be one of the real unit intervals considered in Definition \ref{odcinki} where $g\in \mc{G}_{I}=\{ ut, om, st, rom\}$.

\begin{defi}
Subsets $A,B$ of a gts $X$ are $I_g$-\textbf{functionally separable} if there exists a strictly continuous mapping $f:X\to I_g$ such that $A\subseteq f^{-1}(0)$ and $B\subseteq f^{-1}(1)$. 
\end{defi}

\begin{defi}
A gts $X$ is $I_g$-\textbf{completely regular} if, for each $A\in\text{Cl}_X$ and for each $x\in X\setminus A$, the sets $A$ and $\{ x\}$ are $I_g$-functionally separable.
\end{defi}

\begin{defi}
A gts $X$ is $I_g$-\textbf{Tychonoff} if it is simultaneously weakly $T_1$ and $I_g$-completely regular.
\end{defi}

\begin{prop}
\begin{enumerate}
\item[(i)]Every $I_{ut}$-completely regular gts is $I_{st}$-completely regular.
\item[(ii)] Every $I_{st}$-completely regular gts is $I_{om}$-completely regular. 
\item[(iii)] Every $I_{om}$-completely regular space is $I_{rom}$-completely regular.
\item[(iv)] A topological gts $X$ is $I_{ut}$-completely regular if and only if $X_{top}$ is completely regular.
\end{enumerate}
\end{prop}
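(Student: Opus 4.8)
The plan is to establish the four implications (i)--(iii) of the chain $I_{ut}\Rightarrow I_{st}\Rightarrow I_{om}\Rightarrow I_{rom}$ by noting that each follows from a single structural fact: the identity map on the underlying set $[0,1]$ is strictly continuous when viewed as a map from the finer interval to the coarser one, and then composing. Concretely, for (i)--(iii) I would first observe that $\mathrm{id}_{[0,1]}\colon I_{ut}\to I_{st}$, $\mathrm{id}_{[0,1]}\colon I_{st}\to I_{om}$, and $\mathrm{id}_{[0,1]}\colon I_{om}\to I_{rom}$ are strictly continuous. This is immediate from the descriptions in Definition 1.2 together with the subspace construction: every admissible covering of $I_{om}$ (an essentially finite family of finite unions of open intervals with arbitrary real endpoints) is, in particular, an essentially finite family of members of $\tau^{[0,1]}_{nat}$, hence admissible in $I_{st}$; every admissible covering of $I_{st}$ is an essentially finite family of $\tau_{nat}$-open sets, hence a fortiori admissible in $I_{ut}$ where all families of open sets are admissible; and every admissible covering of $I_{rom}$ (finite unions of open intervals with rational or infinite endpoints) is an admissible covering of $I_{om}$. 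Each of these three statements is the assertion ``$\mathrm{Cov}$ of the finer line, restricted to $[0,1]$, refines into the coarser $\mathrm{Cov}$,'' so the relevant identity is strictly continuous in each case.

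Granting that, the proof of (i) goes as follows: if $X$ is $I_{ut}$-completely regular and $A\in\mathrm{Cl}_X$, $x\in X\setminus A$, pick a strictly continuous $f\colon X\to I_{ut}$ with $A\subseteq f^{-1}(0)$, $x\in f^{-1}(1)$; then $\mathrm{id}\circ f\colon X\to I_{st}$ is a composition of strictly continuous maps, hence strictly continuous, and witnesses $I_{st}$-functional separability of $A$ and $\{x\}$. Parts (ii) and (iii) are verbatim the same argument with the appropriate identity map substituted. For part (iv) I would argue both directions. If $X$ is a topological gts and $X_{top}$ is completely regular, then for $A\in\mathrm{Cl}_X$ and $x\notin A$ there is an ordinary continuous $f\colon X_{top}\to[0,1]$ separating them; since $X$ is topological, $\mathrm{Cov}_X=\mathcal P(\mathrm{Op}_X)$, and one checks that preimages under $f$ of arbitrary families of open sets of $I_{ut}$ lie in $\mathcal P(\mathrm{Op}_X)$ --- here one uses that $\mathrm{Op}_X$ is a topology (topological gtses are partially topological) so that $f^{-1}$ of a $\tau_{nat}$-open set is in $\mathrm{Op}_X$ --- hence $f$ is strictly continuous as a map $X\to I_{ut}$. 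Conversely, if $X$ is $I_{ut}$-completely regular, any strictly continuous $f\colon X\to I_{ut}$ is in particular weakly continuous, i.e.\ continuous as $X_{top}\to (I_{ut})_{top}=[0,1]$, so the same $f$ separates $A$ from $x$ in $X_{top}$; since closed sets of $X$ coincide with closed sets of $X_{top}$ for a topological (hence partially topological) gts, this gives complete regularity of $X_{top}$.

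The only place demanding care --- and the step I expect to be the main obstacle --- is the verification that the three identity maps are strictly continuous, specifically checking the closure condition of Definition 1.2 under restriction to the strict subspace $[0,1]$. One must confirm that $[a,b]$ really is strict in each $\mathbb R_g$ (as asserted in the paragraph preceding Definition 9.7, so this may simply be cited) and that the generalized topology of $[0,1]_g$ has the ``nested refinement'' property claimed above; the essentially-finite and finite-union bookkeeping is routine but must be done honestly, paying attention to the fact that intersecting a locally/essentially finite family of open intervals with $[0,1]$ can split an interval into at most two relatively open pieces, which does not disturb essential finiteness. Everything else is formal composition, and the whole argument goes through in \textbf{ZF} with no appeal to choice.
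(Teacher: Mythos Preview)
Your proposal is correct and follows essentially the same approach as the paper: the paper's proof simply observes that any strictly continuous $f:X\to I_{ut}$ remains strictly continuous when viewed as $f:X\to I_{st}$, and likewise from $I_{st}$ to $I_{om}$, which is exactly your ``compose with the strictly continuous identity'' argument. You are in fact more thorough, since the paper's proof omits explicit mention of (iii) and (iv) entirely, whereas you spell out the analogous step for $I_{om}\to I_{rom}$ and give a full two-direction argument for (iv).
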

\begin{proof} For an arbitrary gts $X$, it suffices to observe that
each strictly continuous mapping $f:X\to I_{ut}$ is also strictly continuous if considered as $f:X\to I_{st}$ and, moreover, each strictly continuous mapping $f:X\to I_{st}$ is also strictly continuous if considered as $f:X\to I_{om}$. 
\end{proof}

\begin{exam}
It is easily seen that the real lines from Definition 1.2 have the following properties:
\begin{enumerate}
\item[(i)] the topological real line $\mb{R}_{ut}$ is $I_{ut}$-Tychonoff;
\item[(ii)] the lines $\mb{R}_{st}$, $\mb{R}_{lst}$,  $\mb{R}_{l^+st}$ ($\mb{R}_{l^-st}$) are $I_{st}$-Tychonoff but not $I_{ut}$-Tychonoff;
\item[ (iii)] all of the nine real lines of Definition 1.2 (i)-(ix) are $I_{om}$-Tychonoff;
\item[(iv)] the rationalized o-minimal real line $\mb{R}_{rom}$ is not $I_{om}$-Tychonoff;
\item[(v)] the rationalized o-minimal unit interval $I_{rom}$ is an example of an absolutely compact weakly normal gts which is not $I_{om}$-Tychonoff.
\end{enumerate}
\end{exam}

\begin{q} 
For a set $X\subseteq \mb{R}$, let $\mc{NG}(X)$ be the collection of all weakly normal generalized topologies on $X$ that induce the natural topology in $X$. Is it true that, for every $\text{Cov}^{\mb{R}}\in\mc{NG}(\mb{R})$, there exists $\text{Cov}^I\in\mc{NG}(I)$ such that the gts $(\mb{R}, \text{Cov}^{\mb{R}})$ is not $(I, \text{Cov}^{I})$-Tychonoff?
\end{q}

\section{Embeddings into products}

In the classical theory of Hausdorff compactifications in \textbf{ZFC}, an important role is played by evaluation mappings into Tychonoff cubes (cf. e. g. \cite{BY1}-\cite{BY3}, \cite{Bl}, \cite{W1}-\cite{W3}). Let us recall that, for an indexed set $F=\{f_j: j\in J\}$ of mappings $f_j: X\to Y_j$, the evaluation mapping $e_F: X\to\prod_{j\in J}Y_j$ is defined by: $[e_F (x)](j)=f_j(x)$ for $x\in X$ and $j\in J$. If $X$ is a topological space, $\{Y_j: j\in J\}$ is a collection of topological spaces and $F=\{ f_j: j\in J\}$ where each $f_j$ is a mapping from $X$ into $Y_j$, there are nice necessary and sufficient conditions for $e_{F}$ to be a homeomorphic embedding (cf. e. g. \cite{Ch},  \cite{BY1}-\cite{BY3}, \cite{W1}-\cite{W3} and 2.3.D of \cite{En}); however, the following interesting problem is new and unsolved:

\begin{p}
Let $F=\{f_j: j\in J\}$ be a collection of mappings $f_j:X\to Y_{j}$ where $X$ and $Y_j$ are gtses for $j\in J$. Find elegant necessary and sufficient conditions for $e_{F}$ to be a strict embedding of $X$ into the $\mathbf{GTS}$-product of the family $\{ Y_j: j\in J\}$.
\end{p}

\begin{f} If $F=\{ f_j: j\in J\}$ is a collection of strictly continuous mappings $f_j: X\to Y_j$ where $X$ and $Y_j$ are gtses, then $e_F$ is a strictly continuous mapping of $X$ into $\prod_{j\in J}^{\mathbf{GTS}}Y_j$.
\end{f}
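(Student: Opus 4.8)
The plan is to verify directly that $e_F$ is strictly continuous, i.e. that $e_{F}^{-1}(\mathcal{V})\in\text{Cov}_X$ for every $\mathcal{V}$ in the generalized topology of $P=\prod_{j\in J}^{\mathbf{GTS}}Y_j$, which by Definition~4.6 is $\text{Cov}_P=\langle\bigcup_{j\in J}\pi_j^{-1}(\text{Cov}_j)\rangle_{P}$. The first, easy, step exploits that $\pi_j\circ e_F=f_j$ for each $j\in J$: for $\mathcal{W}\in\text{Cov}_j$ we get
$$e_{F}^{-1}\bigl(\pi_j^{-1}(\mathcal{W})\bigr)=(\pi_j\circ e_F)^{-1}(\mathcal{W})=f_j^{-1}(\mathcal{W}),$$
and this lies in $\text{Cov}_X$ because $f_j$ is strictly continuous. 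Hence, writing $\mathcal{A}=\bigcup_{j\in J}\pi_j^{-1}(\text{Cov}_j)$ for the generating collection of $\text{Cov}_P$, we already know that $e_{F}^{-1}(\mathcal{A})\subseteq\text{Cov}_X$.

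The substantive step is to promote this from $\mathcal{A}$ to $\langle\mathcal{A}\rangle_{P}$, and for that I would isolate the following general lemma, stated for an arbitrary mapping $g\colon X\to Z$ and an arbitrary $\mathcal{A}\subseteq\mathcal{P}^{2}(Z)$: if $g^{-1}(\mathcal{A})\subseteq\text{Cov}_X$, then $g^{-1}(\langle\mathcal{A}\rangle_{Z})\subseteq\text{Cov}_X$. To prove it, set
$$\mathcal{B}=\{\,\mathcal{C}\in\langle\mathcal{A}\rangle_{Z}\colon g^{-1}(\mathcal{C})\in\text{Cov}_X\,\}.$$
By hypothesis and the inclusion $\mathcal{A}\subseteq\langle\mathcal{A}\rangle_{Z}$ one has $\mathcal{A}\subseteq\mathcal{B}\subseteq\langle\mathcal{A}\rangle_{Z}$. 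If $\mathcal{B}$ is itself a generalized topology in $Z$, then minimality of $\langle\mathcal{A}\rangle_{Z}$ among generalized topologies in $Z$ containing $\mathcal{A}$ forces $\langle\mathcal{A}\rangle_{Z}\subseteq\mathcal{B}$, hence $\mathcal{B}=\langle\mathcal{A}\rangle_{Z}$, which is the assertion of the lemma. So the only thing to check is that $\mathcal{B}$ satisfies the axioms (A1)--(A8) of Definition~2.2.1 of \cite{Pie1} (equivalently, Definition~2.2.2). This is routine: the preimage operator $g^{-1}$ preserves arbitrary unions, finite intersections and inclusions of subsets, so it carries each closure operation occurring in those axioms (passage to a coarsening, to a common refinement, restriction of an admissible family to an admissible open subset, gluing of admissible families indexed by an admissible family, adjunction of $\emptyset$ or of singletons of open sets) into the corresponding operation on families of subsets of $X$; since both $\langle\mathcal{A}\rangle_{Z}$ and $\text{Cov}_X$ are closed under these operations, so is $\mathcal{B}$.

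Applying the lemma with $g=e_F$, $Z=P$ and $\mathcal{A}=\bigcup_{j\in J}\pi_j^{-1}(\text{Cov}_j)$ gives $e_{F}^{-1}(\text{Cov}_{P})\subseteq\text{Cov}_X$, so $e_F$ is strictly continuous, as claimed. I expect the only point deserving care to be the verification of the localization/gluing axiom (``(A8)'') for $\mathcal{B}$: here one uses that $W\subseteq\bigcup\mathcal{C}$ implies $g^{-1}(W)\subseteq\bigcup g^{-1}(\mathcal{C})$, together with $g^{-1}(W\cap C)=g^{-1}(W)\cap g^{-1}(C)$, so no real obstacle arises. A shorter route, bypassing the lemma, is to invoke the universal property of the $\mathbf{GTS}$-product as an initial source for the projections $\{\pi_j\colon j\in J\}$ (Definition~4.6 together with the argument of Theorem~4.4): since $J$ is a set, that universal property is available in \textbf{ZF} in the form needed here, the \textbf{AC}-for-classes subtlety noted after Theorem~4.4 concerning only proper-class indices; then $e_F$ is a morphism precisely because every $\pi_j\circ e_F=f_j$ is. In either case no form of the axiom of choice is used.
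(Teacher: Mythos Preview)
Your proposal is correct. The paper states this result as a Fact without proof, so there is no detailed argument to compare against; the implicit justification is precisely your second route, namely the universal (initial) property of the $\mathbf{GTS}$-product established around Theorem~4.4: since $\pi_j\circ e_F=f_j$ is strictly continuous for each $j$, the evaluation $e_F$ is a $\mathbf{GTS}$-morphism. Your first route is a correct unpacking of that same principle. One small refinement: rather than taking $\mathcal{B}=\{\mathcal{C}\in\langle\mathcal{A}\rangle_Z: g^{-1}(\mathcal{C})\in\text{Cov}_X\}$ and arguing that $\mathcal{B}$ is a generalized topology (which forces you to keep track of whether $\bigcup\mathcal{B}=\bigcup\langle\mathcal{A}\rangle_Z$ when checking axioms like (A8)), it is cleaner to set $\mathcal{D}=\{\mathcal{C}\subseteq\mathcal{P}(Z): g^{-1}(\mathcal{C})\in\text{Cov}_X\}$, verify directly that $\mathcal{D}$ is a generalized topology in $Z$ with open sets $\{U\subseteq Z: g^{-1}(U)\in\text{Op}_X\}$, and conclude $\langle\mathcal{A}\rangle_Z\subseteq\mathcal{D}$ from $\mathcal{A}\subseteq\mathcal{D}$. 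This avoids the bookkeeping about open sets of $\mathcal{B}$ and is exactly how initial structures are shown to exist in $\mathbf{GTS}$.
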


The following facts may be inferred from Proposition 2.2.37 of \cite{Pie1}.
\begin{f}
For $\Psi\in \mc{P}^3(X)$ and $A\subseteq X$ we have $\langle\langle\Psi\rangle_X \cap_2 A\rangle_A=\langle\Psi \cap_2 A\rangle_A$.
\end{f}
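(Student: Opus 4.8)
The plan is to prove the equality by two inclusions, reducing the nontrivial one to a containment that is essentially the content of Proposition 2.2.37 of \cite{Pie1}. First I would record the formal properties of the operators involved: on subsets of $\mathcal{P}^2(A)$ the map $\langle-\rangle_A$ is monotone with respect to inclusion and idempotent, being a closure operator, while for any $\Psi,\Psi'\in\mathcal{P}^3(X)$ with $\Psi\subseteq\Psi'$ one has $\Psi\cap_2 A\subseteq\Psi'\cap_2 A$. Since $\Psi\subseteq\langle\Psi\rangle_X$, applying $\cap_2 A$ and then $\langle-\rangle_A$ gives at once $\langle\Psi\cap_2 A\rangle_A\subseteq\langle\langle\Psi\rangle_X\cap_2 A\rangle_A$, which is the inclusion ``$\supseteq$''; this part needs no further input.

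For the reverse inclusion it suffices to establish
$$\langle\Psi\rangle_X\cap_2 A\subseteq\langle\Psi\cap_2 A\rangle_A,$$
because then, applying the monotone operator $\langle-\rangle_A$ to both sides and using idempotency $\langle\langle\mathcal{B}\rangle_A\rangle_A=\langle\mathcal{B}\rangle_A$, one obtains $\langle\langle\Psi\rangle_X\cap_2 A\rangle_A\subseteq\langle\Psi\cap_2 A\rangle_A$. To prove the displayed containment I would recall that $\langle\Psi\rangle_X$ is built from $\Psi$ by a (possibly transfinite) iteration of the closure operations encoded in conditions (A1)--(A8) of Definition 2.2.1 of \cite{Pie1}, and argue by induction along this construction that restricting to $A$ an admissible family produced at any stage over $X$ yields a family lying in $\langle\Psi\cap_2 A\rangle_A$. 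The inductive step amounts to checking that $\cap_2 A$ is compatible with each of these closure operations, which is exactly the bookkeeping carried out in Proposition 2.2.37 of \cite{Pie1}; invoking that proposition therefore delivers the displayed inclusion and completes the argument.

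The step I expect to be the main obstacle, were one to reprove this in full, is the verification that $\cap_2 A$ commutes with the refinement and gluing axioms (A7) and (A8) of \cite{Pie1}: one has to see that a set $W\subseteq\bigcup(\mathcal{U}\cap_2 A)$ witnessing admissibility over $A$, or a coarsening taken within $A$, can be lifted to the corresponding configuration over $X$ without leaving the admissible families generated so far. Since this is precisely the situation analysed in Proposition 2.2.37 of \cite{Pie1}, I would cite that proposition rather than reproduce its proof, and keep the present argument to the short reduction described above.
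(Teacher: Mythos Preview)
Your proposal is correct and matches the paper's approach: the paper does not give a proof but simply states that this fact ``may be inferred from Proposition 2.2.37 of \cite{Pie1},'' which is exactly the reference you invoke for the nontrivial inclusion. Your write-up just makes the reduction via monotonicity and idempotency of $\langle-\rangle_A$ explicit, which is a welcome elaboration rather than a different argument.
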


\begin{f}
For sets $X,Y$ and $\Psi\in \mc{P}^3(X)$ we have $\langle\langle\Psi\rangle_X \times_2 Y\rangle_{X\times Y}=
\langle\Psi\rangle_X \times_2 Y
=\langle\Psi \times_2 Y\rangle_{X\times Y}$.
\end{f}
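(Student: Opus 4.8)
The plan is to recognise that the level-two product operation $(-)\times_2 Y$ coincides with taking preimages under the projection $\pi\colon X\times Y\to X$, and then to exploit that, when $Y\neq\vn$, the preimage operator $\pi^{-1}$ is an injective homomorphism of power-set lattices which both preserves and reflects every set-theoretic construction occurring in the axioms of a generalized topology. The case $Y=\vn$ is degenerate: then $X\times Y=\vn$ and all three collections in the statement reduce to the (unique, minimal) generalized topology on $\vn$, so from now on I assume $Y\neq\vn$.

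Since $U\times Y=\pi^{-1}(U)$ for every $U\subseteq X$, one has $\mathcal{U}\times_1 Y=\pi^{-1}(\mathcal{U}):=\{\pi^{-1}(U):U\in\mathcal{U}\}$ for every family $\mathcal{U}\subseteq\mathcal{P}(X)$, and hence $\Phi\times_2 Y=\{\pi^{-1}(\mathcal{U}):\mathcal{U}\in\Phi\}$ for every $\Phi\subseteq\mathcal{P}^2(X)$. Because $Y\neq\vn$, the projection $\pi$ is surjective and $\pi^{-1}\colon\mathcal{P}(X)\to\mathcal{P}(X\times Y)$ is injective; moreover $\pi^{-1}$ commutes with arbitrary unions and intersections and transports faithfully the relations ``is a subfamily of'', ``refines'', ``coarsens'' and ``is a finite union of''. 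From this I would extract two facts by transporting the conditions (A1)--(A8) of Definition 2.2.1 of \cite{Pie1} along $\pi^{-1}$: (1) if $\text{Cov}$ is a generalized topology in $X$, then $\text{Cov}\times_2 Y$ is a generalized topology in $X\times Y$ --- in particular $\langle\Psi\rangle_X\times_2 Y$ is one, which already yields $\langle\langle\Psi\rangle_X\times_2 Y\rangle_{X\times Y}=\langle\Psi\rangle_X\times_2 Y$; and (2) if $\mathcal{G}$ is a generalized topology in $X\times Y$, then $\mathcal{T}=\{\mathcal{U}\subseteq\mathcal{P}(X):\pi^{-1}(\mathcal{U})\in\mathcal{G}\}$ is a generalized topology in $X$. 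The only condition requiring care in (1) is (A8): given $\pi^{-1}(\mathcal{U})\in\text{Cov}\times_2 Y$ and a set $W\subseteq\bigcup\pi^{-1}(\mathcal{U})$ with $W\cap\pi^{-1}(U)$ open for each $U\in\mathcal{U}$, write $W\cap\pi^{-1}(U)=\pi^{-1}(V_U)$ with $V_U\in\text{Op}_X$ for each $U\in\mathcal{U}$, deduce $W=\pi^{-1}\bigl(\bigcup_{U'\in\mathcal{U}}V_{U'}\bigr)$, and use the \emph{injectivity} of $\pi^{-1}$ to get $\bigl(\bigcup_{U'\in\mathcal{U}}V_{U'}\bigr)\cap U=V_U\in\text{Op}_X$, so that (A8) for $\text{Cov}$ applies; this is the single place where $Y\neq\vn$ is genuinely used. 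The verification of (2) is analogous.

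Granting (1) and (2), the remaining equality $\langle\Psi\rangle_X\times_2 Y=\langle\Psi\times_2 Y\rangle_{X\times Y}$ is formal. From $\Psi\subseteq\langle\Psi\rangle_X$ one gets $\Psi\times_2 Y\subseteq\langle\Psi\rangle_X\times_2 Y$, and by (1) the right-hand side is a generalized topology in $X\times Y$, so $\langle\Psi\times_2 Y\rangle_{X\times Y}\subseteq\langle\Psi\rangle_X\times_2 Y$. For the reverse inclusion, apply (2) to $\mathcal{G}=\langle\Psi\times_2 Y\rangle_{X\times Y}$: the resulting generalized topology $\mathcal{T}$ in $X$ contains $\Psi$ (since $\Psi\times_2 Y\subseteq\mathcal{G}$), hence $\langle\Psi\rangle_X\subseteq\mathcal{T}$, which unwinds precisely to $\langle\Psi\rangle_X\times_2 Y\subseteq\langle\Psi\times_2 Y\rangle_{X\times Y}$. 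Combining the two inclusions with the first equality finishes the proof; equivalently, (1) and (2) are nothing but Proposition 2.2.37 of \cite{Pie1} specialised to the surjection $\pi$, so the whole statement may also be quoted from there. The part I expect to cost the most work is the bookkeeping behind (1) and (2) --- checking that each of (A1)--(A8) is ``geometric'' in the sense of being simultaneously preserved and reflected by $\pi^{-1}$ --- with the delicate point being (A8), where recovering an open subset of $X$ from its cylinder is exactly what forces $Y\neq\vn$.
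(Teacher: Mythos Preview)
Your argument is correct and matches the paper's approach: the paper simply records this fact as inferred from Proposition 2.2.37 of \cite{Pie1}, and you arrive at exactly that conclusion, having first unpacked what the inference amounts to (your facts (1) and (2) are precisely the preservation and reflection of generalized topologies along the surjection $\pi$). Your explicit treatment of the degenerate case $Y=\vn$ and the careful check of (A8) are welcome additions, but the route is the same.
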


\begin{lem}\label{produkty}
Let $\{ X_j: j\in J\}$ be a collection of gtses and, for each $j\in J$, let $A_j$ be a subspace of $X_j$. Then $\prod^{\mathbf{GTS}}_{j\in J} A_j$ is a subspace of $\prod^{\mathbf{GTS}}_{j\in J} X_j$.
\end{lem}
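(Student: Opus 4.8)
The plan is to unwind both sides of the claimed equality into explicit generating families and then match them, using the Fact inferred above from Proposition 2.2.37 of \cite{Pie1} that $\langle\langle\Psi\rangle_X\cap_2 A\rangle_A=\langle\Psi\cap_2 A\rangle_A$, together with the elementary behaviour of preimages under projections. Write $A=\prod_{j\in J}A_j$, a subset of $X=\prod_{j\in J}X_j$, and for each $j\in J$ let $\pi_j\colon X\to X_j$ and $\pi_j^A\colon A\to A_j$ be the projections, so that $\pi_j^A$ is the corestriction to $A_j$ of the restriction of $\pi_j$ to $A$. By the definitions of the $\mathbf{GTS}$-product and of a subspace, the generalized topology of $\prod^{\mathbf{GTS}}_{j\in J}A_j$ is $\langle\bigcup_{j\in J}(\pi_j^A)^{-1}(\langle\text{Cov}_j\cap_2 A_j\rangle_{A_j})\rangle_A$, while the subspace structure that $A$ inherits from $\prod^{\mathbf{GTS}}_{j\in J}X_j$ is $\langle\langle\bigcup_{j\in J}\pi_j^{-1}(\text{Cov}_j)\rangle_X\cap_2 A\rangle_A$; the task is to show that these two generalized topologies on $A$ coincide.

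First I would simplify the subspace structure. Applying the Fact above with $\Psi=\bigcup_{j\in J}\pi_j^{-1}(\text{Cov}_j)$, and using that the operation $(\,\cdot\,)\cap_2 A$ is performed memberwise and hence distributes over the union over $j$, one gets $\langle\langle\Psi\rangle_X\cap_2 A\rangle_A=\langle\Psi\cap_2 A\rangle_A=\langle\bigcup_{j\in J}(\pi_j^{-1}(\text{Cov}_j)\cap_2 A)\rangle_A$. Next, the purely set-theoretic identity $\pi_j^{-1}(U)\cap A=(\pi_j^A)^{-1}(U\cap A_j)$, valid for every $U\subseteq X_j$ (both sides consist of those $a\in A$ with $a(j)\in U$), upgrades levelwise to $\pi_j^{-1}(\text{Cov}_j)\cap_2 A=(\pi_j^A)^{-1}(\text{Cov}_j\cap_2 A_j)$. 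Hence the subspace structure equals $\langle\bigcup_{j\in J}(\pi_j^A)^{-1}(\text{Cov}_j\cap_2 A_j)\rangle_A$.

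It then suffices to prove $\langle\bigcup_{j\in J}(\pi_j^A)^{-1}(\langle\text{Cov}_j\cap_2 A_j\rangle_{A_j})\rangle_A=\langle\bigcup_{j\in J}(\pi_j^A)^{-1}(\text{Cov}_j\cap_2 A_j)\rangle_A$. The inclusion ``$\supseteq$'' is immediate from $\text{Cov}_j\cap_2 A_j\subseteq\langle\text{Cov}_j\cap_2 A_j\rangle_{A_j}$. For ``$\subseteq$'' I would use that taking preimages commutes with generation: for any map $f\colon W\to Z$ and any $\mathcal{B}\subseteq\mathcal{P}^2(Z)$ one has $f^{-1}(\langle\mathcal{B}\rangle_Z)\subseteq\langle f^{-1}(\mathcal{B})\rangle_W$, because the collection $\{\mathcal{V}\in\mathcal{P}^2(Z):f^{-1}(\mathcal{V})\in\langle f^{-1}(\mathcal{B})\rangle_W\}$ is checked routinely against the axioms of \cite{Pie1} (it is part of Proposition 2.2.37 there) to be a generalized topology on $Z$ containing $\mathcal{B}$, hence to contain $\langle\mathcal{B}\rangle_Z$. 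Applying this with $f=\pi_j^A$ and $\mathcal{B}=\text{Cov}_j\cap_2 A_j$ gives, for each $j\in J$, the inclusion $(\pi_j^A)^{-1}(\langle\text{Cov}_j\cap_2 A_j\rangle_{A_j})\subseteq\langle(\pi_j^A)^{-1}(\text{Cov}_j\cap_2 A_j)\rangle_A\subseteq\langle\bigcup_{j\in J}(\pi_j^A)^{-1}(\text{Cov}_j\cap_2 A_j)\rangle_A$; taking the union over $j\in J$ and then generating gives the remaining inclusion, and the proof is complete.

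I expect the only genuine subtlety to be the bookkeeping with the ``levelled'' operations $\cap_1$, $\cap_2$ and with $\pi_j^{-1}$ applied at different depths, in particular making precise that $(\,\cdot\,)\cap_2 A$ commutes both with unions over $j$ and with preimages of projections; the sole place where one leans on the generation machinery of \cite{Pie1} rather than on a bare computation is the preimage-commutes-with-generation fact (morally the statement that a $\mathbf{GTS}$-initial structure for a composite source is the composite of $\mathbf{GTS}$-initial structures, which for a set-indexed $J$ raises no issue with choice for classes). No form of the axiom of choice is needed: all the displayed identities are purely set-theoretic, and if some $A_j$ is empty then $A=\vn$ and the statement is trivial.
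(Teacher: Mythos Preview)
Your argument is correct and follows essentially the same route as the paper's proof. The paper writes the $\mathbf{GTS}$-product structures using the $\times_2$ notation (so $\tau_1=\langle\bigcup_{j\in J}(\text{Cov}_{A_j}\times_2 A^{(j)})\rangle_A$ and $\tau_2=\langle\langle\bigcup_{j\in J}(\text{Cov}_{X_j}\times_2 X^{(j)})\rangle_X\cap_2 A\rangle_A$) and then simply invokes Facts 10.3 and 10.4; your use of projections $\pi_j^A$ and the ``preimages commute with generation'' inclusion is exactly the content of Fact 10.4 rephrased for arbitrary maps, so the two proofs differ only in notation and in the level of detail supplied.
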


\begin{proof}
We may assume that $J$ has at least two elements. If $Y_j\subseteq X_j$ for all $j\in J$, then, for a fixed $j\in J$, let $Y^{(j)}=\prod_{i\in J\setminus\{j\}}Y_i$.  Put $A=\prod_{j\in J}A_j$.  Let $\tau_1$  be the $\mathbf{GTS}$-product topology on $A$. Then
$\tau_1=\langle \bigcup_{j\in J}( \text{Cov}_{A_j}\times_2 A^{(j)})\rangle_A$. 

Let $\tau_2$ be the induced  from $\prod^{\mathbf{GTS}}_{j\in J} X_j$ generalized topology  on $A$. Then $\tau_2=
\langle\langle \bigcup_{j\in J}(\text{Cov}_{X_j}\times_2 X^{(j)})\rangle_X\cap_2 A\rangle_A$. To see that $\tau_1=\tau_2$, it suffices to apply Facts 10.3 and 10.4 
\end{proof}

The following proposition is a partial solution to Problem 10.1:

\begin{prop}
Let $X$ be a gts and let $\{Y_j: j\in J\}$ be an indexed set of gtses $Y_j$, Suppose that $f_j:X\to Y_j$ is strictly continuous for each $j\in J$ and that there exists $j_0\in J$ such that $f_{j_0}$ is an embedding. Then, for $F=\{ f_j: j\in J\}$, the evaluation mapping $e_F:X\to\prod^{\mathbf{GTS}}_{j\in J}Y_j$ is an embedding.
\end{prop}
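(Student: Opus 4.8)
The plan is to check directly the three clauses of the definition of an embedding (Definition 5.1(i)) for $e_F\colon X\to P$, where I write $P=\prod^{\mathbf{GTS}}_{j\in J}Y_j$, $Z=e_F(X)$, and $\text{Cov}_Z=\langle\text{Cov}_P\cap_2 Z\rangle_Z$ for the subspace generalized topology on $Z$. Strict continuity of $e_F$ is exactly Fact 10.2. Injectivity follows at once from $\pi_{j_0}\circ e_F=f_{j_0}$ together with the injectivity of $f_{j_0}$ (which holds because $f_{j_0}$ is an embedding). So the only real content is the remaining inclusion $e_F(\text{Cov}_X)\subseteq\text{Cov}_Z$.

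To prove that inclusion I would exploit the single coordinate $j_0$. Consider $g:=\pi_{j_0}|_Z\colon Z\to f_{j_0}(X)$; this is well defined since $\pi_{j_0}(e_F(x))=f_{j_0}(x)$, and it is a bijection of $Z$ onto $f_{j_0}(X)$ because $e_F$ is injective and $g\circ e_F=f_{j_0}$ with $f_{j_0}$ injective. Next I would argue that $g$ is strictly continuous: the inclusion $Z\hookrightarrow P$ is strictly continuous by the definition of the subspace generalized topology, the projection $\pi_{j_0}\colon P\to Y_{j_0}$ is strictly continuous by the definition of the $\mathbf{GTS}$-product, and corestricting the composite to the subspace $f_{j_0}(X)$ of $Y_{j_0}$ again yields a strictly continuous map by the way the operator $\langle\,\cdot\,\rangle$ interacts with restriction to an ambient set (Proposition 2.2.37 of \cite{Pie1}, in the spirit of Facts 10.3 and 10.4).

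Granting this, fix $\mathcal{U}\in\text{Cov}_X$. Since $f_{j_0}$ is an embedding, $f_{j_0}(\mathcal{U})\in\langle\text{Cov}_{Y_{j_0}}\cap_2 f_{j_0}(X)\rangle_{f_{j_0}(X)}$, and $g\circ e_F=f_{j_0}$ gives $g(e_F(\mathcal{U}))=f_{j_0}(\mathcal{U})$. As $g$ is a strictly continuous bijection onto $f_{j_0}(X)$, we have $g^{-1}\bigl(\langle\text{Cov}_{Y_{j_0}}\cap_2 f_{j_0}(X)\rangle_{f_{j_0}(X)}\bigr)\subseteq\text{Cov}_Z$, while injectivity of $g$ yields $g^{-1}(g(e_F(\mathcal{U})))=e_F(\mathcal{U})$. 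Hence $e_F(\mathcal{U})\in\text{Cov}_Z$, and $e_F$ is an embedding.

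I expect the only point needing genuine care to be the strict continuity of the corestriction $g\colon Z\to f_{j_0}(X)$, that is, the general principle that corestricting a strictly continuous map to a subspace of its codomain preserves strict continuity; this rests on the compatibility of taking preimages with the operator $\langle\,\cdot\,\rangle$ generating a generalized topology, not merely with unions and intersections. Everything else is routine use of the universal properties of $\mathbf{GTS}$-products and of subspaces.
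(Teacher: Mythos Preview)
Your argument is correct. Both proofs establish the key inclusion $e_F(\text{Cov}_X)\subseteq\langle\text{Cov}_P\cap_2 Z\rangle_Z$ by exploiting the single coordinate $j_0$, and both ultimately rest on the same set-theoretic identity $e_F(\mathcal{U})=Z\cap_1\pi_{j_0}^{-1}(f_{j_0}(\mathcal{U}))$. The packaging differs: the paper passes through the intermediate product $\prod_{j\in J}^{\mathbf{GTS}}f_j(X)$, observing that $f_{j_0}(\mathcal{U})\times_1\prod_{j\neq j_0}f_j(X)$ lies in its $\mathbf{GTS}$-product generalized topology, and then invokes Lemma~10.5 (product of subspaces is a subspace of the product) together with transitivity of the subspace construction (Fact~10.3) to descend to $Z$. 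You instead bypass Lemma~10.5 entirely by working with the single map $g=\pi_{j_0}|_Z\colon Z\to f_{j_0}(X)$ and appealing to the corestriction principle, which is just the initial-lift property of subspaces in the topological construct $\mathbf{GTS}$. Your route is slightly more economical in that it avoids the auxiliary product of images; the paper's route has the advantage of making the role of Lemma~10.5 explicit. The point you flag as needing care---that corestriction to a subspace preserves strict continuity, equivalently that $h^{-1}(\langle\Psi\rangle_C)\subseteq\text{Cov}_A$ whenever $h^{-1}(\Psi)\subseteq\text{Cov}_A$---is indeed the one nontrivial ingredient, and it is of the same order of difficulty as what underlies Lemma~10.5 itself.
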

\begin{proof}
Let $Cov_F$ be the $\mathbf{GTS}$-product generalized topology in $\prod_{i\in J}f_j(X)$ and let $Cov$ be the generalized topology of $\prod^{\mathbf{GTS}}_{j\in J}Y_j$. We deduce from Fact 10.2 that $e_F$ is strictly continuous. We need to show that $e_F(\text{Cov}_X)\subseteq\langle e_F(X){\cap}_2 Cov\rangle_{e_F(X)}$. Assume that $J$ has at least two elements. Let $\mc{U}\in\text{Cov}_X$. Observe that $e_F(\mc{U})= e_F(X)\cap_1 [f_{j_0}(\mc{U})\times_1\prod_{j\in J\setminus\{ j_0\}}f_j(X)]\in  e_F(X)\cap_2 Cov_F$. Since, in view of Lemma 10.5, $\prod^{\mathbf{GTS}}_{j\in J}f_j(X)$ is a subspace of $\prod^{\mathbf{GTS}}_{j\in J}Y_j$, we obtain that $e_F(\mc{U})\in\langle e_F(X)\cap_2 Cov\rangle_{e_F(X)}$.
\end{proof}

In what follows, $\mb{R}$ stands for the real line with $\tau_{nat}$. If $X$ is a topological space, then let $X_{top}=X$.

\begin{defi}
Suppose that $g\in \mc{G}_{\mb{R}}$ and $\alpha X$ is a strict compactification of a gts $X$. Then:
\begin{enumerate}
\item[(i)] $C(X)=C(X_{top})$ is the collection of all weakly continuous functions $f:X\to\mb{R}$;
\item[(ii)] $C^{\ast}(X)=C^{\ast}(X_{top})$ is the collection of all bounded functions from $C(X)$;
\item[(iii)] $C^{g}(X)$ is the collection of all strictly continuous functions $f:X\to\mb{R}_g$;
\item[(iv)] $C^{\ast g}(X)=C^{\ast}(X)\cap C^{g}(X)$;
\item[(v)] $C_{\alpha}(X)$ is the collection of all functions from $C(X)$ that are continuously extendable over $(\alpha X)_{top}$;
\item[(vi)] $C^{g}_{\alpha}(X)$ is the collection of all functions from $C^g(X)$ that are extendable to functions from $C^{g}(\alpha X)$;
\item[(vii)] if $f\in C_{\alpha}( X)$, let $f^{\alpha}\in C(\alpha X)$ be such that $f^{\alpha}(x)=f(x)$ for each $x\in X$;
\item[(viii)] if $F\subseteq C_{\alpha}(X)$, let $F^{\alpha}=\{ f^{\alpha}: f\in\mc{F}\}$;
\item[(ix)] $\mc{E}(X)=\mc{E}(X_{top})$ is the collection of all sets $F\subseteq C^{\ast}(X)$ such that $e_F$ is a homeomorphic embedding;
\item[(x)] $\mc{E}^g(X)$ is the collection of all sets $F\subseteq C^{\ast g}(X)$ such that $e_F$ is a strict embedding of $X$ into the $\mathbf{GTS}$- product $\mb{R}_g^{F}$.
\item[(xi)] if $F\in\mc{E}(X)$, then $e_F X$ is the weak closure of $e_F(X)$ in the Tychonoff product $\mb{R}^{F}$.
\end{enumerate}
\end{defi}
\begin{f}
Let $\mc{G}^{\ast}_\mb{R}=\{ ut, om, st, rom\}$. Then, for every gts $X$, we have $\{ C^{\ast g}(X): g\in \mc{G}_{\mb{R}}\}=\{ C^{\ast g}(X): g\in \mc{G}^{\ast}_{\mb{R}}\}$.
\end{f}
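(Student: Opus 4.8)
The plan is to reduce \emph{Fact~10.8} to the behaviour of the thirteen generalized topologies of Definition~1.2 on a bounded closed interval. First, by Definition~10.7(i)--(iv), for any gts $X$ we have $f\in C^{\ast g}(X)$ if and only if $f:X\to\mb{R}$ is bounded and weakly continuous and, moreover, $f:X\to\mb{R}_g$ is strictly continuous. Since $\mc{G}^{\ast}_{\mb{R}}\subseteq\mc{G}_{\mb{R}}$, the inclusion $\{C^{\ast g}(X):g\in\mc{G}^{\ast}_{\mb{R}}\}\subseteq\{C^{\ast g}(X):g\in\mc{G}_{\mb{R}}\}$ holds trivially, so the whole task is to show that for each $g\in\mc{G}_{\mb{R}}$ there is a $g'\in\mc{G}^{\ast}_{\mb{R}}$ with $C^{\ast g}(X)=C^{\ast g'}(X)$.

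The first real step I would carry out is the observation that, for a bounded map, strict continuity into $\mb{R}_g$ depends on $\mb{R}_g$ only through the subspace it induces on a compact interval. Precisely, if $f(X)\subseteq[a,b]$, then $f:X\to\mb{R}_g$ is strictly continuous if and only if $f:X\to[a,b]_g$ is strictly continuous: since $[a,b]$ is strict in $\mb{R}_g$ one has $\text{Cov}_{[a,b]_g}=\text{Cov}_{\mb{R}_g}{\cap}_2[a,b]$, and because $f(X)\subseteq[a,b]$ one has $f^{-1}(\mc{V})=f^{-1}(\mc{V}{\cap}_2[a,b])$ for every family $\mc{V}$ of subsets of $\mb{R}$, so the two strict-continuity conditions coincide literally. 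Hence $f\in C^{\ast g}(X)$ if and only if $f$ is bounded and weakly continuous and $f:X\to[a,b]_g$ is strictly continuous for one (equivalently, every) bounded closed interval $[a,b]\supseteq f(X)$.

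Next I would classify the induced gtses on $[a,b]$. It is already recorded in the paper that $\{[a,b]_g:g\in\mc{G}_{\mb{R}}\}=\{[a,b]_{ut},[a,b]_{om},[a,b]_{st},[a,b]_{rom}\}$; what I need to make precise is that this matching does not depend on $[a,b]$, i.e.\ that there is a map $\sigma:\mc{G}_{\mb{R}}\to\mc{G}^{\ast}_{\mb{R}}$ with $[a,b]_g=[a,b]_{\sigma(g)}$ for \emph{every} bounded closed interval $[a,b]$. This is read off Definition~1.2: on a compact interval every locally finite family of non-void sets is finite and every locally essentially finite family of subsets is essentially finite, while the conditions imposed ``on the negative (positive) half-line'' concern only the empty trace of $[a,b]$ on that half-line and are therefore vacuous; so $\mb{R}_g$ and $\mb{R}_{\sigma(g)}$ cut out the same admissible families on each $[a,b]$, where $\sigma$ fixes $ut$ and $rom$, sends the $\tau_{nat}$-based lines $st$, $lst$, $l^{+}st$, $l^{-}st$ to $st$, and sends the remaining (``$om$-type'') lines $om$, $lom$, $slom$, $l^{+}om$, $l^{-}om$, $sl^{+}om$, $sl^{-}om$ to $om$.

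Finally I would assemble the pieces. Given $g\in\mc{G}_{\mb{R}}$ and $f$, pick a bounded closed interval $[a,b]\supseteq f(X)$; then, using the first step for $g$, the classification, and the first step again for $\sigma(g)$, we get $f\in C^{\ast g}(X)$ iff $f$ is bounded and weakly continuous and $f:X\to[a,b]_g=[a,b]_{\sigma(g)}$ is strictly continuous iff $f\in C^{\ast\sigma(g)}(X)$. Therefore $C^{\ast g}(X)=C^{\ast\sigma(g)}(X)$ with $\sigma(g)\in\mc{G}^{\ast}_{\mb{R}}$, which yields the missing inclusion and hence the desired equality of the two families of function sets. The only point that genuinely needs checking is the interval-independence of $\sigma$, so that is the main (though entirely routine) obstacle: it is handled by the compactness remark turning ``locally (essentially) finite'' into ``(essentially) finite'', together with the observation that the localization-at-infinity clauses in Definition~1.2 are void on a bounded interval.
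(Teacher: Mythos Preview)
Your argument is correct and is exactly the intended one: the paper records this statement as a \emph{Fact} with no proof, but the key input---that $\{[a,b]_g:g\in\mc{G}_{\mb{R}}\}=\{[a,b]_{ut},[a,b]_{om},[a,b]_{st},[a,b]_{rom}\}$---is stated just before Definition~9.7, and you have correctly supplied the missing link, namely that for a bounded $f$ with $f(X)\subseteq[a,b]$, strict continuity into $\mb{R}_g$ coincides with strict continuity into the strict subspace $[a,b]_g$.

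Two small points worth tightening. First, a notational slip: in the paper's convention the operation on a single family of sets is $\cap_1$, not $\cap_2$, so your displayed identity should read $f^{-1}(\mc{V})=f^{-1}(\mc{V}\cap_1[a,b])$. Second, your justification that the localization-at-infinity clauses are ``vacuous'' because the trace of $[a,b]$ on the relevant half-line is empty is not literally true (e.g.\ $[a,b]=[-2,2]$ meets both half-lines). The correct statement is that on any compact interval these half-line clauses are \emph{redundant}: once one restricts to $[a,b]$, ``locally finite union of open intervals'' already becomes ``finite union of (relatively) open intervals'' and ``locally essentially finite'' becomes ``essentially finite'', so the extra half-line restrictions impose nothing new. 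With that wording your map $\sigma$ is well defined independently of $[a,b]$, and the rest of your assembly goes through verbatim.
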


\begin{rem}
Let us warn that since addition and multiplication in small real lines are not strictly continuous, the sets $C^{g}(X)$ need not be 
algebras with respect to the standard addition and multiplication of functions. For example, $C^{st}(\mb{R}_{st}\times_{\mathbf{GTS}}\mb{R}_{st})$ and $C^{om}(\mb{R}_{om}\times_{\mathbf{GTS}}\mb{R}_{om})$ are not algebras because if $f(x, y)=x $ and $g(x,y)=y$ for $(x, y)\in\mb{R}^2$, then neither $f+g$ nor $f\cdot g$ belongs to $C^{st}(\mb{R}_{st}\times_{\mathbf{GTS}}\mb{R}_{st})\cup C^{om}(\mb{R}_{om}\times_{\mathbf{GTS}} \mb{R}_{om})$. Of course, $C^{st}(\mb{R}_{st})=C(\mb{R}_{st})$ is an algebra. It is interesting that $C^{om}(\mb{R}_{om})$ is not an algebra because, for instance, if $f_1(x)=2x+ \sin x$ and $f_2(x)=-2x$ for $x\in\mb{R}$, then $f_1, f_2\in C^{om}(\mb{R}_{om})$ but $f_1+f_2\notin C^{om}(\mb{R}_{om})$. 
\end{rem}

Since $\mathbf{GTS}$-products of small gtses are small and every subspace of a small space is small, we can observe the following fact:
\begin{f}
If $g\in\{ om, st, rom\}$, then, for every $I^{g}$-Tychonoff not small gts $X$, the collection $\mc{E}^{g}(X)$ is empty. 
\end{f}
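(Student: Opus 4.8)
The plan is to prove this by contradiction, using that for $g\in\{om,st,rom\}$ the gts $\mathbb{R}_{g}$ is \emph{small}, and that smallness is inherited by $\mathbf{GTS}$-products and by subspaces.

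Suppose $g\in\{om,st,rom\}$ and, contrary to the claim, that $F\in\mathcal{E}^{g}(X)$. By the definition of $\mathcal{E}^{g}(X)$ (Definition 10.7(x)), $F\subseteq C^{\ast g}(X)$ and the evaluation map $e_{F}\colon X\to\mathbb{R}_{g}^{F}$ is a strict embedding of $X$ into the $\mathbf{GTS}$-product of the $F$-indexed family of copies of $\mathbb{R}_{g}$. Reading off Definition 1.2(ii),(iii),(x), in each of the three cases the admissible coverings of $\mathbb{R}_{g}$ are exactly the essentially finite families of open sets, so $\mathbb{R}_{g}$ is small.

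First I would invoke Remark 4.8(ii) to conclude that the $\mathbf{GTS}$-product $\mathbb{R}_{g}^{F}$ is small, and then the fact (recalled in the sentence preceding the statement) that every subspace of a small gts is small to conclude that the image $e_{F}(X)$, equipped with the generalized topology induced from $\mathbb{R}_{g}^{F}$, is small. Next I would argue that the strict embedding $e_{F}$ identifies $X$ with this subspace: the strict-embedding condition gives $e_{F}(\text{Cov}_{X})\subseteq\text{Cov}_{\mathbb{R}_{g}^{F}}\cap_{2}e_{F}(X)\subseteq\text{Cov}_{e_{F}(X)}$, which says precisely that $e_{F}^{-1}\colon e_{F}(X)\to X$ is strictly continuous, while strict continuity of $e_{F}$ together with its injectivity makes $e_{F}\colon X\to e_{F}(X)$ strictly continuous; hence $e_{F}$ is an isomorphism of $X$ onto $e_{F}(X)$ in $\mathbf{GTS}$. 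Since a bijective isomorphism carries $\text{Op}$ to $\text{Op}$ and essentially finite families to essentially finite families, the equality $\text{Cov}_{e_{F}(X)}=\text{EssFin}(\text{Op}_{e_{F}(X)})$ forces $\text{Cov}_{X}=\text{EssFin}(\text{Op}_{X})$, i.e. $X$ is small. This contradicts the hypothesis that $X$ is not small, so $\mathcal{E}^{g}(X)=\varnothing$.

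The step I expect to need the most care is the claim that a strict embedding is an isomorphism onto its image, and more concretely that $e_{F}\colon X\to e_{F}(X)$ is strictly continuous: this reduces to the fact that preimage under an injective strictly continuous map is compatible with the generation operator $\langle\cdot\rangle$, so that $e_{F}^{-1}$ sends the induced generalized topology $\langle\text{Cov}_{\mathbb{R}_{g}^{F}}\cap_{2}e_{F}(X)\rangle_{e_{F}(X)}$ into $\text{Cov}_{X}$; this belongs to the standard subspace calculus for generalized topologies (cf. the results of Section 2.3 of \cite{Pie1} used, for instance, in the proof of Theorem 6.3). I would also note explicitly that the $I^{g}$-Tychonoff assumption on $X$ plays no role in the argument: it is retained only to place the Fact within the preceding discussion of evaluation embeddings of $I^{g}$-Tychonoff spaces into $\mathbb{R}_{g}$-cubes, where one hopes for $\mathcal{E}^{g}(X)\neq\varnothing$, and the Fact records the obstruction presented by non-smallness.
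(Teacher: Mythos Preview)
Your proposal is correct and follows essentially the same line as the paper, which records the argument in the single sentence preceding the Fact: ``Since $\mathbf{GTS}$-products of small gtses are small and every subspace of a small space is small, we can observe the following fact.'' You have simply unpacked the implicit step that a strict embedding identifies $X$ with a subspace of the small cube $\mathbb{R}_g^F$; the paper leaves that as understood.
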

\begin{exam} 
The space $\mb{R}_{ut}$ is $I_{st}$-Tychonoff but there does not exit a subset $F$ of $C^{st}(\mb{R}_{ut})$ such that $e_{F}$ is a strict embedding of $\mb{R}_{ut}$ into the $\mathbf{GTS}$-product $\mb{R}_{st}^{F}$.
\end{exam}

Since we work in \textbf{ZF}, contrary to the results of \cite{Ch},  \cite{BY1}-\cite{BY3} and \cite{W1}-\cite{W3} in \textbf{ZFC}, we should not claim that if $X$ is a topological space or a gts and if $F\in\mc{E}(X)$ , then $e_F X$ is certainly topologically compact. 

\begin{thm}  Equivalent are:
\begin{enumerate}
\item[(i)] \textbf{UFT};
\item[(ii)] for every Tychonoff space $X$ and for each $F\in\mc{E}(X)$, the space $e_F X$ is topologically compact;
\item[(iii)] for every Tychonoff space $X$ and for each $F\in \mc{E}(X)$, there exists a topological compactification $\alpha X$ of $X$ such that $F\subseteq C_{\alpha}(X)$.
\end{enumerate}
\end{thm}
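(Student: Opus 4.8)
The plan is to prove the three implications $(i)\Rightarrow(iii)\Rightarrow(ii)\Rightarrow(i)$, the decisive ingredient being, exactly as in the proof of Theorem 2.8 (and paralleling Theorem 4.11), the \textbf{ZF}-theorem that \textbf{UFT} is equivalent to the statement that every Tychonoff product of compact Hausdorff spaces is compact (Theorem 4.37 of \cite{Her}). The only topological facts needed beyond this --- that a closed subspace of a compact space is compact, that a continuous image of a compact space is compact, and that a compact subset of a Hausdorff space is closed --- are all available in \textbf{ZF}.

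For $(i)\Rightarrow(iii)$, fix a Tychonoff space $X$ and $F\in\mc{E}(X)$. Since $F\subseteq C^{\ast}(X)$, for each $f\in F$ choose reals $a_f\le b_f$ with $f(X)\subseteq[a_f,b_f]$, so that $e_F(X)\subseteq Q:=\prod_{f\in F}[a_f,b_f]$, a weakly closed subset of $\mb{R}^F$. Assuming \textbf{UFT}, the cube $Q$ is compact, hence the weak closure $e_FX$ of $e_F(X)$ is a closed subspace of the compact Hausdorff space $Q$ and so is itself compact and Hausdorff. As $F\in\mc{E}(X)$, the map $e_F$ is a homeomorphic embedding whose image is dense in $e_FX$; therefore $\alpha X:=e_FX$, with $e_F$ as embedding, is a Hausdorff compactification of $X$, and for each $f\in F$ the restriction to $e_FX$ of the projection of $\mb{R}^F$ onto the $f$-th coordinate is a continuous real-valued extension of $f$. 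Thus $F\subseteq C_{\alpha}(X)$, which is $(iii)$. For $(iii)\Rightarrow(ii)$, let $X$ be Tychonoff, $F\in\mc{E}(X)$, and $\alpha X$ a topological compactification of $X$ with $F\subseteq C_{\alpha}(X)$. Writing $f^{\alpha}\in C(\alpha X)$ for the continuous extension of $f\in F$ and $F^{\alpha}=\{f^{\alpha}:f\in F\}$, the evaluation map $e_{F^{\alpha}}:\alpha X\to\mb{R}^F$ is continuous and extends $e_F$. Since $\alpha X$ is compact, $e_{F^{\alpha}}(\alpha X)$ is a compact, hence weakly closed, subset of $\mb{R}^F$ that contains $e_F(X)$, so it contains $e_FX$; conversely, as $X$ is dense in $\alpha X$ and $e_{F^{\alpha}}$ is continuous, $e_{F^{\alpha}}(\alpha X)\subseteq e_FX$. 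Hence $e_FX=e_{F^{\alpha}}(\alpha X)$ is topologically compact.

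For $(ii)\Rightarrow(i)$ I would argue by contraposition, using the same device as in the sufficiency part of Theorem 2.8. If \textbf{UFT} fails, Theorem 4.37 of \cite{Her} yields a non-compact product $X=\prod_{s\in S}X_s$ of finite discrete spaces; after discarding degenerate factors we may assume each $X_s$ has at least two points, and we realise each $X_s$ as a finite (hence bounded and weakly closed) subset of $\mb{R}$. Setting $F=\{\pi_s:s\in S\}$ with $\pi_s:X\to X_s\subseteq\mb{R}$ the projections, a routine verification shows that $e_F$ is a homeomorphic embedding of $X$ onto the subspace $\prod_{s\in S}X_s$ of $\mb{R}^S$ carrying the product topology, so $F\in\mc{E}(X)$; and since a product of closed sets is weakly closed in $\mb{R}^S$, the set $e_F(X)$ is already weakly closed, so $e_FX=e_F(X)$ is homeomorphic to $X$ and thus not topologically compact. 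Hence $(ii)$ fails.

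The only part calling for genuine care --- and the exact analogue of the heart of Theorem 2.8 --- is this last construction: one must extract the non-compact product of finite discrete spaces from the failure of \textbf{UFT} and then check that the evaluation map plants a weakly closed homeomorphic copy of $X$ inside $\mb{R}^S$. The remaining implications are short bookkeeping with the \textbf{ZF}-facts listed above and with Definition 10.7.
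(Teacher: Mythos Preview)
Your proof is essentially correct, but one step in $(ii)\Rightarrow(i)$ needs care in \textbf{ZF}: ``we realise each $X_s$ as a finite subset of $\mathbb{R}$'' asks for a simultaneous choice of injections $X_s\hookrightarrow\mathbb{R}$ over all $s\in S$, and for an arbitrary family of finite sets this is not available without choice. The repair is immediate: use instead that \textbf{UFT} is equivalent in \textbf{ZF} to the compactness of every Cantor cube $\{0,1\}^S$, so that on its failure you may take $X_s=\{0,1\}\subseteq\mathbb{R}$ for every $s$ and the rest of your argument goes through verbatim.

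Apart from this, your route differs from the paper's. The paper runs the cycle $(i)\Rightarrow(ii)\Rightarrow(iii)\Rightarrow(i)$ and closes it by taking, for an arbitrary set $J$, the cube $X=I^{J}$ with $F=\{\pi_j:j\in J\}$: condition $(iii)$ supplies a compactification $\alpha X$ over which all projections extend, and then $e_{F^{\alpha}}$ maps $\alpha X$ onto $I^{J}$, so every Tychonoff cube is compact and Theorem~4.70 of \cite{Her} gives \textbf{UFT}. This sidesteps the embedding issue entirely because the projections are $[0,1]$-valued by construction. Your organisation instead isolates $(iii)\Rightarrow(ii)$ as a general statement (via the same $e_{F^{\alpha}}$ device the paper uses only for cubes) and then derives $(ii)\Rightarrow(i)$ from a finite-discrete-product witness, paralleling Theorem~2.8. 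The paper's choice of cubes is marginally cleaner for exactly the \textbf{ZF}-reason above; your route has the virtue of making $(iii)\Rightarrow(ii)$ explicit as a stand-alone fact.
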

\begin{proof} To show that (i) implies (ii), it is sufficient to apply Theorem 4.70 of \cite{Her}. We put $\alpha X=e_F X$ to check that (iii) follows from (ii). Let us assume that (iii) holds. Take any non-void set $J$ and put $I_j=[0, 1]$ with its natural topology for each $j\in J$. Let $X=I^{J}$ be the Tychonoff product $\prod_{j\in J}I_j$. For $j\in J$, let $\pi_j: X\to J$ be the projection. Consider the set $F=\{ \pi_j: j\in j\}$. Clearly, $F\in \mc{E}(X)$. Let $\alpha X$ be a compactification of $X$ such that $F\subseteq C_{\alpha}(X)$. Then the mapping $h=e_{F^{\alpha}}$ is weakly continuous,  while $h(\alpha X)=X$. Hence $X$ is topologically compact. This, together with Theorem 4.70 of \cite{Her}, proves that (iii) implies \textbf{UFT}.
\end{proof}

\begin{exam} Let \textbf{M} be any model for $\mathbf{ZF +\neg UFT}$, for example, let \textbf{M} be the Pincus-Solovay's model $M27$ of \cite{HR} (cf. also Appendix A6 of \cite{Her}). In \textbf{M}, there is a non-compact Tychonoff cube. Let $I^{J}$ be a Tychonoff cube which is non-compact in \textbf{M}. Then there does not exist a compactification $\alpha I^{J}$ such that $C^{\ast}(I^J)\subseteq C_{\alpha}(I^J)$ in \textbf{M}; moreover, for $F=C^{\ast}(I^J)$, the pair $(e_F I^J, e_F)$ is not a compactification of $I^{J}$ in \textbf{M}.
\end{exam} 

\begin{rem} Let $g\in\mc{G}^{\ast}_{\mb{R}}$ and let $X$ be an $I^{g}$-Tychonoff gts. Suppose that $F\in\mc{E}(X_{top})$. Assume \textbf{ZFU}. Even when we do not know whether $F\in\mc{E}^g(X)$, we can equip the topological compactification $e_F X$ of $X$ with the strongest generalized topology associated with $\text{Cov}_X$ to obtain a strict compactification $e^S_F X$ of $X$. 
\end{rem}

Let us finish with the following open problem:
\begin{p}
Let $g\in \mc{G}^{\ast}_{\mb{R}}$ and let $X$ be an $I^g$-Tychonoff gts. Suppose that $\alpha X$ is a strict compactification of $X$ and that $F\in\mc{E}(X_{top})$. Assume \textbf{ZFU} and let $e^g_F X$ be the set $e_F X$ equipped with the generalized topology of the subspace of the $\mathbf{GTS}$-product $\mb{R}_g^F$. Find elegant necessary and sufficient conditions for $e^g_F X$ to be a strict compactification of $X$, strictly equivalent with $\alpha X$. 
\end{p}

\end{document}